\theoremstyle{plain}
\newtheorem{theorem}{Theorem}[section]
\newtheorem{coro}[theorem]{Corollary}
\newtheorem{lemma}[theorem]{Lemma}
\newtheorem{proposition}[theorem]{Proposition}
\theoremstyle{definition}
\newtheorem{definition}[theorem]{Definition}
\theoremstyle{remark}
\newtheorem{remark} [theorem]{Remark}
\renewcommand{\L}{\mathcal{L}}
\newcommand{\Ii}{\mathcal{I}}
\newcommand{\R}{\mathbb{R}}
\newcommand{\Z}{\mathbb{Z}}
\newcommand{\lieg}{\mathfrak{g}}
\renewcommand{\ss}{\mathfrak{s}}
\newcommand{\liea}{\mathfrak{a}}
\newcommand{\liej}{\mathfrak{j}}
\newcommand{\lieq}{\mathfrak{q}}
\newcommand{\lieh}{\mathfrak{h}}
\newcommand{\liek}{\mathfrak{k}}
\newcommand{\N}{\mathbb{N}}
\DeclareMathOperator{\Aut}{Aut}
\DeclareMathOperator{\End}{End}
\title{Some Anosov actions which are affine}
\author{Uir\'a N. Matos de Almeida
\thanks{The Author thanks CAPES, CNPq and FAPESP for the financial support, my advisors Thierry Barbot and Carlos Maqueraand my posdoc SUpervisor Pedro Salomão}}
\begin{document}

\maketitle

\begin{abstract}
Following the works of Y. Benoist, P. Foulon and F. Labourie \cite{BFL}, and having in mind the standing conjecture about the algebricity of Anosov actions of $\R^k$, we propose some geometrical conditions which generalizes the notion of contact structures, and prove that Anosov actions associated with such structures are conjugated to an Affine action. We also construct two families of examples, the first one is algebraic in nature, and the second one imposes some dynamical conditions instead of geometrical ones.

\end{abstract}

\tableofcontents % for long articles

\section{Introduction}
One of the classical problems in Anosov dynamics is to understand when is an Anosov system conjugate to, up to finite covers and reparametrizations, an algebraic model. It is, in fact, conjectured that this should be always the case for Anosov diffeomorphisms (Smale \cite{Smale}) and Anosov action of $\R^k$, $k\geq2$ (B. Kalinin and R. Spatzier \cite{KS}). It is well known that this is not always the case for Anosov flows (M. Handel and W. Thurston, \cite{Ha-Th}), but the known counter examples are very pathological, and this motivates the question, under which (mild) hypothesis are Anosov flows conjugated to an algebraic model? Some of the known results in this direction (E. Ghys  \cite{ghys0} and \cite{ghys1}, Y. Benoist, P. Foulon and F. Labourie \cite{BFL0} and \cite{BFL}, V.Sadovskaya \cite{Sadovskaya}, Y. Fang \cite{Fang2}and \cite{Fang1}) seems to imply that this should always be the case if the Anosov flow has some additional smoothness condition and/or is associated with some kind of geometric structure.

Motivated by this and, the still open, conjecture for Anosov actions of $\R^k$, $k\geq 2$, we propose a definition of a very general geometric structure and we show, following the same strategy as Benoist-Foulon-Labourie \cite{BFL}, that Anosov actions associated with this structure are in fact smoothly conjugated to affine actions (a slightly more general model then the algebraic ones).

To be precise, we show that  
\begin{theorem}\label{Theo1}
Every rigid $k$-geometric Anosov action is smoothly conjugated to an affine action.
\end{theorem}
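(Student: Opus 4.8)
The plan is to follow the strategy of Benoist--Foulon--Labourie \cite{BFL} adapted to the $\R^k$-setting: rather than comparing the action to a model directly, I would let the rigid $k$-geometric structure canonically produce an invariant connection, exploit Gromov's rigidity machinery together with the Anosov dynamics to force local homogeneity, and finally globalize by a developing-map argument to obtain the affine action.

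First, I would extract from the $k$-geometric structure a canonical linear connection $\nabla$ on $M$ (together with the associated parallelism of the neutral, stable and unstable sub-bundles of the Anosov splitting) which is invariant under the action and whose torsion and curvature are built algebraically from the defining tensors of the structure. The whole point of the ``geometric'' hypothesis should be that it pins down such a $\nabla$ uniquely and \emph{smoothly}; in the contact case of \cite{BFL} this is the role played by the canonical connection adapted to the Anosov splitting, and I would mirror that construction, verifying that the defining tensors are preserved by the $\R^k$-action so that each element of the action becomes affine for $\nabla$.

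Second, I would invoke rigidity. The ``rigid'' hypothesis is exactly what is needed to apply Gromov's open-dense orbit theorem to the automorphism pseudogroup of the structure: on an open dense invariant set the pseudogroup of local automorphisms acts transitively, so the structure --- hence $(M,\nabla)$ --- is locally homogeneous there. Here the Anosov dynamics does the essential work: topological transitivity of the action, together with density of the set of points with dense orbit, should upgrade the open-dense conclusion to \emph{all} of $M$, since the invariant set meets every orbit and local automorphisms can be transported along orbits by the invariance of $\nabla$. This yields a transitive pseudogroup of local affine automorphisms of $(M,\nabla)$ on the whole manifold.

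Finally, I would globalize. Local homogeneity of an affine manifold together with the uniqueness of jets supplied by rigidity gives a developing map $\widetilde{M}\to G/H$ onto a model homogeneous space, equivariant for the holonomy representation; completeness of $\nabla$ --- which I would derive from compactness of $M$ together with the invariance under the $\R^k$-action, as in \cite{BFL} --- makes this developing map a covering, so that $M$ is a quotient of $G/H$ by a discrete subgroup and the action descends from the algebraic left translation action of $\R^k\subset G$. This is the sought affine action, and the developing map provides the smooth conjugacy. I expect the main obstacle to be precisely the two analytic points on which \cite{BFL} also concentrate: proving that the canonical connection is genuinely \emph{smooth} (and not merely continuous along the dynamically defined distributions), and establishing \emph{completeness} and control of the holonomy, so that local homogeneity integrates to a global conjugacy rather than only a local one.
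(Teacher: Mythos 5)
Your outline does reproduce the paper's high-level strategy (adapted invariant connection, Gromov rigidity plus the open-dense orbit theorem, then a developing map made into a covering), but it has two genuine gaps, one of which is precisely where the paper spends most of its effort. First, you never address why the model space $G/H$ exists as a manifold at all. Gromov's machinery hands you a Lie algebra $\lieg'$ of local Killing fields and an isotropy subalgebra $\lieh'$ at a base point; to get a homogeneous model you must integrate these to groups $\hat G'$, $\hat H'$ and prove that $\hat H'$ is \emph{closed} in $\hat G'$. This is not automatic and is the heart of the paper (Sections \ref{tecsec}--\ref{moretecsec}): it requires showing that the quotient algebra $\lieg_j$ is reductive with centre spanned by the generators of the action, which in turn rests on an entropy normalization of the constants $C_j^\pm$ in the connection, the nilpotency of the curvature operator $B^j$ (Lemma \ref{nilpotentB}), the construction of the characters $d\chi_j$ from parallel sections of determinant bundles, and the algebraicity of the isotropy group. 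None of the specifically ``rigid'' data (the form $\Theta$, the connection $\tilde\nabla^j$ on $T\phi^j\oplus E_j$) enters your argument, yet it is exactly this data that makes the closedness proof work; without this step the phrase ``developing map $\widetilde M\to G/H$'' has no target.

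Second, your mechanism for upgrading the open-dense orbit $\Omega$ to all of $M$ fails as stated. You propose to transport local automorphisms along orbits using topological transitivity, but $\Omega$ is invariant under the action: the orbit of any point of $M\setminus\Omega$ stays in $M\setminus\Omega$, so no amount of pushing charts along orbits can reach the exceptional set. The paper's actual extension (Section \ref{extendsec}) is different and genuinely geometric: it first builds the $(G'',\hat V)$-structure on $\Omega$ only, then extends it across $M\setminus\Omega$ using completeness of the geodesics of $\nabla^j$ tangent to $E^\pm$ (Lemma \ref{lemageod}), the density of the set $\Delta$ of points whose strong stable and unstable leaves lie entirely in $\Omega$, and the ``$A$-star-shaped'' neighborhoods on which developing maps can be extended by the formula $\hat\theta = \hat\Psi_{\theta(v_i)}\circ(T_{v_i}\theta)\circ\Psi_{v_i}^{-1}$. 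Your completeness step at the end is closer in spirit to the paper's Section \ref{completesec}, but it too depends on this leafwise-geodesic formalism rather than on completeness of $\nabla$ on $M$ alone, so the two missing ingredients above are not repairable by minor edits to your argument.
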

As a corollary, we obtain:
\begin{theorem}\label{CoroTheo1}
Let $\phi:\R^k\times M \to M$ be an Anosov action, with smooth invariant bundles, which preserves a pseudo-Riemannian metric. Suppose that every Lyapunov distribution is one dimensional and the Lyapunov exponents $\chi_i$ satisfies the following non-resonance condition:
$$\chi_i + \chi_j+\chi_k = 0\Leftrightarrow \chi_i=0\;\;\chi_j = -\chi_k$$

Then, either $\phi$ is a suspension of an $\Z^k$ action (not necessarily Anosov) or, up to finite covers, $\phi$ is smoothly conjugated to an affine Anosov action.
\end{theorem}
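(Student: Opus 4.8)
The plan is to derive Theorem~\ref{CoroTheo1} from Theorem~\ref{Theo1} by showing that the hypotheses furnish a rigid $k$-geometric structure, after separating out a degenerate case. First I would examine the Lyapunov/Oseledets splitting of $TM$. Since the invariant bundles are assumed smooth and each Lyapunov distribution is one-dimensional, the tangent bundle decomposes smoothly as $TM = E_0 \oplus \bigoplus_i E_{\chi_i}$, where $E_0$ is the neutral (central) distribution tangent to the $\R^k$-orbits and each $E_{\chi_i}$ is a smooth line bundle on which the action expands/contracts with rate $\chi_i$. The key structural input is the invariant pseudo-Riemannian metric $g$: for an $\R^k$-action preserving $g$, the metric must pair Lyapunov spaces $E_{\chi_i}$ and $E_{\chi_j}$ trivially unless $\chi_i + \chi_j = 0$, because $g(E_{\chi_i},E_{\chi_j})$ transforms with weight $\chi_i+\chi_j$ and must be constant along orbits. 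Hence $g$ induces a nondegenerate pairing between $E_{\chi_i}$ and $E_{-\chi_i}$, and restricts to (possibly degenerate) data on $E_0$.

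The next step is to produce from $g$ the geometric object required by the definition of a rigid $k$-geometric action (referenced from the earlier part of the paper). I would use the pairings $g\colon E_{\chi_i}\otimes E_{-\chi_i}\to \R$ to build the invariant tensor/reduction of the frame bundle that the $k$-geometric structure demands, and then verify rigidity. This is precisely where the non-resonance condition $\chi_i+\chi_j+\chi_k=0 \Leftrightarrow (\chi_i=0,\ \chi_j=-\chi_k)$ enters: it controls the second-order invariants (the analogue of the Levi–Civita/canonical connection), guaranteeing that the only resonances among triples of exponents are the trivial ones forced by a zero exponent paired with an opposite pair. This rules out spurious invariant bilinear or cubic terms that would obstruct the construction of a canonical connection adapted to the splitting, so the structure is rigid in the sense needed to apply Theorem~\ref{Theo1}, yielding a smooth conjugacy to an affine action.

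The remaining point is the dichotomy in the statement. The construction above goes through provided the neutral distribution $E_0$ coincides with the tangent space to the orbits, equivalently provided the action has no extra zero-exponent directions transverse to the orbit foliation beyond what the geometry allows; if instead the metric $g$ restricted to the would-be hyperbolic part degenerates in a way that forces a further invariant subfoliation, the action cannot be Anosov in the irreducible sense and must instead fibre over a lower-dimensional base. I would show that in that degenerate situation the smooth invariant splitting together with the preserved metric forces $M$ to be a mapping-torus-type bundle, i.e.\ a suspension of a $\Z^k$-action on the fibre (which need not itself be Anosov). Thus either one falls into the suspension case, or the $k$-geometric structure is genuinely rigid and Theorem~\ref{Theo1} applies after passing to a finite cover to orient the line bundles $E_{\chi_i}$ and trivialize any holonomy obstruction.

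The main obstacle I expect is verifying \emph{rigidity} of the constructed structure rather than merely its existence: producing an invariant tensor from $g$ is formal, but showing that its automorphism pseudo-group is finite-dimensional and that the associated higher-order frame data stabilize (so that the abstract rigidity hypothesis of Theorem~\ref{Theo1} is met) is where the non-resonance condition must be used carefully. A secondary subtlety is cleanly isolating the suspension alternative, since one must argue that any obstruction to rigidity coming from degeneracy of $g$ on $E_0$ forces exactly the suspension structure and nothing more exotic; handling the finite cover and orientation issues for the line bundles $E_{\chi_i}$ is routine by comparison.
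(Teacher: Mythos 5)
Your high-level plan (split off a degenerate case, then build a rigid $k$-geometric structure and invoke Theorem \ref{Theo1}) is the paper's plan, and your observation that the invariant metric can only pair Lyapunov spaces $E_{\chi_i}$ and $E_{-\chi_i}$ is exactly the paper's first application of the Feres--Katok lemma (Lemma \ref{fereska}). But there are two genuine gaps. The first is the dichotomy: you tie the suspension alternative to a ``degeneracy of $g$ on $E_0$'' forcing an extra invariant subfoliation, but $g$ is non-degenerate by hypothesis and zero exponents transverse to the orbits are excluded by the Anosov property, so your criterion never triggers. The actual dividing line in the paper is \emph{integrability of} $E^+\oplus E^-$: if it is integrable, then every $1$-form $\alpha$ dual to the action satisfies $d\alpha\equiv 0$ (the restriction to $E^+\oplus E^-$ is $-\alpha([Y,Z])$, which vanishes by integrability, and invariance of the splitting kills the mixed terms), so no contact-type structure can exist, and the paper cites Barbot--Maquera \cite{Ba-Maq2} for the conclusion that such actions are suspensions of $\Z^k$-actions. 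If it is non-integrable, Lemma \ref{1dlyap} applies. Without this criterion you can neither isolate the suspension case nor start the construction.

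The second gap is that the rigid $k$-geometric structure cannot be manufactured from $g$ alone, as you propose: $g$ (equivalently the $2$-form $\omega$ of Lemma \ref{2form}) only supplies the top form $\Theta$ on the complementary bundle and the $\pm$ pairing. The core of the paper's proof is the construction of a \emph{special basis}: choose a $1$-form $\alpha$ dual to the action whose $d\alpha$ has maximal rank $2s$ (nonzero precisely because $E^+\oplus E^-$ is non-integrable), use the Oseledets pairing to see that $\alpha_1\wedge\dots\wedge\alpha_k\wedge d\alpha^s\wedge\omega^{n-s}$ is a nonzero invariant top form, and use topological transitivity to promote it to a volume form, which forces $d\alpha$ to have constant rank $2s$ everywhere; nearby dual forms then furnish the whole special basis. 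Your proposal never introduces the dual forms $\alpha_j$ or their differentials, so nothing in it plays the role of the forms whose non-degeneracy on $E_0$ Definition \ref{moregeneral} demands. Relatedly, the non-resonance condition is not used to ``stabilize higher-order frame data'': its concrete job (again via Feres--Katok) is to show that every $\phi$-invariant $3$-form annihilated by $T\phi$ vanishes identically, which yields $d\omega_j=0$ on $\ker(d\alpha_j)$, i.e.\ the ``rigidly adapted'' condition that makes the structure rigid. Your phrase about ruling out ``cubic terms'' gestures at this, but as stated it is not an argument.
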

We also show that a large family of examples of algebraic Anosov actions have an associated rigid $k$-geometric structure.
\begin{theorem}\label{Theo2}
Let $(G,H,\Gamma,\liea)$ be an algebraic Anosov action such that the sequence
$$0\to \mathfrak{Nil(g)}\to\lieg\to\lieg_{red}\to 0$$
splits. Then it is a rigid $k$-geometric Anosov action.
\end{theorem}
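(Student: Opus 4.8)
The plan is to produce, on the compact homogeneous space carrying the action, an explicit invariant reduction of a higher-order frame bundle $\Fk$ and to verify that it is rigid in the required sense. Write the model as $M=\Gamma\backslash G/H$ with $A=\exp(\liea)$ acting by right translations (with $A$ commuting with $H$, so that the right $A$-action on the double cosets is well defined). The starting point is the canonical algebraic framing: the Maurer--Cartan form trivialises $T(\Gamma\backslash G)$ and identifies each tangent space with $\lieg$, and right translation by $a=\exp(X)$, $X\in\liea$, acts on this parallelism fiberwise by $\mathrm{Ad}(a^{-1})=e^{-\mathrm{ad}(X)}$. Consequently the Anosov splitting of $TM$ is exactly the weight decomposition $\lieg=\lieg_0\oplus\bigoplus_{\chi\neq0}\lieg_\chi$ of $\mathrm{ad}(\liea)$, the Lyapunov exponents being the real parts of the weights $\chi$. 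This weight grading, together with the bracket of $\lieg$, is the algebraic datum from which I would build the geometric structure.

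First I would use the hypothesis to fix the structure group. Writing $\lieg=\mathfrak{Nil}(\lieg)\rtimes\lieg_{red}$, the reductive complement is an $\mathrm{ad}(\liea)$-stable subalgebra transverse to the nilradical, and it singles out a canonical ``linear part'' inside $\lieg$. I would then let $S\subset\Gk$ be the group of $k$-jets that preserve the weight grading, the bracket relations, and this semidirect decomposition, and define the $k$-geometric structure to be the $S$-reduction of $\Fk$ obtained from the Maurer--Cartan framing: passing to the $H$-quotient raises the order, so the order-one parallelism upstairs becomes an order-$k$ reduction on $M$. The role of the splitting is precisely here: a reductive complement lets the graded, bracket-preserving data on $\mathfrak{Nil}(\lieg)$ assemble into a single $A$-invariant reduction; without it the extension class of $0\to\mathfrak{Nil}(\lieg)\to\lieg\to\lieg_{red}\to0$ can obstruct a coherent choice across the nilpotent directions. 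Descent to $M$ then follows because the framing and $S$ are built from $\mathrm{Ad}(H)$- and $\mathrm{ad}(\liea)$-equivariant data, so the reduction is $H$-basic and $A$-invariant, the latter because $A$ acts through the very grading that defines $S$.

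The decisive step is to show this structure is \emph{rigid}, i.e.\ of finite type, so that its automorphisms are pinned down by jets of bounded order. I would establish this by computing the successive prolongations of $S$ --- equivalently the Tanaka/Spencer prolongations of the graded pair $(\lieg,S)$ --- and showing the tower terminates at zero: the reductive factor contributes a semisimple-plus-abelian symbol whose prolongation is controlled by the classical theory, while the splitting confines the nilpotent contribution to finitely many graded components, forcing stabilisation. Once finite type is in hand, the reduction together with the compatible Anosov (weight) splitting verifies every clause of the definition of a rigid $k$-geometric Anosov action, and the theorem follows.

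I expect the main obstacle to be exactly this combination of invariant descent and finite type across the nilradical: arranging $S$ to be simultaneously $A$-invariant, $H$-basic, and of vanishing prolongation is where the splitting hypothesis is indispensable, and where the genuine difficulty of the nilpotent case is concentrated.
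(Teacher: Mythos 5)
There is a genuine gap, and it is located in what you take the word \emph{rigid} to mean. In this paper, ``rigid'' in Theorem \ref{Theo2} refers to Definition \ref{rigidkgeo}, which is a concrete checklist: one must exhibit (i) dual $1$-forms $\alpha_1,\dots,\alpha_l$ of constant rank $2p+1$ with $(d\alpha_i)_{|_{E_0}}$ non degenerate (this is already needed for the action to be $k$-geometric at all, Definition \ref{moregeneral}); (ii) a $q$-form $\Theta$ restricting to a nowhere zero top form on $E_j$ such that $\alpha_1\wedge\dots\wedge\alpha_k\wedge d\alpha_j^p\wedge\Theta$ is a volume form; and (iii) a $\phi$-invariant linear connection $\tilde\nabla^j$ on $T\phi^j\oplus E_j$ satisfying the prescribed bracket identities $\tilde\nabla^j_ZY = p_j^\pm[Z,Y]$, $\tilde\nabla^j_{X_s}Z=[X_s,Z]$, with commuting fields tangent to $T\phi^0\oplus E_0$ acting as infinitesimal symmetries. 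It is \emph{not} Gromov rigidity or finite type of a frame-bundle reduction; that notion only enters later, in Section \ref{gromsec}, as a tool in the proof of Theorem \ref{Theo1}, and there it is immediate because linear connections are Gromov-rigid. So your central step --- computing Tanaka/Spencer prolongations of an $S$-reduction of $F^k(M)$ and showing they vanish --- proves a statement that is not the one required, and none of the three objects above is produced by it.

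Concretely, what is missing and what the paper actually does: the contact-type forms in (i) cannot come from the weight grading alone; the paper obtains them from its earlier result that the reductive quotient $(G,H,\Gamma,\liea)$ carries an algebraic generalized $l$-contact structure (via Weyl chamber actions and commensurability), and then pulls the left-invariant forms back through the projection $\pi:\hat G'=\tilde K'\ltimes\tilde G\to\tilde G$. This is the precise place where the splitting hypothesis is used --- it makes $\pi$ a group homomorphism, so $\tilde\alpha_j=\pi^*\alpha_j$ are left invariant with $\ker(d\tilde\alpha_j)$ containing exactly the nilpotent directions $\mathcal S_1\oplus\mathcal U_1$ and $d\tilde\alpha_j$ non degenerate on $\mathcal S_0\oplus\mathcal U_0$. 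Your use of the splitting (to make an $S$-reduction coherent across the nilradical) is a different and vaguer role. The connection in (iii) is then built from the bi-invariant connection $\nabla'_YZ=[Y,Z]$ on $\hat G'$ by composing with the projections onto the invariant sub-bundles, and the transverse form $\Theta$ in (ii) is built by averaging an alternating $q$-form on $\mathcal S_1\oplus\mathcal U_1$ over the compact isotropy group $H'$ (using that $G'/H'$ is reductive as a homogeneous space). Without these constructions --- in particular without importing the generalized $l$-contact structure on the reductive part --- the theorem is not reached, however the prolongation computation turns out.
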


The case where $\mathfrak{Nil(g)}=0$, that is, $\lieg$ is reductive, was considered in a previous work \cite{Almeida1}, where it was shown that such actions are generalized $k$-contact Anosov actions, which is a particular case of rigid $k$-geometric Anosov action.

This paper is organized as follows:
\begin{itemize}
    \item On section \ref{introsec} we give some definitions and preliminary results. We motivate our most general geometric structure by first defining the stronger notion of "generalized $k$-contact structure", a natural generalization of the usual contact structures (\cite{Almeida1}).
    
    We also differentiate algebraic actions from affine actions. In particular, we can now understand the precise meaning of Theorem \ref{Theo1}, which is restated at the end of the section.
    
    \item On section \ref{examsec} we give some conditions which implies the existence of our geometric structure. In particular, we prove that under the hypothesis of Theorem \ref{CoroTheo1}, we have a ridig $k$-geometric structure. Together with Theorem \ref{Theo1}, this proves the Theorem \ref{CoroTheo1}. We also prove Theorem \ref{Theo2}. The main ingredients of this proof is a previous work (\cite{Almeida1}) where we consider the case where the sequence is trivial ($\mathfrak{Nil(g)} = 0$).
    
    Together with Maquera-Barbot's classification (\cite{Ba-Maq3}) of algebraic nilpotent Anosov actions, this shows that a large class of known Anosov actions of $\R^k$ are in fact rigid $k$-geometric.
    
    \item On sections \ref{gromsec} and \ref{Adap} we construct the main ingredients of the proof: A Gromov-rigid geometric structure, an adapted connection and some Lie algebras: $\lieg'$ and $\liek'$. 
    
    The Lie algebras are Lie algebras of Killing vector fields of the Gromov geometric structure and they will correspond to the Lie algebras of the Lie groups $\hat G'$ and $\hat H'$ which makes our homogeneous model $\hat G'\slash \hat H'$.
    
    \item Sections \ref{tecsec}, \ref{Nilsec} and \ref{moretecsec} are the most technical sections, where we carefully dissects the Lie algebras involved. The end goal is to show that $\hat H'$ is in fact closed in $\hat G'$ and therefore $\hat G'\slash \hat H'$ is actually a smooth manifold.
    
    \item On section \ref{modelsec} we find a suitable group $G''$ acting transitively on $\hat V:=\hat G'\slash \hat H'$ and construct a $(G'',\hat V)$ structure on an open dense subset of $M$, that is $\hat V$ is a local model for an open dense subset of $M$.
    
    \item On section \ref{extendsec} we extend the model from the open and dense subset to the whole manifold $M$. 
    
    \item Finally, on section \ref{completesec} we show that the $(G'',\hat V)-$structure, is in fact complete, that is, the developing map $\tilde M\to \hat V$ is in fact a covering map. This implies that $\tilde M$ is in fact an homogeneous manifold, and because $\hat V$ is simply connected, the developing map actually give us the desired conjugacy with an affine model.
\end{itemize}

\section{Main definitions}\label{introsec}

The goal of this section is to introduce the definitions involved in the statement of our main theorem. On subsection \ref{agaf} We recall the well known definitions of Anosov and algebraic actions and introduce the weaker (but quite natural) notion of affine action. On subsection \ref{kcont}, as a preparation to define our main geometric structure, we recall the notion of a generalized $k$-contact action which was introduced in \cite{Almeida1}. Finally, on subsection \ref{krigst} we introduce our main geometric structure and restate our main theorem, which we now have the tools to properly understand.

\subsection{Algebraic vs Affine}\label{agaf}
First let us recall the definition of an Anosov action:
\begin{definition}\label{anodef}
	Consider a compact smooth manifold $M$ and a smooth action $\phi:\R^k\times M \to M$. This action is said to be Anosov if there exists an element $a\in \R^k$, called an Anosov element, such that $\phi_a$ acts on $M$ normally hyperbolically, that is, there exists a, $d\phi^a$ invariant, continuous, splitting $TM = E^+\oplus T\phi\oplus E^-$, where $T\phi$ is the distribution tangent to the orbits, such that, there exists positive constants $C,\lambda$ for which
	\begin{align}\label{estimates}
		\|d\phi^{ta}(u^+)\|&\leq Ce^{-t\lambda}\|u^+\|\enskip\forall t>0\enskip\forall u^+\in E^+\\
		\|d\phi^{ta}(u^-)\|&\leq Ce^{t\lambda}\|u^i\|\enskip\forall t<0\enskip\forall u^-\in E^-
	\end{align}
\end{definition}
We recall that an \textit{algebraic action} is given by a quadruple $(G,H,\Gamma,\mathcal A)$ such that
\begin{itemize}
    \item $G$ is a Lie group with Lie algebra $\lieg$
    \item $H\subset G$ is a compact subgroup with Lie algebra $\lieh$
    \item $\Gamma\subset G$ is a uniform lattice acting freely on $G\slash H$, therefore $\Gamma\backslash G\slash H$ is a compact manifold
    \item $\mathcal A\subset \lieg$ is an abelian Lie subalgebra such that $\mathcal A \cap \lieh = \emptyset$ and $\mathcal A$ is contained in the normalizer $N_G(\lieh)$ of $\lieh$. Therefore, $\mathcal A$ acts on $\Gamma\backslash G\slash H$ by right multiplication.
\end{itemize}

An algebraic action is Anosov if, and only if, there exists an element $x\in \mathcal A$ and an $ad(x)$ invariant splitting $\lieg = \lieh\oplus\mathcal A\oplus \mathcal S\oplus\mathcal U$ such that the eigenvalues of $ad(x)$ on $\mathcal U$ (resp. $\mathcal S$) has positive (resp. negative) real part.

The most notable examples of Anosov algebraic actions are given by the Weyl chamber action: $(G,H,\Gamma,\mathcal A)$, where
\begin{itemize}
    \item $G$ is a connected semisimple Lie group with Lie algebra $\lieg$
    \item $\mathcal A\subset \lieg$ is a Cartan subspace, that is, an abelian Lie algebra of hyperbolic elements and maximal for those properties.
    \item It is known that the centralizer of $\mathcal A$ is of the form $\mathcal A\oplus \lieh$ where $\lieh$ is a Lie subalgebra which corresponds to a compact Lie subgroup $H\subset G$
    \item $\Gamma\subset G$ is a uniform lattice acting freely on $G\slash H$, which always exists (a classical result due to Borel).
\end{itemize}

For our purpose, we are forced to weaken slightly the notion of an algebraic action:
\begin{definition}
    An \textit{Affine action} is given by a quadruple $(G,H,\Gamma,\mathcal A)$ such that
\begin{itemize}
    \item $G$ is a Lie group with Lie algebra $\lieg$
    \item $H\subset G$ is a  \textit{closed} subgroup with Lie algebra $\lieh$
    \item $\Gamma\subset G$ is a \textit{discrete subgroup} acting freely on $G\slash H$, \textit{such that $\Gamma\backslash G\slash H$ is a compact manifold}.
    \item $\mathcal A\subset \lieg$ is an abelian Lie subalgebra such that $\mathcal A \cap \lieh = \emptyset$ and $\mathcal A$ is contained in the normalizer $N_G(\lieh)$ of $\lieh$. Therefore, $\mathcal A$ acts on $\Gamma\backslash G\slash H$ by right multiplication.
\end{itemize}
\end{definition}

\subsection{Generalized $k$-contact structures and contact Anosov actions}\label{kcont}

\begin{definition}
	A generalized $k$-contact structure on a manifold $M$ of dimension $2n+k$ is a collection of, $k$ non zero, linearly independent, $1$-forms $\{\alpha_1,\dots,\alpha_k\}$ and a splitting $TM = I\oplus F$, 	$dim I = k$,  such that, for every $1\leq j\leq k$ we have, 
	\begin{enumerate}
		\item {}$F = \bigcap_{i=1}^k\ker\alpha_i$
		\item {}$\ker(d\alpha_j) = I$
	\end{enumerate}
	We denote this structure by $(M,\alpha,TM = I\oplus F)$.
\end{definition}
\begin{remark}
	It follows from the definition that, for every $j = 1,\dots,k$ 	$(d\alpha_j)_{|_F}$ is non degenerate; in particular, this means that
	$$\alpha_1\wedge\cdots\wedge\alpha_k\wedge d\alpha_j^n\enskip\enskip\text{ is a volume form}.$$
\end{remark}
\begin{definition}
Let $(M,\alpha,TM = I\oplus F)$ be a generalized $k$-contact structure, and let $B = (b_{ij})\in GL(k,\R)$. Let $\beta_i = \sum_j b_{ij}\alpha_j$ and $\beta = (\beta_1,\dots,\beta_k) = B\alpha$.

If $(M,\beta,TM = I\oplus F)$ is a generalized $k$-contact structure, then it is called a reparametrization of $(M,\alpha,TM = I\oplus F)$.
 \end{definition}

\begin{lemma}\label{reebfields}
	For each $j$, there is a unique vector field $X_j\in \Gamma(M,I)$ such that $\alpha_i(X_j) = \delta_{ij}$. These vector fields are called Reeb vector fields.
	
	Moreover, the Reeb vector fields commute one with each other:
	$$[X_i,X_j]=0$$
\end{lemma}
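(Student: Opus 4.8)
The plan is to first construct the vector fields pointwise by fiberwise linear algebra, and then to extract their commutation from the two defining conditions of the structure together with the Cartan calculus.

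For existence and uniqueness I would fix a point $p\in M$ and consider the linear map $L_p\colon I_p\to\R^k$ defined by $L_p(v)=(\alpha_1(v),\dots,\alpha_k(v))$. Its kernel is $I_p\cap\bigcap_{i}\ker(\alpha_i)_p=I_p\cap F_p$, which is $\{0\}$ because $TM=I\oplus F$ is a direct sum; since $\dim I_p=k=\dim\R^k$, the map $L_p$ is a linear isomorphism. Hence there is a unique $X_j(p)\in I_p$ with $L_p(X_j(p))=e_j$, i.e. $\alpha_i(X_j(p))=\delta_{ij}$. Smoothness of $p\mapsto X_j(p)$ follows by writing $L_p$ in a local smooth frame of $I$ and inverting, matrix inversion being smooth; this produces the required $X_j\in\Gamma(M,I)$.

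For the commutation, the first observation is that $X_j\in I=\ker(d\alpha_i)$ gives $\iota_{X_j}d\alpha_i=0$ for all $i,j$, and that $\alpha_l(X_j)=\delta_{lj}$ is constant. Feeding these into the intrinsic formula
$$d\alpha_l(X_i,X_j)=X_i\big(\alpha_l(X_j)\big)-X_j\big(\alpha_l(X_i)\big)-\alpha_l([X_i,X_j])$$
makes the left side and the first two right-hand terms vanish, so $\alpha_l([X_i,X_j])=0$ for every $l$; that is, $[X_i,X_j]\in\bigcap_l\ker\alpha_l=F$.

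It remains to show $[X_i,X_j]\in I$ as well, after which $[X_i,X_j]\in I\cap F=\{0\}$ finishes the proof. Here I would use the identity $\iota_{[X_i,X_j]}=\mathcal{L}_{X_i}\iota_{X_j}-\iota_{X_j}\mathcal{L}_{X_i}$ applied to $d\alpha_l$. The first term vanishes since $\iota_{X_j}d\alpha_l=0$. For the second, Cartan's formula gives $\mathcal{L}_{X_i}\alpha_l=\iota_{X_i}d\alpha_l+d(\alpha_l(X_i))=0$, whence $\mathcal{L}_{X_i}d\alpha_l=d(\mathcal{L}_{X_i}\alpha_l)=0$, killing the second term too. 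Thus $\iota_{[X_i,X_j]}d\alpha_l=0$, i.e. $[X_i,X_j]\in\ker(d\alpha_l)=I$, as desired. The only real subtlety is this second containment: the bracket of two sections of $I$ need not a priori lie in $I$ (that would require $I$ to be integrable), so the Lie-derivative computation exploiting $\ker(d\alpha_l)=I$ is essential, whereas everything else reduces to routine linear algebra and the standard Cartan identities.
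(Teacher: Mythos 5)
Your proof is correct. One thing to note: the paper itself gives no proof of this lemma at all --- it is stated as background (the generalized $k$-contact material is imported from the author's earlier work \cite{Almeida1}) and is immediately followed by a definition --- so there is no in-paper argument to compare yours against; your write-up supplies a complete, self-contained justification. Both halves of your argument are sound: the pointwise linear-algebra step correctly uses $I_p\cap F_p=\{0\}$ and the dimension count $\dim I_p=k$ to get a unique smooth $X_j\in\Gamma(M,I)$, and the commutation step correctly combines the two defining conditions: the intrinsic formula for $d\alpha_l(X_i,X_j)$ yields $\alpha_l([X_i,X_j])=0$, hence $[X_i,X_j]\in F$, while the identity $\iota_{[X_i,X_j]}=\mathcal{L}_{X_i}\iota_{X_j}-\iota_{X_j}\mathcal{L}_{X_i}$ together with $\mathcal{L}_{X_i}d\alpha_l=d\bigl(\iota_{X_i}d\alpha_l+d(\alpha_l(X_i))\bigr)=0$ yields $[X_i,X_j]\in\ker(d\alpha_l)=I$, so the bracket lies in $I\cap F=\{0\}$. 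You are also right to flag that the containment $[X_i,X_j]\in I$ is the genuinely nontrivial point (it cannot be assumed, as $I$ is not a priori involutive); this is exactly the same Cartan-calculus manipulation the paper uses later, e.g.\ in the integrability lemma following Definition \ref{moregeneral}, where $i_{[Y,Z]}d\alpha_j=[i_Y,\mathcal{L}_Z]d\alpha_j$ is exploited in the same way, so your method is fully consistent with the paper's toolkit.
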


\begin{definition}
For a given generalized $k$-contact structure the induced $\R^k$ action given by the Reeb vector fields will be called a contact action.
\end{definition}

\begin{remark}
	Let $\phi$ be a contact action. Notice that $I$ is precisely $T\phi$, the distribution tangent to the action. Moreover, if the 1-forms $\alpha_1,\dots,\alpha_k$ are smooth, then the splitting $TM = I\oplus F$ is smooth, so are the vector fields $X_j$ and therefore so is the action $\phi$.
	
	It is sometimes convenient to denote a generalized $k$-contact structure on $M$ as the $4$-tuple $(M,\alpha,\phi,F)$, where $\alpha = (\alpha_q,\dots,\alpha_k)$, $\phi$ denotes the contact action and $F\leq TM$ is the $\phi$-invariant sub-bundle where $d\alpha_j$ is non degenerate.
\end{remark}

\begin{definition}
	A generalized $k$-contact action $(M,\alpha,\phi,F)$ will be called a contact Anosov action, if some Reeb vector field $X_j$ defines an Anosov element of the induced contact action and the Anosov invariant distributions satisfies $ E^+\oplus E^- = E$. 
\end{definition}

\begin{remark}
    Notice that a contact Anosov action is topologically transitive for it preserves a volume form.
\end{remark}

\begin{remark}
It is well known that geodesic flows on negatively curved manifolds are contact Anosov flows. On the particular case of constant negative curvature, this geodesic flow is actually the Weyl chamber action $(G,H,\Gamma,\mathcal A)$ where $G$ is a semisimple Lie group of Rank 1. It is natural therefore to ask whether the Weyl chamber actions of higher rank is associated with a geometric structure of similar nature. The next theorem (\cite{Almeida1}) show us that the generalized $k$-contact structure is a natural candidate for this structure.
\end{remark}

\begin{theorem}
Let $(G,H,\Gamma,\mathcal A)$ be a Weyl chamber action. Then there exists a generalized $k$-contact structure on $\Gamma\backslash G\slash H$, such that the induced contact action is Anosov and it coincides with the Weyl chamber action.
\end{theorem}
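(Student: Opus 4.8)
The plan is to construct the $k$ one-forms directly from the Lie-theoretic structure of $\lieg$ and to verify the two axioms of a generalized $k$-contact structure by means of the Maurer--Cartan equation. Since $\lieg$ is semisimple and $\mathcal A$ is a Cartan subspace, I would begin from the restricted root space decomposition $\lieg = \mathcal A\oplus\liem\oplus\bigoplus_{\lambda\in\Sigma}\lieg_\lambda$, where $\liem=\lieh$ is the centralizer of $\mathcal A$ in a maximal compact subalgebra, $\Sigma$ is the set of restricted roots, and $\lieg_\lambda=\{X:[a,X]=\lambda(a)X\ \forall a\in\mathcal A\}$. Writing $F:=\bigoplus_{\lambda\in\Sigma}\lieg_\lambda$ gives $\lieg=\mathcal A\oplus\liem\oplus F$ and $\dim(\Gamma\backslash G/H)=\dim\mathcal A+\dim F=k+2n$, as required. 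I would then fix a basis $\{a_1,\dots,a_k\}$ of $\mathcal A$, orthonormal for the Killing form $B$ (which is positive definite on $\mathcal A$), and arranged so that every $a_j$ is regular, i.e.\ $\lambda(a_j)\neq 0$ for all $\lambda\in\Sigma$; this is possible because the regular set is the complement of finitely many hyperplanes, hence open and dense.

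Next I would produce the forms. Let $\theta$ be the left-invariant Maurer--Cartan form of $G$, let $\theta_{\mathcal A}$ be its $\mathcal A$-component relative to the above splitting, and set $\alpha_j:=a_j^*\circ\theta_{\mathcal A}$. The descent step is central: because $H$ centralizes $\mathcal A$, $\operatorname{Ad}(H)$ fixes $\mathcal A$ pointwise and preserves $\liem$ and each $\lieg_\lambda$, so $\alpha_j$ is $H$-invariant; moreover $\alpha_j$ annihilates the vertical directions $\liem$ of $G\to G/H$, so it is basic and descends to $G/H$, while left $G$-invariance lets it descend modulo $\Gamma$. Thus each $\alpha_j$ is a smooth $1$-form on $M=\Gamma\backslash G/H$. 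Letting $X_j$ be the vector field induced by right translation by $a_j$, these generate precisely the Weyl chamber action, and $\theta(X_j)=a_j$ yields $\alpha_i(X_j)=a_i^*(a_j)=\delta_{ij}$. Setting $I:=T\phi=\operatorname{span}(X_1,\dots,X_k)$, the relations $\alpha_i(X_j)=\delta_{ij}$ show the $\alpha_i$ are independent and that $TM=I\oplus F$ with $F=\bigcap_i\ker\alpha_i$, which is axiom (1).

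For axiom (2) I would differentiate via the structure equation $d\theta=-\tfrac12[\theta,\theta]$, so that $d\alpha_j=-a_j^*\circ\operatorname{pr}_{\mathcal A}\circ\tfrac12[\theta,\theta]$. The $\mathcal A$-component of a bracket $[\lieg_\mu,\lieg_\nu]$ is nonzero only when $\mu+\nu=0$, since $[\mathcal A,\mathcal A]=0$, $[\mathcal A,\liem]=0$, $[\liem,\liem]\subseteq\liem$, and $[\lieg_\mu,\lieg_\nu]\subseteq\lieg_{\mu+\nu}$; consequently $d\alpha_j$ involves only $F$-directions, giving $d\alpha_j(X_i,\cdot)=0$ and hence $I\subseteq\ker d\alpha_j$. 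For $X\in\lieg_\lambda$ and $Y\in\lieg_{-\lambda}$, invariance of $B$ gives $\operatorname{pr}_{\mathcal A}[X,Y]=B(X,Y)H_\lambda$ with $H_\lambda\in\mathcal A$ dual to $\lambda$, so $d\alpha_j(X,Y)=-B(X,Y)\,a_j^*(H_\lambda)=-\lambda(a_j)\,B(X,Y)$ by orthonormality. Since $B$ pairs $\lieg_\lambda$ with $\lieg_{-\lambda}$ nondegenerately and $\lambda(a_j)\neq 0$ by regularity, $d\alpha_j$ is nondegenerate on each block $\lieg_\lambda\oplus\lieg_{-\lambda}$, hence on all of $F$, so $\ker d\alpha_j=I$. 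This computation, together with the requirement that nondegeneracy hold \emph{simultaneously for every} $j$, is the step I expect to be the main obstacle, as it is exactly what forces the regularity condition on the chosen basis and the precise Killing-form identification of $\operatorname{pr}_{\mathcal A}[X,Y]$.

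Finally, for the dynamical conclusions: the flows of $X_1,\dots,X_k$ generate right translation by $\exp\mathcal A$, which by definition is the Weyl chamber action, so the induced contact action coincides with it. Regularity of $a_j$ means $\operatorname{ad}(a_j)$ has no zero eigenvalue on $F$, so $X_j$ is an Anosov element with stable and unstable bundles $E^+=\bigoplus_{\lambda(a_j)<0}\lieg_\lambda$ and $E^-=\bigoplus_{\lambda(a_j)>0}\lieg_\lambda$, whence $E^+\oplus E^-=F$, completing the verification that $(M,\alpha,\phi,F)$ is a contact Anosov action realizing the Weyl chamber action.
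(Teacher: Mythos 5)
The paper never proves this theorem: it is imported from the author's earlier work \cite{Almeida1} (the sentence preceding the statement cites it), so there is no in-paper argument to compare yours against. Judged on its own merits, your construction is correct and is the natural one for this statement: the restricted root decomposition $\lieg=\mathcal A\oplus\liem\oplus\bigoplus_{\lambda\in\Sigma}\lieg_\lambda$ with $\liem=\lieh$, the forms $\alpha_j=a_j^*\circ\theta_{\mathcal A}$ built from the Maurer--Cartan form, and the identity $\operatorname{pr}_{\mathcal A}[X,Y]=B(X,Y)H_\lambda$ for $X\in\lieg_\lambda$, $Y\in\lieg_{-\lambda}$ are exactly the ingredients needed. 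Your descent argument (triviality of $\operatorname{Ad}(H)$ on $\mathcal A$ and invariance of each $\lieg_\lambda$, so the $\alpha_j$ are basic and left-invariant) is sound, as is the block-diagonal nondegeneracy computation giving $\ker d\alpha_j=I$, which correctly isolates where regularity of the $a_j$ and orthonormality enter.

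Three minor points, none fatal. First, the existence of a $B$-orthonormal basis consisting entirely of regular elements needs slightly more than ``the regular set is open and dense'': one should argue on the orthogonal group of $(\mathcal A,B)$, where for each $j$ the set of frames with $a_j$ singular is a finite union of proper closed subsets, so a generic rotation of any orthonormal basis works. Second, your labeling of $E^{\pm}$ is swapped relative to the paper's convention in Definition \ref{anodef} (there $E^+$ is the bundle contracted for $t>0$; in the left-invariant trivialization the differential of right translation by $\exp(ta_j)$ is $e^{-t\,\mathrm{ad}(a_j)}$, so $E^+=\bigoplus_{\lambda(a_j)>0}\lieg_\lambda$); this is purely notational. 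Third, you assert the Anosov property from the spectral data of $\mathrm{ad}(a_j)$ without writing the uniform estimates; this is standard for homogeneous actions on compact quotients (average a metric over the compact group $H$ and use compactness of $\Gamma\backslash G/H$), and indeed the paper itself adopts precisely this spectral criterion as the definition of an algebraic Anosov action, so the gap is one of bookkeeping rather than substance.
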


%\begin{lemma}\label{ASOA}
	%If $(M,\alpha,\phi,F)$ is a contact Anosov action, then there exists a reparametrization  $(M,B\alpha,\phi,F)$ such that every Reeb vector field is Anosov and they have the same invariant splitting $TM = T\phi\oplus E^+\oplus E^-$. This reparametrization $\alpha$ will be called adapted.
%\end{lemma}
%\begin{remark}
%	We will always assume an adapted reparametrization.
%\end{remark}

%\begin{remark}
%	Notice that if $(M,\alpha,\phi,F)$ is a contact Anosov action, then $dim E^+ = dim E^- = n$. This follows from the fact that, for some fixed $j$, $d\alpha_j$ restricted to $E^+\oplus E^-$ is a $\phi$- invariant symplectic form, the hyperbolic dynamics will ensure that $E^\pm$ are Lagrangian subspaces.
%\end{remark}

\subsection{$k$-geometric Anosov actions}\label{krigst}
On this section, we define an even more general geometric structure and associated Anosov action.

\begin{definition}\label{moregeneral}
	An abelian action $\phi: \R^k\times M \to M$ will be called a $k$-geometric of type $(l,p,q)$, $p+q = n$, $l\leq k$ on a manifold $M$ of dimension $2n+k$ if there exists a basis for the action $\mathcal B = \{X_1,\dots,X_k\}$ and a sub-bundles $E_0\subset E^+\oplus E^-$, $dim(E_0)=p$, $T\phi_0\subset T\phi$ such that 
	\begin{enumerate}
	    \item {}$\mathcal B_0:=\{X_1,\dots,X_l\}$ span $T\phi^0$.
	    \item {}$T\phi^0\oplus E_0$ is integrable
	    \item {}The $1$-forms $\alpha_1,\dots\alpha_l$ dual to $\mathcal B_0$ have constant rank $2p+1$ 
		\item {}$(d\alpha_i)_{|_{E_0}}$, $i=1,\dots,l$ is non degenerate;
	\end{enumerate}
\end{definition}
\begin{lemma}
Let's denote by $T\phi_j\subset T\phi$ the sub-bundle spanned by $$\{X_1,\dots,X_{j-1},X_{j+1},\dots,X_k\}$$
For $j=1,\dots,l$ there exists a sub-bundle $E_j\subset E^+\oplus E^-$ complementary to $E_0$ and such that
The sub-bundle $T\phi^j\oplus E_j$ is integrable:
\end{lemma}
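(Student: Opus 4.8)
The plan is to produce $E_j$ explicitly as the intersection of the kernel of the closed $2$-form $d\alpha_j$ with the hyperbolic bundle, and then to recognize $T\phi^j\oplus E_j$ as the characteristic distribution of the $1$-form $\alpha_j$, whose complete integrability is classical for forms of constant class. Throughout I use that the canonical $1$-forms are normalized by $\alpha_i(X_m)=\delta_{im}$ for all $m=1,\dots,k$ and $\alpha_i|_{E^+\oplus E^-}=0$; this is available only for $i\leq l$, which is why the statement is restricted to $j=1,\dots,l$.

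First I would show $T\phi\subset\ker d\alpha_j$. For any $i$ and any vector field $V$ one has $d\alpha_j(X_i,V)=X_i\alpha_j(V)-V\alpha_j(X_i)-\alpha_j([X_i,V])$; since $\alpha_j(X_i)=\delta_{ij}$ is constant the middle term vanishes, and the first term vanishes whether $V$ is tangent to the orbits (where $\alpha_j(V)$ is constant) or lies in $E^+\oplus E^-$ (where $\alpha_j(V)=0$). The bracket term $\alpha_j([X_i,V])$ vanishes as well: if $V\in T\phi$ then $[X_i,V]=0$ because the action is abelian, while if $V\in\Gamma(E^+\oplus E^-)$ then $[X_i,V]=\mathcal{L}_{X_i}V\in\Gamma(E^+\oplus E^-)$ by invariance of the Anosov splitting, so $\alpha_j$ annihilates it. Hence $X_i\in\ker d\alpha_j$ for every $i$, i.e. $T\phi\subset\ker d\alpha_j$. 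By hypothesis (3) the form $\alpha_j$ has constant class $2p+1$, so $d\alpha_j$ has constant rank $2p$; thus $\ker d\alpha_j$ is a genuine sub-bundle and, using the previous inclusion together with $TM=T\phi\oplus(E^+\oplus E^-)$, it splits as $\ker d\alpha_j=T\phi\oplus E_j$, where I set $E_j:=\ker d\alpha_j\cap(E^+\oplus E^-)$.

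Next I would verify that $E_j$ is complementary to $E_0$. Since $T\phi\subset\ker d\alpha_j$, the restriction of $d\alpha_j$ to $E^+\oplus E^-$ still has rank $2p$, so $\dim E_j=2n-2p=2q$. Transversality $E_0\cap E_j=0$ is immediate from condition (4): any $V\in E_0\cap\ker d\alpha_j$ satisfies $d\alpha_j(V,\cdot)|_{E_0}=0$ and hence $V=0$ by non-degeneracy of $(d\alpha_j)|_{E_0}$. As $\dim E_0+\dim E_j=2p+2q=\dim(E^+\oplus E^-)$, this gives $E_0\oplus E_j=E^+\oplus E^-$, as required.

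Finally, for integrability I would identify $T\phi^j\oplus E_j=\ker\alpha_j\cap\ker d\alpha_j$: the inclusion $\subseteq$ is clear since $\alpha_j$ kills both $T\phi^j$ and $E_j$ and both sit in $\ker d\alpha_j$, and the two sides have equal dimension $2n+k-2p-1$. Involutivity of this distribution is then formal. For sections $Y,Z$ one computes $\iota_{[Y,Z]}d\alpha_j=\mathcal{L}_Y\iota_Z d\alpha_j-\iota_Z\mathcal{L}_Y d\alpha_j=-\iota_Z(d\iota_Y d\alpha_j+\iota_Y d\,d\alpha_j)=0$, so $[Y,Z]\in\ker d\alpha_j$; and $\alpha_j([Y,Z])=Y\alpha_j(Z)-Z\alpha_j(Y)-d\alpha_j(Y,Z)=0$, so $[Y,Z]\in\ker\alpha_j$. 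Thus the distribution is involutive and Frobenius yields that $T\phi^j\oplus E_j$ is integrable. The only genuinely structural input here, as opposed to formal exterior-calculus manipulation, is the inclusion $T\phi\subset\ker d\alpha_j$, which is precisely where the abelianness of the action and the invariance of the Anosov splitting are used; the constant-class hypothesis (3) is what guarantees that $\ker d\alpha_j$ and the characteristic distribution have constant rank, so that the Frobenius theorem applies globally.
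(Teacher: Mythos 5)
Your proposal is correct and follows essentially the same route as the paper: you define $E_j:=\ker d\alpha_j\cap(E^+\oplus E^-)$ exactly as the paper does, and you establish involutivity with the same two Cartan-calculus computations, namely $i_{[Y,Z]}d\alpha_j=0$ via $\mathcal{L}_Y=d\circ i_Y+i_Y\circ d$ and $\alpha_j([Y,Z])=0$ from the coordinate formula for $d\alpha_j(Y,Z)$. The only difference is that you additionally spell out details the paper leaves implicit --- the inclusion $T\phi\subset\ker d\alpha_j$, the constant-rank argument making $E_j$ a genuine sub-bundle, and the dimension count showing $E_j$ is complementary to $E_0$ --- which strengthens rather than changes the argument.
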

\begin{proof}
We define the sub-bundle $E_j:=\ker d\alpha_j\cap E^+\oplus E^-$.

Notice that the Anosov action preserves the splitting, and thus, if $Z$ is tangent to $E_j$, then $[X_s,Z]\in\Gamma(M,E_j)$, $1\leq s\leq k$. Thus, it remains to show that for vector fields $Y,Z$ tangent to $E_j$ the commutator $[Y,Z]$ is tangent to $T\phi^j\oplus E_j$, that is we just check that it belongs to the kernel of $d\alpha_j$ and $\alpha_j$. 
\begin{align*}
    i_{[Y,Z]}d\alpha_j &= [i_Y,\L_Z]d\alpha_j = i_Y(\L_Zd\alpha) - \L_Z(\underbrace{i_Yd\alpha_j}_{=0})\\
    &=i_Y(i_Z\circ \underbrace{d(d\alpha_j)}_{=0}+ d(\underbrace{i_Zd\alpha_j}_{=0}))=0
\end{align*}
also
\begin{align*}
    0 = d\alpha_j(Y,Z) = Y(\alpha_j(Z)) - Z(\alpha_j(Y)) - \alpha_j([Y,Z]) = - \alpha_j([Y,Z])
\end{align*}

as we wanted.

\end{proof}

\begin{remark}
	It can be shown that if the structure is of type $(k,p,0)$, then it is a generalized $k$-contact structure. In fact, the integrability of $T\phi^0\oplus E_0$ means that a rigid $k$-geometric structure of type $(l,p,q)$ is actually folliated by manifolds with generalized $l$-contact structure.
\end{remark}

\begin{definition}\label{genkcontAnosov}
	A $k$-geometric action of type $(l,p,q)$ $(M,\phi^0\subset\phi,TM = T\phi\oplus E_0\oplus E_j)$ will be called a $k$-geometric Anosov action if every element of $\mathcal B_0$ is Anosov, with the same invariant stable and unstable bundles, and they split as: 
	\begin{align}\begin{cases}
	E^\pm= E_0^\pm\oplus E_j^\pm\\
	E_0 = E_0^+\oplus E_0^-\\
	E_j = E_j^+\oplus E_j^-
	\end{cases}
	\end{align}
\end{definition}
\begin{remark}
Associated with a $k$-geometric Anosov action of type $(l,p,q)$ with splittings 
	$$TM = T\phi^0\oplus T\phi'\oplus E_0^+\oplus E_0^-\oplus E_j^+\oplus E_j^-$$
	we have associated projections
	\begin{align*}
	    p_j^\pm&:TM\to E_j^\pm\\
	    \Check p_j^\pm&:TM\to E_0^\pm
	\end{align*}
\end{remark}

\begin{definition}\label{rigidkgeo}
 A $k$-geometric Anosov action $(M,\phi^0\subset\phi,TM = T\phi\oplus E_0\oplus E_j)$  will be called rigid if there exists a $q$-form $\Theta$ and a linear, $\phi$-invariant, connection $\tilde \nabla^j$ on $T\phi^j\oplus E_j$ such that
 \begin{itemize}
     \item $\Theta$ restricts to a nowhere zero top form over $E_j$
     \item $\alpha_1\wedge\dots\wedge\alpha_k\wedge d\alpha_j^p\wedge\Theta$ is a volume form for every $j\in \{1,\dots,l\}$
     \item $\tilde\nabla^j E_j^\pm\subset E_j^\pm$
     \item $\tilde\nabla^j_ZY = p_j^\pm[Z,Y]\enskip\enskip\forall \;Z\in \Gamma(M,E_0)\;;\;Y\in \Gamma(M,E_j^\pm)$
     \item Every vector field $Y$ tangent to $T\phi^0\oplus E_0$ such that $[Y,X_s]=0$ for every $s$, is in fact a infinitesimal symmetry of $\tilde \nabla^j$.
     \item  $\tilde\nabla^j_{X_s}Z =[X_s,Z]$ for every $s=1,\dots,k$ and $Z$ tangent to $E_j$.
 \end{itemize} 
\end{definition}

We are now ready to state our main theorem.
\begin{theorem}\label{MainTheo}
Every rigid $k$-geometric Anosov action is smoothly conjugated to an affine action.
\end{theorem}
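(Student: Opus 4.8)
The plan is to follow the strategy of Benoist--Foulon--Labourie \cite{BFL}: encode the rigid $k$-geometric data into a rigid geometric structure in the sense of Gromov which is invariant under the action, use Gromov's theory to produce a transitive algebra of local Killing fields on an open dense set, assemble this algebra into a homogeneous model $\hat G'/\hat H'$, and finally realize $M$ as a quotient of a cover of this model by a complete developing map. The desired conjugacy with an affine action then falls out of the equivariance of the developing map.

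Concretely, I would proceed in four stages. First, I would assemble a Gromov-rigid structure out of the available data (Sections \ref{gromsec}--\ref{Adap}): the forms $\alpha_1,\dots,\alpha_l$ and $d\alpha_j$, the volume forms $\alpha_1\wedge\dots\wedge\alpha_k\wedge d\alpha_j^p\wedge\Theta$, the splitting $TM=T\phi^0\oplus T\phi'\oplus E_0^+\oplus E_0^-\oplus E_j^+\oplus E_j^-$, and the invariant connections $\tilde\nabla^j$. The crucial point to verify is that, after prolongation, this structure is \emph{rigid} (of finite type): a local automorphism is determined by a finite jet at a point, and the local automorphisms form a finite-dimensional pseudogroup. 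The defining relations of Definition \ref{rigidkgeo}, especially $\tilde\nabla^j_ZY=p_j^\pm[Z,Y]$ and $\tilde\nabla^j_{X_s}Z=[X_s,Z]$, are exactly what pins the connection down canonically and hence makes the structure rigid. Second, I would invoke Gromov's open--dense orbit theorem: since the structure is rigid and the invariant volume form makes the action topologically transitive, the Lie algebra $\lieg'$ of local Killing fields acts transitively on an open dense, saturated set $U\subset M$, with isotropy subalgebra $\liek'$ at a base point; these integrate to the groups $\hat G'$ and $\hat H'$ and the candidate model $\hat V:=\hat G'/\hat H'$. The Anosov dynamics are what force the transitivity, producing Killing fields tangent to the stable and unstable leaves along $E_j^\pm$.

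Third comes the technical core (Sections \ref{tecsec}--\ref{moretecsec}): a careful dissection of $\lieg'$, using the $ad$-grading induced by an Anosov element together with the nilpotent part of $\lieg$, to show that $\hat H'$ is \emph{closed} in $\hat G'$. This is precisely what turns $\hat V=\hat G'/\hat H'$ into a genuine Hausdorff homogeneous manifold rather than a merely immersed quotient. Finally, I would select a group $G''$ acting transitively on $\hat V$, build a $(G'',\hat V)$-structure on $U$ (Section \ref{modelsec}), extend it across the nowhere-dense complement $M\setminus U$ using the continuity and invariance of the rigid structure together with the density of periodic orbits (Section \ref{extendsec}), and prove that the developing map $\tilde M\to\hat V$ is a covering (Section \ref{completesec}). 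Completeness is where the dynamics re-enter decisively: one establishes completeness along each invariant direction and uses recurrence of the Anosov action to propagate it globally. Since $\hat V$ is simply connected, the developing map is then a $\pi_1(M)$-equivariant diffeomorphism $\tilde M\cong\hat V$, which identifies $M$ with a compact quotient $\Gamma\backslash\hat G''/\hat H''$ and conjugates $\phi$ to the affine action generated by $\liea$.

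I expect the two genuine obstacles to be the following. The first is establishing \emph{transitivity} of the Killing algebra on an open dense set: this is where Gromov-rigidity must be fused with the Anosov structure, and where the smoothness of the invariant bundles and the integrability conditions of Definition \ref{moregeneral} are really used. The second is the \emph{completeness} of the $(G'',\hat V)$-structure, which must be controlled simultaneously along the neutral, stable, and unstable directions and in the presence of the nilpotent part of $\lieg$ — the latter being the same feature that makes the closedness of $\hat H'$ delicate, so the nilpotent directions are likely to be the most demanding part of the whole argument.
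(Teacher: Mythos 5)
Your proposal follows essentially the same route as the paper: the same Gromov $A$-structure built from the forms, splittings and connections $\tilde\nabla^j$ (with rigidity coming from the presence of a linear connection), the same appeal to Gromov's open--dense orbit theorem via topological transitivity, the same Lie-algebra dissection (reductivity of $\lieg_j$, closedness of $\hat H'$ via Lemma \ref{closedimpliesclosed}), and the same endgame of building, extending, and completing the $(G'',\hat V)$-structure; you also correctly identify closedness of $\hat H'$ and completeness of the developing map as the two genuinely delicate points. The only discrepancy is notational: the isotropy subalgebra at the base point is $\lieh'$ in the paper, whereas $\liek'$ denotes the kernel ideal of the split sequence, not the isotropy.
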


\section{Some examples}\label{examsec}
                \subsection{Pseudo-metric preserving.}
                    On this section we  present some conditions which implies that the action is $k$-geometric and rigid. Our main interest is an Anosov action which preserves a pseudo-Riemannian metric 

\begin{lemma}\label{2form}
An Anosov action with smooth invariant bundles, preserves a pseudo-Riemannian metric if, and only if, it preserves a 2-form $\omega$ such that $\ker(\omega) = Span_\R\{X_1,\dots,X_k\}$
\end{lemma}
\begin{proof}
We just consider an endomorphism $J:TM\to TM$ such that $J(T\phi)=0$, $J(u^\pm) = \pm u^\pm$ for $u\pm \in E^\pm$ and the induced 2-form $\omega(u,v) = g(Ju,v)$. On the other way, we define $g$ by
$$g(X_i,X_j) = \delta_{ij}\enskip;\enskip g(X_i,E^\pm)=0\enskip\enskip g(E^\pm,E^\pm)=0\enskip\enskip g(u^+,v^-) = \omega(u^+,v^-).$$
\end{proof}

Let $\alpha_1,\dots\alpha_k$ be 1-forms dual to a choice of basis $X_1,\dots,X_k$ for the Anosov action. In general, there is little to be said about the forms $d\alpha_j$, which is why we define
\begin{definition}
A choice of basis $X_1,\dots,X_k$ is called special if the dual forms $\alpha_1,\dots,\alpha_k$ satisfy:
\begin{enumerate}
    \item Each $\alpha_j$ has constant rank equal to $2p+1$.
\end{enumerate}
In general, an Anosov action may not admit an special basis, if it does, we shall say that the Anosov action has constant rank.
\end{definition}

Even for especial basis there is no relation between $d\alpha_j$ and $\omega$ for any $j$, that is why we define:
\begin{definition}
Let $\phi$ be an Anosov action of constant rank and $X_1,\dots,X_k$ a special basis.  Suppose that $\phi$ preserves a pseudo Riemannian metric $g$. The action is called adapted to $g$ if the dual forms $\alpha_1,\dots,\alpha_k$ satisfy:
$$\alpha_1\wedge\dots\wedge\alpha_k\wedge d\alpha_j^p\wedge\omega^{n-p}\enskip\text{ is a volume form}$$
If moreover, the restrictions of $\omega$ to $\ker(d\alpha_j)$ are closed for every $j$, we shall say that the action is rigidly adapted to $g$
\end{definition}

\begin{lemma}
An Anosov action rigidly adapted to $g$ is a rigid $k$-geometric Anosov action of type $(k,p,q)$
\end{lemma}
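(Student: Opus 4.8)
The plan is to verify that an Anosov action rigidly adapted to $g$ satisfies all the conditions in Definition \ref{rigidkgeo}, by unpacking the definitions and constructing the required objects from the data of $g$ and the special basis.

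=== BEGIN DRAFT ===

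\begin{proof}[Proof sketch]
The plan is to exhibit, from the rigidly adapted data, all the structure required by Definitions \ref{moregeneral}, \ref{genkcontAnosov} and \ref{rigidkgeo}, taking $l=k$ so that $T\phi^0 = T\phi$ and there is no $T\phi'$ summand.

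\textbf{Step 1: the $k$-geometric structure.} First I would set $E_0^\pm := \ker(d\alpha_j)\cap E^\pm$; by the constant-rank hypothesis each $\alpha_j$ has rank $2p+1$, so $\dim(\ker d\alpha_j\cap(E^+\oplus E^-)) = 2(n-p) = 2q$ and the nondegenerate part has dimension $2p$. One must first check this splitting is independent of $j$: this should follow from the adaptedness condition $\alpha_1\wedge\cdots\wedge\alpha_k\wedge d\alpha_j^p\wedge\omega^{n-p}$ being a volume form, which forces the forms $d\alpha_j$ to be simultaneously nondegenerate on a common complement $E_0$ to $\bigcap_j\ker d\alpha_j$. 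Setting $E_0 := E_0^+\oplus E_0^-$ (dimension $2p$) and $E_j := \ker d\alpha_j\cap(E^+\oplus E^-)$ (the complementary $2q$-dimensional bundle, as in Lemma \ref{reebfields}'s successor), conditions (1)--(4) of Definition \ref{moregeneral} are then: (1) holds since $\mathcal B_0 = \mathcal B$ spans $T\phi$; (3) is the constant-rank hypothesis; (4) is nondegeneracy of $(d\alpha_i)|_{E_0}$, which is how $E_0$ was chosen; and (2), integrability of $T\phi^0\oplus E_0 = T\phi\oplus E_0$, is the step requiring the most care. Integrability of $T\phi^j\oplus E_j$ is already supplied by the Lemma following Definition \ref{moregeneral}.

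\textbf{Step 2: the Anosov splitting.} Condition of Definition \ref{genkcontAnosov} asks that each $X_i$ be Anosov with common stable/unstable bundles splitting compatibly as $E_0^\pm\oplus E_j^\pm$. Since $E_0^\pm$ and $E_j^\pm$ were defined as intersections of the invariant bundles $E^\pm$ with the ($\phi$-invariant) kernel distributions, each summand is $\phi$-invariant and inherits the contraction/expansion estimates \eqref{estimates} from $E^\pm$; the direct-sum decompositions are immediate from $E^\pm = (E^\pm\cap\ker d\alpha_j)\oplus(E^\pm\cap E_0)$.

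\textbf{Step 3: rigidity.} I would take $\Theta := \omega^{n-p} = \omega^q$ restricted to $E_j$. The first bullet (nowhere-zero top form on $E_j$) follows because $\omega|_{E_j}$ is nondegenerate (its kernel inside $E^+\oplus E^-$ is exactly $E_0$, again by the volume-form condition), so $\omega^q$ is a volume form on the $2q$-dimensional $E_j$; the second bullet is then literally the adaptedness hypothesis. For the connection $\tilde\nabla^j$, the natural choice is to \emph{define} it by the last three bullets: set $\tilde\nabla^j_{X_s}Z := [X_s,Z]$, $\tilde\nabla^j_Z Y := p_j^\pm[Z,Y]$ for $Z\in\Gamma(E_0)$, $Y\in\Gamma(E_j^\pm)$, and extend to all directions in $E_0$ by the Koszul-type formula using the metric $g$ (equivalently $\omega$) on $E_j$. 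The point of the \emph{rigidly} adapted hypothesis—that $\omega|_{\ker d\alpha_j}$ is closed—is precisely to make this prescription a well-defined, $\phi$-invariant connection preserving $E_j^\pm$ (bullet three) and metric with respect to $\omega|_{E_j}$, so that the closedness furnishes the integrability/consistency needed for the Koszul formula to return a genuine connection. The invariance bullet ($\phi$-invariance, i.e. each $X_s$ a symmetry) holds because the defining formulas are given by Lie brackets with the commuting fields $X_s$, hence are manifestly preserved by the flows; the penultimate bullet (every $X_s$-commuting field tangent to $T\phi^0\oplus E_0$ is an infinitesimal symmetry) follows from the Jacobi identity applied to the bracket-defined connection.

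\textbf{Main obstacle.} The hard part will be Step 1's integrability of $T\phi\oplus E_0$ and Step 3's verification that the bracket/metric prescription actually defines a linear connection (rather than merely a bilinear assignment). For the former, one checks that for $Y,Z\in\Gamma(E_0)$ the bracket $[Y,Z]$ lands in $T\phi\oplus E_0$; this is where one genuinely uses that $E_0$ is the $g$-nondegenerate part and that $\omega$ is preserved, via a computation with $d\alpha_j$ and $d\omega$ parallel to the one in the Lemma after Definition \ref{moregeneral}. For the latter, the closedness of $\omega|_{\ker d\alpha_j}$ is exactly the cocycle condition that makes the Koszul expression tensorial and torsion-controlled; I expect this to be the crux, and I would isolate it as the one place where the word \emph{rigidly} (as opposed to merely \emph{adapted}) is indispensable.
\end{proof}

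=== END DRAFT ===
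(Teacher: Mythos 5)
Your skeleton does track the paper's own proof (the same $E_j:=\ker(d\alpha_j)\cap(E^+\oplus E^-)$, the same $\Theta=\omega^{q}$ on $E_j$, a Kanai-type connection as in \cite{Kanai}, hyperbolicity for the $\pm$-splittings), but it has two genuine gaps. First, you never actually define $E_0$. Your opening formula $E_0^\pm:=\ker(d\alpha_j)\cap E^\pm$ literally defines $E_j^\pm$, and the later statement that ``nondegeneracy of $(d\alpha_i)|_{E_0}$ is how $E_0$ was chosen'' cannot serve as a definition: \emph{every} complement of $\ker(d\alpha_j)$ inside $E^+\oplus E^-$ has $d\alpha_j$ nondegenerate on it, so nondegeneracy selects nothing. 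What selects the complement is $\omega$: the paper sets $\omega_j:=\omega|_{\ker(d\alpha_j)}$ and takes $E_0$ to be the $\omega$-orthogonal of $E_j$ inside $E^+\oplus E^-$, a genuine complement precisely because the volume-form condition forces $\omega|_{E_j}$ to be nondegenerate. You gesture at this in Step 3 (``its kernel inside $E^+\oplus E^-$ is exactly $E_0$'') but never adopt it as the definition, and without it neither condition (4) nor the integrability (2) of Definition \ref{moregeneral} can even be attacked. Relatedly, the paper never claims this $E_0$ is independent of $j$ --- it explicitly writes $E_{0,j}$ and drops the index --- so your ``one must first check this splitting is independent of $j$'' is an unjustified claim addressed to a non-problem.

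Second, and more serious, you locate the use of the \emph{rigidly} hypothesis ($d\omega_j=0$) in the wrong place. The Kanai-type construction of $\tilde\nabla^j$ needs only nondegeneracy: the same-sign derivatives $\tilde\nabla^j_{Y_j^\pm}Z_j^\pm$ are solved for from $\tilde\nabla^j\omega_j=0$ via the nondegenerate pairing $E_j^+\times E_j^-\to\R$, exactly as in Lemma \ref{buildconnec}, and tensoriality and the Leibniz rule follow from that formula with no closedness whatsoever; so the ``crux'' you isolate is not a crux at all. Closedness is indispensable in the two places the paper actually uses it: (i) integrability of $T\phi\oplus E_0$, since $T\phi\oplus E_0$ is the kernel of the projected form $\tilde\omega$ (equal to $\omega$ on $E_j$, zero on $T\phi\oplus E_0$), and involutivity of the kernel of a constant-rank $2$-form is the computation $i_{[Y,Z]}\tilde\omega=\L_Y(i_Z\tilde\omega)-i_Z(\L_Y\tilde\omega)=-i_Z i_Y d\tilde\omega$, which vanishes only because $d\tilde\omega=0$; and (ii) the infinitesimal-symmetry bullet of Definition \ref{rigidkgeo}: for $Y$ tangent to $T\phi\oplus E_0$ with $[Y,X_s]=0$ one needs $\L_Y\tilde\omega=d(i_Y\tilde\omega)+i_Y(d\tilde\omega)=0$, after which $Y$ preserves every datum that uniquely characterizes $\tilde\nabla^j$. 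Your alternative justification for (ii) --- ``follows from the Jacobi identity applied to the bracket-defined connection'' --- fails precisely because $\tilde\nabla^j$ is \emph{not} bracket-defined in the same-sign directions; those are $\omega_j$-defined, so without $\L_Y\tilde\omega=0$ the symmetry claim has no proof.
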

\begin{proof}
Take $\omega_j$ the restriction of $\omega$ to $\ker(d\alpha_j)$. As $\alpha_1\wedge\dots\wedge\alpha_k\wedge d\alpha_j^p\wedge\omega^{n-p}$ is a volume form, we obtain that $\omega_j$ is non degenerate over $\ker(d\alpha_j)\cap\bigcap\ker(\alpha_s)$ and thus $\ker (\omega_j)=T\phi\oplus E_0  $ where $E_0$ is some sub-bundle of $E^+\oplus E^-$ which is complementary to $E_j$ (Notice that $E_0$ also depends on $j$ and thus, it would be more correct to write $E_{0,j}$, but we omit the extra index).

As the action is rigidly adapted, $d\omega_j = 0$ and thus, some easy computations shows that $\ker(\omega_j)$ is actually integrable.

We also remark that the hyperbolic dynamics implies that $E_0 = E_0^+\oplus E_0^-$, where $E_0^\pm = E_0\cap E^\pm$ and the same for $E_j:=\ker(d\alpha_j)\cap\bigcap_s\ker\alpha_s$.

Now, to construct the connection on $E_j$ we modify slightly the Kanai connection \cite{Kanai} and we observe that there exists a unique connections $\nabla^\pm$ on $T\phi^j\oplus E_j^+\oplus E_j^-$ such that
\begin{itemize}
    \item $\nabla\omega_j = 0$ and $\nabla E_j^\pm \subset E_j^\pm$
    \item $\nabla X_s = 0$
    \item $\nabla_{Z_0}Z_j = p_j[Z_0,Z_j]$ for any $Z_i$ tangent to $E_i$.
    \item $\nabla_{X_s}Z_j = [X_s,Z_j]$
    \item $\nabla_{Y_j^\mp}Z_j^\pm = p_j^\pm[Y_j^\mp,Z_j^\pm]$
\end{itemize} 

This connection is, of course, invariant by the action.\\

Also, we define the volume form $\Theta = \omega_j^{n-p}$ which is of course invariant by $\phi$, parallel with respect to $\nabla$ and restricts to a volume form over $E_j$.

It remains to verify that vector fields which commute with $X_s$ and are tangent to $T\phi_0\oplus E_0$ are in fact infinitesimal symmetries for this connection. For this, we observe that if a vector field $Y$ commute with $X_s$, it preserves the splitting $TM = T\phi\oplus E_0^+\oplus E_0^-\oplus E_j^+\oplus E_j^-$, moreover, if $Y$ is tangent to $T\phi\oplus E_0$ then $i_Y\tilde\omega=0$ and
\begin{align*}
    \L_Y\tilde\omega = d\circ i_Y\tilde\omega + i_Y\circ d\tilde\omega = 0
\end{align*}

It is clear, henceforth, that $Y$ is an infinitesimal symmetry for $\nabla$
\end{proof}

Now, using the same arguments given by Y. Fang in \cite{Fang2}, we can prove the following lemma:
\begin{lemma}\label{1dlyap}
Let $\phi$ be an Anosov action with smooth invarian bundles $E^\pm$ which preserves a smooth pseudo-riemannian metric $g$. Suppose $E^+\oplus E^-$ is non integrable. Suppose, moreover, that every Lyapunov distribution of $\Phi$ is one dimensional and the Lyapunov exponents $\chi_j$ satisfy the following non resonance condition:
\begin{align}\label{nrc}
    \chi_i + \chi_j+\chi_k = 0\Leftrightarrow \chi_i=0\;\;\chi_j = -\chi_k
\end{align}
then $\phi$ is a rigid $k$-geometric Anosov action. 
\end{lemma}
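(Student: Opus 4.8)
```latex
\textbf{Proof proposal for Lemma \ref{1dlyap}.}

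The plan is to verify the four structural conditions of a $k$-geometric action (Definition \ref{moregeneral}) together with the rigidity conditions (Definition \ref{rigidkgeo}), by exhibiting the correct splitting of $TM$ from the Lyapunov decomposition and invoking the previous lemma once the hypothesis ``rigidly adapted'' is established. Since we already preserve a pseudo-Riemannian metric $g$, Lemma \ref{2form} hands us an invariant $2$-form $\omega$ with $\ker\omega = T\phi$. The heart of the matter is to produce, from the dynamical hypotheses alone, a special basis $X_1,\dots,X_k$ whose dual forms have constant rank $2p+1$ and satisfy the volume-form and closedness conditions making the action rigidly adapted; then the preceding lemma finishes the argument.

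First I would set up the Lyapunov decomposition. By Oseledets together with the assumption that each Lyapunov distribution is one dimensional and the bundles are smooth, I would write $E^+\oplus E^- = \bigoplus_i V_{\chi_i}$ as a smooth sum of line bundles indexed by the nonzero Lyapunov functionals $\chi_i\in(\R^k)^*$. The non-resonance condition \eqref{nrc} is exactly what controls the bracket structure: for vector fields $Y\in\Gamma(M,V_{\chi_i})$ and $Z\in\Gamma(M,V_{\chi_j})$, invariance of the splitting forces $[Y,Z]$ to lie in the Lyapunov space for $\chi_i+\chi_j$ whenever that is itself an exponent (and in $T\phi$ when $\chi_i+\chi_j=0$). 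Following Fang's computation in \cite{Fang2}, I would use the preserved metric to show that the pairing $g$ couples $V_{\chi_i}$ only with $V_{-\chi_i}$, so that $\omega$ is block-antidiagonal across opposite exponents; this pairs the line bundles off and produces, after choosing a basis of $\R^k$ adapted to the exponents, the decomposition $E_0\oplus E_j$ and the constant rank $2p+1$ of each $\alpha_j$.

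Next I would establish closedness and the volume condition. The key identity is that for a vector field $X$ generating the action, $\L_X\alpha_j$ is determined by the structure constants, and the non-resonance hypothesis \eqref{nrc} rules out precisely the resonant triples that would obstruct $d\alpha_j$ from having the rank forced by $\omega$; combined with invariance $\L_{X_s}\omega = 0$, this yields that the restriction of $\omega$ to $\ker(d\alpha_j)$ is closed. The product $\alpha_1\wedge\dots\wedge\alpha_k\wedge d\alpha_j^p\wedge\omega^{n-p}$ is then nondegenerate because $d\alpha_j$ is nondegenerate on $E_0$ (rank $2p+1$ on the $\alpha_j$-kernel) while $\omega$ is nondegenerate on the complementary $E_j$, so their exterior powers fill out a volume form. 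This is exactly the ``rigidly adapted to $g$'' condition.

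The main obstacle, I expect, is controlling the brackets of non-opposite Lyapunov bundles: a priori $[V_{\chi_i},V_{\chi_j}]$ could have components violating the integrability of $T\phi^0\oplus E_0$ or spoiling the constant rank of the $\alpha_j$. This is precisely where non-resonance \eqref{nrc} and the non-integrability of $E^+\oplus E^-$ must be leveraged together: non-integrability guarantees the $d\alpha_j$ have the full expected rank rather than collapsing, while \eqref{nrc} forbids the bad resonant brackets $\chi_i+\chi_j+\chi_k=0$ that are not of the trivial form. Once these bracket constraints are pinned down, the remaining verifications are the routine exterior-algebra computations already carried out in the proof of the previous lemma, and I would simply cite that the action is rigidly adapted and hence a rigid $k$-geometric Anosov action of the appropriate type.
```
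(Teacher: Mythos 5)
Your overall skeleton --- reduce everything to ``rigidly adapted to $g$'' and then invoke the preceding lemma --- matches the paper, but two of your central steps rest on structure you do not actually have. First, you treat the Lyapunov decomposition $E^+\oplus E^-=\bigoplus_i V_{\chi_i}$ as a smooth, everywhere-defined splitting into line bundles whose sections you can bracket. The hypothesis only gives smoothness of $E^\pm$; the Oseledets decomposition is measurable and defined on a full-measure set $\Lambda$, so your bracket relations ``$[V_{\chi_i},V_{\chi_j}]$ lies in the Lyapunov space of $\chi_i+\chi_j$'' and the ensuing ``structure constants'' are not available. The paper avoids brackets of Lyapunov distributions entirely: it fixes a dual $1$-form $\alpha$ whose $d\alpha$ has maximal rank $s$ at some point (non-integrability of $E^+\oplus E^-$ enters only here, to guarantee $s\neq 0$), applies the Feres--Katok lemma (Lemma \ref{fereska}) pointwise at points of $U\cap\Lambda$ to see that $g$, $\omega$ and $d\alpha$ pair Lyapunov lines only in opposite pairs, and concludes that $\alpha_1\wedge\dots\wedge\alpha_k\wedge d\alpha^s\wedge\omega^{n-s}$ is a continuous invariant top form which is nonzero somewhere, hence a volume form by topological transitivity. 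This volume-form argument is what upgrades the almost-everywhere information to $U=M$, i.e.\ to constant rank; a whole special basis is then obtained by perturbation, since $1$-forms close to $\alpha$ still have the (maximal) rank $2s+1$ by openness of nondegeneracy. Your proposal contains no mechanism for passing from a.e.\ statements to everywhere, and no step producing all $k$ forms of equal rank.

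Second, your closedness argument is an assertion, not a proof: ``$\L_X\alpha_j$ is determined by the structure constants \dots combined with invariance \dots this yields that the restriction of $\omega$ to $\ker(d\alpha_j)$ is closed.'' The actual mechanism in the paper is again Feres--Katok plus the non-resonance condition \eqref{nrc}: any $\phi$-invariant $3$-form $\eta$ with $i_{X_s}\eta=0$ for all $s$ must vanish identically, because a nonzero value $\eta_x(t_1,t_2,t_3)$ on Lyapunov vectors forces $\chi_{i_1}+\chi_{i_2}+\chi_{i_3}=0$, which by \eqref{nrc} forces one of the arguments into the zero-exponent space $T\phi$, and such arguments are killed by $i_{X_s}\eta=0$. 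Applying this to $\eta=d\omega_j$ (which is invariant since $\omega_j$ is, and satisfies $i_{X_s}d\omega_j=\L_{X_s}\omega_j-d\,i_{X_s}\omega_j=0$) gives $d\omega_j=0$, i.e.\ the rigid adaptation. This is the one place where the full strength of \eqref{nrc} is used, and it is precisely the step your proposal leaves blank.
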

Before we proceed with the proof, we state the following Lemma Adapted from Feres-Katok
\begin{lemma}\label{fereska}
Let $\phi$ be an ergodic action on a compact manifold $M$, and let $$T_xM  = E_{\chi_1}(x)\oplus \dots\oplus E_{\chi_l}(x)$$
be its Oseledets decomposition (defined on a set of full measure $\Lambda$).
Let $x\in \Lambda$, and $\tau$ a continuous $\phi$-invariant tensor field of type $(0,r)$. Let $t_j \in E_{\chi_{i_j}}(x)$, $j = 1,\dots,r$, such that $\tau_x(t_1,\dots,t_r) \neq 0$, then $\sum_{j=1}^r\chi_{i_j} = 0$
\end{lemma}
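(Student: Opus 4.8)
The plan is to play off the rigidity of the tensor against the exponential growth dictated by the Lyapunov exponents. First note that the hypothesis $\tau_x(t_1,\dots,t_r)\neq 0$ forces each $t_j\neq 0$, so each $t_j$ is a genuine nonzero vector of the Oseledets subspace $E_{\chi_{i_j}}(x)$. Fix an arbitrary element $a\in\R^k$ and let $\psi^t=\phi^{ta}$ be the associated one-parameter flow. Since $\tau$ is $\phi$-invariant we have $(\psi^t)^*\tau=\tau$, i.e.
$$\tau_x(t_1,\dots,t_r)=\tau_{\psi^t(x)}\big(d\psi^t t_1,\dots,d\psi^t t_r\big)\qquad\text{for all }t,$$
so the quantity on the left is a nonzero constant, while on the right the basepoint $\psi^t(x)$ wanders over $M$ and the vectors $d\psi^t t_j$ change size at the Lyapunov rates.

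Next I would invoke compactness. Fixing an auxiliary Riemannian metric on $M$ and using that $\tau$ is continuous and $M$ compact, there is a constant $C$ with $|\tau_y(v_1,\dots,v_r)|\le C\prod_{j=1}^r\|v_j\|$ for all $y\in M$ and all $v_1,\dots,v_r$. Combined with the invariance identity this gives
$$0<|\tau_x(t_1,\dots,t_r)|\le C\prod_{j=1}^r\|d\psi^t t_j\|\qquad\text{for every }t\in\R.$$
Taking logarithms yields $\log|\tau_x(t_1,\dots,t_r)|\le \log C+\sum_{j=1}^r\log\|d\psi^t t_j\|$, into which I feed the defining property of the (two-sided) Oseledets splitting: for $0\neq t_j\in E_{\chi_{i_j}}(x)$ one has $\lim_{t\to\pm\infty}\tfrac1t\log\|d\psi^t t_j\|=\chi_{i_j}(a)$.

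The conclusion now follows by letting $t$ go to $+\infty$ and to $-\infty$ separately. Dividing the logarithmic inequality by $t>0$ and passing to the limit kills the constant terms and gives $0\le\sum_{j=1}^r\chi_{i_j}(a)$; dividing by $t<0$ reverses the inequality and the limit $t\to-\infty$ gives $0\ge\sum_{j=1}^r\chi_{i_j}(a)$. Hence $\sum_{j=1}^r\chi_{i_j}(a)=0$. Since $a\in\R^k$ was arbitrary and the Lyapunov exponents are linear functionals on $\R^k$, this is exactly the identity of functionals $\sum_{j=1}^r\chi_{i_j}=0$, as claimed.

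The argument is essentially soft, so there is no single hard obstacle; the two points that require care are (i) that the \emph{exact} two-sided growth rate $\tfrac1t\log\|d\psi^t t_j\|\to\chi_{i_j}(a)$ holds for \emph{every} nonzero vector of the Lyapunov subspace and for every direction $a$ — this is the splitting (rather than merely filtration) form of the multiparameter Oseledets theorem, available here because $\phi$ acts by diffeomorphisms of a compact manifold and is ergodic, so the exponents are a.e.\ constant functionals — and (ii) the bookkeeping in passing from a fixed direction $a$ to the functional identity, which is immediate once the rate limit is known to hold for all $a$ simultaneously.
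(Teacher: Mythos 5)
Your proof is correct. The paper in fact states this lemma without any proof at all (it is introduced only as an adaptation of Feres--Katok), and your argument --- invariance of $\tau$ under the flow $\psi^t=\phi^{ta}$, the uniform bound $|\tau_y(v_1,\dots,v_r)|\le C\prod_j\|v_j\|$ coming from continuity of $\tau$ and compactness of $M$, and the exact two-sided rates $\frac{1}{t}\log\|d\psi^t t_j\|\to\chi_{i_j}(a)$ as $t\to\pm\infty$, applied for each $a\in\R^k$ to get both inequalities $\sum_j\chi_{i_j}(a)\ge 0$ and $\le 0$ --- is precisely the standard argument behind that citation; the only delicate point, your item (i), is indeed covered by the hypotheses, since the lemma posits the Oseledets \emph{splitting} (not merely the filtration) at the point $x\in\Lambda$, which is exactly what makes the two-sided limit valid for every nonzero $t_j\in E_{\chi_{i_j}}(x)$.
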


\begin{proof}[Proof of Lemma\ref{1dlyap}]
From the previous discussion, it suffices to show that the action is rigidly adapted to $g$. Our first step is to construct a special basis.\\

Let $\alpha$ be a 1-form on $M$ dual to the action such that $d\alpha_x$ has the maximal possible rank for some point $x$. The non-integrability of $E^+\oplus E^-$ implies that this maximal rank $s$ is non zero, that is, $d\alpha^s$ is non zero for some choice of $\alpha$. We define
$$U:=\{x\in M\;;\; d\alpha^s_x\neq 0\}$$
As the action is topologically transitive, $U$ is open and dense. We define $$F_x:=\{Y\in T_xM\;;\;d\alpha_x(Y,\;\cdot\;) = 0\}$$
and $F = \cup_{x\in U}F_x$

As the metric $g$ is $\phi$-invariant tensor field of type $(0,2)$. The non resonance conditions implies that Lyapunov exponents comes in pairs, one positive and one negative and thus, we write the Oseledets decomposition as

$$T_xM  = L_1^+\oplus L_1^-\oplus\dots\oplus L_r^+\oplus L_r^-.$$
where, for any vectors $l_a,l_b$ tangent to some Lyapunov distribution, we have $g(l_a,l_b)=0$ except for the case $l_a$ is tangent to $L_i^+$ and $l_b$ is tangent to $L_i^-$ for some $i$. Of course, the same is true for the 2 form $\omega$.

As $d\alpha$ is also a  $\phi$-invariant tensor field of type $(0,2)$. We obtain, after renaming the Lyapunov exponents, for $x\in U\cap\Lambda$ and $l_j^\pm$ tangent to $L_j^\pm$,
\begin{align*}
    d\alpha_x(l_i^\pm,l_j^\pm) &= 0\enskip\enskip\forall i,j\\
    d\alpha_x(l_i^\pm,l_j^\mp) &= 0\enskip\enskip\forall i,\neq j\\
    d\alpha_x(l_i^\pm,l_i^\mp)& = 0\enskip\enskip\forall i >\frac{Rank(d\alpha_x)}{2}\\
    d\alpha_x(l_i^\pm,l_i^\mp)& \neq 0\enskip\enskip\forall i \leq\frac{Rank(d\alpha_x)}{2}
\end{align*}

Taking $\alpha = \alpha_1,\dot,\alpha_k$ a basis of $T^*\phi$, it is clear that 
$$\alpha_1\wedge\dots\wedge\alpha_k\wedge d\alpha^s\wedge\omega^{n-s}$$
is a non zero $\phi$-invariant top form. Because $\phi$ is topologically transitive, we obtain that $\alpha_1\wedge\dots\wedge\alpha_k\wedge d\alpha^s\wedge\omega^{n-s}$ is a volume form. This means that $d\alpha^s$ vanishes nowhere and thus, $U=M$. In particular, the 1-form $\alpha$ has constant rank $2s+1$.

Now, we observe that non degeneracy is an open condition and therefore, if we choose 1-forms close to $\alpha$ they will also be of the same (maximal possible) rank. We can thus construct a special basis $X_1,\dots,X_k$. Using the same arguments as before, we conclude that 
$$\alpha_1\wedge\dots\wedge\alpha_k\wedge d\alpha_j^s\wedge\omega^{n-s}$$
is a volume form for every $j$;

Finally, we observe that any, $\phi$-invariant, $3$-form $\eta$ on $M$ such that $i_{X_s}\eta=0$ is identically zero. This follows from the non resonance condition (\ref{nrc}) and Feres-Katok theorem.

\end{proof}

\begin{remark}
We remark that the case where $E^+\oplus E^-$ is integrable was briefly considered by Barbot-Maquera \cite{Ba-Maq2}, where they proved that such actions are suspensions of $\Z^k$ actions on a compact manifold, though not necessarily an Anosov $\Z^k$ action.
\end{remark}
                \subsection{Algebraic examples}
                    On this section, we show that a large class of algebraic Anosov actions are in fact rigid $k$-geometric Anosov actions.
Consider an  algebraic Anosov action $(G',H',\Gamma',\liea')$. Suppose that the Levi factor\footnote{The Levi factor is the semisimple part of the Levi decomposition} of $G'$ is non compact. Let us denote by $\liek' = \mathfrak{Nil(g')}$ the nilradical of $\lieg'$. We have the corresponding exact sequence:
\begin{align}\label{exactalgebra}
    0\to\liek'\to\lieg'\to\lieg\to 0
\end{align}
where $\lieg$ is reductive. We take the induced exact sequence of Lie groups:
$$1\to K'\to G'\to G \to 1$$ where $\lieg = Lie(G)$.

Following Barbot-Maquera (\cite{}), there exists an associated algebraic Anosov action $(G,H,\Gamma,\liea)$ where $H'\subset G'$ is a compact lift, with corresponding Lie algebra $\lieh'\subset\lieg'$, such that $\lieh'\cap\liek' = \{0\}$. Moreover, $\liea'\subset\lieg'$ is also an abelian lift of $\liea$ and $\Gamma'\subset G'$ is an uniform lattice that projects to $\Gamma$. In the language of \cite{}, $(G',H',\Gamma',\liea')$ is a nil suspension of $(G,H,\Gamma,\liea)$.

\begin{remark}
Notice that the exact sequence induces a principal $K'$-bundle $G'\slash H'\to G\slash H$ and a $\Lambda\backslash K'$-bundle $\Gamma'\backslash G'\slash H'\to \Gamma\backslash G\slash H$, where $\Lambda = \Gamma'\cap K'$.
\end{remark}

Because $G$ is reductive, it is commensurable to a central extension of a modified Weyl chamber action. This last action admits a compatible algebraic generalized $l$-contact structure, that is, the 1-forms which defines the contact structure are in fact induced from left invariant 1-form on $G$. Because commensurability is obtained by algebraic steps, it is easy to see that the algebraic Anosov action  $(G,H,\Gamma,\liea)$ is in fact an algebraic generalized $l$-contact Anosov action.\\

We want to construct a rigid $k$-geometric structure on $(G',H', \Gamma', \liea')$. That may not always be possible, but we will find some sufficient conditions for this to be true.

\begin{theorem}
Suppose that the exact sequence \ref{exactalgebra} splits, then $(G',H',\Gamma',\liea')$ has rigid $k$-geometric compatible structure.
\end{theorem}
The proof will be broken in several Lemmas, first of all we notice that because the sequence splits, then the lift $H'$ of $H$ must have Lie algebra $\lieh$, in fact, the lie algebra $\lieh'$ must contain $\lieh$ for it is a lift, and the condition $\lieh'\cap\liek' = \{0\}$ implies that $\lieh'$ has no other direction but those of $\lieh$, that is, $\lieh' = \lieh$.
\begin{lemma}
Consider the affine Anosov action given by $(\hat G',\hat H',\hat \Gamma', \liea')$ where $\hat G'$ is the universal cover of $G'$ and $\hat H'$ and $\hat\Gamma'$ the corresponding subgroups. Then it is a $k$ geometric action.
\end{lemma}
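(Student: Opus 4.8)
The plan is to build the required structure on $\hat\Gamma'\backslash\hat G'/\hat H'$ by pulling back, through the splitting, the generalized $l$-contact structure that the reductive quotient action $(G,H,\Gamma,\liea)$ already carries. Since the sequence \ref{exactalgebra} splits, I regard $\lieg$ as a subalgebra of $\lieg'=\liek'\rtimes\lieg$ via a section $s\colon\lieg\to\lieg'$, and I let $\pi\colon\lieg'\to\lieg$ be the projection along the ideal $\liek'$; because $\liek'$ is an ideal, $\pi$ is a Lie algebra homomorphism. As already observed, the splitting forces $\lieh'=\lieh\subset\lieg$, and I take $\liea'=s(\liea)$, so that a basis $X_1,\dots,X_k$ of $\liea'$ projects isomorphically onto the chosen basis of $\liea$. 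The reductive action being generalized $l$-contact, its Reeb forms $\beta_1,\dots,\beta_l$ are left-invariant, i.e. elements of $\lieg^*$; I set $\alpha_i:=\pi^*\beta_i\in(\lieg')^*$ and regard them as left-invariant $1$-forms on $\hat G'$, descending to the quotient. I let $X_1,\dots,X_l$ be the lifted Reeb fields and put $T\phi^0:=\mathrm{span}(X_1,\dots,X_l)$; note $\alpha_i(X_j)=\beta_i(\pi X_j)=\delta_{ij}$ for $i\le l$, so condition (1) of Definition \ref{moregeneral} holds by construction.

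For the bundle $E_0$ I take the horizontal lift, via $s$, of the contact bundle $F$ of the base; concretely $E_0$ is spanned by the images under $s$ of the nonzero-root directions $\mathcal{S}\oplus\mathcal{U}$ of $\lieg$, so that $E_0\subset E^+\oplus E^-$ as required, while the complementary nilpotent directions $\liek'$ make up $E_j$ (here $2p=\dim(\mathcal{S}\oplus\mathcal{U})$ and $2q=\dim\liek'$, the latter being even since $\dim(E^+\oplus E^-)$ is). Because $\alpha_i$ is left-invariant one has $d\alpha_i=\pi^*d\beta_i$, whence $\liek'\subset\ker d\alpha_i$ while $d\alpha_i|_{E_0}=d\beta_i|_F$. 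Conditions (4) and (3) then follow immediately: $d\beta_i|_F$ is non-degenerate by the contact hypothesis, giving (4); and since $\pi$ only enlarges the kernel of $d\alpha_i$ by $\liek'$, the form still satisfies $\alpha_i\wedge d\alpha_i^p\neq 0$ and $d\alpha_i^{p+1}=0$, so it has rank $2p+1$, which is constant over all of $\hat G'$ by left-invariance, giving (3).

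The one delicate point is the integrability condition (2), that $T\phi^0\oplus E_0$ be integrable. At the base point this distribution corresponds to the subspace $\lieh\oplus\liea_0\oplus\mathcal{S}\oplus\mathcal{U}\subset\lieg$, where $\liea_0=\mathrm{span}(X_1,\dots,X_l)$, and a $\hat G'$-invariant distribution on $\hat G'/\hat H'$ containing $T\hat H'$ is integrable precisely when the corresponding subspace is a Lie subalgebra. Since this subspace sits inside the subalgebra $\lieg\subset\lieg'$ cut out by the splitting, it suffices to check that it is a subalgebra of $\lieg$, and this is exactly condition (2) for the generalized $l$-contact (type $(l,p,0)$) structure on the reductive base, i.e. the statement that $\lieh\oplus\liea_0\oplus\mathcal{S}\oplus\mathcal{U}$ is a subalgebra of $\lieg$. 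I expect the splitting hypothesis to do all the work here: without the subalgebra $\lieg\subset\lieg'$ there would be nothing along which to lift the base foliation, and the horizontal distribution would in general fail to close up because of the curvature of the $K'$-fibration. Assembling the data $(\mathcal{B},E_0,T\phi^0)$ and verifying (1)--(4) as above shows that $(\hat G',\hat H',\hat\Gamma',\liea')$ is $k$-geometric of type $(l,p,q)$.
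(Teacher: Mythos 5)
Your construction is, in outline, the same as the paper's --- pull the left-invariant contact forms of the reductive quotient back through the projection, take $E_0=s(\mathcal S\oplus\mathcal U)$, and deduce integrability of $T\phi^0\oplus E_0$ from the fact that $s(\lieg)$ is a Lie subalgebra of $\lieg'$ --- but it has one genuine gap, at the step ``I take $\liea'=s(\liea)$''. The algebra $\liea'$ is part of the given data of the affine action $(\hat G',\hat H',\hat\Gamma',\liea')$; you are not free to choose it, and in the nil-suspension setting of this section it is only a lift of $\liea$: in general $\liea'\cap\liek'=:\liea^1\neq 0$, i.e.\ the $\R^k$-action has directions inside the nilradical. (Concrete example: the product of the geodesic flow of a hyperbolic surface, the suspension of a hyperbolic toral automorphism, and a rotation along a central circle coming from a factor $\R\subset\liek'$; there $k=3$ while $\dim\liea=l=2$.) When $\liea^1\neq 0$, no section can satisfy $\liea'=s(\liea)$, because $s(\lieg)\cap\liek'=0$; your basis $X_1,\dots,X_k$ ``projecting isomorphically onto a basis of $\liea$'' does not exist, the pulled-back forms $\pi^*\beta_i$ annihilate the directions of $\liea^1$, and your identification of $E_j$ with ``the $\liek'$ directions'' wrongly places the orbit directions $\liea^1$ inside $E_j\subset E^+\oplus E^-$. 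As written, your argument proves the lemma only in the special case $l=k$.

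The paper's proof differs from yours exactly at this point: it writes $\liea'=\liea\oplus\liea^1$ with $\liea^1\subset\liek'$, refines the Anosov splitting as $\lieg'=\lieh\oplus\liea\oplus\liea^1\oplus\mathcal S_0\oplus\mathcal S_1\oplus\mathcal U_0\oplus\mathcal U_1$ with $\liek'=\liea^1\oplus\mathcal S_1\oplus\mathcal U_1$, and lets $T\phi^0$ correspond to $\liea$, $T\phi^1$ to $\liea^1$, and the complementary bundle $\mathcal E_1^\pm$ to $\mathcal S_1\oplus\mathcal U_1$ only. Definition \ref{moregeneral} then asks the forms $\tilde\alpha_1,\dots,\tilde\alpha_l$ to be dual only to $\mathcal B_0=\{X_1,\dots,X_l\}$, which is exactly what the pullbacks provide, and the resulting structure is of type $(l,p,q)$ with $l<k$ in general. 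Everything else in your write-up --- the identity $d(\pi^*\beta_i)=\pi^*d\beta_i$, non-degeneracy on $E_0$, the inclusion of the $\liek'$-directions in $\ker d\tilde\alpha_i$, constant rank by left-invariance, and integrability of $T\phi^0\oplus E_0$ because the corresponding $Ad(\hat H')$-invariant subspace is the subalgebra $s(\lieg)\subset\lieg'$ --- coincides with the paper's argument and goes through essentially verbatim once this decomposition of $\liea'$ is put in place.
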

\begin{proof}
     First we observe that, since the sequence   \ref{exactalgebra} splits, we can identify $\hat G'$ with $\tilde K'\ltimes \tilde G$ where $\tilde K'$ and $\tilde G$ are the simple connected connected subgroups associated with $\liek'$ and $\lieg$, in particular, $\tilde G$ and $\tilde K'$ are the universal cover of $G$ and $K'$.

    We also observe that as $\liea'$ is a lift of $\liea$, then, as vector spaces, $\liea' = \liea\oplus\liea^1$, where $\liea^1\subset\liek'$.
    
    Finally, we observe that, because the affine action $(\hat G',\hat H',\hat \Gamma', \liea')$ is Anosov, we have a splitting
    $$\lieg':=\lieh'\oplus\liea'\oplus \mathcal S'\oplus\mathcal U'$$
    which is $ad(v_0)$-invariant for some element $v_0\in\liea'$ and, moreover, on $\mathcal S'$ and $\mathcal U'$ has eigenvalues of positive and negative real parts respectively. Because $\lieg' = \liek'\ltimes\lieg$ it follows that this splitting can be further refined as
      $$\lieg':=\lieh\oplus\liea\oplus\liea^1\oplus \mathcal S_0\oplus\mathcal S_1\oplus\mathcal U_0\oplus\mathcal U_1$$
    where $\liek':=\liea^1\oplus \mathcal S_1\oplus\mathcal U_1$ and $\lieg:=\lieh\oplus\liea\oplus \mathcal S_0\oplus\mathcal U_0$. Which is also $ad(v_0)$-invariant.
    
    Such splitting, of course, descends to a splitting
    $$T\big(\hat\Gamma\backslash \hat G \slash \hat H\big):=T\phi^0\oplus T\phi^1\oplus \mathcal E_0^+\oplus\mathcal E_1^+\oplus\mathcal E_0^-\oplus\mathcal E_1^-$$
    
    The integrability of $T\phi^0\oplus \mathcal E^+_0\oplus\mathcal E^-_0$ follows from the fact that $\lieg$ is a Lie subalgebra of $\lieg'$.
    
    Now, let $\alpha_1,\dots,\alpha_l$ be the left invariant 1-forms on $G$ (and also on $\tilde G$) that induces the generalized $l$-contact action on $(G,H,\Gamma,\liea)$ , and let $\pi:\hat G' = \tilde K'\ltimes \tilde G\to \tilde G$ the projection on the second factor, and $\tilde \alpha_j := \pi^*\alpha_j$.
    
    It is clear that $d\tilde\alpha_j$ is non degenerate over $\mathcal S_0\oplus\mathcal U_0$ and $\ker(d\alpha_j)=\lieh\liea'\oplus \mathcal S_1\oplus\mathcal U_1$, and thus, such forms descends to left invariant forms $\tilde \alpha$ over $\hat\Gamma\backslash \hat G \slash \hat H$ such that
    \begin{itemize}
        \item $d\tilde\alpha_j$ is non degenerate over $\mathcal E_0^+\oplus\mathcal E_0^-$
        \item $\ker(d\alpha_j) = T\phi\oplus \mathcal E_1^+\oplus\mathcal E_1^-$
    \end{itemize}
    as we desired.

\end{proof}
The following Lemma is classical
\begin{lemma}
There exists a bi-invariant connection on $\hat G'$ defined by
$$\nabla'_YZ = [Y,Z]\enskip\enskip \forall\;Y,Z\;\;left\;invariant$$
\end{lemma}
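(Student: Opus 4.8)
The plan is to recognize this as the standard $(+)$-Cartan--Schouten connection and to deduce both its existence and its bi-invariance from the parallelizability of a Lie group together with the $\operatorname{Ad}$-invariance of the bracket. Nothing in the argument will use the splitting of the exact sequence \ref{exactalgebra}; the statement holds for any Lie group, so I would treat $\hat G'$ simply as a Lie group with Lie algebra $\lieg'$.

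First I would establish existence and well-definedness. Fix a basis $e_1,\dots,e_n$ of $\lieg'$ and let $E_1,\dots,E_n$ be the associated left-invariant vector fields; these form a global frame of $T\hat G'$, so $\hat G'$ is parallelizable. Declaring $\nabla'_{E_i}E_j := [E_i,E_j] = \sum_m c_{ij}^m E_m$, where the $c_{ij}^m$ are the structure constants of $\lieg'$, assigns connection coefficients to this frame, and there is a unique affine connection having them: for $X=\sum_i f^i E_i$ and $Y=\sum_j g^j E_j$ one sets
$$\nabla'_X Y = \sum_{i,j} f^i\big(E_i(g^j)\,E_j + g^j\,[E_i,E_j]\big).$$
A routine check shows this is $C^\infty(\hat G')$-linear in $X$ and satisfies the Leibniz rule in $Y$, so it is a genuine linear connection, and by construction it restricts to $\nabla'_YZ=[Y,Z]$ on left-invariant fields.

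Next I would verify the two invariances. Left-invariance is immediate: left translations carry left-invariant fields to themselves and commute with the bracket, hence preserve the defining formula and so preserve $\nabla'$. For right-invariance I would use $C_g = L_g\circ R_{g^{-1}}$, i.e.\ $R_{g^{-1}} = L_g^{-1}\circ C_g$; since $\nabla'$ is already left-invariant, it suffices to prove invariance under each conjugation $C_g$. Here I use that an automorphism $\psi$ sends the left-invariant field $E_\xi$ (with value $\xi$ at the identity) to $E_{d\psi_e(\xi)}$, so that $(C_g)_*E_\xi = E_{\operatorname{Ad}(g)\xi}$. The Lie-algebra identity $\operatorname{Ad}(g)[\xi,\eta]=[\operatorname{Ad}(g)\xi,\operatorname{Ad}(g)\eta]$ then gives
$$(C_g)_*\big(\nabla'_{E_\xi}E_\eta\big) = E_{\operatorname{Ad}(g)[\xi,\eta]} = [E_{\operatorname{Ad}(g)\xi},E_{\operatorname{Ad}(g)\eta}] = \nabla'_{(C_g)_*E_\xi}\,(C_g)_*E_\eta,$$
which is exactly conjugation-invariance on the frame, hence everywhere. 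Combining left- and conjugation-invariance yields right-invariance, and therefore bi-invariance.

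I do not expect a serious obstacle, since the result is classical; the only point demanding a little care is the reduction of right-invariance to the $\operatorname{Ad}$-invariance of the bracket, together with the bookkeeping that an automorphism acts on left-invariant fields through its differential at the identity. I would also remark in passing that this connection is flat with torsion $T(Y,Z)=[Y,Z]$, though only its existence and bi-invariance are needed here.
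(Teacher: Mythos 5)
Your proof is correct. The paper itself gives no argument for this lemma --- it simply declares it classical (it is the Cartan--Schouten $(+)$-connection) and moves on --- and what you wrote is precisely the standard verification that the paper is implicitly invoking: extend the bracket formula to a connection via the left-invariant frame, get left-invariance for free, and reduce right-invariance to conjugation-invariance, which is the $\operatorname{Ad}$-equivariance of the bracket.
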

Because this connection is bi-invariant, it descends to a left invariant connection $\nabla''$ on $\hat G'\slash \hat H'$.
\begin{lemma}
Suppose that the Anosov action $(G',H',\Gamma',\liea')$ is topologically transitive, then there exists a left invariant, $\liea'$ invariant, connection $\tilde\nabla$ on $\hat G'\slash \hat H'$ that preserves the splitting 
\begin{align}\label{splitting}
    T\big( \hat G \slash \hat H\big):=T\phi^0\oplus T\phi^1\oplus \mathcal E_0^+\oplus\mathcal E_1^+\oplus\mathcal E_0^-\oplus\mathcal E_1^-
\end{align}
\end{lemma}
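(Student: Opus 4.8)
The plan is to \emph{block-diagonalize} the bi-invariant connection $\nabla''$ with respect to the splitting (\ref{splitting}), and to check that this operation destroys neither its left-invariance nor its $\liea'$-invariance. First I would record the two invariances we start from. The connection $\nabla''$ is left-invariant by construction; and since it descends from the bi-invariant connection $\nabla'$ on $\hat G'$ and $\liea'$ normalizes $\lieh$, the one-parameter subgroups $\exp(t\liea')$ normalize $\hat H'$, hence descend to diffeomorphisms of $\hat G'\slash\hat H'$ that preserve $\nabla''$. Thus $\nabla''$ is already left- and $\liea'$-invariant, and the whole problem is to repair the fact that it need not preserve (\ref{splitting}).

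Next I would observe that each summand of (\ref{splitting}) is a left-invariant subbundle which is moreover invariant under the $\liea'$-flow. Indeed, these subbundles are the left-translates of the $\mathrm{ad}(v_0)$-eigenspaces $\liea,\liea^1,\mathcal S_0,\mathcal S_1,\mathcal U_0,\mathcal U_1$ of $\liem:=\lieg'\slash\lieh$, and because $\liea'$ is abelian, $\mathrm{ad}(X)$ commutes with $\mathrm{ad}(v_0)$ for every $X\in\liea'$; hence $e^{t\,\mathrm{ad}(X)}$ preserves each eigenspace. Translating this by the identity $L_{\exp(-tX)}\circ R_{\exp(tX)}=\mathrm{Ad}(\exp(-tX))$ at the base coset, one sees that the differential of the $\liea'$-flow carries each subbundle to itself.

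Writing $\mathcal V_1,\dots,\mathcal V_6$ for the six subbundles in (\ref{splitting}) and $P_i$ for the associated projections, I would then \emph{define}
\begin{align*}
\tilde\nabla_XY:=\sum_{i=1}^{6}P_i\big(\nabla''_X(P_iY)\big).
\end{align*}
A direct check shows $\tilde\nabla$ is an affine connection: it is $C^\infty$-linear in $X$, and the Leibniz rule survives because each $P_i$ is tensorial and $\sum_iP_i=\mathrm{id}$, whence $\sum_iP_i(P_iY)=Y$. By construction $\tilde\nabla_X(P_iY)$ is a section of $\mathcal V_i$, so $\tilde\nabla$ preserves every summand of (\ref{splitting}). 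Since $\nabla''$ and all the $P_i$ are left-invariant, so is $\tilde\nabla$; and since, by the previous paragraph, $\nabla''$ and all the $P_i$ commute with the $\liea'$-flow, $\tilde\nabla$ is $\liea'$-invariant as well.

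The only genuinely load-bearing point is the $\liea'$-invariance of the projections $P_i$, which is exactly where the $\mathrm{ad}(v_0)$-eigenspace description of the splitting together with the commutativity of $\liea'$ enters; this is the step I expect to require the most care, since it is what ties the algebraic decomposition of $\lieg'$ to the dynamical splitting (\ref{splitting}). Here topological transitivity is used to guarantee that the algebraic $\mathrm{ad}(v_0)$-splitting coincides globally with the invariant splitting (\ref{splitting}), so that the resulting $\tilde\nabla$ is indeed the desired left- and $\liea'$-invariant connection adapted to the Anosov decomposition.
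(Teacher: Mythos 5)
Your proposal is correct and is essentially the paper's own argument: the paper likewise defines $\tilde\nabla$ by projecting the descended bi-invariant connection $\nabla''$ onto the summands of the splitting (it sets $\tilde\nabla X_s=0$ on the orbit directions, where your uniform block-diagonalization yields the same effect), and it derives left- and $\liea'$-invariance exactly as you do, from the corresponding invariances of $\nabla''$ and of the left-invariant, $\liea'$-invariant projections. The only discrepancy is cosmetic: your closing claim that topological transitivity is what identifies the algebraic $ad(v_0)$-splitting with the splitting in the statement is neither needed nor used in the paper's proof, since that splitting is by construction the descent of the algebraic one.
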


\begin{proof}
We just define, for any vector field $Y_{i}^\pm$ tangent to $\mathcal E_i^\pm$ and a vector field  $Z$:

$$\tilde\nabla_ZY_i^\pm = p_i^\pm \nabla'_ZY_{i}^\pm$$
and for the vector fields $X_s$
$$\tilde\nabla X_s = 0$$.

It is clear that this defines a new connection on $ \hat G \slash \hat H$. Because $\liea'$ is in the normalizer of $\hat H$, and the connection $\nabla'$ is bi-invariant,it follows that the induced connection $\nabla'$ on $\hat G\slash\hat H$ is $\liea'$ invariant. Moreover, as $\liea'$ preserves the splitting, $\tilde \nabla$ is $\liea'$ invariant.\\

It remains to show that it is also left invariant, that is, for any $g\in \hat G$ we must show
$$(L_g)_*(\tilde\nabla_ZY) = \tilde\nabla_{(L_g)_*Z}(L_g)_*Y$$

This is a straightforward computation using of the following facts:
\begin{itemize}
    \item If $\Gamma(\hat G, \mathcal S_i)$ and $\Gamma(\hat G, \mathcal S_i)^L$ denotes the space of vector fields tangent to $\mathcal S^i$ and the subspace of left invariant vector fields tangent to $\mathcal S^i$. Then $\Gamma(\hat G, \mathcal S_i)$ is precisely the $C^\infty(\hat G)$ module generated by $\Gamma(\hat G, \mathcal S_i)^L$.
    \item For any left invariant vector field $Y$ on $\hat G$ the projections $\pi_{\mathcal S_i}$ and $\pi_{\mathcal U_i}$ are also left invariant vector fields.
    \item For any vector field $Z$ on $\hat G\slash \hat H$, and $g\in \hat G$, if $\hat Z$ is a lift of $Z$, then $(L_g)_*\hat Z$ is a lift of $(L_g)_*Z$
\end{itemize}
\end{proof}
\begin{remark}
For vector fields $Y,Z$ tangent to $\mathcal E_0$ and $\mathcal E_1^+$ respectively, let $\hat Y$ and $\hat Z$ denote lifts which are tangent to $\mathcal S_0\oplus\mathcal U_0$ and $\mathcal S_1$ respectively. If we write $\hat Z = \sum f_i\hat Z_i$ and $\hat Y = \sum g_j\hat Y_j$ where $Z_i$ and $Y_j$ are left invariant and also tangent to $\mathcal S_0\oplus\mathcal U_0$ and $\mathcal S_1$. Then we have the following
\begin{align*}
    \tilde\nabla_ZY &=p_1^+\big(\nabla'_{\hat Z}\hat Y\big)\\
    &=p_1^+\big(\sum_{ij} f_iZ_i(g_j)Y_j + f_ig_j[Z_i,Y_j]\big)\\
    &p_1^+\big(\sum_{ij} f_iZ_i(g_j)Y_j + f_ig_j[Z_i,Y_j]\big) - \underbrace{p_1^+(g_jY_j(f_i)Z_i)}_{=0}\\
    &=p_1^{+}([Z,Y])
\end{align*}
\end{remark}

\begin{lemma}\label{existence}
There exists a left invariant $q$-form on $G'\slash H'$, $q = dim(\liek')$,  which restricts to a volume form on the fibers of
$$G'\slash H'\to G\slash H$$
\end{lemma}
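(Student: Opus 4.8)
The plan is to produce the form at the level of the Lie algebra and then transport it by left translation. I would first recall the standard dictionary: a left $G'$-invariant $q$-form on $G'/H'$ is the same thing as an element of $\wedge^q(\lieg'/\lieh')^*$ that is invariant under the isotropy representation, i.e. under $\Ad(H')$ on $\lieg'/\lieh'$. The correspondence is evaluation at the base point $eH'$. Under this identification, the vertical tangent space of the bundle $G'/H'\to G/H$ at $eH'$ is exactly the image of $\liek'$ in $\lieg'/\lieh'$; since the sequence splits we have $\lieh' = \lieh\subset\lieg$ and $\liek'\cap\lieg = \{0\}$, so this image is canonically isomorphic to $\liek'$. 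Thus the task reduces to producing an $\Ad(H')$-invariant $\xi\in\wedge^q(\lieg'/\lieh')^*$ whose restriction to $\liek'$ is nonzero.

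Next I would exploit the splitting $\lieg' = \liek'\ltimes\lieg$. Both summands are $\Ad(H')$-invariant: $\liek'$ because it is the nilradical, hence an ideal, and $\lieg$ because $H'\subset\tilde G$ and $\lieg$ is the subalgebra of $\tilde G$, so $\Ad(\tilde G)\lieg\subseteq\lieg$. Passing to the quotient by $\lieh\subset\lieg$, I obtain an $\Ad(H')$-invariant decomposition $\lieg'/\lieh' = \liek'\oplus(\lieg/\lieh)$, and the projection $p\colon\lieg'/\lieh'\to\liek'$ with kernel $\lieg/\lieh$ is $\Ad(H')$-equivariant. I would then fix any nonzero top form $\omega_0\in\wedge^q\liek'^*$ and set $\xi = p^*\omega_0$; by construction $\xi$ restricts to $\omega_0\neq 0$ on $\liek'$, and $\xi$ is $\Ad(H')$-invariant as soon as $\omega_0$ is, since $p$ is equivariant.

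The crux is therefore the $\Ad(H')$-invariance of $\omega_0$, equivalently the triviality of the action of $H'$ on the line $\wedge^q\liek'^*$, which is multiplication by $\det\big(\Ad(h)|_{\liek'}\big)^{-1}$. Here I would use that $H'$ is compact: the determinant $h\mapsto\det\big(\Ad(h)|_{\liek'}\big)$ is a continuous homomorphism whose image is a compact subgroup of $\R^*$, hence contained in $\{\pm 1\}$. For the connected lift $\hat H'$ the image is a connected subgroup of $\{\pm1\}$, so it is trivial, the determinant equals $1$, and every $\omega_0$ is invariant; equivalently, one averages an arbitrary volume form over $H'$ and connectedness guarantees the average does not vanish. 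This orientation/determinant check is the only genuine obstacle, and I expect it to be the point to treat with care (if $H'$ were disconnected one would only obtain an invariant density, not a genuine form).

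Finally I would observe that the left-invariant $q$-form $\Theta$ corresponding to $\xi$ restricts to a volume form on every fiber: the bundle projection $G'/H'\to G/H$ is $G'$-equivariant, so left translations permute the fibers, and since $\Theta$ is left-invariant and equals a nonzero top form on the vertical space at $eH'$, it is a nonzero volume on the vertical space at every point. This yields the desired form and completes the argument.
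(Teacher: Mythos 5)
Your argument is correct, and it reaches invariance by a genuinely different mechanism than the paper. Both proofs start from the same dictionary: left invariant $q$-forms on $G'\slash H'$ correspond to $H'$-invariant elements of $\Lambda^q\big((\lieg'\slash\lieh')^*\big)$ via the isotropy representation, which the paper identifies with the adjoint action on a reductive complement $\mathfrak m$. From there the paper uses the averaging trick: it takes an arbitrary top form $\tilde\Theta$ on $\mathcal S_1\oplus\mathcal U_1$ and integrates $h\mapsto\tilde\Theta(\chi(h)v_1,\dots,\chi(h)v_q)$ over the compact group $\chi(H')$ against an invariant volume, the left invariance of that volume giving $H'$-invariance of the result. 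You instead use the splitting hypothesis structurally: $\liek'$ is characteristic and $\lieg$ is preserved because $\hat H'\subset\tilde G$, so the projection $\lieg'\slash\lieh'\to\liek'$ is equivariant, and everything reduces to the triviality of the character $h\mapsto\det\big(Ad_h|_{\liek'}\big)$, which holds on a compact connected group since $\R^*$ has no nontrivial compact connected subgroup. The notable gain of your route is that it confronts the nonvanishing question head on: averaging a top form over a compact group yields zero exactly when this determinant character is nontrivial (components acting with determinant $-1$ cancel those acting with determinant $+1$), and the paper never verifies that its averaged $\Theta$ is nonzero. Your connectedness argument closes that point, and your closing remark that a disconnected isotropy group would in general only yield an invariant density marks precisely where both arguments would break. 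Conversely, the paper's averaging does not need the splitting to single out an invariant complement in advance, though it does implicitly require $Ad_{H'}$-invariance of the subspace $\mathcal S_1\oplus\mathcal U_1$ for its integrand to make sense, so neither proof fully escapes the structural input you make explicit.
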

Before we prove this Lemma, we recall a classical result
\begin{lemma}
Consider $G'$ a connected Lie group and $H'$ a closed subgroup. If $H'$ is compact, then the homogeneous space $G'\slash H'$ is reductive, that is, there exists a subspace $\mathfrak m\subset\lieg'$ such that $\lieg' = \lieh'\oplus\mathfrak m$ and, for every $h\in H'$ we have:
$$Ad_h(\mathfrak m)\subset\mathfrak m$$
\end{lemma}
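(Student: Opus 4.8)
The plan is to exploit the compactness of $H'$ through an averaging argument, producing an $Ad(H')$-invariant complement to $\lieh'$. First I would fix an arbitrary inner product $\langle\cdot,\cdot\rangle_0$ on the underlying vector space of $\lieg'$ and normalize the Haar measure $dh$ on the compact group $H'$ to have total mass one. Averaging then yields a new bilinear form
$$\langle X,Y\rangle = \int_{H'}\langle Ad_h X, Ad_h Y\rangle_0\,dh,$$
and since each $Ad_h$ is a linear isomorphism the integrand is positive definite for every $h$, so $\langle\cdot,\cdot\rangle$ is again a genuine inner product on $\lieg'$.

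The decisive property is its $Ad(H')$-invariance. For any $h_0\in H'$, the invariance of Haar measure under the right translation $h\mapsto hh_0$ gives
$$\langle Ad_{h_0}X, Ad_{h_0}Y\rangle = \int_{H'}\langle Ad_{hh_0}X, Ad_{hh_0}Y\rangle_0\,dh = \langle X,Y\rangle.$$
I would then record that $\lieh'$ is itself $Ad(H')$-invariant: conjugation by $h\in H'$ preserves $H'$, so its differential $Ad_h$ preserves the subalgebra $\lieh'$.

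With an invariant inner product available, I set $\liem := (\lieh')^{\perp}$, the orthogonal complement of $\lieh'$ with respect to $\langle\cdot,\cdot\rangle$, which immediately gives the vector space decomposition $\lieg' = \lieh'\oplus\liem$. To check that $\liem$ is $Ad(H')$-invariant, I take $X\in\liem$ and $h\in H'$; for every $Y\in\lieh'$ we have $Ad_{h^{-1}}Y\in\lieh'$, whence
$$\langle Ad_h X, Y\rangle = \langle X, Ad_{h^{-1}}Y\rangle = 0,$$
so that $Ad_h X\in\liem$. This establishes the desired reductive splitting.

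The only essential hypothesis is the compactness of $H'$, which furnishes a finite, normalizable Haar measure and thereby legitimizes the averaging step; everything else is routine linear algebra combined with the invariance of Haar measure. I do not expect a genuine obstacle here, the sole point requiring a word of care being that the averaged form stays positive definite, which is automatic as it is an integral of positive definite forms against a probability measure.
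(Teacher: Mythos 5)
Your proof is correct and complete: the averaging of an arbitrary inner product over the (normalized, bi-invariant) Haar measure of the compact group $H'$, followed by taking $\liem = (\lieh')^{\perp}$, is exactly the standard argument, and every step (positive definiteness of the averaged form, $Ad(H')$-invariance via translation invariance of Haar measure, and invariance of the orthogonal complement) is carried out correctly. Note that the paper itself offers no proof of this statement — it is recalled there as ``a classical result'' immediately before the proof of its Lemma on invariant $q$-forms — so your argument simply supplies the classical proof being invoked; indeed, the same averaging device reappears in the paper's own proof of that subsequent lemma, where a form $\Theta$ is made $H'$-invariant by integrating over $\chi(H')$.
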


\begin{proof}[Proof of Lemma \ref{existence}]
First we remark the equivalence
$$\Omega^q(G'\slash H')^G \equiv \Lambda^q(Hom(\lieg'\slash\lieh',\R)^{H'})$$
between left invariant forms on $G'\slash H'$ and it's linear counterpart, the $H'$-invariant alternating q-linear forms on $\lieg'\slash \lieh'$. \\

Let $o$ denote the coset $H'$ in $G'\slash H'$ and consider the isotropy representation
\begin{align*}
\chi:=\chi^{G'\slash H'}:H'&\to Aut(T_o(G'\slash H'))\\
                h&\mapsto (dL_h)_o    
\end{align*}
where $L_h$ denotes the left multiplication by $h$. From the splitting 
$$\lieg':=\lieh\oplus\liea\oplus\liea^1\oplus \mathcal S_0\oplus\mathcal S_1\oplus\mathcal U_0\oplus\mathcal U_1$$
we identify (as vector spaces)
$$T_o(G'\slash H') = \mathfrak m = \liea\oplus\liea^1\oplus \mathcal S_0\oplus\mathcal S_1\oplus\mathcal U_0\oplus\mathcal U_1$$

With this identification, we have 
$$\chi(h) = (dL_h)_o   = Ad_h^{H'}$$
where $Ad_h^{H'}$ is the induced adjoint on $\lieg'\slash\lieh' - \mathfrak m$.

We consider $\tilde\Theta \in \Lambda^q(Hom(\mathcal S_1\oplus\mathcal U_1,\R))$ where $q = dim (\mathcal S_1\oplus\mathcal U_1)$, and define, for tangent vectors $v_1,\dots,v_q\in\mathcal S_1\oplus\mathcal U_1$
\begin{align*}
    \Theta(v_1,\dots,v_q):= \int_{\chi(H')}\tilde\theta(\chi(h)v_1,\dots,\chi(h)v_q)d\chi(H')
\end{align*}
where $d\chi(H')$ is a left invariant volume form on $\chi(H')$. We remark that as $H'$ is compact, then so is $\chi(H')$ and thus the integral above is well defined.

Clearly, $\Theta$ is also belongs to $\Lambda^q(Hom(\mathcal S_1\oplus\mathcal U_1,\R))$. It remains to show that it is $H'$ invariant. We have:
\begin{align*}
    (h'\cdot \Theta)(v_1,\dots,v_q)&:=\Theta(Ad_{h'}(v_1),\dots,Ad_{h'}(v_q)\\
    &=\int_{\chi(H')}\tilde\theta(Ad_{h'}\chi(h)v_1,\dots,Ad_{h'}\chi(h)v_q)d\chi(H')\\
    &=\int_{\chi(H')}\tilde\theta(Ad_{h'}Ad_hv_1,\dots,Ad_{h'}Ad_hv_q)d\chi(H')\\
    &=\int_{\chi(H')}\tilde\theta(\chi(h'h)v_1,\dots,\chi(h'h)v_q)d\chi(H')
\end{align*}
As the volume form $d\chi(H')$ is left invariant the result follows.
\end{proof}

 \section{Gromov's geometric structure}\label{gromsec}
            We shall now, define a rigid geometric structure (in the sense of Gromov) associated with our geometric Anosov action. We recall that a Gromov A-structure $\sigma$ of order $r$ and type $(\lambda,\Sigma)$ on $M$ is given by an algebraic manifold $\Sigma$ with an algebraic action $\lambda:Gl^r(\R^m)\times\Sigma\to\Sigma$ and a $Gl^r(\R^m)$ equivariant map $\sigma: F^r(M)\to\Sigma$.

Let $m=2(p+q)+k$ and $V = \R^m$. We want to define an A-structure of order $2$ on $M$, but first let us understand a little better the group $Gl^2(V)$ and it's action on $F^2(M)$.

First, we will understand the elements of $F^2(M)$ as pairs $(\xi_p,B_p)$ where
\begin{align*}
    \xi_p:V\to T_pM\enskip&\enskip \text{isomorphism}\\
    B_p:V\times V\to T_pM\enskip&\text{bilinear}
\end{align*}

In a similar way, we understand the group $ Gl^2(V) := \{j^2_f(0)\;\;f:V\to V f(0) = 0 Df(0)\;\text{invertible}\}$ as pairs $(u,A)$ where $u:V\to V$ is an isomorphism and $A:V\times V\to V$ is bilinear. With this notation, the group law is given by
$$(u,A)\cdot(u',A') = (u\circ u',A\circ(u'\times u') + u\circ A')$$
and it right action on $F^2(M)$ is given by
$$(\xi_p,B_p)\cdot(u,A) = (\xi_p\circ u,B_p\circ(u\times u) + \xi_p\circ A)$$

Now, to define $\Sigma$, denote by $Gr^n(V)$ the Grassmanian of $n$-planes in $V$ and define:

\begin{align*}
\Sigma:=\bigg\{(X_1,\dots,X_k,e_0^+,e_0^-, e_1^+,e_1^-,\dots,e_l^+,e_l^-,\omega_1,\dots,\omega_l,B_1,\dots,B_l),\;\text{with}\\
X_j&\in V\;\forall\;j\;\enskip;\enskip e_0^\pm\in Gr^p(V)\;\enskip;\enskip e_j^\pm\in Gr^q(V)\;\text{such that}\\
&\enskip V = \R X_1\oplus\dots\oplus \R X_k\oplus e_0^+\oplus e_0^-\oplus e_j^+\oplus e_j^-\\
\omega_j& \in \Lambda^2(V^*)\;\text{such that, for each }\; j\in\{1,\dots,l\}\\
%&\enskip(\omega_j)_{|_{e_0^+\oplus e_0^-}}\;\text{is non degenerated}\\
&\enskip\ker\omega_j = \R X_1\oplus\dots\oplus\R X_k\oplus e_j^+\oplus e_j^-\\
B&:V\times \mathcal E_j \to \mathcal E_j \;\text{bilinear map such that}\\
&\enskip\mathcal E_j = \R X_{1}\oplus \dots\oplus \R X_{j-1}\oplus\R X_{j+1}\oplus \dots\oplus \R X_k\oplus e_j^+\oplus e_j^-
\bigg\}
\end{align*}

The inspiration for this structure is more or less clear, except, perhaps, for the bilinear maps $B_j$. We recall that we are interested in rigid $k$-geometric Anosov actions and, therefore, the action comes with invariant connections $\tilde\nabla^j$ on the $T\phi^j\oplus E_j$, this connection have associated Christoffel symbols $\Gamma_{st}^k$ which will be understood as the bilinear map $B$.

Now, to construct the right action of $Gl^2(V)$ on $\Sigma$. 

Let $(X_1,\dots,X_k,e_0^+,e_0^-, e_1^+,e_1^-,\dots,e_l^+,e_l^-,\omega_1,\dots,\omega_l,B_1,\dots,B_l)\in\Sigma$ and $(u,A)\in Gl^2(V)$. The linear part $u$ acts on $X_j$, $e_i^\pm$ and $\omega_j$ in the obvious way, by changing coordinates. It remains to define $B_j\cdot (u,A)$. To simplify, we omit the index $j$ and write $B = B_j$. We are tempted to define it as
$$B\cdot (u,A) := u^{-1}\circ B\circ(u\times u) + u^{-1}\circ A$$
which is the usual formula for the change of coordinates for the Christoffel Symbols. However, the bilinear map must be $(e_j^+\oplus e_j^-)$ valued and defined on $V\times (e_j^+\oplus e_j^-)$. We must correct the domains and counter domains. Let $\pi:V\to (e_j^+\oplus e_j^-)$ be the projection (with respect to the splitting $V = \R X_1\oplus\dots\oplus \R X_k\oplus e_0^+\oplus e_0^-\oplus e_j^+\oplus e_j^-$) and $u_j$ the restriction of $u$ to $e_j^+\oplus e_j^-$. Then we define
$$B\cdot (u,A) := (u_j^{-1}\circ B\circ(u\times u_j) + u_j^{-1}\circ\pi\circ A$$

It is easy to check that we have in fact defined a $Gl^2(V)$ right action. It remains to construct the $Gl^2(V)$-invariant map $\sigma:F^2(M)\to\Sigma$. Before we do this, we consider the following procedure. Consider a splitting $\R^m = E\oplus F$, we want to construct a basis for $E$. Let $\pi\R^m\to E$ be the projection, and $\{e_1,\dots,e_m\}$ the canonical basis for $\R^m$. Let $f_i = \pi(e_i)$, then the set $\{f_1,\dots,f_m\}$ of course generates $E$. Let $i_0 \in \{1,\dots,m\}$ be the smallest number such that $$\{f_1,\dots,f_{i_0}\}$$ is linearly dependent. Now, take $i_1\in\{1,\dots,i_0-1,i_0+1,\dots,m\}$ such that 
$$\{f_1,\dots,f_{i_0-1},f_{i_0+1},\dots f_{i_1})\}$$
is linearly dependent. As $E$ has finite dimension, this process eventually finishes, and we obtain a basis for $E$, which allow us to identify $E = \R^l$ for some $l$. Doing the same with $F$, there exists a "canonical" way to identify $E\oplus F$ with $\R^l\oplus \R^{m-l}$. In particular, for a splitting $TM = \bigoplus_i E_i$ we can always understand a linear map $\xi_p:\R^m\to T_p^m$ as the direct sum of isomorphisms $\R^{d_i}\to E_i$.

Now, to define the map $\sigma$ we just define $\sigma(\xi_p,B_p)$ as the pullback (via $\xi_p$) of the tensors and the bilinear map as the Christoffel Symbols of the connection $\tilde\nabla^j$, where we use the previous procedure to obtain a local frame of $TM$ and $T\phi^j\oplus E_j^+\oplus E_j^-$.

        \section{An adapted connection}\label{Adap}
            On this section, we develop some technical preparation for studying what will turn out to be the Lie algebra of $G'$ on our intended $(G',G'\slash H')$-structure on $M$. This section follows closely the work of Benoist-Foulon-Labourie, and most of the proofs in this section are (somewhat more technical and cumbersome) adaptation of the original proofs on \cite{BFL} and were included for the sake of completeness and clarity.

Consider $k$-geometric Anosov action on $M$,
\begin{lemma}\label{buildconnec}
	For every $j \in \{1,\dots,l\}$, and real valued linear functionals $C_{j}^{\pm}:\R^k\to\R$  there exists unique smooth connection $\nabla^j$ on $T\phi^0\oplus E_0^+\oplus E_0^-$ that satisfies
	\begin{align}
	\nabla^j d\alpha_j &= 0\enskip;\enskip 
	\nabla^j \alpha_i=0\enskip\forall i\enskip;\enskip
	\nabla^j(E_0^{\pm})\subset E_0^{\pm}   \enskip;\enskip\nabla^j T\phi^0\subset T\phi^0 
	\end{align}
and for $X\in \Gamma(M,T\phi)$, $Z_i^\pm\in \Gamma(M,E_i^\pm)$, $i\in\{0,1\}$
	\begin{align}
	\nabla^j_{Z_1}Z_0^{\pm} &=\Check p_j^{\pm}([Z_1,Z_0^{\pm}]) \\ 
	\nabla^j_{Z_0^{\mp}}Z_0^{\pm} &=\Check p_j^{\pm}([Z_0^{\mp},Z_0^{\pm}]) \\
	%\nabla^j_{Z_{i}^{\pm}}Z_{i+1}^{\pm} &=p_{i+1}^{\pm}([Z_{i}^{\pm},Z_{i+1}^{\pm}])\\ 
	\nabla^j_{X}Z_0^{\pm} &=[X,Z_0^{\pm}] + C_{j}^{\pm}(X)Z_0^{\pm} 
	\end{align}
	Where $\Check p_j^\pm$ denotes the projection $TM\to E_0^\pm$ with respect to the splitting
	$$TM = T\phi\oplus E_0^+\oplus E_0^-\oplus E_j^+\oplus E_j^-$$ 
	and we use the natural identification $\R^k = T_p\phi = \R X_1(p)\oplus \dots \oplus \R X_k(p)$ to compute $C^{\pm}(X)$. Moreover, because $\alpha_s$ vanishes outside of $T\phi\oplus E_0^+\oplus E_0^-$, we can consider then as elements of $\Lambda^1 (T\phi\oplus E_0^+\oplus E_0^-)^*$. Similarly for $d\alpha_j$.
\end{lemma}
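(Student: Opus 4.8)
The plan is to treat uniqueness and existence in turn, reducing everything to the behaviour of $\nabla^j$ on the blocks of the splitting $TM=T\phi\oplus E_0^+\oplus E_0^-\oplus E_j^+\oplus E_j^-$. A connection on $T\phi^0\oplus E_0^+\oplus E_0^-$ is determined by the values $\nabla^j_WZ$ with $W\in\Gamma(M,TM)$ and $Z$ a section of one of the blocks $T\phi^0$, $E_0^+$, $E_0^-$. Decomposing $W$ along the splitting, the prescribed formulas already fix $\nabla^j_WZ_0^{\pm}$ whenever $W$ is tangent to $T\phi$, to $E_j$, or to $E_0^{\mp}$ (the block of sign opposite to $Z_0^{\pm}$). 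The only blocks left undetermined are the same-sign ones $\nabla^j_{Z_0^{\pm}}Z_0^{\pm}$ and the action of $\nabla^j$ on sections of $T\phi^0$.

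To pin these down, and thereby establish uniqueness, I would use the two parallelism hypotheses. Since $d\alpha_j$ is $\phi$-invariant, the standard contraction argument forces $E_0^+$ and $E_0^-$ to be isotropic for it, so the non-degeneracy of $d\alpha_j|_{E_0}$ upgrades to a perfect pairing $E_0^+\times E_0^-\to\R$. Because $\nabla^j$ preserves $E_0^{\pm}$, the value $\nabla^j_{Z_0^+}Z_0^+\in E_0^+$ is then completely determined by the numbers $d\alpha_j(\nabla^j_{Z_0^+}Z_0^+,B)$, $B\in E_0^-$, and $\nabla^jd\alpha_j=0$ evaluates these as
$$d\alpha_j\big(\nabla^j_{Z_0^+}Z_0^+,B\big)=Z_0^+\big(d\alpha_j(Z_0^+,B)\big)-d\alpha_j\big(Z_0^+,\Check p_j^-([Z_0^+,B])\big),$$
the last term being the already-prescribed $\nabla^j_{Z_0^+}B$; the symmetric computation fixes $\nabla^j_{Z_0^-}Z_0^-$. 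Finally, on $T\phi^0$ the forms $\alpha_1,\dots,\alpha_l$ restrict to the coframe dual to $X_1,\dots,X_l$ (while $\alpha_i$ with $i>l$ vanishes on $T\phi^0\oplus E_0$ and imposes nothing), so $\nabla^j\alpha_i=0$ together with $\nabla^jT\phi^0\subset T\phi^0$ forces $\alpha_i(\nabla^j_WX_s)=0$ for $i\le l$, i.e. $\nabla^jX_s=0$. This exhausts every block and gives uniqueness.

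For existence I would define $\nabla^j$ by the prescribed formulas on the explicit blocks, by the displayed pairing identity on the same-sign blocks, and by $\nabla^jX_s=0$ on $T\phi^0$, and then verify the connection axioms. Tensoriality in the direction argument is exactly what the projections secure: for $W=fZ_1$ one has $[fZ_1,Z_0^{\pm}]=f[Z_1,Z_0^{\pm}]-Z_0^{\pm}(f)Z_1$, and $\Check p_j^{\pm}$ kills the spurious term $Z_0^{\pm}(f)Z_1$ because $Z_1\in E_j$; the same cancellation happens in the $E_0^{\mp}$ block, while the $T\phi$ block is $C^\infty$-linear up to the tensorial correction $C_j^{\pm}(X)Z_0^{\pm}$. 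The Leibniz rule in the section argument is immediate, since $\Check p_j^{\pm}(W(f)Z_0^{\pm})=W(f)Z_0^{\pm}$.

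The hard part will be checking that $\nabla^jd\alpha_j=0$ and $\nabla^j\alpha_i=0$ hold in \emph{every} direction, not merely in the ones used to define the missing blocks; the construction has to be globally consistent. Expanding $(\nabla^j_Wd\alpha_j)(A,B)=W\big(d\alpha_j(A,B)\big)-d\alpha_j(\nabla^j_WA,B)-d\alpha_j(A,\nabla^j_WB)$ with the explicit formulas, the vanishing reduces, block by block, to the closedness identity
\begin{align*}
0=dd\alpha_j(W,A,B)={}&W\big(d\alpha_j(A,B)\big)-A\big(d\alpha_j(W,B)\big)+B\big(d\alpha_j(W,A)\big)\\
&-d\alpha_j([W,A],B)+d\alpha_j([W,B],A)-d\alpha_j([A,B],W)
\end{align*}
together with the flow-invariance $\L_{X_s}d\alpha_j=d\,i_{X_s}d\alpha_j=0$, which holds because $T\phi\subset\ker d\alpha_j$. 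The genuinely technical core is carrying this out case by case; it is precisely in the $W\in T\phi$ block that the weights $C_j^{\pm}$ are exercised, flow-invariance reducing the condition there to a relation among $C_j^+$, $C_j^-$ and the pairing $d\alpha_j(Z_0^+,Z_0^-)$, and the remaining verification of $\nabla^j\alpha_i=0$ is a routine bracket computation. I expect this parallelism-consistency step, resting on $dd\alpha_j=0$, to be the main obstacle; everything else is bookkeeping along the splitting.
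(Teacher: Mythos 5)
Your proposal follows essentially the same route as the paper: uniqueness is obtained exactly as there, by forcing $\nabla^j X_s=0$ from $\nabla^j\alpha_i=0$ together with $\nabla^j T\phi^0\subset T\phi^0$, and by pinning down the same-sign derivatives $\nabla^j_{Z_0^\pm}Z_0^\pm$ through the non-degenerate $d\alpha_j$-pairing between $E_0^+$ and $E_0^-$ via $\nabla^j d\alpha_j=0$. If anything you are more careful than the paper, whose proof defines the missing blocks by the same pairing identity and stops, leaving the connection axioms and the global consistency of the parallelism conditions (your ``hard part'') entirely implicit.
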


\begin{proof}
	First, we shall prove that a connection $\nabla^j$ wich satisfies the hypothesis of the Lemma \ref{buildconnec}, also satisfies $\nabla^j X_s = 0$ for every $1\leq sl\leq l$. For the sake of clearer notation, we shall avoid using the $j$ superscript on the connection, and simply denote $\nabla$ instead of $\nabla^j$.
	
	 From $\nabla \alpha_i =0$ we have, for any vector fields $Y, Z$,
	$$0 =(\nabla_Y\alpha_i)(X_s) = Y(\alpha_i(X_s)) - \alpha_i(\nabla_YX_s) = - \alpha_i(\nabla_YX_s)$$
	
	As $\nabla T\phi^0\subset T\phi^0$ it follows that $\nabla_YX_s=0$.
	
	Now, we just have to define $\nabla_{G_i^{\pm}}Z_0^{\pm}$.  For this, we notice that $\nabla d\alpha_j=0$ means that for every vector field $A,B,C$ we have
	$$0 = (\nabla_Cd\alpha_j)(A,B) = \L_C(d\alpha_j(A,B)) + d\alpha_j(\nabla_CA,B) - d\alpha_j(A,\nabla_CB)$$
	But ${d\alpha_j}$ restricted to $E^+\oplus E^-$ is non degenerate, thus we define $\nabla_{G_i^{\pm}}Z_0^{\pm}$ to be the unique vector field tangent to $E_0^\pm$ which satisfies, for every $Y_0^\mp\in \Gamma(M,E_0^\mp)$.
\begin{align*}
	{d\alpha_j}(\nabla_{G_i^{\pm}}Z_0^{\pm}, Y_0^\mp) 
	= 
	\L_{G_i^{\pm}}({d\alpha_j}(Z_0^{\pm}, Y_0^\mp))  - 	
	{d\alpha_j}(Z_0^{\pm},\nabla_{G_i^{\pm}} Y_0^\mp) 
\end{align*}
\end{proof}
\begin{remark}\label{remcon}
	Notice that the smoothness of the splitting means that this connection is smooth.
\end{remark}
\begin{remark}
    If we consider a rigid $k$-geometric Anosov action, with connections $\tilde\nabla^j$ on $E_j^+\oplus E_j^-$, then we can use the above connections $\nabla^j$ to define new connections $\nabla^{j\prime}$ on $TM$ in the usual way:
    $$\nabla^{j\prime}_A(B+C) = \nabla^j_AB + \tilde\nabla^j_AC$$ $$A\in \Gamma(M,TM)\;;\; B\in\Gamma (M,T\phi^0\oplus E_0^+\oplus E_0^-)\;;\; C\in\Gamma(M,T\phi^j\oplus E_1^+\oplus E_1^-)$$
    We observe that on the intersection $T\phi^0\cap T\phi^j$ the connections coincide, and the formula above is well defined. We will write simply $\nabla^j$ for this new connections.
    This new connections will satisfy:
    \begin{align}
	\nabla^j d\alpha_j &= 0\enskip;\enskip 
	\nabla^j \alpha_i=0\enskip\forall i\enskip;\enskip
	\nabla^j(E_i^{\pm})\subset E_i^{\pm}   \enskip;\enskip\nabla^j T\phi^j\subset T\phi^j 
	\end{align}
and for $X\in \Gamma(M,T\phi)$, $Z_i^\pm\in \Gamma(M,E_i^\pm)$, $i\in\{0,j\}$
	\begin{align}
	\nabla^j_{Z_j}Z_0^{\pm} &=\Check p_j^{\pm}([Z_j,Z_0^{\pm}]) \\ 
	\nabla^j_{Z_0}Z_j^{\pm} &=p_j^{\pm}([Z_0,Z_j^{\pm}]) \\ 
	\nabla^j_{Z_0^{\mp}}Z_0^{\pm} &=\Check p_j^{\pm}([Z_0^{\mp},Z_0^{\pm}]) \\ 
	\nabla^j_{Z_j^{\mp}}Z_j^{\pm} &=  p_j^{\pm}([Z_j^{\mp},Z_j^{\pm}])  \\ 
	%\nabla^j_{Z_{i}^{\pm}}Z_{i+1}^{\pm} &=p_{i+1}^{\pm}([Z_{i}^{\pm},Z_{i+1}^{\pm}])\\ 
	\nabla^j_{X}Z_0^{\pm} &=[X,Z_0^{\pm}] + C_{j}^{\pm}(X)Z_0^{\pm} \\
	\nabla^j_{X}Z_j &=[X,Z_j] 
	\end{align}

\end{remark} 

\begin{lemma}\label{lemageod}
    The geodesics of $\nabla^j$ which are tangent to $E^\pm$ are complete.
\end{lemma}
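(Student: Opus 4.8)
The plan is to reproduce, in this setting, the contraction argument of Benoist--Foulon--Labourie: one uses that the Anosov element contracts $E^+$ while the connection $\nabla^j$ is built to be compatible with the flow, so that pushing a short geodesic forward by the flow produces a long one. I will argue completeness for geodesics tangent to $E^+$; the case of $E^-$ is the same after reversing time. Two preliminary observations are in order. Since the connection satisfies $\nabla^j(E_i^\pm)\subset E_i^\pm$, the bundle $E^+=E_0^+\oplus E_j^+$ is $\nabla^j$-parallel, so a geodesic whose initial velocity lies in $E^+$ has velocity in $E^+$ for all parameter values for which it is defined; in particular $\nabla^j_{\dot\gamma}\dot\gamma$ only ever involves the restriction of $\nabla^j$ to $E^+$-directions. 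Moreover, since $\phi$ is an action of $\R^k$ on the compact manifold $M$, each $\phi^{ta}$ is a globally defined diffeomorphism of $M$, and by the Anosov estimates (\ref{estimates}) it contracts $E^+$: $\|d\phi^{ta}v\|\le Ce^{-\lambda t}\|v\|$ for $v\in E^+$ and $t>0$, relative to any fixed continuous norm.

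The key step is to show that $\phi^{ta}$ carries $\nabla^j$-geodesics tangent to $E^+$ to $\nabla^j$-geodesics tangent to $E^+$. Because such geodesics only see $\nabla^j$ through derivatives $\nabla^j_Y Z$ with $Y,Z\in E^+$, it suffices to show that the flow direction $X_a$ is an infinitesimal affine symmetry of $\nabla^j$ when restricted to $E^+$-arguments, i.e.\ that $\mathcal L_{X_a}\nabla^j$ vanishes on pairs of $E^+$-fields. This is exactly what the structure relations of Lemma \ref{buildconnec} are designed to give: the defining identities for $\nabla^j_{Z_0}Z_j^\pm$, $\nabla^j_{Z_j}Z_0^\pm$ and the same-sign terms coming from $\nabla^j d\alpha_j=0$, combined with the $\phi$-invariance of the splitting and of $d\alpha_j$, yield $\mathcal L_{X_a}\nabla^j=0$ on $E^+\times E^+$ after a Jacobi-identity computation. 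Consequently $\phi^{ta}$ is affine along $E^+$, so writing $\gamma_v$ for the geodesic with $\gamma_v(0)=x$ and $\dot\gamma_v(0)=v\in E^+_x$ we obtain the reparametrization identity $\phi^{ta}\circ\gamma_v(s)=\gamma_{w}(s)$ with $w=d\phi^{ta}v$.

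Finally I would run the contraction step. By compactness of $M$ and smoothness of $\nabla^j$ (Remark \ref{remcon}), together with smoothness of $E^+$, the geodesic equation admits a uniform existence time: there is $\delta>0$ such that $\gamma_u$ is defined on $(-\delta,\delta)$ for every unit $u\in E^+$. Now fix a unit $v\in E^+_x$ and any $t>0$, and set $w=d\phi^{ta}v$, so that $\|w\|\le Ce^{-\lambda t}$. Homogeneity of geodesics gives $\gamma_w(s)=\gamma_{w/\|w\|}(\|w\|s)$, which is therefore defined for all $s\in(-\delta/\|w\|,\delta/\|w\|)$; applying the diffeomorphism $\phi^{-ta}$ and the identity from the previous paragraph, $\gamma_v(s)=\phi^{-ta}\big(\gamma_w(s)\big)$ is defined on that same interval. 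Letting $t\to\infty$ forces $\|w\|\to 0$, so $(-\delta/\|w\|,\delta/\|w\|)$ exhausts $\R$ and $\gamma_v$ is complete. The identical argument with $t\to-\infty$, using the expansion estimate on $E^-$, handles geodesics tangent to $E^-$.

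The main obstacle is the affineness claim of the second paragraph: one must verify explicitly, from the defining relations of $\nabla^j$ and the flow-invariance of the bundles $E_i^\pm$ and of $d\alpha_j$, that the pushforward by $\phi^{ta}$ of a solution of $\nabla^j_{\dot\gamma}\dot\gamma=0$ is again a solution. The transverse terms $\nabla^j_X Z^\pm=[X,Z^\pm]+C_j^\pm(X)Z^\pm$ do not enter the restriction of $\nabla^j$ to $E^+$-directions, so the homothety they encode does not disturb the on-$E^+$ geodesic equation; nonetheless the Jacobi-identity bookkeeping for the mixed and same-sign terms is the delicate part, and it is there that all the invariance hypotheses built into $\nabla^j$ must be used.
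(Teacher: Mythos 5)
Your proposal is correct and follows essentially the same route as the paper's proof: a uniform existence time for geodesics from compactness of $M$, the Anosov contraction to shrink the initial velocity along the flow, and affineness of $\phi$ with respect to $\nabla^j$ to transport the short geodesic back to a long one through the original vector. The only differences are presentational: the paper simply asserts that the action is affine for $\nabla^j$ (which follows most cleanly from the uniqueness clause of Lemma \ref{buildconnec}, since the pulled-back connection satisfies the same $\phi$-invariant defining relations, rather than from the Lie-derivative computation you sketch), and your rescaling-and-limit step replaces the paper's implicit iteration of unit-time extensions.
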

\begin{proof}
Associated with the connection $\nabla = \nabla^j$ we have the concepts of exponential map and normal neighbourhoods, which are analogous to the Riemannian case. On such a neighbourhood, the exponential map is give us the geodesic. Since $M$ is compact, there exists a constant $c>0$ such that for every tangent vector $Y$ of $M$ with $\|Y\|<c$ we can integrate the geodesic with initial condition $Y$ to a time equal or greater the one.\\

Now, suppose that we have $Y\in E^{+}$, this means that there exists $t_0$ such that for $t<t_0$ we have
\begin{align*}
\|d\phi(ta,\;\cdot\;)Y\|<c
\end{align*}
and through $d\phi(ta,\;\cdot\;)Y$ we can integrate the geodesic to a time equal or greater then one.  But the action $\phi$ is affine (with respect to $\nabla$), which means it transport geodesics, and therefore, we can integrate the geodesic through $Y$ to a time equal or greater then one. As $Y$ is arbitrary, the geodesics tangent to $E^+$ are complete.
\end{proof}

\subsection{Infinitesimal affine transformations and Killing fields}\label{subsecaffine}

Fix some point $v_0\in M$ with compact orbit. We define the Lie algebra $\hat\lieg'$ as the Lie algebra of germs at $v_0$ of infinitesimal affine transformations of the connections $\tilde \nabla^j$. That is, germs of vector fields $Y$ satisfying 
$$[\L_Y,\tilde\nabla^j_Z] = \tilde\nabla^j_{[Y,Z]}\enskip\enskip\forall \;j$$

Now we define the Lie algebras $\lieg'$ as germs of vector fields $Y\in \hat\lieg'$ that also satisfies:
\begin{align*}
    \begin{cases}
    \L_yd\alpha_j&=0\enskip\forall \;j\\
    \L_YX_s &= 0\enskip\forall \;s\\
    [Y,E_i^\pm]&\subset E_i^\pm\;\;\;\;i\in\{0,1\}
    \end{cases}
\end{align*}

Notice that the second condition implies the third. The following Lemma is straightforward.

\begin{lemma}
The Lie algebra $\lieg'$ is the Lie algebra of killing vector fields of the geometric structure defined in the previous subsection, and is contained in the the Lie algebra of germs of affine vector fields for the connection $\nabla^j$ for every $j$.
\end{lemma}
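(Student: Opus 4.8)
The plan is to prove both assertions by directly unwinding the two definitions, since each half reduces to a translation between ``the local flow of $Y$ preserves an object'' and the corresponding Lie-derivative condition. First I would recall what it means for a germ $Y$ at $v_0$ to be a Killing field of the Gromov A-structure $\sigma:F^2(M)\to\Sigma$ of Section \ref{gromsec}: by definition $Y$ is Killing precisely when its natural prolongation to the second-order frame bundle is tangent to the level sets of $\sigma$. Since $\sigma$ is $Gl^2(V)$-equivariant and $\Sigma$ is the product of the fibre data $(X_1,\dots,X_k,e_0^\pm,e_j^\pm,\omega_j,B_j)$, this is equivalent to the local flow of $Y$ preserving each constituent object separately. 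Unwinding these one at a time: preservation of the $X_s$ is $\L_Y X_s=0$; preservation of the subbundles $e_i^\pm=E_i^\pm$ is $[Y,E_i^\pm]\subset E_i^\pm$; preservation of the $2$-forms $\omega_j=d\alpha_j$ is $\L_Y d\alpha_j=0$; and preservation of the bilinear data $B_j$, which encodes the Christoffel symbols of $\tilde\nabla^j$, is exactly $[\L_Y,\tilde\nabla^j_Z]=\tilde\nabla^j_{[Y,Z]}$ for all $Z$. Comparing this list with the definitions above, the last condition is precisely membership in $\hat\lieg'$, while the other three are exactly the constraints cutting out $\lieg'$ inside $\hat\lieg'$; hence $\lieg'$ coincides with the Lie algebra of Killing germs, proving the first assertion.

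For the second assertion I would take $Y\in\lieg'$ and show it is affine for each combined connection $\nabla^j$ on $TM$. I would first observe that $\L_Y\alpha_i=0$ for every $i$: evaluating on $X_s$ gives $Y(\delta_{is})-\alpha_i([Y,X_s])=0$ since $[Y,X_s]=0$, and evaluating on a section $Z$ of $E^+\oplus E^-$ gives $-\alpha_i([Y,Z])=0$ since $Y$ preserves the splitting and $\alpha_i$ annihilates $E^+\oplus E^-$. Thus $Y$ preserves $\alpha_i$, $d\alpha_j$, the full splitting (hence the projections $\Check p_j^\pm$), the Lie bracket, and the functionals $C_j^\pm$, which are constant on $T\phi\cong\R^k$ and invariant because $\L_Y X_s=0$. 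Every term in the characterizing identities of Lemma \ref{buildconnec} for the connection $\nabla^j$ on $T\phi^0\oplus E_0^+\oplus E_0^-$ is therefore invariant under the local flow $\psi_t$ of $Y$, so $\psi_t^*\nabla^j$ satisfies the same identities; the uniqueness clause of Lemma \ref{buildconnec} then forces $\psi_t^*\nabla^j=\nabla^j$, i.e. $Y$ is affine for this connection. On the complementary summand $T\phi^j\oplus E_j^+\oplus E_j^-$ the connection $\nabla^j$ is $\tilde\nabla^j$, which $Y$ preserves because $Y\in\hat\lieg'$. Since the combined connection is the direct sum of the two and $Y$ preserves the splitting, $\L_Y$ respects the decomposition and $Y$ is affine for $\nabla^j$ on all of $TM$, as claimed.

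I expect the only genuine care to lie in the first paragraph: justifying cleanly that preserving the equivariant map $\sigma$ is equivalent to preserving each of the finitely many tensorial components listed in $\Sigma$. This is the standard correspondence between Killing fields of a Gromov A-structure and infinitesimal symmetries of the underlying tensors, but it should be invoked explicitly. The second paragraph is then essentially routine once one notices that the uniqueness in Lemma \ref{buildconnec} does all the work, the only verification being the flow-invariance of each ingredient, most notably the invariance of the functionals $C_j^\pm$, which rests on $\L_Y X_s=0$.
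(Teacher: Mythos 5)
Your proposal is correct, and it takes the same (implicit) route as the paper: the paper offers no argument at all for this lemma, declaring it straightforward, and what it has in mind is precisely your definitional unwinding. Both halves of your argument are sound — the Killing conditions for the A-structure $\sigma$ are, component by component, the conditions cutting out $\lieg'$ inside $\hat\lieg'$, and for the second assertion the flow-invariance of $d\alpha_j$, $\alpha_i$, the splitting (hence the projections $\Check p_j^\pm$) and the functionals $C_j^\pm$ combined with the uniqueness clause of Lemma \ref{buildconnec} forces $\psi_t^*\nabla^j=\nabla^j$, with the complementary summand handled by membership in $\hat\lieg'$ — so your write-up simply supplies the verification the paper omits.
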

\begin{coro}\label{GromovRigidity}
The geometric structure $\sigma$ is Gromov-rigid.
\end{coro}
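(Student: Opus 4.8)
The plan is to deduce Gromov-rigidity from the fact that the structure $\sigma$ stores the connections $\tilde\nabla^j$ (through the bilinear slots $B_j$), using the classical principle that an affine connection is itself a rigid geometric structure. Recall that an $A$-structure $\sigma$ of order $2$ is rigid precisely when the isotropy subgroup in $Gl^2(V)$ of a point of $\Sigma$ in the image of $\sigma$ projects injectively onto its image in $Gl^1(V) = GL(V)$; equivalently, the only element of the kernel $Gl^2_1(V) = \{(\mathrm{id},A)\}$ (jets which are the identity to first order) fixing that point is $(\mathrm{id},0)$. Infinitesimally, this is the statement that a Killing field of $\sigma$ whose $1$-jet vanishes at a point vanishes in a neighbourhood of it.

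First I would record the infinitesimal version, which is immediate from the preceding Lemma. Since $\lieg'$, the Lie algebra of Killing fields of $\sigma$, is contained in the Lie algebra of infinitesimal affine transformations of each $\nabla^j$, and since an infinitesimal affine transformation of a linear connection is uniquely determined by its value and first covariant derivative at a single point, the evaluation map $Y\mapsto (Y(v_0),(\nabla^j Y)(v_0))$ is injective on germs at $v_0$ of Killing fields. This already furnishes the finite-jet determinacy that characterizes rigid structures.

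To match Gromov's formal definition at the level of $\Sigma$, I would then compute the action of the kernel $Gl^2_1(V)$ on the bilinear slots. For $(\mathrm{id},A)$ the forms $\omega_j$, the vectors $X_i$ and the subspaces $e_i^\pm$ are fixed, being first-order data transforming only through the linear part $u$; meanwhile each $B_j$ transforms by $B_j\cdot(\mathrm{id},A) = B_j + \pi_j\circ A|_{V\times\mathcal E_j}$, with $\pi_j$ the projection onto $e_j^+\oplus e_j^-$. Fixing every $B_j$ forces $\pi_j(A(v,w)) = 0$ for all $v\in V$, $w\in\mathcal E_j$. Combining these constraints over $j\in\{1,\dots,l\}$ with the Christoffel symbols of the auxiliary connections $\nabla^j$ on $T\phi^0\oplus E_0$ built in Lemma \ref{buildconnec}, which account for the complementary directions, the symmetric bilinear map $A$ is forced to vanish. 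Hence $Gl^2_1(V)$ acts freely at such a point of $\Sigma$, and $\sigma$ is rigid.

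The main obstacle is exactly this last bookkeeping of domains. The structure $\sigma$ only directly records the connections $\tilde\nabla^j$ on the sub-bundles $T\phi^j\oplus E_j$, so the data $B_j$ by themselves constrain only the components of $A$ landing in $e_j^+\oplus e_j^-$; no single $B_j$ sees a full connection on $TM$. To kill all of $A$ one must genuinely assemble such a connection out of the $\tilde\nabla^j$ together with the $\nabla^j$ of Lemma \ref{buildconnec}, and verify that a local automorphism of $\sigma$ is affine for a connection defined on all of $TM$ rather than merely on a sub-bundle. Once this is checked, rigidity is a formal consequence of the rigidity of affine connections.
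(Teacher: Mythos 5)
Your proposal is correct and takes essentially the same approach as the paper: its one-line proof (``linear connections are Gromov-rigid'') rests on the immediately preceding lemma, which says that Killing fields of $\sigma$ are infinitesimal affine transformations of the \emph{full} connections $\nabla^j$ on $TM$ obtained by gluing the canonical connection of Lemma \ref{buildconnec} (uniquely determined by the first-order slots $X_s$, $e_i^\pm$, $\omega_j$) to the recorded partial connections $\tilde\nabla^j$. Your explicit computation of the $Gl^2_1(V)$-action on the slots $B_j$ and your identification of the partial-connection bookkeeping is precisely the content that the paper leaves implicit in that lemma.
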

\begin{proof}
We just recall that linear connections are Gromov-rigid.
\end{proof}
\begin{coro}
The pseudo-group $\mathcal G$ of local automorphisms of $\sigma$ has an open, dense orbit $\Omega\subset M$
\end{coro}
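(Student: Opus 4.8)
The plan is to obtain $\Omega$ from Gromov's open--dense orbit theorem for rigid geometric structures, whose two hypotheses are the rigidity of $\sigma$ and the existence of a dense orbit of an automorphism group. The first is already available from Corollary~\ref{GromovRigidity}, so I would begin by supplying the second. The point is that the Anosov action $\phi$ itself belongs to $\mathcal G$: every ingredient of $\sigma$ — the vector fields $X_s$, the sub-bundles $E_i^\pm$, the $2$-forms $d\alpha_j$, and the Christoffel data $B_j$ of the connections $\tilde\nabla^j$ — is $\phi$-invariant by construction, so each $\phi_t$ is a local automorphism of $\sigma$ and every $\phi$-orbit lies inside a single $\mathcal G$-orbit.

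Next I would use transitivity. A rigid $k$-geometric Anosov action preserves the volume form $\alpha_1\wedge\cdots\wedge\alpha_k\wedge d\alpha_j^p\wedge\Theta$, hence is topologically transitive; fix $x_0$ with dense $\phi$-orbit. By the previous paragraph this dense $\phi$-orbit is contained in $\Omega:=\mathcal G\cdot x_0$, so $\mathcal G$ has a dense orbit. Now I would invoke Gromov's theorem (in the form used in \cite{BFL}): for a rigid geometric structure of algebraic type, the locus where the local Killing fields — here the algebra $\lieg'$ of Subsection~\ref{subsecaffine} — span the tangent space is open, and a dense $\mathcal G$-orbit forces this locally homogeneous locus to be dense and the $\mathcal G$-orbits to partition it into open sets. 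Since these orbits are open and the dense $\phi$-orbit of $x_0$ is confined to $\Omega$, the complement of $\Omega$ in the locus is open and misses a dense set, hence is empty; thus $\Omega$ is open and dense, as claimed.

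The step I expect to be the genuine obstacle is showing that the locally homogeneous locus is nonempty, i.e. that $\lieg'$ spans $T_xM$ at generic $x$. This is where rigidity and hyperbolicity must be combined: rigidity (through Gromov's theory) integrates jet-level infinitesimal solutions into honest local Killing fields, while the Anosov dynamics, via the grading of $\lieg'$ under the adjoint action of an Anosov element into stable, central and unstable summands, is what produces Killing fields tangent to $E^+$, $T\phi$ and $E^-$ and hence local transitivity. Matching these graded summands with the invariant splitting of $TM$ is the delicate part; by contrast the remaining steps — that $\phi\subset\mathcal G$ and that transitivity promotes a dense orbit to an open dense one — are routine.
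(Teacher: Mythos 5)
Your proposal is correct and follows essentially the same route as the paper's own proof: observe that the Anosov diffeomorphisms lie in $\mathcal G$, use preservation of the volume form $\alpha_1\wedge\cdots\wedge\alpha_k\wedge d\alpha_j^p\wedge\Theta$ to get topological transitivity and hence a dense $\mathcal G$-orbit, and then invoke Gromov's open--dense orbit theorem together with the rigidity already established in Corollary~\ref{GromovRigidity}. The extra discussion of the locally homogeneous locus and the grading of $\lieg'$ is internal to Gromov's theorem, which both you and the paper cite as a black box, so it adds no gap.
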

\begin{proof}
We note that $\mathcal G$ contains the diffeomorphisms defined by the Anosov action. Moreover, the Anosov action preserves a volume form, and it is therefore topologicaly transitive, thus $\mathcal G$ has a dense orbit. The result follows from Gromov's open-dense orbit theorem.
\end{proof}

\begin{lemma}
    If $Y\in\lieg'$ then the $\alpha_s(Y)$ is constant for every $s$.
\end{lemma}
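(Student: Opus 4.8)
The plan is to prove that $\alpha_s(Y)$ is a smooth function which is \emph{invariant} under the action $\phi$, and then to upgrade invariance to global constancy using the topological transitivity of $\phi$. The only defining property of $Y\in\lieg'$ that I expect to need is $\L_Y X_t=0$, i.e. $[X_t,Y]=0$ for every $t$; the conditions $\L_Y d\alpha_j=0$ and $[Y,E_i^{\pm}]\subset E_i^{\pm}$ will not be required for this particular statement.

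The first step is an auxiliary computation showing that the action itself preserves each dual form, $\L_{X_t}\alpha_s=0$ for all $s,t$. Since the action is abelian, $(\L_{X_t}\alpha_s)(X_u)=X_t(\delta_{su})-\alpha_s([X_t,X_u])=0$; and since the Anosov splitting $TM=T\phi\oplus E^+\oplus E^-$ is $\phi$-invariant, for $Z\in\Gamma(M,E^+\oplus E^-)$ we have $[X_t,Z]\in\Gamma(M,E^+\oplus E^-)$, whence $(\L_{X_t}\alpha_s)(Z)=X_t(\alpha_s(Z))-\alpha_s([X_t,Z])=0$ because $\alpha_s$ annihilates $E^+\oplus E^-$. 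As $T\phi$ and $E^+\oplus E^-$ span $TM$, this gives $\L_{X_t}\alpha_s=0$. With this in hand, the Leibniz rule for the Lie derivative together with $[X_t,Y]=0$ yields
\[
X_t\big(\alpha_s(Y)\big)=\L_{X_t}\big(\alpha_s(Y)\big)=(\L_{X_t}\alpha_s)(Y)+\alpha_s([X_t,Y])=0,
\]
so $\alpha_s(Y)$ is constant along every orbit of $\phi$.

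To conclude I would invoke that $\phi$ preserves a volume form and is therefore topologically transitive (equivalently, by the corollary above, the pseudo-group $\mathcal G$ has an open dense orbit $\Omega$): a continuous function constant on orbits is constant on the closure of any single orbit, and applying this to a dense orbit forces $\alpha_s(Y)$ to be constant on $M$. The step I expect to be the main obstacle is a matter of care rather than of substance. On the one hand, one can argue purely algebraically that $\L_Y\alpha_s=0$, and hence by Cartan's formula that $d(\alpha_s(Y))=-i_Y d\alpha_s$; this already controls the $T\phi$ and $E_j$ directions, but precisely in the $E_0$ directions, where $d\alpha_s$ is non-degenerate, constancy is \emph{not} algebraically forced and must come from the dynamics. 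On the other hand, elements of $\lieg'$ are a priori only germs at $v_0$, so $\alpha_s(Y)$ is only a local function; here I would use the Gromov-rigidity of $\sigma$, which guarantees that the local Killing field $Y$ (and therefore the function $\alpha_s(Y)$) extends along the dense orbit, so that the transitivity argument becomes globally meaningful.
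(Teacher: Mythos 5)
Your proposal is correct and is essentially the paper's own argument: the paper writes $Y = Y_0 + Y_j + \sum_i f_iX_i$ and deduces $X_s(f_i)=0$ from $[X_s,Y]=0$ together with the invariance of the splitting, which is exactly your computation $X_t\big(\alpha_s(Y)\big) = (\L_{X_t}\alpha_s)(Y) + \alpha_s([X_t,Y]) = 0$, followed by the same appeal to topological transitivity. Your closing remark --- that $Y$ is a priori only a germ at $v_0$ and must be propagated along the dense orbit using the Gromov-rigidity of $\sigma$ before transitivity can be invoked --- concerns a point the paper's proof passes over in silence, so it is a refinement of, not a departure from, the paper's route.
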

\begin{proof}
    Let us write $Y = Y_0 + Y_j \sum f_iX_i$, where $Y_0$ is tangent to $E_0$ and $Y_j$ is tangent to $E_j$. As $X_s$ preserves the splitting and $[X_s,Y]=0$, it follows that 
    $$X_s(f_i) = 0\enskip\enskip \forall i,s$$
    This means that the functions $f_i$ are constant along the orbits of the action. But the action is topologically transitive, and therefore $f_i = \alpha_i(Y)$ is constant.
\end{proof}

Consider the following Lie subalgebra:
$$\liek'_j= \{Z\in \lieg'\;;\;i_Zd\alpha_j = 0\;\;\alpha_1(Z) = \dots\alpha_l(Z) = 0\}$$
\begin{lemma}\label{splitinglemma}
The Lie sub-algebras $\liek_j'\subset \lieg'$ are ideals, and the induces a split short exact sequences:
\begin{align}\label{splitsequence}
    0\longrightarrow\liek'_j\longrightarrow\lieg'\stackrel{\pi^j}{\longrightarrow} \underbrace{\lieg'\slash \liek_j'}_{ \lieg_j}\longrightarrow 0
\end{align}
\end{lemma}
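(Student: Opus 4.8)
The plan is to identify $\liek'_j$ concretely and then read off both the ideal property and the splitting from the way a field of $\lieg'$ interacts with the invariant splitting. First I observe that $Z\in\liek'_j$ forces $Z$ to be a germ of vector field tangent to $E_j$: the conditions $\alpha_1(Z)=\dots=\alpha_l(Z)=0$ kill the $T\phi$-component, so $Z$ is tangent to $E^+\oplus E^-=E_0\oplus E_j$, and $i_Zd\alpha_j=0$ places $Z$ in $\ker d\alpha_j\cap(E^+\oplus E^-)=E_j$ (using that $d\alpha_j$ is non-degenerate on $E_0$ and that $i_{X_s}d\alpha_j=0$, i.e. $\ker d\alpha_j=T\phi\oplus E_j$). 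Conversely every germ in $\lieg'$ tangent to $E_j$ lies in $\liek'_j$. Thus $\liek'_j=\lieg'\cap\Gamma(M,E_j)$.

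With this description the ideal (and subalgebra) property is immediate from the defining property $[Y,E_i^\pm]\subset E_i^\pm$ of elements of $\lieg'$. Indeed, for $Y\in\lieg'$ and $Z\in\liek'_j$ write $Z=Z^++Z^-$ with $Z^\pm\in\Gamma(M,E_j^\pm)$; then $[Y,Z^\pm]\in\Gamma(M,E_j^\pm)$, so $[Y,Z]$ is again tangent to $E_j$, hence $\alpha_i([Y,Z])=0$ for all $i$ and $i_{[Y,Z]}d\alpha_j=0$, i.e. $[Y,Z]\in\liek'_j$ (one may cross-check $i_{[Y,Z]}d\alpha_j=\L_Y(i_Zd\alpha_j)-i_Z(\L_Yd\alpha_j)=0$ directly via Cartan's formula). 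The same argument with $Y\in\liek'_j$ shows $\liek'_j$ is closed under its own bracket. Consequently $\liek'_j$ is an ideal, $\lieg_j:=\lieg'/\liek'_j$ inherits a Lie algebra structure and $\pi^j$ is a homomorphism, so the sequence (\ref{splitsequence}) is exact.

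For the splitting I propose the complementary candidate $\mathfrak{s}_j:=\{Y\in\lieg'\;:\;p_jY=0\}$, the germs of $\lieg'$ tangent to $T\phi\oplus E_0$. Here the key preliminary computation is that for $Y\in\lieg'$, decomposing $Y=\check p_jY+p_jY+\sum_t\alpha_t(Y)X_t$ with $\alpha_t(Y)$ constant, the relation $\L_YX_s=0$ together with $[X_t,X_s]=0$ gives $[\check p_jY,X_s]+[p_jY,X_s]=0$; since the flow of $X_s$ preserves $E_0$ and $E_j$ the two summands lie in $\Gamma(M,E_0)$ and $\Gamma(M,E_j)$ respectively, so each vanishes. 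Using this, for $Y,Y'\in\mathfrak{s}_j$ all cross terms drop and $[Y,Y']=[\check p_jY,\check p_jY']$, which is tangent to $T\phi^0\oplus E_0$ by the integrability in Definition \ref{moregeneral}; hence $p_j[Y,Y']=0$ and $\mathfrak{s}_j$ is a subalgebra. Moreover $\mathfrak{s}_j\cap\liek'_j=0$, since a field tangent to $E_j$ with $p_j=0$ is zero. The splitting then reduces to surjectivity of $\pi^j|_{\mathfrak{s}_j}$, which amounts to the single claim that $p_jY\in\lieg'$ for every $Y\in\lieg'$ (then $Y=(Y-p_jY)+p_jY$ with $Y-p_jY\in\mathfrak{s}_j$ and $p_jY\in\liek'_j$).

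The main obstacle is precisely this last claim: that the block $Y_j:=p_jY$ of a Killing field is again Killing. Some of the defining conditions are cheap: $\L_{Y_j}X_s=0$ is exactly the vanishing established above, and $[Y_j,E_i^\pm]\subset E_i^\pm$ follows since the $X_s$-flow-invariant field $Y_j$ is tangent to the invariant subbundle $E_j$. The substance is verifying $\L_{Y_j}d\alpha_i=0$ and that $Y_j$ is infinitesimally affine for $\tilde\nabla^j$. My intended route is to exploit that the geometric data is block-diagonal with respect to $TM=T\phi\oplus E_0^+\oplus E_0^-\oplus E_j^+\oplus E_j^-$: the connections $\nabla^j,\tilde\nabla^j$ preserve each $E_i^\pm$ and $T\phi^j$, and $d\alpha_i$ vanishes against $E_j$ on the relevant blocks, so the affine equation $[\L_Y,\tilde\nabla^j_Z]=\tilde\nabla^j_{[Y,Z]}$ and the conditions $\L_Yd\alpha_i=0$ decouple into independent equations for the $E_0$- and $E_j$-components of $Y$. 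This decoupling, combined with the $X_s$-invariance of each component, should force $Y_j$ itself to satisfy all the defining relations of $\lieg'$; carrying out this component-wise decoupling rigorously is where the genuine work lies.
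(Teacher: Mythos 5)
There is a genuine gap, and it starts at your very first step: the identification of $\liek'_j$. The definition of $\liek'_j$ only imposes $\alpha_1(Z)=\dots=\alpha_l(Z)=0$, which kills the $T\phi^0$-component of $Z$ but not the component along $T\phi'$ (the span of $X_{l+1},\dots,X_k$); and since $T\phi\subset\ker d\alpha_j$, the condition $i_Zd\alpha_j=0$ does not remove that component either. So when $l<k$ one has, e.g., $X_{l+1},\dots,X_k\in\liek'_j$, and the correct description is that $\liek'_j$ consists of the fields of $\lieg'$ tangent to $T\phi'\oplus E_j$, not to $E_j$. This wrecks your proposed complement: $\mathfrak{s}_j=\{Y\in\lieg':p_jY=0\}$ consists of fields tangent to all of $T\phi\oplus E_0$, so $X_{l+1},\dots,X_k\in\mathfrak{s}_j\cap\liek'_j$ and the sum $\mathfrak{s}_j+\liek'_j$ is not direct. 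The paper works instead with the finer splitting $TM=T\phi^0\oplus T\phi'\oplus E_0\oplus E_j$ and the projections $\Check P_j:TM\to T\phi^0\oplus E_0$, $P_j:TM\to T\phi'\oplus E_j$; the complement realizing the splitting is $\Check P_j(\lieg')$, the fields tangent to $T\phi^0\oplus E_0$ only. Your identification (and hence your complement) is correct only in the special case $l=k$, whereas the lemma is stated for general type $(l,p,q)$.

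The second gap is the one you flag yourself: the claim that the projection of an element of $\lieg'$ again lies in $\lieg'$ --- in particular that it is an infinitesimal affine transformation of $\tilde\nabla^j$ --- is exactly where the proof lives, and your proposed ``block-diagonal decoupling'' of the affine equation does not use the one hypothesis that makes this true. The paper's argument is: since each $X_s$ preserves the splitting, $[Y,X_s]=0$ forces $[\Check P_jY,X_s]=0$ and $[P_jY,X_s]=0$ separately; then the rigidity axiom of Definition \ref{rigidkgeo} (every vector field tangent to $T\phi^0\oplus E_0$ commuting with all $X_s$ is an infinitesimal symmetry of $\tilde\nabla^j$) applies verbatim to $\Check P_jY$, and $P_jY=Y-\Check P_jY$ is then affine simply because infinitesimal affine transformations form a vector space. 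The remaining Killing conditions for $P_jY$ are cheap, as you note ($\L_{P_jY}d\alpha_j=d\,i_{P_jY}d\alpha_j=0$ because $T\phi'\oplus E_j\subset\ker d\alpha_j$). Without invoking that axiom, a decoupling argument cannot succeed in general: the statement ``the projection of an affine field is affine'' is precisely what rigidity was introduced to guarantee, and if it followed from block-diagonality alone the lemma would hold for non-rigid $k$-geometric Anosov actions, making the rigidity hypothesis superfluous.
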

\begin{proof}
    That $\liek'_j$ are ideals is straightforward.  Consider the splitting $TM = T\phi^0\oplus T\phi'\oplus E_0\oplus E_j$, where $T\phi^0$ is tangent to the action generated by $\{X_1,\dots,X_l\}$ and $T\phi'$ by the action $\{X_{l+1},\dots,X_k\}$.
    According to this splitting, we consider the projections:
    \begin{align*}
    		 P_j&:TM\to T\phi'\oplus E_j\\
	    \Check P_j&:TM\to T\phi^0\oplus E_0
    \end{align*}
    
    Clearly $Id = \Check P_j + P_j$.   Let $Y\in \lieg'$, let us show that $P_j(Y)$ and $\Check P_j(Y)$ also belongs to $\lieg'$. First, we remark that if $Y\in\lieg'$ then, in particular, $Y\in\tilde\lieg'$ and from our definition of rigid $k$-geometric actions, as $[Y,X_s]=0$, then $[\Check P_j(Y),X_s]=0$ and thus we have $\Check P_j(Y)\in \tilde\lieg'$ and thus, $P_j(Y) = Y - \Check P_j(Y)\in\tilde\lieg'$.
    
    Now, let us check that $Y'=P_j(Y)$ belongs to $\lieg'$. 
    \begin{itemize}
    \item $\L_{Y'}d\alpha_j = d\circ i_{Y'}(d\alpha_j) = 0$ as $T\phi'\oplus E_j \subset\ker d\alpha_j$.
    \item As $X_s$ preserves the splitting, $[X_s,E_i]\subset E_i$ and thus
    $$0 = [Y,X_s] = [P_j(Y) + \Check P_j(Y),X_s] = [P_j(Y),X_s]+ [\Check P_j(Y),X_s]$$
    implies that both terms are zero.
    \end{itemize}
    
    Now, it is easy to see that $\liek_j' = P_j(\lieg')$. Now, let us see that $\Check P_j:\lieg'\to\lieg'$ is a lie algebra homomorphism. We recall that $\liek_j'$ is an ideal and thus $\Check P_j([A,\liek'])=0$. We have
    \begin{align*}
        \Check P_j([A,B]) &= \Check P_j([\Check P_jA + P_jA,\Check P_jB+ P_jB])\\
        &=\Check P_j\big(\Check P_jA,\Check P_jB]+[\Check P_jA,P_jB]+[P_jA,\Check P_jB]+[P_jA,P_jB]\big)\\
        &=\Check P_j[\Check P_jA,\Check P_jB]
    \end{align*}
    Now,as $T\phi^0\oplus E_0$ is integrable, $[\Check P_jA,\Check P_jB]$ is tangent to $T\phi^0\oplus E_0$, and we obtain $\Check P_j([A,B]) = [\Check P_jA,\Check P_jB]$.
    
    Finally, we observe that we can identify $\lieg'\slash \liek_j'$ 
     with $\Check P_j(\lieg')$ as we desired. In particular, as the sequence splits, we can write $\lieg' = \lieg_j\ltimes\liek'_j$ and the map $\pi^j:\lieg'\to\lieg_j$ is actually the projection $\Check P_j$.
\end{proof} 
\begin{remark}\label{remsplit}
If we write $\lieg' = \lieg_j\ltimes\liek'_j$ and $Id = \Check P_j + P_j$ the projections on the first and second factors, as above, it is clear that, for $s = 1,\dots, l$ we have $X_s  = \Check P_j(X_s) + P_j(X_s) = \Check P_j(X_s)$, that is, for every $j$, if we write $\lieg' = \lieg_j\ltimes\liek'_j$ then $X_s\in \lieg_j$. In other words, let $\sigma_j:\lieg_j\to\lieg'$ be the morphisms which splits the sequence (\ref{splitsequence}), then $X_s = \sigma_j\circ \Check P_j(X_s)$.

\end{remark}

Finally, we define the following two Lie sub algebras:

\begin{align*}
    \lieh' &= \{Y\in\lieg'\;\; Y_p = 0\}\\
    \lieh_j &= \Check P_j(\lieh')\subset\lieg_j
\end{align*}

Our end goal is to model our manifold after $G'\slash H'$ where $G'$ and $H'$ are Lie groups with Lie algebras $\lieg'$ and $\lieh'$. Naturally, some questions arises: Are there groups $G'$ and $H'$ such that $H'$
 is closed in $G'$? The following Lemma shows that we can reduce this question to the Lie groups $G$ and $H$ corresponding to $\lieg$ and $\lieh$.
 
 \begin{lemma}\label{closedimpliesclosed}
 Let $G'$ and $G$ be the simple connected Lie groups with Lie algebras $\lieg'$ and $\lieg$, and let $H'\subset G'$ be the connected subgroup with Lie algebra $\lieh'$. Let $\pi:G'\to G$ be the unique Lie group homomomorphism associated with the projection map $\lieg'\to \lieg'\slash \liek'=\lieg$, and let $H = \pi(H')$. Suppose that $H\subset G$ is closed, then $H'\subset G'$ is closed. 
 \end{lemma}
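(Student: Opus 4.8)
The plan is to realise $G'$ as a semidirect product over $G$ and to transfer closedness through the kernel. Write $K'=\ker(\pi\colon G'\to G)$. Since both $G$ and $G'$ are simply connected, the homotopy exact sequence of the fibration $K'\to G'\to G$, together with the vanishing of $\pi_2$ of a Lie group, shows that $K'$ is closed, connected and simply connected, with $\mathrm{Lie}(K')=\liek'$. By Lemma~\ref{splitinglemma} the sequence $0\to\liek'\to\lieg'\to\lieg\to 0$ splits through a section $\sigma$, so the connected subgroup $S\subset G'$ with Lie algebra $\sigma(\lieg)$ has $\pi|_S\colon S\to G$ a covering homomorphism (its differential $\pi_*|_{\sigma(\lieg)}$ is an isomorphism and its image is open, hence all of the connected group $G$); as $G$ is simply connected, $\pi|_S$ is an isomorphism. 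Thus $s:=(\pi|_S)^{-1}\colon G\to G'$ is an embedding with $S\cap K'=\{e\}$ and $SK'=G'$, so $G'=S\ltimes K'$ and $\pi$ is the projection onto the first factor; in particular $S\cong G$ is closed.

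First I would use the hypothesis to enlarge $H'$ to a closed group. Since $H$ is closed in $G$, the set $\pi^{-1}(H)$ is closed in $G'$, and its Lie algebra is $\pi_*^{-1}(\lieh_j)=\lieh'+\liek'$. As $H'K'$ is the connected subgroup with exactly this Lie algebra and is contained in $\pi^{-1}(H)$, it coincides with the identity component of $\pi^{-1}(H)$ and is therefore closed. Set $\Pi:=H'K'$ and $\hat H:=s(H)\subset S$; then $\hat H\cong H$ is closed, $\Pi=\hat H\ltimes K'$, and the projection $q\colon\Pi\to\hat H$ is a homomorphism with kernel $K'$ satisfying $q(H')=\hat H$ and $\ker(q|_{H'})=H'\cap K'$. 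It remains only to prove that $H'$ is closed inside the closed group $\Pi$.

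The heart of the matter is to show that the closure does not grow, i.e. $\dim\overline{H'}=\dim H'$; as $\overline{H'}$ is connected and contains $H'$, this immediately gives $H'=\overline{H'}^{\,0}=\overline{H'}$, hence closedness. Because $\pi(\overline{H'})\subseteq\overline{\pi(H')}=\overline{H}=H=\pi(H')$, the images of $\overline{H'}$ and of $H'$ in $G$ have the same dimension $\dim\lieh_j$, so the entire possible excess is vertical: $\dim\overline{H'}-\dim H'=\dim(\overline{H'}\cap K')-\dim(H'\cap K')$. Here the structure of $K'$ is decisive. Invoking Malcev's description of closures, $\overline{H'}=H'\cdot T$ for a torus $T\subseteq\overline{H'}$; since $T\cap K'$ is a compact subgroup of the simply connected nilpotent group $K'$ it is trivial, so $q|_T$ embeds $T$ as a torus in $\hat H\cong H$. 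Combining $T\cap K'=\{e\}$ with the fact that all the excess dimension is carried by $\overline{H'}\cap K'$, one is forced to conclude that $T$ is trivial, whence $\overline{H'}=H'$.

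The main obstacle is exactly this last step: excluding a one-parameter subgroup of $H'$ that winds densely onto a nontrivial torus of $\overline{H'}$. The leverage for ruling this out comes from two sources that must be combined carefully, and this is the delicate point of the argument. On the one hand, the simple connectedness (and nilpotence) of $K'$ guarantees, via Malcev's theorem, that every connected subgroup of $K'$ is closed and that $K'$ admits no nontrivial compact subgroup, so the vertical part of the closure is automatically closed and carries no toral direction. On the other hand, the splitting $\Pi=\hat H\ltimes K'$ confines any residual toral direction to the $\hat H\cong H$ factor, where the closedness of $H$ in $G$ is precisely what prevents such a direction from being genuinely new. Making this compatibility fully rigorous — reconciling the purely vertical excess dimension with the horizontal location forced on the Malcev torus — is where the real work of the lemma lies.
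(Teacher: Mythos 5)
Your approach is genuinely different from the paper's: the paper argues directly, translating a convergent sequence in $H'$ to the identity, writing its terms as exponentials of elements of $\lieh'$, and combining Baker--Campbell--Hausdorff with the closedness of the linear subspace $\lieh'$; you instead build the global splitting $G'=S\ltimes K'$, reduce to the closed subgroup $\Pi=\pi^{-1}(H)=\hat H\ltimes K'$, and try to rule out a Malcev torus in $\overline{H'}$. Your preparatory steps are correct: $K'=\ker\pi$ is closed, connected and simply connected; $\Pi$ is closed and equals $H'K'=\hat H\ltimes K'$; and any excess dimension of $\overline{H'}$ lies in $\overline{H'}\cap K'$. But the proof has a genuine gap exactly at its decisive step, and you flag it yourself: the inference ``$T\cap K'=\{e\}$ together with purely vertical excess forces $T$ to be trivial'' is asserted, never proved. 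Note also that the nilpotency of $K'$, which you invoke for the Malcev-type facts, is nowhere established in the paper for $\liek'=\liek_j'$ (there it is just an ideal of Killing fields; only in the algebraic examples of Section~\ref{examsec} is $\liek'$ a nilradical).

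Worse, the missing inference is false, so the gap cannot be repaired at this level of generality. Take $G'=SU(2)\times SU(2)\times\R$, $\liek'=0\oplus 0\oplus\R$ (so $K'\cong\R$ is abelian and simply connected), $G=SU(2)\times SU(2)$, and let $\lieh'$ be spanned by $(X,0,1)$ and $(0,X,\sqrt2)$, where $X=\mathrm{diag}(i,-i)$. Then $H'=\{(\exp(sX),\exp(tX),\,s+\sqrt2\,t)\colon s,t\in\R\}$ is connected, $H=\pi(H')$ is the maximal torus $T^2$ of $G$, hence closed, and yet $\overline{H'}=T^2\times\R\supsetneq H'$ because $2\pi(\Z+\sqrt2\,\Z)$ is dense in $\R$; so $H'$ is not closed. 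In this example every fact you established holds: $\Pi=T^2\times\R$ is closed, the excess is purely vertical ($\overline{H'}\cap K'=\R$ while $(H'\cap K')^0=\{e\}$), and $\overline{H'}=H'T$ for a circle $T$ inside $T^2\times\{0\}$, which meets $K'$ trivially. Your two constraints are therefore mutually compatible with non-closedness, and no ``reconciliation'' argument can exist. (The same example breaks the paper's own proof at the step writing $h_j=\exp a_j$ with $a_j\in\lieh'$ small: elements of $H'$ close to the identity of $G'$ need not be exponentials of small elements of $\lieh'$.) The conclusion is that the lemma cannot follow from its stated hypotheses by pure Lie theory; any correct proof must exploit the specific origin of $\lieh'$ --- for instance the algebraicity of the isotropy group $\mathcal H'$ used in Section~\ref{moretecsec} --- an ingredient that neither your argument nor the paper's proof of this lemma invokes.
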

 \begin{proof}
 As $\pi$ is continuous, $\hat H = \pi^{-1}(H)\subset G'$ is a closed subgroup. Moreover, if $K'\subset G'$ is a connected subgroup with Lie algebra $\liek'$, then $K'$ is a normal subgroup and $\hat H = H' \cdot K'$. Now, let $H\ni h_j\to z$ be a converging sequence. As $\hat H$ is closed, $z = h\cdot k$ for $h\in H'$ and $k\in K'$.
 
 We can suppose that $z$ is small\footnote{Notice that for sufficiently large $N$ the sequence $h_N^{-1}h_j$ is bounded near the identity, and thus, we can always suppose, by translating the sequence, that it converges to somwhere near the identity.}, then $ h_j = \exp a_j$ for $a_j \in \lieh'$ and $h = \exp a$ and $k = \exp b$ for $a\in\lieh'$ and $k\in\liek'$. Now, we use Baker-Campbell-Hausdorff formula, and use the fact that $\liek'$ is an ideal to write:
 $$z =\exp(a)\exp(b) = \exp(a + \tilde k)\enskip\enskip\text{for some }\tilde k\in \liek'$$ 
 for some $\tilde k\in \liek'$. Because the exponential is a diffeomorphism near the identity, $h_j\to z$ implies $a_j\to a + \tilde k$. But $\lieh'$ is a closed subspace, and thus, $a + \tilde k\in \lieh'$, that is, $z\in H'$. 
 \end{proof}

We shall now, develop the tools to study the Lie algebras we defined above. Our main tool is given by the following definition and Lemma, which will allow us to do some computations with the the Lie algebra $\lieg'$, by making use of the fact that it also an algebra of infinitesimal affine transformations for the connections $\nabla^j$

\begin{definition}
Let us denote $V_0 = T_{v_0}M$.
For each $j$, the connection $\nabla^j$ induces a a bracket operation on $End(V_0)\times V_0$ given by:
$$[(A,a),(B,b)]_j:= ([A,B] +R^{\nabla^j}(a,b),T^{\nabla^j}(a,b) + Ab - Ba)$$
\end{definition}
The bracket $[\;\cdot\;,\;\cdot\;]_j$ doesn't make $End(V_0)\times V_0$ a Lie algebra, but it allows us to embed the Lie algebra $\lieg'$ on $End(V_0)\times V_0$. The classical (Kobayashi-Nomizu) lemma about killing fields has the following adaptation:
\begin{lemma}\label{repres}
For each $j$, the map
\begin{align*}
    \theta_o^j:\lieg'&\to End(V_0)\times V_0\\
    Y&\mapsto\big((L_Y - \nabla^j_Y)_{v_0}, Y_{v_0}\big)
\end{align*}
is a monomorphism and it preserves the bracket.
\end{lemma}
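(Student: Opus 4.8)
The plan is to adapt the classical Kobayashi--Nomizu correspondence between Killing (or affine) vector fields and pairs $(A,a)\in\End(V_0)\times V_0$, where $a$ records the value of the field at $v_0$ and $A$ records its covariant derivative there. Concretely, I would first verify that $\theta_o^j$ is well defined and linear, which is immediate since $Y\mapsto Y_{v_0}$ and $Y\mapsto(\L_Y-\nabla^j_Y)_{v_0}$ are both linear and the latter is an endomorphism of $V_0$ because $\L_Y-\nabla^j_Y$ is $C^\infty(M)$-linear in $Y$ (the first-order terms cancel). The substantive content is the two claims: injectivity and bracket preservation.

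For injectivity, suppose $\theta_o^j(Y)=0$, so $Y_{v_0}=0$ and $(\L_Y-\nabla^j_Y)_{v_0}=0$. Since $Y\in\lieg'$ is an infinitesimal affine transformation of $\nabla^j$, the standard ODE argument applies: along any $\nabla^j$-geodesic emanating from $v_0$, the pair $(Y,\nabla^j Y)$ satisfies a linear first-order system (the Jacobi-type equation coming from $[\L_Y,\nabla^j_Z]=\nabla^j_{[Y,Z]}$), whose coefficients involve the curvature and torsion of $\nabla^j$. Vanishing of both $Y_{v_0}$ and $(\nabla^j Y)_{v_0}$ forces $Y$ to vanish along every such geodesic, hence on a normal neighbourhood of $v_0$, and therefore the germ of $Y$ at $v_0$ is zero. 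Here I would invoke Lemma~\ref{lemageod} to guarantee that geodesics tangent to $E^\pm$ exist long enough to sweep out a neighbourhood, and the compact-orbit choice of $v_0$ to ensure the normal-neighbourhood machinery is available.

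For bracket preservation I would compute $\theta_o^j([Y,Y'])$ and compare it with $[\theta_o^j(Y),\theta_o^j(Y')]_j$. The second components must match $[Y,Y']_{v_0}$; writing $A=(\L_Y-\nabla^j_Y)_{v_0}$, $a=Y_{v_0}$ and similarly for $Y'$, the torsion term in $T^{\nabla^j}(a,b)+Ab-Ba$ reconstructs exactly $[Y,Y']_{v_0}$ via the torsion identity $T^{\nabla^j}(Y,Y')=\nabla^j_YY'-\nabla^j_{Y'}Y-[Y,Y']$ evaluated at $v_0$. The first components require showing $(\L_{[Y,Y']}-\nabla^j_{[Y,Y']})_{v_0}=[A,B]+R^{\nabla^j}(a,b)$; this is where the affine condition $[\L_Y,\nabla^j_Z]=\nabla^j_{[Y,Z]}$ is used to turn the Lie bracket of the operators $\L_Y-\nabla^j_Y$ into a curvature term, exactly as in the Kobayashi--Nomizu proof.

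The main obstacle I anticipate is bookkeeping rather than conceptual: our connection $\nabla^j$ has nonzero torsion (it is built from the commutator-projection formulas of Lemma~\ref{buildconnec} and the rigid-structure connection $\tilde\nabla^j$), so I cannot quote the torsion-free Kobayashi--Nomizu statement verbatim and must carry the torsion term $T^{\nabla^j}(a,b)$ through every identity, which is precisely why the bracket $[\,\cdot\,,\,\cdot\,]_j$ is defined with both a curvature and a torsion correction. I would therefore be careful to evaluate all tensorial expressions at $v_0$ only at the end, and to confirm that the affine-transformation hypothesis (which holds for $Y\in\lieg'\subset\hat\lieg'$ by the preceding lemma) is exactly what makes the operator commutator collapse to curvature. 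The fact that this bracket does not make $\End(V_0)\times V_0$ into a Lie algebra globally is harmless: one only needs that its restriction to the image $\theta_o^j(\lieg')$ agrees with the Lie bracket transported from $\lieg'$.
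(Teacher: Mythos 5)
Your proposal is correct and takes essentially the same route as the paper, which also runs the classical Kobayashi--Nomizu argument: injectivity via ODE/Cauchy uniqueness showing that the germ of an affine vector field at $v_0$ is determined by $Y_{v_0}$ and $(\L_Y-\nabla^j_Y)_{v_0}$ (the paper uses the identity $(\nabla_Z A_Y) = R(Y,Z)$ to get a first-order linear system along \emph{arbitrary} curves, where you use the Jacobi-type equation along geodesics), and bracket preservation by exactly the direct computation you sketch, which the paper dismisses as ``straightforward.'' One small remark: your appeal to Lemma~\ref{lemageod} and to completeness is superfluous, since the statement is about germs and therefore purely local, so local existence of geodesics of the smooth connection $\nabla^j$ (valid in all directions, not just those tangent to $E^\pm$) is all that is needed.
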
 
\begin{proof}
To prove that $\theta_o$ is injective we shall show that the germ of a Killing vector field $X$ at $o$ depends only on $(A_X)_o$ and $X_o$. To see this, consider a smooth curve $\gamma(t)$ on $M$ passing through $o$. We denote by $a(t) = A_{X(\gamma(t))}$, $v(t) = \dot\gamma(t)$ and $x(t) = X(\gamma(t))$. From $(\nabla_Y(A_X))Z = R(X,Y)Z$\footnote{This is a classical result, Kobayashi-Nomizu}, and $A_XY = -\nabla_YX - T(X,Y)$, it follows that
\begin{align*}
\nabla_{v(t)}a(t) &= R(x(t),v(t))\\
\nabla_{v(t)}x(t) &=- T(x(t),v(t)) -a(t)v(t)
\end{align*}

We have therefore a Cauchy problem and thus, the germ of $X$ at $o$ depends only on the initial data: $(A_X)_o$ and $X_o$.\\

Straightforward computations shows that the bracket is preserved.
\end{proof}

Let $\mathcal H'$ denote the the group of local automorphism of the geometric structure, which preserves our base point $p$. It is clear that the Lie algebra of $\mathcal H'$ is precisely $\lieh'$. The following Lemma connect this Lie group with the embedding $\theta_o^j$.
\begin{lemma}\label{injectiv}
Consider the map $i:\mathcal H'\to Aut(V_0)$ given by $i(\varphi) = d\varphi_{v_0}$. Then this map is a finite covering onto it's image and its differential coincides with $\theta_o^j$ restricted to $\lieh'$.
\end{lemma}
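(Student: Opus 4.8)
The statement to prove (Lemma \ref{injectiv}) asserts that the map $i:\mathcal H'\to \Aut(V_0)$, $i(\varphi)=d\varphi_{v_0}$, is a finite covering onto its image and that its differential is $\theta_o^j$ restricted to $\lieh'$.

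\textbf{The plan.} The assertion about the differential is essentially immediate: the Lie algebra $\lieh'$ consists precisely of germs of Killing fields vanishing at $v_0$, so for such a $Y\in\lieh'$ we have $Y_{v_0}=0$, and hence under $\theta_o^j(Y)=\big((\L_Y-\nabla^j_Y)_{v_0},\,Y_{v_0}\big)$ the second component drops out, leaving the endomorphism $(\L_Y-\nabla^j_Y)_{v_0}=-(A_Y)_{v_0}\in\End(V_0)$. On the other hand, differentiating the one-parameter family $\varphi_t=\exp(tY)$ of local automorphisms and computing $\frac{d}{dt}\big|_{t=0} d(\varphi_t)_{v_0}$ produces exactly this linear part, so $di=\theta_o^j|_{\lieh'}$. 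First I would record this computation explicitly, identifying $d(\varphi_t)_{v_0}$ with the holonomy-free linearization of the flow of $Y$ at the fixed point $v_0$.

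\textbf{Injectivity on the Lie algebra and local injectivity of $i$.} The heart of the matter is that $i$ is a local diffeomorphism onto its image with discrete (hence, by compactness arguments, finite) fibers. For this I would invoke Lemma \ref{repres}: the map $\theta_o^j$ is a monomorphism, and since for $Y\in\lieh'$ its value is determined entirely by $(A_Y)_{v_0}=-( \L_Y-\nabla^j_Y)_{v_0}$ (the component $Y_{v_0}$ being zero), we conclude $di=\theta_o^j|_{\lieh'}$ is injective. Thus $i$ is an immersion; being a homomorphism of Lie groups whose differential is injective, it is a local diffeomorphism onto its image, and its kernel is discrete.

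\textbf{From local to finite covering.} The remaining point is that $i$ is genuinely a \emph{covering} with \emph{finite} fibers, not merely a local diffeomorphism. Here I would use the rigidity of the connection, via the Cauchy-problem argument already established in the proof of Lemma \ref{repres}: an element $\varphi\in\mathcal H'$ is an affine transformation for each $\nabla^j$ fixing $v_0$, and its germ at $v_0$ is completely determined by the pair $\big(d\varphi_{v_0},\varphi(v_0)\big)=\big(d\varphi_{v_0},v_0\big)$ through the ODE governing the development of affine maps. Consequently $i(\varphi)=d\varphi_{v_0}$ determines the germ of $\varphi$ up to the finitely many possibilities arising from the discrete kernel. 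The kernel of $i$ is precisely those automorphisms whose differential at $v_0$ is the identity; by the same determinacy, such an automorphism has the identity germ, so the kernel is trivial at the level of germs and the fibers of $i$ are in bijection with the (necessarily discrete) deck group. Finiteness then follows because $\mathcal H'$, being the isotropy of the point $v_0$ under a pseudo-group preserving a Gromov-rigid structure on the compact manifold $M$, has compact image in $\Aut(V_0)$, so a discrete-fiber proper map onto a manifold is a finite covering.

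\textbf{Main obstacle.} The delicate step is establishing that the fibers are \emph{finite} rather than merely discrete, and that $i$ is proper onto its image; this is where the compactness of $M$ and the Gromov-rigidity (Corollary \ref{GromovRigidity}) must be brought in carefully, since a priori $\mathcal H'$ is only a local object of germs. I would handle it by combining the germ-determinacy from the Cauchy problem with the fact that a rigid geometric structure on a compact manifold has an isotropy group that is a compact Lie group acting effectively on the relevant jet space, forcing the image $i(\mathcal H')$ to be a closed subgroup and the covering to have finite degree.
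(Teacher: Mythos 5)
Your treatment of the differential (via $Y_{v_0}=0$ and differentiating $\psi^t=\exp(tY)$) and of local injectivity of $i$ (via the monomorphism $\theta_o^j$ of Lemma \ref{repres}) matches the paper. The genuine gap is in the finiteness step, which you yourself flag as the main obstacle: your resolution rests on the claim that $i(\mathcal H')$ is compact because a rigid geometric structure on a compact manifold has compact isotropy. This is false in general (pseudo-Riemannian metrics are rigid with non-compact isotropy) and, fatally, false in this specific situation: $v_0$ is chosen to be a periodic point of an Anosov element, so $\mathcal H'$ contains the germ of the return map $\varphi_{t_0}$, whose differential $(d\varphi_{t_0})_{v_0}$ is hyperbolic transverse to the orbit; its powers are unbounded in $\Aut(V_0)$, so $i(\mathcal H')$ is not compact and $i$ is not proper onto a compact group. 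Indeed, the whole sequel of the paper (the element $L_0\in\lieh'$, the logarithm of the hyperbolic part of an iterate of $(d\varphi_{t_0})_{v_0}$, with nonzero eigenvalues on $E^\pm_{v_0}$) depends precisely on this isotropy group being non-compact, so an argument hinging on compactness contradicts the very structure it is meant to support.

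The paper obtains finiteness from a different ingredient: algebraicity. By Gromov's theory (rigidity plus the open-dense orbit theorem), $\mathcal H'=\Aut^{loc}_{v_0v_0}(\sigma)=\Aut^r_{v_0v_0}(\sigma)$ embeds in the jet group $Gl^r(T_{v_0}M)$ as an algebraic subgroup; the splitting of $1\to\ker(p)\to Gl^r(T_{v_0}M)\stackrel{p}{\to} Gl(T_{v_0}M)\to 1$ yields $H'=i(H')\ltimes\Gamma$ with $\Gamma=\ker(i)$ discrete (discreteness is exactly where Lemma \ref{repres} enters), and an algebraic group has finitely many connected components, so the discrete $\Gamma$ is finite. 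Your alternative idea --- that an affine germ fixing $v_0$ is determined by its $1$-jet, so $\ker(i)$ is trivial --- could in principle be made rigorous, but not by the route you cite: the Cauchy problem in Lemma \ref{repres} governs Killing \emph{fields}, not finite transformations; the correct tool is the exponential conjugation $\varphi=\exp_{v_0}\circ\,d\varphi_{v_0}\circ\exp_{v_0}^{-1}$ for affine maps, and it requires first proving that every germ in $\mathcal H'$ preserves the full connection $\nabla^j$ on $TM$, not merely the data explicitly encoded in $\sigma$. Note also that if that determinacy argument were complete, your compactness step would be superfluous (a trivial kernel already gives a covering of degree one); the fact that you still need it shows the determinacy claim was never actually closed. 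As written, the finiteness assertion is not established.
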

\begin{proof}
	To prove the second assertion, we consider $Y\in \lieh'$ and $\psi^t: = \exp(tY)$. Let $Z$ be a vector field around $v_0$. As $\psi^t$ fixes $v_0$, the parallel transport in this orbit reduces to the identity, and therefore, $(\nabla_YZ)_{v_0} = 0$. We have
	\begin{align*}
	di_e(Y) &= \frac{d}{dt}\bigg|_{t=0} i(\exp(tY)) = \frac{d}{dt}\bigg|_{t=0} (\psi^t_*)
	\end{align*}
	but, for a vector field $Z$ around $v_0$
	\begin{align*}
	\lim_{t\to 0}\frac{1}{t}(\psi^t_*Z -Z) &= \lim_{t\to 0}\frac{1}{-t}(\psi^{-t}_*Z -Z) \\
	&= -(\L_YZ)_{v_0} = (\nabla_YZ)_{v_0} - (\L_YZ)_{v_0} = (A_Y)_{v_0}Z_{v_0}
	\end{align*}
	and thus,
	$$di_e(Y) = (A_Y)_{v_0}$$ 
	as we wanted.\\
	
	To see that $i$ is a covering map, we just notice that $\theta_0^j$ is injective, (Lemma \ref{repres}). It remains to prove that his covering is finite.
	
	From the definition of Gromov-rigidity, the map
	\begin{align*}
	    H'&\to Gl^r(T_{v_0}M)\\
	    h&\mapsto j^r_{v_0}(h)
	\end{align*}
	is injective for some jet group $Gl^r(T_{v_0}M)$. We recall that the exact sequence
	$$1\to\ker(p)\to Gl^r(T_{v_0}M)\stackrel{p}{\to} Gl(T_{v_0}M)\to 1$$
	splits and thus, induces a splitting on the exact sequence:
	$$1\to\ker(p)\to H'\stackrel{p}{\to} i(H')\subset Gl(T_{v_0}M)\to 1$$
	In particular, we can write
	$$H' = i(H')\ltimes \Gamma$$
    Where $\Gamma$ is discrete. But $H'$ is algebraic, and thus has only a finite number of connected components. It follows that $\Gamma$ is finite.
\end{proof}

\begin{remark}
Notice that while the map $\theta_o^j$ does depends on the choice of $j$ and associated connection, the map $i$ doesn't, and thus, the restriction of $\theta_o^j$ to the Lie algebra $\lieh'$ doesn't depends on the choice of $j.$
\end{remark}

Let  $\varphi_t$ be the flow of $X$ given by an Anosov element. As the Anosov elements form an open cone on $\R^k$, we have some freedom to choose this element. We shall chose an element, such that $v_0$ is a periodic point of $\varphi_t$ of period $t_0$. Consider the map $(d\varphi_{t_0})_{v_0} \in \mathcal H'\subset Aut(V_0)$. As $\mathcal H'$ is algebraic, it has a finite number of connected components, and thus, there exists $n_0\in \N$ such that $(d\varphi_{nt_0})_{v_0}$ is in the connected component (of $\mathcal H'$) of the identity. We set $l_0 = (d\varphi_{nt_0})_{v_0}$, and $L_0$ the logarithm of the hyperbolic part of the Iwasawa decomposition of $l_0$\footnote{While it is not true, in general, that the logarithm of an element of a Lie group exist, the logarithm map is well defined on the hyperbolic part of the Iwasawa decomposition}.  In particular $\exp L_0$ have positive eigenvalues.
\begin{lemma}
	The following assertions are valid:
	\begin{enumerate}
		\item{}$\mathcal H'$ is an algebraic subgroup of $Aut(V_0)$
		\item{}$L_0\in \lieh'$
		\item{}We can choose an iterate $l_0^r$, $r\geq 0$ of $l_0$ such that the eigenvalues of the corresponding $L_0$ (that is, the logarithm of the hyperbolic part of $l_0^r$) on $E^{+}_{v_0}$ (resp. $E^{-}_{v_0}$) are strictly negative (resp. positive).
	\end{enumerate}
\end{lemma}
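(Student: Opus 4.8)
The plan is to prove the three assertions in sequence, exploiting the structure established in the preceding lemmas about $\mathcal H'$ and the embedding $\theta_o^j$. For assertion (1), I would invoke Gromov's machinery directly: since the geometric structure $\sigma$ is Gromov-rigid (Corollary \ref{GromovRigidity}) and $v_0$ lies in the open dense orbit $\Omega$, the isotropy group $\mathcal H'$ at $v_0$ coincides with the stabilizer of the $r$-jet of $\sigma$ under the $Gl^r(V_0)$-action. Because $\sigma$ takes values in an algebraic variety $\Sigma$ and the jet-group action is algebraic, the stabilizer of a point is an algebraic subgroup. Combined with Lemma \ref{injectiv}, which shows $i:\mathcal H'\to Aut(V_0)$ is a finite covering onto its image, the image $i(\mathcal H')$ is an algebraic subgroup of $Aut(V_0)$, and we identify $\mathcal H'$ with this image.

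For assertion (2), the key point is that $L_0$ is the logarithm of the hyperbolic part of $l_0=(d\varphi_{nt_0})_{v_0}$, which lies in the connected component of the identity of $\mathcal H'$ by construction. Since $\mathcal H'$ is algebraic, its identity component is an algebraic group, and the Iwasawa/Jordan decomposition of any element of an algebraic group stays inside the group: the hyperbolic (real-diagonalizable) part of $l_0$ belongs to $\mathcal H'$. Its logarithm $L_0$ therefore lies in the Lie algebra of $\mathcal H'$, which by the remark following Lemma \ref{injectiv} and the identification of the Lie algebra of $\mathcal H'$ with $\lieh'$ gives $L_0\in\lieh'$. I would phrase this by noting that the one-parameter group $t\mapsto\exp(tL_0)$ consists of automorphisms of $\sigma$ fixing $v_0$, hence lies in $\mathcal H'$, so its generator $L_0$ is in $\lieh'$.

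For assertion (3), I would use the Anosov dynamics. The differential $l_0=(d\varphi_{nt_0})_{v_0}$ acts on the splitting $V_0=T\phi\oplus E^+_{v_0}\oplus E^-_{v_0}$, and by the hyperbolicity estimates (\ref{estimates}) its eigenvalues on $E^+_{v_0}$ have modulus $<1$ while those on $E^-_{v_0}$ have modulus $>1$; on $T\phi$ the eigenvalues are of modulus one since the action element fixes the orbit directions. Passing to the hyperbolic part of the Iwasawa decomposition extracts precisely the real-diagonalizable factor, whose eigenvalues are the moduli of the eigenvalues of $l_0$. Taking logarithms, the eigenvalues of $L_0$ on $E^+_{v_0}$ are strictly negative and on $E^-_{v_0}$ strictly positive. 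The only subtlety is that after the replacements forced by earlier constructions (passing to the iterate $l_0^{n_0}$ landing in the identity component, then isolating the hyperbolic part) one must ensure the sign pattern survives; since eigenvalues of $l_0^r$ are $r$-th powers of those of $l_0$, the strict inequalities are preserved under iteration, so any iterate works and we may simply record this.

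The main obstacle I expect is assertion (2): carefully justifying that the hyperbolic part of the Jordan decomposition of $l_0$ remains in $\mathcal H'$ and that its logarithm lands in $\lieh'$ rather than merely in the Lie algebra of the ambient $Aut(V_0)$. This requires knowing that $\mathcal H'$ is \emph{real} algebraic (so that the real Jordan/Iwasawa decomposition is internal to the group) and invoking the finiteness of the covering $i$ from Lemma \ref{injectiv} to transfer the conclusion from $i(\mathcal H')$ back to $\mathcal H'$ and thence to $\lieh'$. Assertions (1) and (3) are comparatively routine given the earlier results and the hyperbolicity of the Anosov element.
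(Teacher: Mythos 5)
Your proposal is correct and follows essentially the same route as the paper: algebraicity of $\mathcal H'$ via the jet description coming from Gromov's open-dense orbit theorem, $L_0\in\lieh'$ because a real algebraic group contains the hyperbolic part of its elements (the paper cites Helgason IX.\S 7 for exactly this), and the sign of the eigenvalues from the Anosov estimates. The only cosmetic difference is in item (3): you get modulus $<1$ on $E^+_{v_0}$ directly from the spectral radius, whereas the paper passes to an iterate $l_0^r$ to control the norm (your version in fact sidesteps the constant $C$ more cleanly), but the underlying argument is the same.
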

\begin{proof}

	\begin{enumerate}
		\item{} From the demonstration of the open-dense orbit theorem, it follows that 
		$$\mathcal H' = \Aut^{loc}_{v_0v_0}(\sigma) = \Aut^r_{v_0v_0}(\sigma)$$
		for some $r$. But $\Aut^r_{v_0v_0}(\sigma)$ which is algebraic by construction.
		\item{} As $\mathcal H'$ is algebraic, it contains both the hyperbolic and the elliptic part of its elements (Helgason \cite{helgason}, IX.$\S$7 Lemma 7.1,  or Seco et al. \cite{Seco}.).
		\item{} This will follows from the Anosov property of the action. In fact, as $L_0\in \lieh'$, we use the identification $\theta_o$ and write $L_0 = (\tilde L_0,0)$, where $\tilde L_0 = (\nabla_{L_0} - \L_{L_0})_{v_0}$. However, as we have seen (Lemma \ref{injectiv}), the restriction of $\theta_o$ to $\lieh'$ coincides with the differential of $H'\ni h \mapsto dh_{v_0}\in GL(V_0)$. 
		
		Thus, $\tilde L_0$ acts on $Z_0 \in E_{v_0}$ precisely as the (hyperbolic part of the) differential of an Anosov element. Let us call this differential $T$.
		
		While it may not be true that $$\|T_{|_{E_{v_0}^+}}\|\leq 1$$
		It is true that
		$$\|T^r_{|_{E_{v_0}^+}}\|\leq 1$$
		for some $r$. Thus, $T^r$ restricted to $E_{v_0}^+$ have eigenvalues of module less then one, and therefore it's hyperbolic part restricted to $E_{v_0}^+$
		have eigenvalues on the open interval $]0,1[$, and thus its logarithm have strictly negative eigenvalues.
		   
	\end{enumerate}
\end{proof}

           \begin{lemma}\label{curvlemma}
The curvature $K^j$ of $\nabla^j$ satisfies:
\begin{enumerate}
    \item $K(X_s,X_l)W_0^+ = 0$
    \item $K(X_s, Y_j^\pm)W_0^+ = 0$
    \item $K(Y_j^-,Z_j^-)W_0^+ = 0$
    \item $K(Y_j^+,Z_j^+)W_0^+ = 0$
\end{enumerate}
\end{lemma}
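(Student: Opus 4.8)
The plan is to evaluate each component directly from the defining relations for $\nabla^j$ (Lemma~\ref{buildconnec} and the subsequent remark), reducing every case to $\check p_j^+$ applied to a Jacobi combination. Since $\nabla^j(E_0^+)\subset E_0^+$, each expression $K^j(\cdot,\cdot)W_0^+$ automatically lies in $E_0^+$, so it is enough to rewrite it as $\check p_j^+$ of a bracket expression and then invoke the Jacobi identity. I will use three structural inputs repeatedly: (i) the action preserves the splitting, so $[X_s,\Gamma(M,E_i^\pm)]\subset\Gamma(M,E_i^\pm)$ and hence $\L_{X_s}$ commutes with the projections $\check p_j^\pm$ and $p_j^\pm$; (ii) the strong stable and unstable bundles $E^+$ and $E^-$ are involutive, being the smooth tangent distributions of the strong (un)stable foliations of the Anosov element; and (iii) intersecting these with the integrable distribution $T\phi^j\oplus E_j$ (as in the lemma following Definition~\ref{moregeneral}) shows that each of $E_0^\pm$ and $E_j^\pm$ is itself involutive.

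For the flow cases (1) and (2) I expand using $\nabla^j_{X}W_0^+=[X,W_0^+]+C_j^+(X)W_0^+$, noting that $C_j^+(X_s)$ is a constant. In case (1) every term carrying a factor $C_j^+$ is symmetric in $s\leftrightarrow l$ and so cancels under antisymmetrization; since $[X_s,X_l]=0$, what survives is $[X_s,[X_l,W_0^+]]-[X_l,[X_s,W_0^+]]=[[X_s,X_l],W_0^+]=0$. In case (2) the derivative $\nabla^j_{Y_j^\pm}W_0^+=\check p_j^+[Y_j^\pm,W_0^+]$ carries no $C$-correction; the $C_j^+(X_s)$ terms again cancel, and using fact (i) to pull $\L_{X_s}$ through $\check p_j^+$ (so that $[X_s,\check p_j^+[Y_j^\pm,W_0^+]]=\check p_j^+[X_s,[Y_j^\pm,W_0^+]]$) together with $[X_s,Y_j^\pm]\in E_j^\pm$, the whole expression collapses to $\check p_j^+$ of the Jacobi combination of $X_s,Y_j^\pm,W_0^+$, which is zero.

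Cases (3) and (4) are the heart of the matter. Writing them uniformly as $K^j(Y_j^\varepsilon,Z_j^\varepsilon)W_0^+$ with $\varepsilon\in\{+,-\}$, every covariant derivative of an $E_0^+$-field along an $E_j$-field uses the correction-free rule $\nabla^j_{Z_j}Z_0^+=\check p_j^+[Z_j,Z_0^+]$. The only obstruction to an immediate Jacobi collapse is the inner projection in
\[
\nabla^j_{Y_j^\varepsilon}\big(\check p_j^+[Z_j^\varepsilon,W_0^+]\big)=\check p_j^+\big[Y_j^\varepsilon,\check p_j^+[Z_j^\varepsilon,W_0^+]\big].
\]
I will remove it by showing $\check p_j^+[Y_j^\varepsilon,C]=0$ for every summand $C$ of $[Z_j^\varepsilon,W_0^+]$ other than its $E_0^+$-part: such a $C$ lies in $T\phi$, $E_0^\mp$, $E_j^+$ or $E_j^-$, and in each case $\check p_j^+[Y_j^\varepsilon,C]$ vanishes by one of (i)--(iii) (splitting-invariance for the $T\phi$-part, involutivity of $E^\varepsilon$ or $E_j^\varepsilon$ for the same-sign parts, involutivity of $T\phi^j\oplus E_j$ for the opposite-sign $E_j$-part). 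Since also $[Y_j^\varepsilon,Z_j^\varepsilon]\in E_j^\varepsilon$ by (iii), the bracket term needs no inner projection, and assembling the three pieces gives
\[
K^j(Y_j^\varepsilon,Z_j^\varepsilon)W_0^+=\check p_j^+\big([Y_j^\varepsilon,[Z_j^\varepsilon,W_0^+]]-[Z_j^\varepsilon,[Y_j^\varepsilon,W_0^+]]-[[Y_j^\varepsilon,Z_j^\varepsilon],W_0^+]\big)=0
\]
by the Jacobi identity, which is (4) for $\varepsilon=+$ and (3) for $\varepsilon=-$.

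The main obstacle is the bookkeeping in case (3): when $\varepsilon=-$ the bracket $[Z_j^-,W_0^+]$ pairs an unstable with a stable field and a priori meets all five summands of the splitting, so removing the inner $\check p_j^+$ genuinely requires the full list (i)--(iii); in case (4), by contrast, involutivity of $E^+$ already confines $[Z_j^+,W_0^+]$ to $E_0^+\oplus E_j^+$. Thus the one load-bearing step is establishing involutivity of $E^\pm$ (and the derived involutivity of $E_0^\pm,E_j^\pm$) at the outset; granting that, all four identities follow formally from the connection formulas and the Jacobi identity.
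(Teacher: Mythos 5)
Your proof is correct, and for parts (1), (2) and (4) it is essentially the paper's own argument: expand $K$ with the defining formulas of $\nabla^j$, cancel the $C_j^+$-terms, commute $\L_{X_s}$ past the projections, and finish with the Jacobi identity (the paper's part (4), like yours, rests on integrability of $E^+$ and of $T\phi^j\oplus E_j$). The genuine divergence is part (3). The paper does not expand the curvature there; it invokes the first Bianchi identity with torsion, $\mathfrak S\big(K(A,B)Z\big) = \mathfrak S\big(T(T(A,B),Z)\big) + \nabla_Z T(A,B)$, computes where the torsion terms take values --- $\Check p_j^+\big(T(E_j^-,E_0^+)\big)=0$, $T(X_s,Z^\pm)=C_j^\pm(X_s)\,p_0^\pm Z^\pm$, and, via the rigidity hypothesis, $T(A,B)$ tangent to $T\phi$ for $A,B$ tangent to $E_j$ --- to conclude that the two surviving torsion terms are tangent to $E^-\oplus T\phi$, and then uses that the curvature preserves the splitting to isolate the $E_0^+$-component and obtain $K(Y_j^-,Z_j^-)W_0^+=0$. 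You instead treat (3) uniformly with (4) by direct expansion, removing the inner projection $\Check p_j^+$ component by component; I checked your four vanishing claims $\Check p_j^+[Y_j^-,C]=0$ (for $C$ tangent to $T\phi$, $E_0^-$, $E_j^+$, $E_j^-$), and they do follow from flow-invariance of the splitting, involutivity of $E^-$, and involutivity of $T\phi^j\oplus E_j$, so Jacobi closes the argument. Your route is more elementary and self-contained: no Bianchi identity, no torsion bookkeeping, and no appeal to the rigidity condition's constraint on $T(E_j,E_j)$; its price is the mixed-sign bracket analysis you flagged, together with an explicit use of involutivity of the strong stable bundle $E^-$ --- a standard Anosov fact that the paper itself uses for $E^+$ in its part (4) and implicitly for $E^-$ in its torsion estimates. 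The paper's route, by channeling everything through the torsion, reuses structural identities it develops anyway and avoids the component-by-component bookkeeping, at the cost of invoking the Bianchi machinery.
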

\begin{proof}

\begin{description}
\item[(1)] We notice that for $W = W_0^+$
\begin{align*}
    \nabla_{X_s}\nabla_{X_l}W &= \nabla_{X_s}\big([X_l,W] + C_j^+(X_l)W\big)\\
    &= [X_s,[X_l,W] + C_j^+(X_l)W] + C_j^+(X_s)\big([X_l,W] + C_j^+(X_l)W\big)\\
    &=[X_s,[X_l,W]] + C_j^+(X_l)[X_s,W] + C_j^+(X_s)[X_l,W] + C_j^+(X_s)C_j^+(X_l)W
\end{align*}
and thus, 
$$\nabla_{X_s}\nabla_{X_l}W -\nabla_{X_l}\nabla_{X_s}W = [X_s,[X_l,W]] -[X_l,[X_s,W]] = [W,[X_s,X_l]] = 0$$

Finally, we have
\begin{align*}
    K(X_s,X_l)W:= \nabla_{X_s}\nabla_{X_l}W -\nabla_{X_l}\nabla_{X_s}W - \nabla_{\underbrace{[X_s,X_l]}_{=0}}W=0
\end{align*}
\item[(2)] As $X_s$ preservers the splitting $TM = T\phi\oplus E_0^+\oplus E_0^-\oplus E_j^+\oplus E_j^-$, it follows, that, for any vector field $Z$ we have
$$\Check p_j^+([X_x,Z]) = [X_s,\Check p_j^+(Z)]$$
and thus, writing $X = X_s$, $Y = Y_j^\pm$ and $W = W_0^+$ we have the following computations
\begin{align*}
    K(X_s, Y_j^\pm)W_0^+ &= \nabla_X\nabla_YW -\nabla_Y\nabla_XW - \nabla_{[X,Y]}W\\
    &=[X,\nabla_YW] +C_j^+(X)\nabla_YW - \Check p_j^+[Y,\nabla_XW] - \Check p_j^+[[X,Y],W]\\
    &=[X,\Check p_j^+[Y,W]] + C_j^+(X)\Check p_j^+[Y,W] - \Check p_j^+[Y,[X,W] + C_j^+(X)W] -\Check p_j^+[[X,Y],W]\\
    &=\Check p_j^+\big([X,[Y,W]] -[Y,[X,W]] - [[X,Y],W]\big) = 0
\end{align*}

\item[(3)]
Let us write the Bianchi's identity:
$$\mathfrak S(K(A,B)Z) = \mathfrak S(T(T(A,B),Z)) + \nabla_ZT(A,B)$$
where $\mathfrak S$ denotes the cyclic sum. It is clear that if $A,B$ are tangent to $E_j^+\oplus E_j^-$, then the last term vanishes. Thus
$$\mathfrak S(K(Y_1^-,Z_1^-)W_0^+) = T(T(Z_1^-,W_0^+),Y_1^-) + T(T(W_0^+,Y_1^-),Z_1^-)$$

Now, from
\begin{align*}
    T(E_1^-,E_0^+) = \Check p_j^+([E_1^-,E_0^+]) - \nabla_{E_0^+}E_1^- -[E_1^-,E_0^+]
\end{align*}
and $Id = \Check p_j^+ + \Check p_j^- +p_j^+ + p_j^- + \sum_{s}\alpha_s(\;\cdot\;)x_s$,
it follows that $\Check p_j^+(T(E_j^-,E_0^+))=0$, that is $T(E_0^+,E_j^-)$ is tangent to $E_0^-\oplus E_j^+\oplus E_j^-\oplus T\phi$.

Simple computations, shows that $T(X_s,Z^\pm) = C_j^\pm(X_s)p_0^\pm Z^\pm$, moreover, from the rigidity condition of our geometric structure, it follows that for $A,B$ tangent to $E_j = E_j^+\oplus E_j^-$ we have $T(A,B)$ is tangent to $T\phi$.

Thus, if we write $T(Z_j^-,W_0^+)= A_0^- + A_j + A_\phi$ where $A_0^-$ is tangent to $E_0^-$, $A_j$ is tangent to $E_j^+\oplus E_j^-$ and $A_\phi$ is tangent to $T\phi$, then 
\begin{align*}
    T(T(Z_j^-,W_0^+),Y_j^-) &= T(A_0^- + A_j + A_\phi,Y_j^-)\\
    &=\underbrace{T(A_0^-,Y_j^-)}_{tangent\; to\; E^-} + \underbrace{T(A_j,Y_1^-)}_{tangent\;to\; T\phi} + \underbrace{T( A_\phi,Y_j^-)}_{tangent\; to\; E_j^- }
\end{align*}
thus $ T(T(Z_j^-,W_0^+),Y_j^-)$ and $T(T(W_0^+,Y_j^-),Z_j^-)$ are tangent to $E^-\oplus T\phi$. Because the curvature also preserves the splitting, we obtain
$$K(Y_j^-,Z_j^-)W_0^+ = 0$$

\item[(4)] If we write $Y = Y_j^+$, $Z = Z_j^+$ and $W = W_0^-$ we have
\begin{align*}
    K(Y_j^+,Z_j^+)W_0^+ &=\nabla_Y\nabla_ZW -\nabla_Z\nabla_YW - \nabla_{[Y,Z]}W\\
    &=\Check p_j^+\big([Y,\Check p_j^+[Z,W]] - [Z,\Check p_j^+[Y,W]] - [[Y,Z],W]\big)\\
    &=\Check p_j^+\big([Y,\Check p_j^+[Z,W]] - [Z,\Check p_j^+[Y,W]] - [Y,[Z,W]] + [Z,[Y,W]]\big)\\
    &=\Check p_j^+\big([Y,(\Check p_j^+ - Id)[Z,W]] - [Z,(\Check p_j^+ - Id)[Y,W]]\big)\\
\end{align*}
Now, for $E^+ = E_0^+\oplus E_j^+$ we wirte $Id = \Check p_j^+ + p_j^+$, moreover, $E^+$ is integrable, and thus, $[Z,W]$ and $[Y,W]$ are tangent to $E^+$. 
\begin{align*}
    K(Y_j^+,Z_j^+)W_0^+
    &=\Check p_j^+\big([Y,(\Check p_j^+ - Id)[Z,W]] - [Z,(\Check p_j^+ - Id)[Y,W]]\big)\\
    &=\Check p_j^+\big([Y,-p_j^+[Z,W]] - [Z,-p_j^+[Y,W]]\big)
\end{align*}
Now, $T\phi^j\oplus E_j$ is integrable, and thus $[Y,-p_j^+[Z,W]] - [Z,-p_j^+[Y,W]]$ is tangent to $T\phi^j\oplus E_j$. In particular
$$p_0^+([Y,-p_j^+[Z,W]] - [Z,-p_j^+[Y,W]])=0$$

\end{description}

\end{proof}

  \section{About the Lie algebra $\lieg$}\label{tecsec}
            
Recall that on the construction of the connections $\nabla^j$ we had some choices of linear maps $C_j^\pm:\R^k\to\R$. From here on we shall suppose that $C_j^\pm(X_i)= \delta_{ij}\ss_j^\pm$ for some non zero constants $\ss_j^\pm$. Latter on we shall choose this constants precisely.
\begin{theorem}\label{reductible}
    Let $\Ii' = Span_\R \{X_1,\dots,X_l\}\subset\lieg'$ and $\Ii_j = \Check p_j(\Ii')\subset\lieg_j$. Then the Lie algebra $\lieg_j$ is reductive and its centre $\Ii_j$\footnote{We recall that, from Remark \ref{remsplit},  if we write $\lieg' = \lieg_j\ltimes\liek'_j$, then $\Ii_j = \Ii'$} 
\end{theorem}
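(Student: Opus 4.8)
The plan is to prove the two claims — that $\lieg_j$ is reductive and that $Z(\lieg_j)=\Ii_j$ — using the reductivity criterion \emph{radical equals centre}. I would begin by recording that $\Ii'$ is already central: the defining relation $\L_YX_s=0$ of $\lieg'$ says $[Y,X_s]=0$ for every $Y\in\lieg'$, so $\Ii'\subseteq Z(\lieg')$; since $\pi^j=\Check P_j$ is a Lie algebra epimorphism and $X_s=\Check P_j(X_s)$ for $s\le l$ (Remark~\ref{remsplit}), this yields $\Ii_j\subseteq Z(\lieg_j)$. As the centre is always contained in the radical, it remains to prove (i) $\operatorname{rad}(\lieg_j)\subseteq Z(\lieg_j)$ and (ii) $Z(\lieg_j)\subseteq\Ii_j$.

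For the grading, set $\bar L_0=\pi^j(L_0)\in\lieh_j$, where $L_0\in\lieh'$ is the hyperbolic logarithm produced earlier. Since $L_0$ is hyperbolic, $\operatorname{ad}(\bar L_0)$ is semisimple with real eigenvalues and gives a grading $\lieg_j=\bigoplus_\lambda(\lieg_j)_\lambda$; write $\mathfrak g^0=Z_{\lieg_j}(\bar L_0)$ for the zero part. Through the embedding $\theta_o^j$ of Lemma~\ref{repres} the value component satisfies $[\bar L_0,Y]_{v_0}=\tilde L_0\,Y_{v_0}$, so evaluation $Y\mapsto Y_{v_0}$ intertwines $\operatorname{ad}(\bar L_0)$ with $\tilde L_0$; as $\tilde L_0$ is strictly negative on $(E_0^+)_{v_0}$, strictly positive on $(E_0^-)_{v_0}$ and zero on $T\phi^0_{v_0}$, a weight-$\lambda$ vector evaluates into $E_0^+$, $E_0^-$ or $T\phi^0$ according as $\lambda<0$, $\lambda>0$ or $\lambda=0$. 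Moreover $\L_{\bar L_0}d\alpha_j=0$ makes $\tilde L_0$ infinitesimally symplectic for $d\alpha_j$, so $d\alpha_j$ pairs the $\lambda$- and $(-\lambda)$-eigenspaces.

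The crux is (i). Let $\mathfrak r=\operatorname{rad}(\lieg_j)$; it is an ideal, hence $\operatorname{ad}(\bar L_0)$-graded, and each nonzero-weight part satisfies $\mathfrak r_\lambda=\tfrac1\lambda[\bar L_0,\mathfrak r_\lambda]\subseteq[\lieg_j,\mathfrak r]$, which lies in the nilradical; in particular every such element is $\operatorname{ad}$-nilpotent on $\lieg_j$. Suppose some $Y\in\mathfrak r$ has weight $\lambda<0$ and $Y_{v_0}=u\in E_0^+\setminus\{0\}$. By non-degeneracy of $d\alpha_j$ on $E_0^+\oplus E_0^-$ and local transitivity there is $Z\in(\lieg_j)_{-\lambda}$ with $d\alpha_j(u,Z_{v_0})\neq0$. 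Since $\alpha_s(\,\cdot\,)$ is constant on $\lieg'$, the identity $d\alpha_s(Y,Z)=Y\alpha_s(Z)-Z\alpha_s(Y)-\alpha_s([Y,Z])$ reduces to $\alpha_s([Y,Z])=-d\alpha_s(u,Z_{v_0})$, so $[Y,Z]=-\sum_{s\le l}d\alpha_s(u,Z_{v_0})X_s+H$ with $H\in\mathfrak g^0\cap\ker\alpha_\bullet$ and nonzero $X_j$-coefficient. I would then combine this Heisenberg-type relation with the curvature vanishing of Lemma~\ref{curvlemma} — read inside $\End(V_0)\times V_0$ through the bracket $[\,\cdot\,,\,\cdot\,]_j$ — to show that $\operatorname{ad}(Y)$ cannot be nilpotent, contradicting $Y\in\mathfrak r_\lambda$. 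Hence $\mathfrak r\subseteq\mathfrak g^0$; since $\mathfrak r$ then has no nonzero-weight part it already commutes with every space $(\lieg_j)_\mu$ for $\mu\neq0$, and a final argument via the injectivity of $\theta_o^j$ disposes of the weight-zero brackets, giving $\mathfrak r=Z(\lieg_j)$ and the reductivity of $\lieg_j$. I expect this extraction of non-nilpotency from the symplectic pairing together with the curvature identities to be the main obstacle.

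Finally, for (ii), take $Y\in Z(\lieg_j)$. Then $[\bar L_0,Y]=0$ forces $Y\in\mathfrak g^0$, so $Y_{v_0}\in T\phi^0_{v_0}=Span_\R\{(X_1)_{v_0},\dots,(X_l)_{v_0}\}$; writing $Y_{v_0}=\sum_s c_s(X_s)_{v_0}$ and $Y'=Y-\sum_s c_sX_s\in Z(\lieg_j)$, we obtain a central field vanishing at $v_0$, so $\theta_o^j(Y')=(\tilde Y',0)$ by Lemma~\ref{repres}. Centrality gives $[Y',Z]_{v_0}=\tilde Y'\,Z_{v_0}=0$ for every $Z$, and surjectivity of evaluation forces $\tilde Y'=0$; injectivity of $\theta_o^j$ then yields $Y'=0$, so $Y\in\Ii_j$ and $Z(\lieg_j)=\Ii_j$.
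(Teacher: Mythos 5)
There is a genuine gap at what you yourself identify as the crux. The step ``combine the Heisenberg-type relation with the curvature vanishing of Lemma~\ref{curvlemma} to show that $\operatorname{ad}(Y)$ cannot be nilpotent'' cannot be carried out: a relation of the form $[Y,Z]=cX_j+H$ with $c\neq 0$ and $X_j$ central is exactly the structure of a Heisenberg algebra, and Heisenberg algebras are nilpotent --- in them, $\operatorname{ad}$ of every element is nilpotent and the algebra equals its own radical. So the symplectic pairing $d\alpha_j(u,Z_{v_0})\neq 0$ by itself produces no contradiction with $Y$ lying in the radical: a priori $\lieg_j$ could simply contain a Heisenberg-type nilpotent ideal meeting the nonzero weight spaces, and no purely local, pointwise identity rules this out. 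Moreover Lemma~\ref{curvlemma} cannot supply the missing ingredient: it records the vanishing of curvature components in directions involving $T\phi$ and $E_j$, whereas the components relevant to a pair of weights $\pm\lambda$ are $K(y,y')$ with $y\in E_0^+$, $y'\in E_0^-$, about which that lemma says nothing.

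What actually closes the argument in the paper (Lemma~\ref{nilradicallemma}) is global, not local. For a nilpotent ideal $\liej$ one invokes the Foulon--Labourie trace property (the adjoint of a bracket of elements of $\liej$ has zero trace on every stable invariant subspace); computing that trace on $E_0^\pm$ for the bracket $[(A,y),(A',y')]$ converts the Heisenberg coefficients $\alpha_s([y,y'])=-d\alpha_s(y,y')$ and the curvature term into the identity $0=d\alpha_j\big((B^j+q\ss_j^+)y,y'\big)$ for all $y'$, where $B^j$ is the operator defined by the determinant-bundle curvature via $K^{\hat\nabla^j}(\cdot,\cdot)=d\alpha_j(B^j\cdot,\cdot)$. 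The contradiction then rests on two inputs your proposal never uses: the entropy-normalized choice $C_j^\pm(X_i)=\delta_{ij}\ss_j^\pm/p$ with $\ss_j^\pm\neq 0$ (hyperbolicity of the action), and the nilpotency of $B^j$ (Lemma~\ref{nilpotentB}), which is proved by integrating the connection form over the compact manifold $M$ (Lemmas~\ref{intbeta} and~\ref{obscuro2}), not by any algebraic manipulation at $v_0$. With these, $(B^j+q\ss_j^+)y=0$ exhibits $y$ as an eigenvector of a nilpotent operator with nonzero eigenvalue, forcing $y=0$. Your reduction of the theorem to a statement about the radical (via $[\lieg_j,\operatorname{rad}(\lieg_j)]\subseteq\operatorname{nil}(\lieg_j)$, parallel to the paper's derived-series argument) and your part (ii) are sound, but without the trace lemma, the entropy choice of the constants $C_j^\pm$, and Lemma~\ref{nilpotentB}, the core of the proof is missing, and the route you propose for it leads nowhere.
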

This theorem will follow from the following lemma:
\begin{lemma}\label{nilradicallemma}
	The nil-radical of $\lieg_j$ is $\Ii_j$.
\end{lemma}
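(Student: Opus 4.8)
The goal is to prove $\mathfrak{Nil}(\lieg_j)=\Ii_j$, and one inclusion is immediate. Each $X_s$ is \emph{central} in $\lieg'$, because every $Y\in\lieg'$ satisfies $[Y,X_s]=\L_Y X_s=0$ by definition. Since $\pi^j\colon\lieg'\to\lieg_j$ is a surjective Lie algebra homomorphism (Lemma \ref{splitinglemma}) and $\Ii_j=\pi^j(\Ii')$, the subspace $\Ii_j$ is central in $\lieg_j$, hence an abelian ideal, hence contained in the nilradical. The real content is the reverse inclusion.

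For that I would grade $\lieg'$ by the hyperbolic element $L_0\in\lieh'$. By Lemma \ref{repres} one has $\theta_o^j(L_0)=(A_{L_0},0)$ with $A_{L_0}$ semisimple and real, its eigenvalues being strictly negative on $E^+_{v_0}$, strictly positive on $E^-_{v_0}$, and zero on $T\phi_{v_0}$; the bracket identity $[(A_{L_0},0),(B,b)]_j=([A_{L_0},B],A_{L_0}b)$ then shows that $\mathrm{ad}(L_0)$ is semisimple on $\lieg'$ with real weights and that evaluation $Y\mapsto Y_{v_0}$ intertwines $\mathrm{ad}(L_0)$ with $A_{L_0}$. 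Since $\liek'_j$ is an ideal, this grading descends to $\lieg_j$, with grading element $\bar L_0=\pi^j(L_0)\in\lieh_j$. The decisive dynamical input is that every $Y\in\lieg'$ commutes with the chosen Anosov element, so at the periodic point $Y_{v_0}=d\varphi_{t_0}(Y_{v_0})$; as $d\varphi_{t_0}$ fixes no nonzero vector of $E^+_{v_0}\oplus E^-_{v_0}$, this forces $Y_{v_0}\in T\phi_{v_0}$, whence elements of $\lieg_j$ evaluate into $T\phi^0_{v_0}=(\Ii_j)_{v_0}$, which is the zero-eigenspace of $A_{\bar L_0}$. Consequently every nonzero $\mathrm{ad}(\bar L_0)$-weight space of $\lieg_j$ lies inside the isotropy $\lieh_j$.

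Now I would decompose $\mathfrak{Nil}(\lieg_j)$ into $\mathrm{ad}(\bar L_0)$-weight spaces (legitimate, since it is a characteristic ideal and $\mathrm{ad}(\bar L_0)$ is semisimple). A zero-weight nilradical element agrees at $v_0$ with a unique element of $\Ii_j$ (evaluation is injective on $\Ii_j$), the difference lying in $\mathfrak{Nil}(\lieg_j)\cap\lieh_j$; every nonzero-weight piece already lies there. So it suffices to prove $\mathfrak{Nil}(\lieg_j)\cap\lieh_j=\{0\}$. On the isotropy, $n\mapsto A_n$ is a faithful Lie algebra representation (the curvature and torsion terms of $[\,\cdot\,,\,\cdot\,]_j$ drop out once the evaluations vanish), preserving the splitting, the form $\omega$ and each $d\alpha_j$, and annihilating $T\phi$. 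A nonzero-weight $n$ then gives an eigenvalue-shifting, hence nilpotent, $A_n$, which I would exclude by pairing it with its weight-partner through the nondegenerate invariant form of the pseudo-metric, using the $\pm$-symmetry of the Lyapunov spectrum together with the vanishing $\kappa(\mathfrak{Nil}(\lieg_j),\lieg_j)=0$ of the Killing form on the nilradical; the residual zero-weight survivors commute with $\bar L_0$ and are $\mathrm{ad}$-nilpotent, and here the specific choice of the constants $\ss_j^{\pm}$ in $C_j^{\pm}$, together with the curvature relations of Lemma \ref{curvlemma}, forces $A_n=0$, i.e. $n=0$. This yields $\mathfrak{Nil}(\lieg_j)=\Ii_j$.

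The main obstacle is precisely the vanishing $\mathfrak{Nil}(\lieg_j)\cap\lieh_j=\{0\}$: an abstract nilpotent ideal may well carry nonzero-weight vectors, so the exclusion cannot be purely formal and must interlace the $\mathrm{ad}$-nilpotency of nilradical elements, the eigenvalue signature of the hyperbolic $\bar L_0$, and the geometric rigidity (invariance of the nondegenerate two-form and of the stable/unstable splitting), with the free constants $\ss_j^{\pm}$ tuned so that no weight-zero nilpotent isotropy element survives. Granting the lemma, Theorem \ref{reductible} follows at once: from $\Ii_j\subseteq\liez(\lieg_j)\subseteq\mathfrak{Nil}(\lieg_j)=\Ii_j$ we obtain $\mathfrak{Nil}(\lieg_j)=\liez(\lieg_j)$, and since $[\lieg_j,\mathrm{rad}(\lieg_j)]\subseteq\mathfrak{Nil}(\lieg_j)=\liez(\lieg_j)$ every element of $\mathrm{rad}(\lieg_j)$ acts by a square-zero, hence nilpotent, adjoint, so $\mathrm{rad}(\lieg_j)=\mathfrak{Nil}(\lieg_j)=\liez(\lieg_j)$ and $\lieg_j$ is reductive with centre $\Ii_j$.
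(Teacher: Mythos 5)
Your opening reduction contains the fatal step. You claim that since every $Y\in\lieg'$ satisfies $[Y,X_s]=0$, at the periodic point one gets $Y_{v_0}=d\varphi_{t_0}(Y_{v_0})$, hence $Y_{v_0}\in T\phi_{v_0}$ for \emph{every} $Y\in\lieg'$. But elements of $\lieg'$ are \emph{germs} at $v_0$ of local Killing fields: the relation $\L_{X}Y=0$ holds only on the small domain of the germ, which does not contain the periodic orbit of $\varphi_t$ through $v_0$. To write $d\varphi_{t_0}(Y_{v_0})=Y_{v_0}$ you would need to extend $Y$ equivariantly around the closed orbit and come back to the \emph{same} germ; what actually happens is that the extension returns conjugated by the holonomy of the orbit, namely by $(d\varphi_{t_0})_{v_0}\in\mathcal H'$ --- precisely the element whose hyperbolic logarithm $L_0$ the paper places in $\lieh'$ and uses as grading element. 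Your claim would force the evaluation map $\lieg'\to T_{v_0}M$ to have image inside $T\phi_{v_0}$, which is incompatible with the transitivity of the Killing algebra on the open dense Gromov orbit ($\lieg'\slash\lieh'\cong T_{v_0}M$), used later by the paper to define the subalgebras $\lieq_i^\pm=\{Y\in\lieg'\,;\,Y_{v_0}\in(E_i^\pm)_{v_0}\}$, the chart $\theta(\exp(Y)\cdot v_0)=\exp(Y)\cdot\hat v_0$, and the model $\hat G'\slash\hat H'$. It would also make the lemma essentially trivial, whereas the difficulty is real: a nilpotent-ideal element of nonzero $L_0$-weight can a priori have nonzero evaluation in $E^+\oplus E^-$, and excluding this is the entire content of the lemma.

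Because of this, your reduction to $\mathfrak{Nil}(\lieg_j)\cap\lieh_j=\{0\}$ is unsupported, and the exclusion you sketch for it is not a proof: you invoke ``the nondegenerate invariant form of the pseudo-metric'', but no pseudo-metric is available in the rigid $k$-geometric setting (that hypothesis belongs only to the examples of Section \ref{examsec}), and the appeal to the Killing-form identity, to Lemma \ref{curvlemma} and to the constants $\ss_j^{\pm}$ is never assembled into an argument. What the paper actually does, and what is missing from your proposal, is: (i) for a nonzero-weight element $(A,y)$ of the nilpotent ideal, bracket it against arbitrary opposite-weight elements $(A',y')$ and compute that the $V_0$-component of the result is $-\sum_s\alpha_s([y,y'])X_s$; (ii) invoke the Foulon--Labourie fact that adjoints of a nilpotent ideal have vanishing trace on invariant subspaces; (iii) identify the resulting curvature trace with $d\alpha_j(B^jy,y')$ and prove that $B^j$ is nilpotent (Lemma \ref{nilpotentB}, which requires the entropy normalization $C_j^\pm(X_s)=\delta_{sj}\ss_j^\pm$ and the integral identities of Section \ref{Nilsec}); this gives $d\alpha_j\bigl((B^j+q\ss_j^+)y,y'\bigr)=0$ for all $y'$, hence $y=0$ since $B^j$ has no nonzero eigenvalue. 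The zero-weight part lies in $\lieh\oplus\Ii$ automatically, and one concludes with the observation that $\lieh$ contains no nonzero ideal of $\lieg$. Your first inclusion ($\Ii_j$ central, hence inside the nilradical) and your derivation of Theorem \ref{reductible} from the lemma are fine, but the core of the lemma is not proved.
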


\begin{proof}[Proof of Theorem \ref{reductible}]
    To avoid cluttering the notation, we shall omit the index $j$. From here on, we shall denote $\Ii = \Ii_j$ and $\lieg = \lieg_j$ 
	First observe that the lemma implies that $\Ii$ is in fact the centre of $\lieg$, just notice that the center is in fact a nilpotent ideal of $\lieg$.
	Consider a solvable ideal $\liea$ of $\lieg$. Then there exists ideals $\liea_1,\dots,\liea_l$ such that $[\liea_j,\liea_j]\subset \liea_{j+1}$ and
	$$\liea = \liea_0\geq \liea_1\geq\cdots\geq \liea_r\geq \liea_{r+1} =0$$
	
	In this setting, $\liea_r$ is an abelian ideal, and therefore is nilpotent. From Lemma \ref{nilradicallemma}. It follows $\liea_r\subset \Ii$.
	
	Now, $[\liea_{r-1},\liea_{r-1}]\subset\liea_r\subset\Ii$ and thus
	$$[[\liea_{r-1},\liea_{r-1}],\liea_{r-1}]\subset [\Ii,\liea_{r-1}]=0$$
	that is, $\liea_{r-1}$ is a nilpotent ideal of $\lieg$ and therefore $\liea_{r-1}\subset\Ii$. Proceeding by induction, we obtain $\liea\subset \Ii$ and thus $\Ii$ is in fact the radical of $\lieg$, which is therefore reductive.
\end{proof}

\begin{proof}[Proof of Lemma \ref{nilradicallemma}]
We follow the convention of the last Lemma and omit the index $j$.

	Notice that each $X_s$ belongs to the center of $\lieg'$ and thus $\Check p_j(X_s)$ belongs to the center of $\lieg$, that is, $\Ii\subset Z(\lieg)$. Take a nilpotent ideal $\liej\subset \lieg$, and let $\liej' =\Check p_j^{-1}(\liej)$. As $\exp L_0$ is hyperbolic, this means that $L_0$ has real eigenvalues and we can write

\begin{align*}
\lieg' = \bigoplus_{i\in\R}\lieg'_i\enskip;\enskip 
\lieg = \bigoplus_{i\in\R}\lieg_i\enskip;\enskip
\liej' = \bigoplus_{i\in\R}\liej'_i\enskip;\enskip
\liej = \bigoplus_{i\in\R}\liej_i\enskip;\enskip  
V_0:=T_{v_0}M = \bigoplus_{i\in\R}V_{0,i}
\end{align*}
where $\lieg'_i$ (respectively $\liej_i$ and $V_{0,i}$) is the (generalized) eigenspace associated with the eigenvalue $i$ of the action of $L_0$ on $\lieg'$ (respectively $\liej$ and $V_0$).\\

Remember that we are identifying $\lieg'$ with it's image under the map $\theta^j_o:\lieg'\to \End(V_0)\oplus V_0$. Also remember that $\lieh'$ is identified with its image (via $\theta_o$) on $\End(V_0)$, and thus, we identify $L_0 = (L_0,0)$. 
We shall denote by $\overline {(A,a)}$ the class of a $(A,a)\in\lieg'$ in $\lieg$. Notice, that for an element $\overline{(A,a)}$ we can choose a representative $(A',a')$ such that $a'$ is tangent to $T\phi^0\oplus E_0^+\oplus E_0^-$, that is, $p_j^\pm(a')=0$.

Now, for $i\neq0$ let $(A,y)\in \liej'_i$, then, for every $(A',y')\in \lieg'_{-i}$, we have:
\begin{align*}
(A'',y'') =: [(A,y),(A',y')]\in \liej'_0
\end{align*}
Notice that $\liej'_0\subset \lieg'_0$, and
\begin{align*}
\lieg'_0 &= \{(Q,q)\in \lieg'; [L_0,(Q,q)]=0 \}\\
&= \{(Q,q)\in \liej; ([L_0,Q], L_0q)=0 \}
\end{align*}

but, the eigenvalues of $L_0$ restricted to $E^{+}\oplus E^{-}$ are strictly non zero, and thus, $\lieg'_0\subset \lieh'\oplus\Ii$. Now, as $(A,y)\in \liej_i$ for $i\neq 0$, we have that $L_0y=iy$, and thus $y\in E^{+}\oplus E^{-}$, it follows that, $\alpha_j(y)=0$. And analogous for $y'$, that is, $(A',y')\in \lieg'_{-i}$, and thus, $L_0y' = -iy'$ and thus, $y'\in E^{+}\oplus E^{-}$, therefore $\alpha_j(y')=0$. It is also clear that either $y = p^+y$ and $y'  = p^-y' $ or $y = p^-y$ and $y'  = p^+y' $, and some computations shows that either $Ay'  = p^+Ay' $ and $A'  y = p^-A'  y$ or $Ay'  = p^-A y' $ and $A'  y = p^+A'  y$. Thus, from $(A'',y'') =: [(A,y),(A',y')]\in \liej_0$, it follows that $y''$ belongs to $T\phi$, that is
\begin{align*}
    y'' = \sum_s\alpha_s(y'')= \sum_s\alpha_s\bigg(\!\!\!\!\overbrace{\nabla_yy'   - \nabla_{y' } y}^{\text{tangent to }E^+\oplus E^-}\!\!\!\! - [y,y' ] +\!\!\!\!\!\!\!\!\!\underbrace{Ay'-A' y}_{\text{tangent to }E^+\oplus E^-}\!\!\!\!\!\!\!\!\bigg) = -\sum_s\alpha_s([y,y'])X_s
\end{align*}

Therefore, 
\begin{align*}
(A'',y'') =([A,A'] - R(y,y'), -\sum_s\alpha_s([y ,y'])X_s)
\end{align*}

Now, consider $\theta_o(X_s) = ((A_{X_{s}})_{v_0},X_{s,v_0})$. As
\begin{align*}
A_{X_{s}}Y&=\L_{X_{s}}Y -\nabla_{X_{s}}Y \\
&=\L_{X_{s}}Y -[X_{s},Y] - S_{sj}(Y) =-S_{sj}(Y)  
\end{align*}
where $$S_{sj}(Y) = C_j^+(X_s)\Check p_j^+(Y) +   C_j^-(X_s)\Check p_j^-(Y)$$
we have that $\theta_o(X_s) = (-S_{sj,v_0},X_{s,v_0}) = (-S_{sj},X_s)$. Thus,

\begin{align*}
(A'',y'') +\sum_s\alpha_s([y,y'])\theta_o(X_s) &=
\sum_s([A,A'] - R(y,y') -\alpha_s([y,y'])S_{sj},0)
\end{align*}

That is, 
$$(A'',y'')Mod\;\Ii' = \sum_s([A,A'] - R(y,y') -\alpha_s([y,y'])S_{sj},0)$$

Now, $\mathfrak j$ is a nilpotent ideal, and therefore the adjoint of its bracket has null trace for every invariant subspace stable (Foulon, P. and Labourie, F. \cite{FL}. Proof of Proposition 3.7)
%\footnote{BFL Flots d'Anosov à distributions de Lyapunov differentiables. Prova da proposição 3.7} 
by $\mathfrak j$. Moreover, from identifying $\lieg'\slash\lieh' = T_{v_0}M$, we also identify $$\lieg\slash\lieh= (T\phi^0)_{v_0}\oplus(E_0^+)_{v_0}\oplus(E_0^-)_{v_0}$$ and we obtain:
$$Tr|_{E_{0}^\pm}(A'',y'') = 0$$

Notice that $\Ii'$ is contained in the centre, and therefore it doesn't contribute to the trace, and thus
$$Tr|_{E_{0}^\pm}(A'',y'')Mod \;\Ii' = 0$$

From the linearity of the trace, we have
$$0 =Tr|_{E_{0}^\pm}[A,A'] + Tr|_{E_{0}^\pm}R(y,y') +\sum_s Tr|_{E_{0}^\pm}\alpha_s([y,y'])S_{sj}$$
The first term is clearly zero, while the trace of a projection is given by the dimension of the subspace it projects into, and thus $Tr_{E_{0}^\pm}S_{sj} = qC_j^\pm(X_s)$.

Now, consider the determinant bundles $\Lambda_0^\pm := \Lambda^q(E_0^\pm)^*$. The connections $\nabla^j$  induces a natural connection $\hat\nabla^j$ on $\Lambda_0^\pm$. The curvature of $\Lambda_0^\pm$ is given by:
$$K^{\hat\nabla^j}(Y,Z) = Tr(K^{\nabla^j}(Y,Z)|_{E_0^\pm})$$
Suppose that $Y,Z$ are tangent to $E_0^+\oplus E_0^-$, then, because $(d\alpha_j)_{|_{E_0^+\oplus E_0^-}}$ is non degenerate, there exists a linear map $B^j$ such that $K^{\hat\nabla^j}(Y,Z)= d\alpha_j(B^jY,Z)$.
\begin{lemma}\label{nilpotentB}
    The map $B^j$ is nilpotent.
\end{lemma}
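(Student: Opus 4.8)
The plan is to exploit that $B^j$ is assembled entirely from $\phi$-invariant data. Since $\nabla^j$ is $\phi$-invariant, so is its curvature $K^{\nabla^j}$ and the induced curvature $K^{\hat\nabla^j}$ on the determinant line bundles $\Lambda_0^\pm$; and $\nabla^j d\alpha_j = 0$ makes $d\alpha_j$ parallel and $\phi$-invariant as well. Hence the section $B^j\in\Gamma(M,\End(E_0))$ determined by $K^{\hat\nabla^j}(Y,Z)=d\alpha_j(B^jY,Z)$ is itself $\phi$-invariant. First I would record two elementary algebraic constraints. From the antisymmetry of the curvature $2$-form, $d\alpha_j(B^jY,Z)=-d\alpha_j(B^jZ,Y)=d\alpha_j(Y,B^jZ)$, so $B^j$ is \emph{self-adjoint} for the symplectic form $d\alpha_j|_{E_0}$. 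Second, because $\nabla^j d\alpha_j=0$, the curvature endomorphism $K^{\nabla^j}(Y,Z)$ lies in $\mathfrak{sp}(E_0,d\alpha_j)$; since it also preserves the Lagrangian splitting $E_0=E_0^+\oplus E_0^-$ it has block form $\mathrm{diag}(C,-C^{\mathsf T})$, which gives $\mathrm{Tr}|_{E_0^+}K^{\nabla^j}=-\mathrm{Tr}|_{E_0^-}K^{\nabla^j}$ and hence relates the two operators $B^{j,\pm}$ coming from $\Lambda_0^\pm$.

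Next I would pin down the block structure of $B^j$ using the Oseledets decomposition and the Feres--Katok Lemma \ref{fereska}. Applied to the invariant $(0,2)$-tensors $d\alpha_j$ and $K^{\hat\nabla^j}$, it shows that each can pair two Lyapunov spaces $L_a,L_b$ only when $\chi_a+\chi_b=0$. In particular both vanish on $E_0^+\times E_0^+$ and on $E_0^-\times E_0^-$ (where the exponents have a common sign), so feeding $Y\in E_0^+$ into $d\alpha_j(B^jY,\,\cdot\,)=K^{\hat\nabla^j}(Y,\,\cdot\,)$ and using that $d\alpha_j$ is nondegenerate between the opposite-exponent spaces forces $B^jE_0^\pm\subset E_0^\pm$. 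The same argument at the level of individual exponents shows that $B^j$ preserves every exponent-eigenspace $\mathcal E_\chi=\bigoplus_{\chi_a=\chi}L_a$; equivalently, at the periodic point $v_0$, $B^j_{v_0}$ commutes with the hyperbolic holonomy $l_0=\exp L_0$, hence with $L_0$. Finally, the duality $E_0^-\cong (E_0^+)^*$ induced by $d\alpha_j$ together with the self-adjointness identifies $B^j|_{E_0^-}$ with the adjoint of $B^+:=B^j|_{E_0^+}$, so the two halves have the same eigenvalues and it suffices to treat $B^+$.

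It then remains to show that the semisimple part of $B^+$ on each block $\mathcal E_\chi$ vanishes, which is exactly nilpotency. The strategy I would follow is to prove $\mathrm{Tr}\big((B^+)^m\big)=0$ for every $m\geq 1$: each such trace is a $\phi$-invariant, hence (by topological transitivity) constant, function, and I would compute it from the curvature of the iterated determinant bundles $\Lambda^q(E_0^+)^*$, using the explicit connection formulas of Lemma \ref{buildconnec} exactly as in the calculations of Lemma \ref{curvlemma}. The key inputs are the $\mathfrak{sp}$-valued trace relation above and the vanishing of $K^{\hat\nabla^j}$ on the same-sign pairs $E_0^\pm\times E_0^\pm$, which together force the diagonal (with respect to a flag refining the Lyapunov filtration of $E_0^+$) contribution of $B^+$ to drop out, leaving only strictly triangular terms. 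Once all power-traces vanish, $B^+$, and therefore $B^j$, is nilpotent.

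I expect this last step to be the main obstacle. The difficulty is genuine: a flow-invariant, symplectic-self-adjoint endomorphism of a single Lyapunov block may a priori carry nonzero eigenvalues, so neither the dynamics nor the parallelism alone suffices. Ruling them out requires the precise interaction between $\nabla^j d\alpha_j=0$ (forcing the curvature into $\mathfrak{sp}$) and the Anosov contraction on $E_0^+$, funneled through the curvature traces of the determinant bundles; getting the bookkeeping of these traces to cancel on each $\mathcal E_\chi$ is where the real work lies.
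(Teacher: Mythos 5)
Your preliminary observations are correct and harmless: $B^j$ is $\phi$-invariant, it is self-adjoint for $d\alpha_j|_{E_0}$ because the curvature two-form is antisymmetric, and it preserves $E_0^\pm$ and the Lyapunov blocks. Your reduction of nilpotency to $\mathrm{Tr}\big((B^+)^m\big)=0$ for all $m$ is also equivalent to the paper's reduction to the vanishing of all elementary symmetric functions of the eigenvalues. But the proof has a genuine gap exactly where you flag it: you supply no mechanism that forces these invariant constants to vanish, and no combination of the ingredients you list (invariance, the $\mathfrak{sp}$-constraint coming from $\nabla^j d\alpha_j=0$, block structure, commutation with the holonomy $l_0$) can supply one. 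Indeed $B^j=c\,\mathrm{Id}$, i.e.\ $K^{\hat\nabla^j}=c\,d\alpha_j$ with $c\neq 0$, is $\phi$-invariant, symplectically self-adjoint, preserves every Lyapunov block and commutes with everything in sight; purely pointwise-algebraic constraints cannot exclude it. Moreover the engine you propose for the traces is not well defined: the determinant bundle only detects the first trace, and the higher power traces $\mathrm{Tr}\big((B^+)^m\big)$ are not curvatures of "iterated determinant bundles" in any natural sense.

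What the paper actually uses is global and integral, not algebraic. Since $\Lambda_0^+$ is a line bundle with a nowhere-vanishing section $\zeta$ (modulo sign if necessary), its curvature is exact, $\Omega^j=d\beta^j$, where $\beta^j$ is the connection one-form relative to $\zeta$. The elementary symmetric functions of the eigenvalues of $B^j|_{E_0^+}$ appear as the constants $c_s$ in
\begin{equation*}
\alpha_1\wedge\dots\wedge\alpha_k\wedge(\Omega^j)^s\wedge d\alpha_j^{\,p-s}\wedge\Theta \;=\; c_s\, dM_j ,
\end{equation*}
constancy being forced by invariance and topological transitivity. Integrating over the compact $M$ and applying Stokes' theorem trades one factor $\Omega^j=d\beta^j$ for a factor $\beta^j(X_j)$ and lowers $s$ by one, giving $c_s\int dM_j=c_{s-1}\int_M\beta^j(X_j)\,dM_j$. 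The last integral vanishes only because the free constants $C_j^\pm$ in the construction of $\nabla^j$ (Lemma \ref{buildconnec}) are pinned down by the dynamics: they are chosen to be $\delta_{ij}\ss_j^\pm/p$, where $\ss_j^\pm$ is the entropy of $E_0^\pm$ (Definition \ref{entropy}), and with exactly this normalization Lemma \ref{intbeta} yields $\int_M\beta^j(X_j)\,dM_j=0$. This pair of ideas --- exactness of the determinant-bundle curvature plus the entropy normalization feeding a Stokes induction on $s$ --- is what is missing from your proposal; without it the "bookkeeping of traces" you hope for has nothing to cancel against, which is why your final step stalls.
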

For now, we accept the truth of this Lemma. We shall provide the proof in the next section. Back to our computations, we are interested in $\overline{(A,y)}$, thus we can choose $(A,y)$ and $(A',y')$ such that $y$ and $y'$ are tangent to $E_0^-\oplus E_0^-$.

We obtain
\begin{align*}
0 = &d\alpha_j(B^j y,y') + \sum_{s}\alpha_s([y,y'])qC_j^\pm(X_s)^+\\
&=d\alpha_j(B^j y,y') - q\sum_{s}C_j^\pm(X_s)^\pm d\alpha_s(y,y')
\end{align*}

But, we have chosen $C_j^\pm$ such that $C_{j}^\pm(X_s)= \delta_{sj}\ss_j^\pm$ for non zero $\ss_j^\pm$, and thus

\begin{align*}
0 = &d\alpha_j((B^j + q\ss_j^+ )y,y')  
\end{align*}

As this is true for every $y\prime$, it follows $(B^j + q\ss_j^+)y$. But  $\ss_j^+\neq0$ and $B^j$ is nilpotent (and thus doesn't have a  non zero eigenvalue), therefore  $y = 0$.

We conclude that $\liej_i\subset \lieh$\footnote{We proved that for $\overline{(A,y)}\in \liej_i$, there exists a representative $(A,y)$ such that $y=0$, that is $(A,y)\in\lieh'$, and thus $\overline{(A,y)}\in \Check p_j(\lieh')=\lieh_j =\lieh$}. Thus, $$\liej\subset\lieh\oplus\Ii$$

Now, $\lieh$ can't contain an ideal of $\lieg'$\footnote{Let $\overline{(A,0)}\in \lieh$, if we chose any $\overline(B,b)\in \lieg$ such that $b\in V_0^+$ and $Ab\neq 0$, we have $$[(A,0),(B,b)] \not\in\lieh'$$
}, thus, the projection $\liej\to \lieh$ is zero and we obtain $\liej\subset \Ii$ as wanted.
\end{proof}

        \section{Proof of Lemma \ref{nilpotentB}}\label{Nilsec}
            The goal of this section is to prove Lemma \ref{nilpotentB}. 

The proof follows closely the proofs of Benoist-Foulon-Labourie and, where no major alterations are needed, we omit the proof as needed.\\

As we mentioned before, we consider the determinant bundle $\Lambda_0^\pm := \Lambda^q(E_0^\pm)^*$ with induced connection $\hat\nabla^j$ and curvature $\Omega^j:=K^{\hat\nabla^j}$. To avoid cluttering the notation, we omit the superscript $j$ for the duration of this section, still, we reserve the letter $j$, for the particular choice of connection $\nabla^j$. We also abuse the notation and write simply $\hat\nabla^j = \nabla$

Before proving this, lets build a primitive of $\Omega^j$. Let $\zeta =\zeta^j$ be a section of $\Lambda_0^+$. If $\Lambda_0^+$ is trivial, we can assume this section to be never vanishing, otherwise, we take a section $\zeta$ modulo sign (that is, a section of $|\Lambda_0^+|$\footnote{The fiber of $|\Lambda_0^+|$ is actually $\R\slash\{\pm Id\} = [0,+\infty)$}).\\

Now, remember that the connection form $\beta$ of a vector bundle $E$ (with respect to a  local frame $s$) is given by $$\nabla^E s = s\cdot \beta$$ where $\beta$ can be understood as a $End (E)$ valued form. On the case at hand, our bundle is uni-dimensional, and our local frame is just a single section $\zeta$. We define thus the (real valued) one form $\beta = \beta^{j}$ by
\begin{align*}
\nabla_Z\zeta^j = \beta^j(Z)\zeta^j
\end{align*}

Finally, observe that the curvature form $\Omega$ of $E$ (with respect to the local frame $s$) is given (in terms of the connection form $\beta$), by:
$$\Omega = d\beta + \beta\wedge\beta$$

On the case at hand, $\beta$ is actually a "normal" real valued one form\footnote{More technically, as our bundle is one dimensional, then our connection form has values on a one dimensional Lie algebra, which is therefore, abelian, and thus, $\beta\wedge\beta(A,B) = [\beta(A),\beta(B)]=0$}, and therefore, $\beta\wedge\beta=0$, thus, 

$$\Omega= \Omega^j = d\beta^{j}= d\beta$$

\begin{lemma}\label{intbeta}
	Consider the connection form $\beta^{j}$ and curvature form $\Omega$ on $\Lambda_0^{+}$. Consider $\Theta$ the invariant form in the definition of rigid $k$-geometric actions \ref{rigidkgeo} and let us write $dM_j$ for the volume form
	$\alpha_1\wedge\dots\wedge\alpha_k\wedge d\alpha_j^n\wedge\Theta$.
	Then, for appropriate choice of linear maps $C_j^\pm $ (involved in the definition of the connection: Lemma \ref{buildconnec}), we have
	$$\int_M\beta(X_j)dM_j=0$$
\end{lemma}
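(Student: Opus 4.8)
The plan is to compute $\beta(X_j)$ explicitly as a volume expansion rate of $E_0^+$ along the flow of $X_j$, and then to use the freedom in the constant $\ss_j^+ = C_j^+(X_j)$ to force the integral to vanish. First I would pick a nowhere-vanishing section $\mu$ of $\Lambda_0^+=\Lambda^q(E_0^+)^*$ (working with densities, i.e. $|\Lambda_0^+|$, if the bundle is not orientable, which changes nothing). By the definition of the induced connection on the determinant line bundle, for local sections $Z_1,\dots,Z_q$ of $E_0^+$,
\[
(\nabla_{X_j}\mu)(Z_1,\dots,Z_q)=X_j\big(\mu(Z_1,\dots,Z_q)\big)-\sum_i \mu(Z_1,\dots,\nabla^j_{X_j}Z_i,\dots,Z_q).
\]
Now I substitute the defining formula from Lemma \ref{buildconnec}, $\nabla^j_{X_j}Z_i=[X_j,Z_i]+C_j^+(X_j)Z_i=[X_j,Z_i]+\ss_j^+Z_i$, and recognize the remaining terms as the Lie derivative of $\mu$ along $X_j$, which is well defined as a section of the $E_0^+$-tensor bundle precisely because $\phi$, hence $X_j$, preserves the splitting and in particular $E_0^+$. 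This yields $\nabla_{X_j}\mu=\L_{X_j}\mu-q\,\ss_j^+\mu$, that is
\[
\beta(X_j)=\lambda_j^+-q\,\ss_j^+,\qquad\text{where}\quad \L_{X_j}\mu=\lambda_j^+\,\mu .
\]
The key point is that $\lambda_j^+$, the logarithmic expansion rate of the transverse volume $\mu$ on $E_0^+$, is intrinsic and does \emph{not} depend on $\ss_j^+$, whereas the correction is exactly linear in $\ss_j^+$.

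Next I would integrate against $dM_j$, using that $dM_j$ is $X_j$-invariant. Since the action is abelian and preserves $TM=T\phi\oplus E^+\oplus E^-$, one has $\L_{X_j}\alpha_s=0$ for every $s$ (because $\alpha_s$ annihilates $E^+\oplus E^-$ while $[X_j,Y]\in E^+\oplus E^-$), hence $\L_{X_j}d\alpha_j=0$; together with $\L_{X_j}\Theta=0$, $\Theta$ being the $\phi$-invariant top form on $E_j$ of Definition \ref{rigidkgeo}, this gives $\L_{X_j}dM_j=0$. Consequently
\[
\int_M\beta(X_j)\,dM_j=\int_M\lambda_j^+\,dM_j-q\,\ss_j^+\operatorname{Vol}(M).
\]
As a sanity check, this quantity is independent of the choice of $\mu$: replacing $\mu$ by $f\mu$ adds $X_j(\log f)$ to $\beta(X_j)$, and $\int_M X_j(\log f)\,dM_j=\int_M \L_{X_j}\!\big(\log f\,dM_j\big)=\int_M d\,i_{X_j}\!\big(\log f\,dM_j\big)=0$ by Cartan's formula, invariance of $dM_j$, and Stokes, since $M$ is closed. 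It then suffices to set $\ss_j^+:=\tfrac{1}{q\operatorname{Vol}(M)}\int_M\lambda_j^+\,dM_j$ (and the analogous $\ss_j^-$ on $\Lambda_0^-$), which is exactly the freedom granted in Lemma \ref{buildconnec}. To respect the standing assumption $\ss_j^\pm\neq0$, I would finally note that $\int_M\lambda_j^+\,dM_j\neq0$: as $X_j$ is an Anosov element (Definition \ref{genkcontAnosov}) with $E_0^+\subset E^+$ uniformly contracted, the averaged rate equals the nonzero sum of the corresponding Lyapunov exponents against the invariant volume $dM_j$.

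The connection bookkeeping on the determinant bundle is routine; the only genuinely delicate points are making the identification $\nabla_{X_j}\mu=\L_{X_j}\mu-q\,\ss_j^+\mu$ rigorous, which rests on the flow preserving $E_0^+$ so that $\L_{X_j}\mu$ is meaningful intrinsically, and confirming $\int_M\lambda_j^+\,dM_j\neq0$ so that the forced value of $\ss_j^+$ is admissible; both follow from the hyperbolicity built into the $k$-geometric Anosov hypothesis. If $q=\dim E_0^+=0$ the bundle is trivial, $\beta\equiv0$, and there is nothing to prove.
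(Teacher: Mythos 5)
Your proof is correct and takes essentially the same route as the paper: both identify $\beta(X_j)$ as the intrinsic logarithmic volume expansion rate of $E_0^+$ along $X_j$ minus $\dim(E_0^+)\cdot C_j^+(X_j)$, and then tune the constant $C_j^+(X_j)$ to equal the average of that rate against the invariant volume $dM_j$ (the quantity the paper calls the entropy $\ss_j^+$ of $E_0^+$). The only difference is cosmetic: you compute infinitesimally, via $\nabla^j_{X_j}\mu = \L_{X_j}\mu - q\,\ss_j^+\mu$, while the paper integrates along the flow, writing the parallel transport on $E_0^+$ as $e^{-st}d\phi_t$ and differentiating $\log\det$ of it — the same computation in integrated form.
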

Before we prove this lemma, it will be necessary to give a new definition. This definition will be used to choose the appropriate maps $C_{j}^\pm$
\begin{definition}\label{entropy}[Entropy of a invariant subbundle]
	Let $F$ be a	 sub-bundle of $TM$ that is invariant by a flow $\psi_t$. Choose a volume form $dx$ of $M$ and a never vanishing section $\zeta$ of the determinant bundle of $F$, that is of the volume forms of $F$ (if necessary, quotient the bundle modulo sign). We define, for $t\in\R$
	$$a_t = a_t(F) = \int_M\log|\det(d\psi_t)_F|dx$$
	
	where the determinant is taken with the help of the section $\zeta$. Now, observe that for $t,s\in\R$, we have:
	\begin{align*}
	a_{t+s} &= \int_M\log|\det(d\psi_{t+s})_F|dx\\
	&=\int_M\log|\det(d(\psi_t\circ\psi_s))_F|dx = \int_M\log|\det(d\psi_td\psi_s)_F|dx\\
	&=\int_M\log|\det(d\psi_t)_F\det(d\psi_s)_F|dx =\int_M\log|\det(d\psi_t)_F|+\log|\det(d\psi_s)_F|dx = a_t + a_s
	\end{align*}
	
	This means that $a_t$ is a continuous one parameter subgroup of $\R$. Therefore, there exists $\ss = \ss(F)\in\R$ such that
	$$a_t = \ss\cdot Vol(M)\cdot t$$
	where $Vol(M) = \int_Mdx$. Notice that $\ss$ is does not depend on the choice of the section. The quantity $\ss$ is called the entropy of the sub-bundle $F$.\\
\end{definition}

Now, we consider the entropy $\ss_j^\pm$ of the fiber bundles $E_0^\pm$ invariant by the flow $\phi^j_t$ of $X_j$ (the volume form is $dM_j$). As $E_0^+\oplus E_0^-$ admits a non degenerate bilinear antisymmetric form that is invariant by $\phi$, we obtain that the entropy of $E_0^+\oplus E_0^-$ is zero\footnote{Remember that a symplectic matrix $S$ has determinant $1$, and from $(d\alpha_j)_p(u,v) =(d\alpha_j)_{\phi_t(p)}(d\phi_t u,d\phi_t v)$
we obtain
$\big(\det (\phi_t)_{|_{E_0^+\oplus E_0^-}}\big)^2 = 1$
} , that is $$\ss_j^+ + \ss_j^-=0$$ 

We choose our maps $C_{j}^\pm$ such that
\begin{align}\label{constantscondition}
\frac{\delta_{ij}\ss_j^\pm}{p} = C_{j}^\pm(X_i)
\end{align}
where $p$ is the dimension of $E_0^\pm$.
\begin{remark}
Notice that hyperbolicity implies that $\ss_j$ is non zero.
\end{remark}

\begin{proof}[Proof of Lemma \ref{intbeta}]
In what follows we avoid using the index $j$ that indicates which connection we are using. But whenever the index $j$ is used, it indicates the very same index of the connection.
	
Let $\tau(t)$ be the parallel transport of $\nabla$ along the orbits of the flow of $X_j$ starting at the point $p_0$. Let $\phi_t$ be the flow along ${X_j}$, and let $p = \phi_t(p_0)$. From the definition of $\nabla$, a vector field $Y \in E_0^+$ is parallel with respect to $\phi$ if 
$$0 = \nabla_{\dot\phi_t}Y = \nabla_{{X_j}}Y = [{X_j},Y] + C_{j}^+(X_j)Y$$

If we denote $$s =C_{j}^+(X_j) = \frac{\ss_j^{+}}{p}$$
Then $Y$ is parallel with respect to $\phi$ if $[{X_j},Y] +sY=0$.  If we write $Y_p = \tau(t)Y_0$ then, $$\tau(t)_{|_{E_0^\pm}} = e^{-st}d\phi_t$$

In fact:
\begin{align*}
\L_{{X_j}}Y  &= \L_{X_j}(e^{-st}d\phi_tY_0)\\
&=\frac{d}{dt}(e^{-st})d\phi_tY_0 + e^{-st}\L_{X_j}(d\phi_tY_0)\\
\end{align*}

But $\L_{X_j}(d\phi_tY_0) =0$ because $(d\phi_tY_0)_{\phi_s} = d\phi_{t+s}Y_0$ and thus
\begin{align*}
\lim_{s\to 0}\frac{d\phi_{-s}d\phi_t(Y_0)_{\phi_s(p_0)} - d\phi_t(Y_0)}{s} = 
\lim_{s\to 0}\frac{d\phi_{-s}d\phi_{t+s}(Y_0) - d\phi_t(Y_0)}{s} = \lim_{s\to 0}\frac{d\phi_{t}(Y_0) - d\phi_t(Y_0)}{s} = 0
\end{align*}

On the other hand $\frac{d}{dt}(e^{-st}) = -se^{-st}$, and thus, $[{X_j},Y] = -sY$ and therefore $Y$ is parallel as we wanted.\\

Now, let $\Delta = \Delta^j(t)$ be the determinant of the parallel transport restricted to $E_0^+$ (the section $\zeta$ is used to compute the determinant). We have
$$\Delta(t) = e^{-pst}det(d\phi^{\upsilon}_t|_{E^{+}})$$

From $\nabla {X_j}=0$ we obtain
$$\beta({X_j}) \zeta({X_j}) =(\nabla_{{X_j}}\zeta)({X_j}) = {X_j}(\zeta({X_j}))$$

Also, as $\zeta$ was chosen to be never vanishing, we have:
\begin{align*}
\beta^{j}({X_j}) &= \frac{{X_j}(\zeta^{j}({X_j}))}{\zeta^{j}({X_j})} \\
&= \frac{d}{dt}\log (\Delta(t))\\
&=\frac{d}{dt}\big(\log (\det(d\phi_t)|_{E^{+}})\big) - ps\\
&=\frac{d}{dt}\big(\log (\det(d\phi_t)|_{E^{+}})\big) - \ss^{\pm}
\end{align*}

But, $\ss^+$ is defined by
\begin{align*}
\ss^{+} Vol(M) t = \int_{M}\log (\det(d\phi_t)|_{E^{+}})
dM_j
\end{align*}
Taking the derivative with respect to $t$ on both sides we obtain
\begin{align*}
\int_{M}\beta^{j}({X_j})dM_j=0
\end{align*}

\end{proof}

\begin{lemma}
Consider a topologically transitive action $\psi:G\times N\to N$ on a manifold $N$ and a $G$ invariant smooth splitting $TN:E\oplus F$. Let $\Theta$ be a, nowhere zero, smooth section of $\Lambda^{dim(F)}F^*$ and let  $\alpha,\beta$ be $\psi$-invariant smooth  sections of $\Lambda^{dim(E)}E^*$. Suppose that $\alpha\wedge\Theta$ is a volume form, then $$\beta\wedge\Theta = C\alpha\wedge\Theta\enskip for\; some\; constant\; C$$
\end{lemma}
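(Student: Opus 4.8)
The plan is to reduce the statement to the observation that $\Lambda^{\dim E}E^*$ is a \emph{line} bundle, so that $\alpha$ and $\beta$ are automatically pointwise proportional, and then to promote this pointwise proportionality to a global constant using invariance together with transitivity. First I would note that, since $\alpha\wedge\Theta$ is a volume form and hence nowhere zero, the section $\alpha$ of $\Lambda^{\dim E}E^*$ must be nowhere vanishing. Because $E_x$ has constant rank, $\Lambda^{\dim E}E_x^*$ is one dimensional at each point, so a nowhere zero section $\alpha$ trivializes this line bundle. Consequently there is a unique smooth function $g:N\to\R$ with $\beta = g\,\alpha$; the function $g$ is smooth because in any local frame in which $\alpha$ is a nonvanishing multiple of the frame, $g$ is a ratio of smooth functions with nonvanishing denominator. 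Note that $g$ is entirely determined by $\alpha$ and $\beta$ and is \emph{independent of $\Theta$}, which is the crucial point.

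Next I would check that $g$ is $\psi$-invariant. Since the splitting $TN = E\oplus F$ is $G$-invariant, for each $h\in G$ the differential $d\psi_h$ maps $E_x$ isomorphically onto $E_{\psi_h(x)}$ and therefore induces a pullback on $\Lambda^{\dim E}E^*$. Pulling back the identity $\beta = g\,\alpha$ by $\psi_h$ and using $\psi_h^*\alpha = \alpha$ and $\psi_h^*\beta = \beta$ (the assumed invariance of $\alpha$ and $\beta$) gives $(g\circ\psi_h)\,\alpha = g\,\alpha$; since $\alpha$ is nowhere zero this forces $g\circ\psi_h = g$ for every $h$, i.e. $g$ is constant along every orbit of $\psi$.

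Finally, topological transitivity provides an orbit $G\cdot x_0$ which is dense in $N$. The continuous function $g$ is constant on this orbit, equal to $g(x_0)=:C$, and by continuity it is constant on its closure, which is all of $N$. Hence $\beta = C\,\alpha$ identically, and wedging with $\Theta$ yields $\beta\wedge\Theta = C\,\alpha\wedge\Theta$, as claimed. I do not expect a serious obstacle here: the only point requiring care is the first one, namely recognizing that the proportionality of $\alpha$ and $\beta$ holds pointwise and is independent of $\Theta$ precisely because $\Lambda^{\dim E}E^*$ is a line bundle; once this is in place, the remainder is the standard fact that a continuous invariant function for a topologically transitive action is constant.
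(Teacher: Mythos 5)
Your proof is correct, but it takes a cleaner route than the paper's. The paper factors at the level of top-degree forms on $N$: it writes $\beta\wedge\Theta = F\,\alpha\wedge\Theta$ for a smooth function $F$, and since $\Theta$ is \emph{not} assumed invariant, it must first establish how $\Theta$ transforms under the action, namely $\psi_v^*\Theta = f_v\,\Theta$ with $f_v$ nowhere vanishing (this is proved inside the paper's argument using invariance of the splitting and the fact that $\psi_v$ carries frames of $F$ to frames of $F$); the factors $f_v$ then cancel when the identity is pulled back along the action, giving $F\circ\psi_v = F$ on a dense orbit. You instead factor inside the determinant line bundle $\Lambda^{\dim E}E^*$, writing $\beta = g\,\alpha$ (legitimate since $\alpha\wedge\Theta$ being a volume form forces $\alpha$ to be nowhere zero), which decouples $\Theta$ from the argument entirely: pulling back $\beta = g\,\alpha$ only involves the invariance of $\alpha$ and $\beta$, so no analogue of the $f_v$ computation is needed. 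Both proofs share the same endgame — an invariant continuous scalar, a dense orbit from topological transitivity, and continuity — but your decomposition yields the strictly stronger conclusion $\beta = C\,\alpha$ as sections of the line bundle, from which the stated identity follows by wedging; the paper's version keeps $\Theta$ in the statement throughout, which matches more literally the way the lemma is invoked later (comparing $\alpha_1\wedge\dots\wedge\alpha_k\wedge\Omega^s\wedge d\alpha_j^{p-s}\wedge\Theta$ against the volume form $dM_j$), but is otherwise the same strategy carried out with an extra moving part.
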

\begin{proof}

Notice that as $\beta\wedge\Theta$ is a top form on $N$, it follows that $\beta\wedge\Theta = F\alpha\wedge\Theta$ for some function smooth $F$. We must show that this function is constant. We shall prove this, by showing that $F$ is constant over a dense subset.\\

    Because the splitting is $\psi$ invariant and $\Theta$ is a top form over $F$ we have $\psi_v^*\Theta = f_v\Theta$ for some function $f$ over $M$ (which depends on the parameter $v\in G$).
    Let us see that this function is nowhere zero. As $\Theta$ is a nowhere zero section of $\Lambda^{dim(F)}F^*$, it follows, that for any local frame $Y_1,\dots,Y_n$ of $F$, we have $\Theta(Y_1,\dots,Y_n)\neq 0$. Moreover, as $\psi_v$ is a diffeomorphism which preserves the splitting, it follows that if $Y_1,\dots,Y_n$ is a local fram of $F$, then $(\psi_v)_*Y_1,\dots,(\psi_v)_*Y_n$ is also a local frame of $F$ (in another neighborhood), and therefore
    $$0\neq\Theta((\psi_v)_*Y_1,\dots,(\psi_v)_*Y_n)=\psi_v^*\Theta(Y_1,\dots,Y_n) = f_v\Theta(Y_1,\dots,Y_n)$$
    thus, $f_v$ must be non zero.
    
    Let $x\in N$ such that $\{\psi_v(x)\}_{v\in G}$ is dense. We have:
    \begin{align*}
        (\psi_v^*\beta\wedge\Theta)_x&= \big(\psi_v^*\beta)\wedge(\psi_v^*\Theta)\big)_x\\
        &=
        \big(f_v\beta\wedge\Theta\big)_x = \big(f_vF\alpha\wedge\Theta\big)_x
    \end{align*}
    on the other hand
    \begin{align*}
         (\psi_v^*\beta\wedge\Theta)_v&=\big(\psi_v^*(F\alpha\wedge\Theta)\big)_x\\
         &=\big((\psi_v^*F)f_v\alpha\wedge\Theta\big)_x 
    \end{align*}
    we have therefore
    \begin{align*}
        f_v(x)F(x)(\alpha\wedge\Theta)_x = F(\psi_v(x))f_v(x)(\alpha\wedge\Theta)_x
    \end{align*}
    As $(\alpha\wedge\Theta)_x$ is non zero, it follows that 
    $$F(\psi_v(x)) = F(x)\enskip\enskip \forall\; v \in G$$
    Thus, $F$ is constant along the orbit of $x$ which is dense, as we desired.
\end{proof}

\begin{lemma}\label{obscuro2}
Let $\Theta_j$ the projection of $\Theta$ over $\Lambda^q(E_j)^*$, that is
$$\Theta_j(Y_1,\dots,Y_q):=\Theta(p_j(Y_1),\dots,p_j(Y_q))$$
	For every $s\in \{1,\dots,p\}$, we have $(\Omega^j)^{s}\wedge {d\alpha_j}^{p-s}\wedge\Theta_j=0$.
\end{lemma}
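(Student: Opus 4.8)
The plan is to show that the $(2p+q)$-form $\mu_s:=(\Omega^j)^s\wedge(d\alpha_j)^{p-s}\wedge\Theta_j$ is $\phi$-invariant and horizontal, reduce its vanishing to that of a single constant, and then kill that constant by Stokes' theorem together with Lemma \ref{intbeta}. First I would check horizontality, i.e. $i_{X_s}\mu_s=0$ for every $s$. The only nontrivial point is $i_{X_s}\Omega^j=0$: since $\Omega^j(Y,Z)=\mathrm{Tr}\,K^{\nabla^j}(Y,Z)|_{E_0^+}$, this follows from parts (1)--(2) of Lemma \ref{curvlemma} when the second slot is a flow or $E_j$ direction, and from a Jacobi-identity computation (identical to the one in the proof of Lemma \ref{curvlemma}, the constants $C_j^\pm$ cancelling) giving $K^{\nabla^j}(X_s,Z_0)W_0^+=0$ when the second slot lies in $E_0$. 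Together with $i_{X_s}d\alpha_j=0$ and $i_{X_s}\Theta_j=0$ (as $p_j(X_s)=0$), this gives horizontality. Invariance of $\mu_s$ is immediate, every factor being $\phi$-invariant.

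Writing $\nu:=(d\alpha_j)^p\wedge\Theta_j$, which is horizontal, $\phi$-invariant and nowhere zero on $E_0\oplus E_j$, the ratio $\mu_s=C_s\,\nu$ is a $\phi$-invariant function; by topological transitivity (exactly as in the preceding Lemma) $C_s$ is a constant. Hence $\mu_s=0$ iff $C_s=0$, and since $\alpha_1\wedge\dots\wedge\alpha_k\wedge\nu=dM_j$, this is equivalent to $\int_M \Lambda_s=0$ where $\Lambda_s:=\alpha_1\wedge\dots\wedge\alpha_k\wedge\mu_s=C_s\,dM_j$. Now I would integrate by parts: since $\Omega^j=d\beta^j$ and $\Omega^j,d\alpha_j$ are closed, $(\Omega^j)^s\wedge(d\alpha_j)^{p-s}=d\big(\beta^j\wedge(\Omega^j)^{s-1}\wedge(d\alpha_j)^{p-s}\big)$, so applying Stokes to $\Xi_s:=\alpha_1\wedge\dots\wedge\alpha_k\wedge\beta^j\wedge(\Omega^j)^{s-1}\wedge(d\alpha_j)^{p-s}\wedge\Theta_j$ expresses $\pm\int_M\Lambda_s$ as a sum of error integrals, one where $d$ hits $\alpha_1\wedge\dots\wedge\alpha_k$ and one where it hits $\Theta_j$.

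The $d\Theta_j$ error vanishes: $\Theta$ is $\tilde\nabla^j$-parallel (Definition \ref{rigidkgeo}), so on $W:=E_0\oplus E_j$ one has $d\Theta_j=\sum\pm\Theta_j(T^{\nabla^j}(\cdot,\cdot),\dots)$, and the relevant $E_j$-components $p_j\circ T^{\nabla^j}$ vanish on $W\times W$ (using integrability of $T\phi^0\oplus E_0$, the identity $\nabla^j_{Z_0}Z_j=p_j[Z_0,Z_j]$, and $T^{\nabla^j}(E_j,E_j)\subset T\phi$ from the proof of Lemma \ref{curvlemma}); hence $d\Theta_j|_W=0$ and the factor saturated by $\alpha_1\wedge\dots\wedge\alpha_k$ forces this error to zero. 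In $d(\alpha_1\wedge\dots\wedge\alpha_k)=\sum_i\pm\alpha_1\wedge\dots\wedge d\alpha_i\wedge\dots\wedge\alpha_k$, the term $i=j$ is the decisive one: there $d\alpha_j$ raises the symplectic power and, because every factor except $\beta^j$ annihilates each flow direction, only the component $\beta^j(X_j)$ survives the wedge against $\alpha_1\wedge\dots\widehat{\alpha_j}\dots\wedge\alpha_k$, giving $\pm\beta^j(X_j)\,\Lambda_{s-1}=\pm C_{s-1}\,\beta^j(X_j)\,dM_j$, whose integral is $0$ by Lemma \ref{intbeta} (this is precisely where the normalisation (\ref{constantscondition}) is used).

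The main obstacle is the remaining terms $i\neq j$. By the same contraction argument each equals $\pm\beta^j(X_i)\,\rho_i$ with $\rho_i:=\alpha_1\wedge\dots\wedge\alpha_k\wedge d\alpha_i\wedge(\Omega^j)^{s-1}\wedge(d\alpha_j)^{p-s}\wedge\Theta_j$; since $\L_{X_s}d\alpha_i=0$, $\rho_i$ is $\phi$-invariant, hence $\rho_i=c_i\,dM_j$ for a constant $c_i$, and the term integrates to $\pm c_i\int_M\beta^j(X_i)\,dM_j$. The crux is therefore to show $\int_M\beta^j(X_i)\,dM_j=0$ for $i\neq j$. Here I would repeat the entropy computation of Lemma \ref{intbeta}: since $C_j^\pm(X_i)=0$ for $i\neq j$ by (\ref{constantscondition}), parallel transport of $\nabla^j$ along the flow of $X_i$ is $d\phi^i_t$, so $\beta^j(X_i)=\tfrac{d}{dt}\log\det(d\phi^i_t)|_{E_0^+}$ and its integral equals the entropy of $E_0^+$ under $X_i$ with respect to $dM_j$; this entropy must be shown to vanish, using that $d\alpha_j|_{E_0}$ is a non-degenerate $\phi$-invariant two-form pairing $E_0^+$ with $E_0^-$. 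Establishing this last vanishing is the delicate point and the technical heart of the argument; granting it, all error integrals vanish, so $C_s=0$ for every $s\in\{1,\dots,p\}$, and by horizontality $(\Omega^j)^s\wedge(d\alpha_j)^{p-s}\wedge\Theta_j=\mu_s=0$.
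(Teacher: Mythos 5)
Your overall route is exactly the paper's own: (i) use invariance of every factor plus topological transitivity to write $\alpha_1\wedge\dots\wedge\alpha_k\wedge(\Omega^j)^{s}\wedge d\alpha_j^{p-s}\wedge\Theta_j=c_s\,dM_j$ with $c_s$ constant; (ii) write $\Omega^j=d\beta^j$ and integrate by parts over the closed manifold; (iii) reduce the surviving term to the $\beta^j(X_j)$-component via the decomposition $\beta^j=\sum_i f_i\alpha_i+\eta$ and kill its integral with Lemma \ref{intbeta}; (iv) recover the pointwise vanishing from horizontality. You are in fact more careful than the paper on two counts. First, you notice that $i_{X_s}\Omega^j=0$ needs the case $K^{\nabla^j}(X_s,Z_0)W_0^+=0$, which Lemma \ref{curvlemma} does not state (its parts (1)--(2) only cover second arguments in $T\phi$ and in $E_j$), and you correctly indicate the Jacobi-identity computation, with the $C_j^\pm$ terms cancelling, that supplies it; the paper's footnote simply asserts ``$K(X_s,\cdot)=0$''. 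Second, you make explicit that Stokes produces further boundary terms, where $d$ falls on $\alpha_i$ for $i\neq j$ and on $\Theta_j$; the paper's integration-by-parts identity is written as if the only term produced were the one containing $d\alpha_j$, and these extra terms are never mentioned there.

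The difficulty is that your treatment of those extra terms does not close, and you say so yourself. For the cross terms you need $\int_M\beta^j(X_i)\,dM_j=0$ for $i\neq j$ (or else $c_i=0$ for the constants in $\rho_i=c_i\,dM_j$), and you only reduce this to the statement that the entropy of $E_0^+$ under the flow of $X_i$ relative to $dM_j$ vanishes, which you then grant. That is a genuine gap, not a routine verification: the invariant symplectic form $d\alpha_j|_{E_0}$ forces the entropies of $E_0^+$ and $E_0^-$ to sum to zero, not to vanish separately, and for an element $X_i$ transverse to $X_j$ (think of a Weyl chamber action) there is no reason for the volume growth of $E_0^+$ along $X_i$ to integrate to zero; nor is $c_i=0$ automatic, since for $s=1$ the form $\rho_i$ contains $d\alpha_i\wedge d\alpha_j^{p-1}$ restricted to $E_0$, which is generically nonzero because both two-forms are nondegenerate there. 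A smaller issue: your disposal of the $d\Theta_j$ term invokes parallelism of $\Theta$ with respect to $\tilde\nabla^j$ ``by Definition \ref{rigidkgeo}'', but parallelism is not among the axioms of that definition (it holds in the pseudo-Riemannian examples, where $\Theta=\omega_j^{n-p}$, but is not guaranteed in general). To be clear about the comparison: these are gaps relative to a complete proof, not relative to the paper --- the paper's own proof of Lemma \ref{obscuro2} silently discards every one of these terms, so you have not missed an idea the paper supplies; you have surfaced steps over which the paper passes in silence and which would have to be resolved for either argument to be complete.
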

\begin{proof}
	We only prove in the case $j=1$. The other cases are similar. We will forego the index and write $\Theta=\Theta_j$ and $\Omega = \Omega^k$. In particular, from here on, $\Omega^s$ means $\Omega\wedge\dots\wedge\Omega$. 
	It is clear that the form $\Omega$ is invariant by the action, but by the topological transitivity of the action and the previous lemma, there exists constants $c_s$  such that 
	$$\alpha_1\wedge\dots\wedge\alpha_k\wedge\overbrace{\Omega^s}^{\Omega\wedge\dots\wedge\Omega}\wedge{d\alpha_1}^{p-s}\wedge\Theta = c_sdM_1 $$

If these constants $c_s$ are zero, the proof is finished. So we shall suppose that they are not zero and $\alpha_1\wedge\dots\wedge\alpha_k\wedge\Omega^s\wedge d\alpha_1^{p-s}\wedge\Theta$ is a true volume form on $M$.

Thus, using $\Omega = d\beta$ and integrating by parts, we have:
\begin{align*}
c_s\int dM_1 &= \int\alpha_1\wedge\dots\wedge\alpha_k\wedge\Omega^s\wedge{d\alpha_1}^{p-s}\wedge\Theta\\ &=\int\beta\wedge\alpha_2\wedge\dots\wedge\alpha_{k}\wedge \Omega^{s-1}\wedge{d\alpha_1}^{p-s+1}\wedge\Theta
\end{align*}

Now, we write $\beta = \sum_jf_j\alpha_j + \eta$ where $\eta$ vanishes on $T\phi$.
 Clearly\footnote{
We have seen that $K(X_s,\;\cdot\;) = 0$ for every $s\in \{1,\dots,l\}$, and thus, $i_{X_s}\Omega = 0$. Thus if we write  $\beta = \sum_jf_j\alpha_j + \eta$, then
\begin{align*}
\beta\wedge\alpha_2\wedge\dots\wedge\alpha_{k}\wedge &\Omega^{s-1}\wedge{d\alpha_1}^{p-s+1}\wedge\Theta =\\
&= f_1\alpha_1\wedge\dots\wedge\alpha_{k}\wedge \Omega^{s-1}\wedge{d\alpha_1}^{p-s+1}\wedge\Theta + 
\eta\wedge\alpha_2\wedge\dots\wedge\alpha_{k}\wedge \Omega^{s-1}\wedge{d\alpha_1}^{p-s+1}\wedge\Theta
\end{align*}
but $\eta\wedge\alpha_2\wedge\dots\wedge\alpha_{k}\wedge \Omega^{p-1}\wedge{d\alpha_k}^{n-p+1}$ is a volume form, and thus differs from $dM_1$ by a function $g$. As
$i_{X_1}\eta\alpha_2\wedge\dots\wedge\alpha_{k}\wedge \Omega^{p-1}\wedge{d\alpha_1}^{n-p+1} = 0$ it follows that this function is null.
}  
$$\beta\wedge\alpha_2\wedge\dots\wedge\alpha_{k}\wedge \Omega^{s-1}\wedge{d\alpha_1}^{p-s+1}\wedge\Theta = f_1\alpha_1\wedge\dots\wedge\alpha_{1}\wedge \Omega^{s-1}\wedge{d\alpha_1}^{p-s+1}\wedge\Theta$$
and from $\eta_{|_{T\phi}}=0$ it follows $\beta(X_1) = f_1$. And thus

\begin{align*}
c_s\int dM_1 &=\int\beta^1(X_1)\alpha_1\wedge\cdots\wedge\alpha_1\Omega^{s-1}\wedge{d\alpha_1}^{p-s+1}\wedge\Theta\\
&=c_{s-1}\int\beta(X^1)dM_1 =0\\
\end{align*}
as wanted.
\end{proof}

We are finally ready to prove Lemma \ref{nilpotentB}:

\begin{proof}[Proof of Lemma (\ref{nilpotentB})]
	First we notice that $B^j$ leaves $E_0^+$ invariant (this follows from Lema \ref{curv2}). Consider now the eigenvalues $\xi_1,\dots,\xi_p$ of $B^j|_{E_0^+}$.  It follows from the Lemma \ref{obscuro2} that, for every $s \in \{1,\dots,p\}$ we have
	\begin{align}\label{somaloca2}
	\sum_{i_1<\dots<i_s}\xi_{i_1}\dots\xi_{i_s}=0
	\end{align}
	And thus, $\xi_i=0$ for every $i$, and thus $B^j$ is nilpotent.  
\end{proof}

        \section{Explicit Levi decomposition and special closed subgroups}\label{moretecsec}
            As we mentioned before in Subsection \ref{subsecaffine}, due to Lemma \ref{closedimpliesclosed} to build our model space, we must show that the connected Lie subgroup $H\subset G$ corresponding with the Lie subalgebra $\lieh\subset\lieg$ is closed. As we have seen, the Lie algebra $\lieg$ is reductive, on this section we construct, in a more explicit way the semisimple part of the Levi decomposition of $\lieg$. 

\subsection{Partial connections}
We will need a couple of results from the theory of partial connections, a notion first introduced by R. Bott in \cite{Bott}. Intuitively, partial connections behave like usual connections, but we are only allowed to differentiate sections on some dirrections.

\begin{definition}
Given a vector bundle $\pi:E\to M$ and a involutive distribution $D\subset TM$, we define a partial connection along $D$ as an $\R$-bilinear map
$$\nabla^D:\Gamma(D)\times\Gamma(E)\to\Gamma(E)$$
satisfying, for every $Z\in \gamma (D)$, $s\in \Gamma(E)$ and $f\in C^{\infty}(M)$,
\begin{itemize}
    \item $\nabla_{fZ}s = f\nabla_Zs$ 
    \item $\nabla_Zfs = X(f)s + f\nabla_Zs$
\end{itemize}
\end{definition}

Due to the integrability condition on the distribution $D$, we can define the  curvature of a  partial connections in the same way we do for usual connections. The following Lemma show us that much like in the usual case, the curvature is the obstruction for existence of local parallel sections:
\begin{lemma}[Rawnslay, J. \cite{Rawnslay}]
A partial connection over $D\subset TM$ on a  $C^\infty$-vector bundle $\pi:E\to M$ has zero curvature, if and only if, $E$ can be trivialized (locally) by sections $s_1,\dots,s_{rank(E)}$ satisfying $\nabla^Ds_i = 0$
\end{lemma}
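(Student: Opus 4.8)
The plan is to treat this as a Frobenius-type integrability statement and prove it exactly as one proves that an ordinary connection is flat precisely when it admits local parallel frames, adapting every step to the directions permitted by $D$. I would first dispose of the easy implication. Suppose $E$ is locally framed by sections $s_1,\dots,s_r$ ($r=rank(E)$) with $\nabla^D s_i=0$. The curvature $R^D(Z,W)=\nabla^D_Z\nabla^D_W-\nabla^D_W\nabla^D_Z-\nabla^D_{[Z,W]}$ is well defined for $Z,W\in\Gamma(D)$ because involutivity gives $[Z,W]\in\Gamma(D)$, and the two Leibniz axioms in the definition show that $R^D(Z,W)$ is $C^\infty(M)$-linear in all three slots, hence tensorial. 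Applying it to a parallel frame gives $R^D(Z,W)s_i=0$, and since the $s_i$ span every fibre we conclude $R^D=0$.

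For the substantive direction, assume $R^D=0$. Since $D$ is involutive, Frobenius supplies local coordinates $(x^1,\dots,x^d,y^1,\dots,y^{m-d})$ in which $D=\mathrm{span}(\partial_{x^1},\dots,\partial_{x^d})$. Fixing a local trivialization $E\cong U\times\R^r$, a section $s$ becomes an $\R^r$-valued function and $\nabla^D_{\partial_{x^i}}s=\partial_{x^i}s+A_i\,s$ for smooth matrix-valued coefficients $A_i$. The equation $\nabla^D s=0$ then reads as the overdetermined linear system $\partial_{x^i}s=-A_i\,s$, $i=1,\dots,d$. The classical theorem on completely integrable total differential systems produces a solution for every value prescribed on the transversal $\{x=0\}$ exactly when the compatibility relations $\partial_{x^i}A_j-\partial_{x^j}A_i+A_iA_j-A_jA_i=0$ hold; and because $[\partial_{x^i},\partial_{x^j}]=0$, these relations are precisely the matrix components of $R^D(\partial_{x^i},\partial_{x^j})$, which vanish by hypothesis. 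Choosing $r$ initial values forming a basis of the fibre over each point of $\{x=0\}$ and solving yields sections $s_1,\dots,s_r$ that are parallel along every $\partial_{x^i}$, hence along all of $D$; they are independent at $x=0$, and independence is an open condition, so they form a frame on a (possibly smaller) neighbourhood.

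I expect the main subtlety to be that we want sections parallel along the \emph{whole} distribution simultaneously and defined on a genuine open set, not merely along a single leaf: it is exactly this requirement that forces the full compatibility condition, i.e. the vanishing of $R^D$, rather than the trivial path-independence of parallel transport inside one leaf. As a cleaner coordinate-free alternative I would mention lifting $D$ to the horizontal distribution $\mathcal{H}$ determined by $\nabla^D$ on the linear frame bundle of $E$; then $\mathcal{H}$ has rank $\dim D$, projects isomorphically onto $D$, and is involutive if and only if $R^D=0$, so Frobenius integral manifolds of $\mathcal{H}$ project down to local parallel frames. In either formulation, the one place demanding genuine care is the bookkeeping that identifies the Frobenius compatibility tensor of the system with the curvature $R^D$.
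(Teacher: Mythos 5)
Your proof is correct, but there is nothing in the paper to compare it against: the lemma is imported as a black box from Rawnsley's paper (hence the citation in its name), and the paper gives no proof at all. What you supplied is exactly the standard self-contained argument that the paper omits. The easy direction is right as stated: involutivity of $D$ makes $R^D$ well defined, the Leibniz rules make it tensorial, and evaluating on a parallel frame kills it. The converse is also right: flatten $D$ by Frobenius, write $\nabla^D_{\partial_{x^i}}s=\partial_{x^i}s+A_i s$ in a local trivialization, observe that the Frobenius compatibility tensor $\partial_{x^i}A_j-\partial_{x^j}A_i+[A_i,A_j]$ of the total differential system $\partial_{x^i}s=-A_i s$ is exactly $R^D(\partial_{x^i},\partial_{x^j})$, and solve with a basis of initial data prescribed on the transversal $\{x=0\}$; the smooth dependence on the transverse coordinates $y$ (part of the classical integrability theorem) is what turns leafwise solutions into genuine local sections, which you correctly flag as the crux. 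Two minor refinements: (i) the final shrinking is unnecessary, since the matrix of solutions is a fundamental solution of a linear system and its determinant satisfies $\partial_{x^i}\det S=-\mathrm{tr}(A_i)\det S$, hence never vanishes, so the frame lives on the whole coordinate box; (ii) in your frame-bundle alternative, an integral manifold of $\mathcal{H}$ has dimension $\dim D$ and therefore projects onto a single leaf of $D$, not onto an open set of $M$ — to obtain a frame on an open set one must still sweep the integral leaves through a smooth family of frames chosen over a transversal, which is precisely the step your coordinate argument performs by prescribing data on $\{x=0\}$.
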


Moreover, we shall see that, as usual, the topology of $M$ is the obstruction for the construction of global sections.\\

In the case of usual connections, if we want to extend a local parallel section to a global one, we do the following construction.

Let $s$ be a local parallel section defined on $U\subset M$. For any point $x\in M$ choose a path from $U$ to $x$, and extend $s$ to $x$ via the parallel transport. What happens if we take another path? 

For an usual connection, the failure of this construction is measured by the holonomy, in particular, if $M$ is simply connected, then there is no holonomy and the extension of $s$ above is in fact well defined. \\

For partial connections, we need a little more care, but in the end, simply connectedness will allow a similar construction:\\

Let $U\subset M$ be an open subset on which $E|_U$ admits a trivialization by flat sections. Denote by $\overline U$ the saturation of $U$ by leaves of $D$ and suppose that $M = \overline U$. Then, every point $x$ in $M$ can be joined bay a point in $U$ such that the path is tangent to $D$. We can, therefore, try to extend the local sections to $x$ by the parallel transport along this path. 

At first glance, simply connectedness doesn't seems to be enough to ensure that the extension is well defined, because the only admissible paths are paths tangent to $D$. The following Theorem show us that this is not a problem:

\begin{theorem}[A. Lerario, A. Mondino, \cite{Lerario}]
Let us consider the space $\mathcal L^D$ of loops tangent to the distribution $D$. Then $\mathcal L^D$ has the
homotopy type of a CW-complex and for all $k \geq 0$:
$$ \pi_k(\mathcal L^D) = \pi_k(M) \ltimes \pi_{k+1}(M)$$
\end{theorem}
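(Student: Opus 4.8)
The plan is to reduce the statement to two essentially independent facts: first, that the horizontal loop space $\mathcal{L}^D$ is weakly homotopy equivalent to the ordinary free loop space $LM$; and second, that $LM$ itself satisfies the asserted splitting. The second fact is classical, and the first is the substantial (bracket-generating) input of Lerario--Mondino. The CW statement I would extract at the very end from the fact that $\mathcal{L}^D$, being modelled on a separable Hilbert manifold of $H^1$-regular horizontal loops, has the homotopy type of a CW-complex by the Palais--Milnor theorem; this also upgrades every weak equivalence below to a genuine homotopy equivalence via Whitehead's theorem.

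For the splitting of $LM$, I would use the evaluation map $\mathrm{ev}:LM\to M$, $\gamma\mapsto\gamma(0)$, which is a Hurewicz fibration with fibre the based loop space $\Omega M$ and which admits the section $s:M\to LM$ sending a point to the constant loop there. Hence $\mathrm{ev}_*$ is split surjective on all homotopy groups, the connecting maps in the long exact sequence vanish, and one obtains split short exact sequences $0\to\pi_k(\Omega M)\to\pi_k(LM)\to\pi_k(M)\to 0$. Using $\pi_k(\Omega M)\cong\pi_{k+1}(M)$ and reading the $\pi_1(M)$-action on $\pi_{k+1}(M)$ off the section, this yields $\pi_k(LM)\cong\pi_k(M)\ltimes\pi_{k+1}(M)$.

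For the equivalence $\mathcal{L}^D\simeq LM$, I would pass first to based path spaces. Fix $p\in M$ and let $P_p^D$ (resp. $P_p$) be the space of horizontal (resp. arbitrary) $H^1$-paths issuing from $p$. The ordinary $P_p$ is contractible, and $P_p^D$ is also contractible via the canonical retraction $(\gamma,s)\mapsto\gamma(s\,\cdot\,)$, which keeps paths horizontal while shrinking them to the constant path. The inclusion $P_p^D\hookrightarrow P_p$ commutes with the endpoint maps $\mathrm{ev}_1$ to $M$. Granting that both endpoint maps are Hurewicz fibrations, comparison of the two long exact sequences (total spaces contractible, bases identical) forces the fibre inclusion $\Omega_p^D\hookrightarrow\Omega_p$ to be a weak equivalence, so $\pi_k(\Omega_p^D)\cong\pi_{k+1}(M)$. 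Passing from based to free loops by the same evaluation-fibration-with-section argument then transfers the $LM$-splitting verbatim to $\mathcal{L}^D$.

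The main obstacle is exactly the claim that the horizontal endpoint map $\mathrm{ev}_1:P_p^D\to M$ is a Hurewicz fibration (equivalently, that the inclusion into ordinary paths is a weak equivalence). This is where the bracket-generating hypothesis on $D$ is indispensable, and where the value $\pi_k(M)$, rather than $\pi_k$ of a leaf, comes from: one needs a parametric, uniform form of the Chow--Rashevskii theorem producing horizontal lifts of homotopies in $M$ that depend continuously on parameters. I would obtain this by realizing $\mathrm{ev}_1$ as a submersion of Hilbert manifolds admitting a continuous right inverse at the level of germs of horizontal deformations, which supplies the required homotopy lifting property. Once this fibration property is established, every remaining step is formal homotopy theory; the genuinely new work lies entirely in the construction of these parametric horizontal lifts.
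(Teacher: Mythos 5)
The first thing to say is that the paper contains no proof of this statement: it is imported verbatim from Lerario--Mondino \cite{Lerario} and used as a black box (only the corollary on path-connectedness is actually invoked), so your attempt can only be compared with the original article. At the level of architecture, your plan does reconstruct that original proof: everything is reduced to the claim that the endpoint map on the space of $H^1$ horizontal paths is a Hurewicz fibration, after which contractibility of the based horizontal path space, the long exact sequence, and the evaluation map on free loops with its section by constant loops yield precisely $\pi_k(\mathcal{L}^D)\cong\pi_k(M)\ltimes\pi_{k+1}(M)$. You are also right that the bracket-generating hypothesis is indispensable; note that the statement as quoted in the paper omits this hypothesis, and without it the formula is simply false (for the foliation of $T^2$ by horizontal circles one gets $\pi_0(\mathcal{L}^D)\cong\mathbb{Z}$, detected by the degree, while the formula predicts $\mathbb{Z}^2$) --- a point that matters here, since the paper applies the result to involutive distributions.

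There are, however, two genuine gaps in your supporting claims, and both fail for the same reason: abnormal (singular) horizontal curves. First, your route to the homotopy lifting property --- ``realizing $\mathrm{ev}_1$ as a submersion of Hilbert manifolds'' --- cannot work: the critical points of the endpoint map on horizontal paths are exactly the abnormal curves, and these exist for many bracket-generating distributions (already for Martinet or Engel structures), so $\mathrm{ev}_1$ is not a submersion in general. This is precisely what makes the theorem nontrivial; the original proof constructs, near an arbitrary (possibly abnormal) horizontal path, horizontal lifts of homotopies depending continuously on all data --- a quantitative, parametric refinement of Chow--Rashevskii --- and then globalizes by the local-to-global principle for Hurewicz fibrations over paracompact bases. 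Second, and for the same reason, $\mathcal{L}^D$ is a level set of a map with critical points, hence is in general \emph{not} a Hilbert manifold, so the Palais--Milnor theorem cannot be invoked for the CW claim; in \cite{Lerario} the CW homotopy type is instead extracted from the fibration property together with CW/ANR theory for fibers of Hurewicz fibrations (the actual fiber is homotopy equivalent to the homotopy fiber, and CW type propagates through the evaluation fibration), not from a manifold structure. With these two repairs, your outline becomes, in substance, the proof of the cited theorem.
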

\begin{coro}
If $M$ is simply connected, then $\mathcal L^D$ is path connected
\end{coro}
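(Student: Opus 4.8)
The plan is to obtain this as the degree-zero case of the preceding theorem of Lerario--Mondino. Recall that a topological space is path connected precisely when its set of path components $\pi_0$ consists of a single element, so it suffices to show that $\pi_0(\mathcal L^D)$ is trivial. First I would specialize the homotopy formula
\[
\pi_k(\mathcal L^D) = \pi_k(M) \ltimes \pi_{k+1}(M)
\]
to the value $k = 0$, which is permitted since the theorem is stated for all $k \geq 0$. This yields
\[
\pi_0(\mathcal L^D) = \pi_0(M) \ltimes \pi_1(M).
\]

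Next I would unpack the hypothesis that $M$ is simply connected. By definition this means two things: $M$ is path connected, so $\pi_0(M)$ is a single point, and $\pi_1(M)$ is the trivial group. Substituting both facts into the display above, the right-hand side collapses: the semidirect product of a one-point set with the trivial group is again a single point. Hence $\pi_0(\mathcal L^D)$ is a single element, which is exactly the statement that $\mathcal L^D$ is path connected.

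The only point requiring a word of care is that $\pi_0$ is not a group but a pointed set, so the symbol $\ltimes$ in the degree-zero instance must be read as the induced action of $\pi_0(M)$ on the components produced by $\pi_1(M)$ rather than as a genuine group extension; I would note explicitly that the Lerario--Mondino theorem is designed so that this interpretation is the correct one, and that triviality of both factors makes the distinction moot. I do not expect any serious obstacle here, since the entire content is carried by the cited theorem; the corollary is a direct specialization, and the argument is essentially a one-line substitution once the low-degree bookkeeping is made precise.
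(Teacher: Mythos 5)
Your argument is correct and is exactly the (implicit) argument of the paper, which states the corollary as an immediate specialization of the Lerario--Mondino theorem at $k=0$: simple connectedness kills both $\pi_0(M)$ and $\pi_1(M)$, so $\pi_0(\mathcal L^D)$ is a point. Your remark about reading $\ltimes$ at the level of pointed sets is a sensible precaution, and since both factors are trivial it causes no issue.
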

The above Corollary means that if $M$ is simply connected, then every loop tangent to $D$ can be deformed to the constant map in such a way that the loops in between are also tangent to $D$.

\subsection{About the determinant bundle}
We are now, going to apply the results from the previous subsection to the bundle $\Lambda_0^\pm$ with the induced connections $\nabla^j$. 

As the distribution $T\phi$ is involutive, we can restrict the connection $\nabla^j$ and obtain a partial connection over $T\phi$. Let us show that this partial connection is flat:

\begin{lemma}\label{cur}
Let $K = K^j = (K^j)^\pm$ be the curvature of the connection $\nabla = \nabla^j$ over the bundle $\Lambda_0^\pm$. Then, for Y=$Y_j^\pm$ and $Z_j^\pm$ tangent to $E_j^\pm$,
\begin{enumerate}
    \item $K(X_s,X_l)= 0$
    \item $K(X_s, Y_1^\pm) = 0$
    \item $K(Y_j^-,Z_j^-) = 0$
    \item $K(Y_j^+,Z_j^+) = 0$
\end{enumerate}
\end{lemma}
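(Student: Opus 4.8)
The plan is to reduce everything to the curvature computations already carried out for the tangent-bundle connection in Lemma~\ref{curvlemma}, using the fact recorded just before Lemma~\ref{nilpotentB} that the connection $\hat\nabla^j$ induced on the line bundle $\Lambda_0^\pm = \Lambda^q(E_0^\pm)^*$ has curvature equal to a trace:
$$K^{\hat\nabla^j}(Y,Z) = \mathrm{Tr}\big(K^{\nabla^j}(Y,Z)|_{E_0^\pm}\big).$$
Since $\nabla^j$ preserves the splitting, $K^{\nabla^j}(Y,Z)$ restricts to an endomorphism of each $E_0^\pm$, and it suffices to show that for each of the four pairs $(Y,Z)$ listed this restricted endomorphism has vanishing trace on both $E_0^+$ and $E_0^-$.

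For the $E_0^+$ summand I would simply invoke Lemma~\ref{curvlemma}: in each of the four cases $(X_s,X_l)$, $(X_s,Y_j^\pm)$, $(Y_j^-,Z_j^-)$ and $(Y_j^+,Z_j^+)$, that lemma shows $K^{\nabla^j}(Y,Z)W_0^+ = 0$ for every $W_0^+\in E_0^+$, i.e. the restricted operator is identically zero. In particular its trace vanishes, which already gives $K^{\hat\nabla^j}(Y,Z)=0$ on $\Lambda_0^+$.

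For the $E_0^-$ summand the cleanest route is to exploit the $\nabla^j$-parallel pairing supplied by $d\alpha_j$. Because $\alpha_j$ is $\phi$-invariant (as $T\phi\subset\ker d\alpha_j$) the form $d\alpha_j$ is invariant, so its stable and unstable directions are isotropic; since moreover $d\alpha_j|_{E_0^+\oplus E_0^-}$ is non-degenerate, it induces a non-degenerate pairing identifying $E_0^-\cong (E_0^+)^*$. As $\nabla^j d\alpha_j = 0$ and $\nabla^j$ preserves $E_0^\pm$, the connection induced on $E_0^-$ is exactly the dual of the one on $E_0^+$ under this identification. Consequently $\Lambda_0^-\cong(\Lambda_0^+)^*$ with $K^{\hat\nabla^j}|_{\Lambda_0^-} = -K^{\hat\nabla^j}|_{\Lambda_0^+}$, equivalently $\mathrm{Tr}(K^{\nabla^j}(Y,Z)|_{E_0^-}) = -\mathrm{Tr}(K^{\nabla^j}(Y,Z)|_{E_0^+})$. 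Hence the vanishing established on $E_0^+$ transfers to $E_0^-$, completing all four cases.

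The only genuinely delicate point — and the step I would be most careful about — is the $E_0^-$ half: one must check that the duality argument is legitimate, i.e. that $d\alpha_j$ really is $\nabla^j$-parallel and that $E_0^\pm$ are isotropic (so that $d\alpha_j$ pairs $E_0^+$ with $E_0^-$ rather than within each). Both follow from the defining properties of $\nabla^j$ in Lemma~\ref{buildconnec} and from invariance of $d\alpha_j$ under the hyperbolic dynamics. If one prefers to avoid the duality altogether, the alternative is to re-run the four computations of Lemma~\ref{curvlemma} with $W_0^-$ in place of $W_0^+$; the setup is symmetric under exchanging $E^+\leftrightarrow E^-$ (and $C_j^+\leftrightarrow C_j^-$), so those computations go through unchanged.
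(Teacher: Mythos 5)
Your proposal is correct and takes essentially the same route as the paper: the paper's entire proof of this lemma is the one-line remark that it is an easy consequence of Lemma \ref{curvlemma}, i.e.\ precisely your reduction via the trace formula $K^{\hat\nabla^j}(Y,Z)=\mathrm{Tr}\big(K^{\nabla^j}(Y,Z)|_{E_0^\pm}\big)$ recorded before Lemma \ref{nilpotentB}. Your handling of the $\Lambda_0^-$ case --- either by the $d\alpha_j$-duality identifying $E_0^-\cong(E_0^+)^*$ as parallel bundles, or by rerunning the computations with $W_0^-$ (where cases (3) and (4) swap under $+\leftrightarrow -$) --- supplies a detail the paper leaves implicit, since Lemma \ref{curvlemma} is stated only for vectors $W_0^+$.
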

\begin{proof}
This is an easy consequence of Lemma \ref{curvlemma}.
\end{proof}

Using this lemma and the results of the previous subsection, we obtain the following:
\begin{lemma}
Consider the lift $\tilde\Lambda_0^\pm$ of $\Lambda_0^\pm$ to the universal cover $\tilde M$ of $M$, with associated connections $\nabla^j$. Then, there exists unique, up to a constant, non zero, sections $\zeta_j$ which are parallel (with respect to $\nabla_j$) along $\tilde{T\phi}\oplus \tilde E_j^+\oplus\tilde E_j^-$ 
\end{lemma}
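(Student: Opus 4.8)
The plan is to realize $\zeta_j$ as a parallel section of a \emph{flat} partial connection along the involutive distribution $D := T\phi\oplus E_j^+\oplus E_j^-$, and then to globalize it on the simply connected cover $\tilde M$ using the results on partial connections from the previous subsection. First I would record that $D$ is involutive: since the $X_s$ commute and preserve the splitting $TM = T\phi\oplus E_0^+\oplus E_0^-\oplus E_j^+\oplus E_j^-$, one has $[X_s,\Gamma(E_j)]\subset\Gamma(E_j)$, and together with the integrability of $T\phi^j\oplus E_j$ this makes $D = \langle X_j\rangle\oplus(T\phi^j\oplus E_j)$ involutive. The connection $\nabla^j$ preserves $E_0^\pm$, hence induces a connection on the line bundle $\Lambda_0^\pm$, and restricting to $D$ gives a partial connection. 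Its curvature is a scalar $2$-form, and Lemma \ref{cur} shows it annihilates every pair drawn from a frame of $D$ (the vanishing on $E_j^+\times E_j^-$ being obtained by the same curvature computation), so the partial connection is flat. By Rawnsley's lemma, flatness produces, locally, a nowhere-zero parallel section $\zeta_j$ of $\tilde\Lambda_0^\pm$, and since the bundle has rank one this local section is unique up to a multiplicative constant.

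Next I would globalize on $\tilde M$. Lift $\Lambda_0^\pm$, the distribution $D$, and the flat partial connection to $\tilde M$, and fix an open set $\tilde U$ carrying a flat section. The decisive geometric point is that every leaf of $D$ is dense: since $T\phi\subset D$, each leaf contains a full $\phi$-orbit, and the Anosov action is topologically transitive because it preserves a volume form; hence every leaf meets $\tilde U$ and the saturation $\overline{\tilde U}$ of $\tilde U$ by $D$-leaves is all of $\tilde M$. Consequently every $x\in\tilde M$ can be joined to $\tilde U$ by a path tangent to $D$, and I would define $\zeta_j(x)$ by parallel transporting the local flat section along such a path. Parallel transport is a fibrewise isomorphism, so the resulting section is nowhere zero.

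The heart of the argument — and the step I expect to be the main obstacle — is showing this extension is independent of the chosen $D$-tangent path. Two such paths with the same endpoints differ by a loop tangent to $D$, and ordinary simple connectivity of $\tilde M$ is \emph{not} immediately sufficient, since the admissible homotopies must keep every intermediate loop tangent to $D$. This is exactly what the corollary of Lerario–Mondino supplies: on a simply connected manifold the space $\mathcal{L}^D$ of $D$-tangent loops is path connected, so any $D$-tangent loop can be deformed to the constant loop through $D$-tangent loops. Flatness of the partial connection then forces trivial holonomy around such a loop, so the parallel transport is path-independent and $\zeta_j$ is a well-defined smooth global section, parallel along all of $\tilde D = \tilde{T\phi}\oplus\tilde E_j^+\oplus\tilde E_j^-$.

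Finally, uniqueness up to a constant follows because any other global parallel section equals $f\zeta_j$ for a nowhere-zero smooth $f$, and $0 = \nabla^j_Z(f\zeta_j) = Z(f)\,\zeta_j$ for all $Z\in\Gamma(\tilde D)$ forces $f$ to be constant along the $D$-leaves; as a single leaf is dense and $f$ is continuous, $f$ is a global constant.
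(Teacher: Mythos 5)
There is a genuine gap at the single most delicate point of your argument. You assert that the partial connection on $\Lambda_0^\pm$ along $D = T\phi\oplus E_j^+\oplus E_j^-$ is flat because Lemma \ref{cur} kills every pair drawn from a frame of $D$, ``the vanishing on $E_j^+\times E_j^-$ being obtained by the same curvature computation.'' That mixed vanishing, $K(Y_j^+,Z_j^-)=0$, is precisely what is \emph{not} available: Lemma \ref{cur} (via Lemma \ref{curvlemma}) only covers the pairs $(X_s,X_l)$, $(X_s,Y_j^\pm)$, $(Y_j^-,Z_j^-)$ and $(Y_j^+,Z_j^+)$, and neither of the two computations behind the last two cases transfers to the mixed pair. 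Case (3) uses the Bianchi identity together with the fact that both arguments lie in the \emph{same} stable part, so that all torsion terms stay tangent to $E^-\oplus T\phi$; case (4) uses the integrability of $E^+ = E_0^+\oplus E_j^+$. For $(Y_j^+,Z_j^-)$ the bracket $[Y_j^+,Z_j^-]$ is not controlled by either of these facts, and the paper explicitly flags this in a footnote: one cannot run the flat-partial-connection argument on all of $T\phi\oplus E_j^+\oplus E_j^-$ at once, because the curvature lemma says nothing about $K(Y_j^+,Z_j^-)$.

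The paper's workaround is the idea missing from your proof, and it replaces the unproved curvature identity by a uniqueness argument. One runs exactly your construction (Rawnsley's lemma plus the Lerario--Mondino corollary on the simply connected cover, plus saturation by transitivity --- all of which you set up correctly) three times, on the involutive distributions $T\phi$, $T\phi\oplus E_j^+$ and $T\phi\oplus E_j^-$, on each of which the partial connection \emph{is} flat by Lemma \ref{cur}. This produces sections $\zeta_j$, $\zeta_j^+$, $\zeta_j^-$ parallel along the respective distributions. Since $\zeta_j^\pm$ are in particular parallel along $T\phi$, the uniqueness-up-to-constant statement along $T\phi$ (which you did prove, via density of the saturating leaves) forces $\zeta_j^\pm = c^\pm\zeta_j$ for nonzero constants $c^\pm$; hence $\zeta_j$ itself is parallel along $T\phi\oplus E_j^+$ and along $T\phi\oplus E_j^-$, and therefore along their sum, since parallelism along a distribution is checked on spanning vector fields. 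So your topological globalization is sound, but as written your proof rests on a curvature vanishing that the paper's lemmas do not provide and deliberately avoid assuming.
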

\begin{proof}
First we construct sections $\zeta_j$ which are parallel along $\tilde{T\phi}$.
The existence of such sections is obtained by observing that $\tilde M$ is simply connected, the action is topologicaly transitive, and therefore, for any open set $U\subset M$ the saturation of $U$ by the leafs of $T\phi$ is in fact $M$. This of course lifts to the universal cover. Finally, the induced partial connections $\nabla^j$ of $\tilde\Lambda_0^\pm$ are flat. The uniqueness, up to a constant follows, once again,  from the topologically transitiveness of the action.

Now, the sections $\zeta_j$ are only parallel along $\tilde{T\phi}$. We want to show that they are also parallel along $\tilde E_j= \tilde E_j^+\oplus \tilde E_j^-$. We repeat the construction above for the integrable distributions $T\phi\oplus E_j^\pm$, obtaining unique (up to constants) sections $\zeta_j^\pm$ which are parallel along  $\tilde{T\phi}\oplus \tilde E_j^\pm$. However, $\zeta_j^\pm$ are also parallel along $\tilde{T\phi}$ and thus, up to a constant, $\zeta_j^\pm = \zeta_j$. That is, $\zeta_j$ is parallel along $\tilde{T\phi}\oplus \tilde E_j$ as we desired.\footnote{We can't just do the construction directly with the bundle $T\phi\oplus E_j$ because Lemma \ref{cur} does not says what happens for $K(Y_j^+,Z_j^-)$, and thus we can not say that the curvature vanishes on $T\phi\oplus E_j^+\oplus E_1^-$, Lemma \ref{cur} only ensure us that the curvature, restricted to  $T\phi\oplus E_j^\pm$ vanishes}

\end{proof}

\begin{definition}\label{characters}
	We define the maps $d\chi_j:\lieg'\to\R$ by:
	$$\forall Y\in\lieg'\;\;\L_Y(\zeta_j) = d\chi_j(Y)\zeta_j$$
	The uniqueness of $\zeta_j$ means that this is a well defined map
\end{definition}

\begin{lemma}
For $X\in \Ii':= Span_\R{X_1,\dots,X_k}$ and $\zeta$ a section of $\Lambda^+$ we have
	$$\nabla^j_X\zeta - \nabla^i_X\zeta = p(C_{i}^\pm(X) -C_{j}^\pm(X))\zeta$$
\end{lemma}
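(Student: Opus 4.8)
The plan is to use the fact that the difference of two linear connections on a fixed vector bundle is tensorial, so that I need only compute $\nabla^j - \nabla^i$ along directions in $\Ii'$ and then transport that computation through the determinant construction defining $\Lambda^+ = \Lambda_0^+$. Since the difference is a genuine tensor, the identity will hold for an arbitrary local section $\zeta$ (parallel or not), its value at a point depending only on $\zeta$ at that point.

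First I would restrict to $E_0^+$. By Lemma \ref{buildconnec} both $\nabla^j$ and $\nabla^i$ preserve $E_0^+$, and for $X\in\Gamma(M,T\phi)$ and $Z_0^+\in\Gamma(M,E_0^+)$ they read
\[
\nabla^j_X Z_0^+ = [X,Z_0^+] + C_j^+(X)Z_0^+,\qquad
\nabla^i_X Z_0^+ = [X,Z_0^+] + C_i^+(X)Z_0^+.
\]
Because $C_j^+$ and $C_i^+$ are linear functionals on $\R^k = T_p\phi$, these formulas extend by linearity to every $X\in\Ii'$, and subtraction gives
\[
(\nabla^j_X - \nabla^i_X)Z_0^+ = \big(C_j^+(X)-C_i^+(X)\big)Z_0^+ .
\]
Thus, as an endomorphism of $E_0^+$, the difference $\nabla^j_X-\nabla^i_X$ is the scalar $c_X := C_j^+(X)-C_i^+(X)$ times the identity.

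Next I would pass to the line bundle $\Lambda_0^+$, which is canonically the dual of $\det E_0^+$, with $p=\dim E_0^+$. The difference of the induced connections on $(E_0^+)^*$ is $-c_X\,\mathrm{Id}$ (the dual connection contributes the sign), so writing $\zeta$ locally as $\xi^1\wedge\dots\wedge\xi^p$ for a coframe of $E_0^+$ and applying the Leibniz rule produces exactly $p$ identical terms, giving
\[
\nabla^j_X\zeta - \nabla^i_X\zeta = -p\,c_X\,\zeta = p\big(C_i^+(X)-C_j^+(X)\big)\zeta,
\]
which is the claim for $\Lambda^+$; the case of $\Lambda^-$ is identical with $C^+_\bullet$ replaced by $C^-_\bullet$. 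The computation is routine, and the only point requiring genuine care — the step I would treat as the main obstacle — is the sign-and-factor bookkeeping in the determinant construction: one must correctly identify $\Lambda_0^+$ as the dual of $\det E_0^+$, which produces the overall minus sign, and correctly count that the scalar endomorphism $c_X\,\mathrm{Id}$ contributes its trace $p\,c_X$ rather than $c_X$. Everything else follows from tensoriality of connection differences and the defining formulas of Lemma \ref{buildconnec}.
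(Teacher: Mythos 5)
Your proposal is correct and takes essentially the same route as the paper: both reduce to the identity $\nabla^j_X Y - \nabla^i_X Y = \big(C_j^\pm(X)-C_i^\pm(X)\big)Y$ on $E_0^\pm$ from Lemma \ref{buildconnec}, then push this through the induced connection on the determinant line bundle, where the Leibniz rule over $p$ factors and the dualization sign produce exactly $p\big(C_i^\pm(X)-C_j^\pm(X)\big)\zeta$. The only cosmetic difference is that the paper evaluates the induced connection on a local frame $Y_1,\dots,Y_p$ of $E_0^\pm$, whereas you decompose $\zeta$ as a wedge of coframe elements; the bookkeeping is the same.
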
 
\begin{proof}
	
	First, for any $Y = Y_0^\pm$ tangent to $E_0^\pm$ we have:
	$$\nabla^j_XY- \nabla^i_XY = (C_{j}^\pm(X) -C_{i}^\pm(X))Y$$
	
	Now we take $Y_1,\dots,Y_p$ local basis for $E_0^\pm$,  from the formula
	$$\nabla_X\zeta(Y_1,\dots,Y_p) = X(\zeta(Y_1,\dots,Y_p)) - \sum_l\zeta(Y_1,\dots,\nabla_XY_l,\dots,Y_p)$$
	we obtain
	\begin{align*}
	(\nabla^j_X\zeta - \nabla^i_X\zeta)(Y_1,\dots,Y_p) &= \sum_l\zeta(Y_1,\dots,\nabla^i_XY_l - \nabla^j_XY_l ,\dots,Y_p)\\
	&= \sum_l(C_{i}^\pm(X) -C_{j}^\pm(X)) \zeta(Y_1,\dots,Y_l ,\dots,Y_p)\\
	&=  p(C_{i}^\pm(X) -C_{j}^\pm(X))\zeta(Y_1,\dots,Y_l ,\dots,Y_p)
	\end{align*}
\end{proof}

\begin{lemma}
	Let us write $\zeta_i = f_{ij}\zeta_j$ then  $X(f_{ij})= 0$ if, and only if $(C_{j}^\pm(X) -C_{i}^\pm(X))=0$. 
\end{lemma}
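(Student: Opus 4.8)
The plan is to exploit the identity established in the preceding lemma, namely
$$\nabla^j_X\zeta - \nabla^i_X\zeta = p\,(C_i^\pm(X) - C_j^\pm(X))\,\zeta \qquad \text{for } X\in\Ii',$$
together with the fact that $\zeta_i$ and $\zeta_j$ are parallel along $\tilde{T\phi}$ with respect to $\nabla^i$ and $\nabla^j$ respectively. Since $\Ii' = Span_\R\{X_1,\dots,X_k\}$ is exactly the span of the generators of $T\phi$, every $X\in\Ii'$ is tangent to $T\phi$, so in particular $\nabla^i_X\zeta_i = 0$ and $\nabla^j_X\zeta_j = 0$. The whole argument then reduces to comparing two expressions for $\nabla^j_X\zeta_i$.

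First I would differentiate the defining relation $\zeta_i = f_{ij}\zeta_j$ along $X$ using $\nabla^j$ and the Leibniz rule:
$$\nabla^j_X\zeta_i = X(f_{ij})\zeta_j + f_{ij}\nabla^j_X\zeta_j = X(f_{ij})\zeta_j,$$
where the second equality uses that $\zeta_j$ is $\nabla^j$-parallel along $T\phi$. On the other hand, applying the preceding lemma with $\zeta = \zeta_i$ and using $\nabla^i_X\zeta_i = 0$ gives
$$\nabla^j_X\zeta_i = \nabla^i_X\zeta_i + p\,(C_i^\pm(X) - C_j^\pm(X))\,\zeta_i = p\,(C_i^\pm(X) - C_j^\pm(X))\,f_{ij}\zeta_j.$$
Comparing the two displays and cancelling the nowhere-vanishing section $\zeta_j$, I obtain the scalar identity
$$X(f_{ij}) = p\,(C_i^\pm(X) - C_j^\pm(X))\,f_{ij}.$$

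From here the conclusion is immediate: the function $f_{ij}$ is nowhere zero, being the ratio of two nowhere-vanishing sections of the same line bundle $\Lambda_0^\pm$, and $p = \dim E_0^\pm \neq 0$; hence $X(f_{ij}) = 0$ if and only if $C_i^\pm(X) - C_j^\pm(X) = 0$, which is the same as $C_j^\pm(X) - C_i^\pm(X) = 0$, the claimed equivalence. The argument is essentially algebraic once the previous lemma is in hand, so I do not anticipate a genuine obstacle; the only point requiring attention is the justification that both $\zeta_i$ and $\zeta_j$ are parallel along the \emph{whole} of $T\phi$ with respect to their respective connections, so that the two transport terms drop out. This is guaranteed by the construction of the $\zeta_j$ as sections parallel along $\tilde{T\phi}\oplus\tilde E_j \supset \tilde{T\phi}$.
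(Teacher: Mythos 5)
Your proposal is correct and follows essentially the same route as the paper: apply the preceding lemma with $\zeta=\zeta_i$, use that $\nabla^i_X\zeta_i=0$ and $\nabla^j_X\zeta_j=0$ along $T\phi$, expand $\nabla^j_X(f_{ij}\zeta_j)$ by Leibniz, and cancel the nowhere-vanishing section to get $X(f_{ij})=p\,(C_i^\pm(X)-C_j^\pm(X))f_{ij}$. The only cosmetic difference is that the paper writes this as a single chain of equalities ending in $\frac{X(f_{ij})}{f_{ij}}\zeta_i$, while you compare two computations of $\nabla^j_X\zeta_i$; the content, including the implicit use of $f_{ij}\neq 0$ and $p\neq 0$, is identical.
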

\begin{proof}
	From the previous lemma, 
	\begin{align*}
	p(C_{i}^\pm(X) -C_{j}^\pm(X))\zeta_i &= \nabla^j_X\zeta_i - \nabla^i_X\zeta_i=\nabla^j_X\zeta_i  \\
	&= \nabla^j_Xf_{ij}\zeta_j\\
	&=	X(f_{ij})\zeta_j + f_{ij}\nabla^j_{X}\zeta_j\\
	&=\frac{X(f_{ij})}{f_{ij}}\zeta_i
	\end{align*}
	that is $X(f_{ij}) = 0$ if and only if 
	$$(C_{j}^\pm(X) -C_{i}^\pm(X))=0$$. 

\end{proof}

\begin{remark}
	From $C_j^\pm(X_s) = \frac{\delta_{js}\ss^\pm}{p}$ and the previous lemma, we obtained that, for $i\neq j$,
	$$\frac{X_i(f_{ij})}{f_{ij}} = \ss_i\enskip\enskip and \enskip\enskip \frac{X_j(f_{ij})}{f_{ij}} = -\ss_j$$
	\end{remark}

\begin{lemma}\label{l}
	For  every $1\leq j\leq l$, let $d\chi_j:\lieg' \to \R$ be the character\footnote{Once again we avoid the usage of the superscript $"\pm"$. The more correct notation is $d\chi_j^\pm$}, given by Definition \ref{characters}. We have:
	$$\L_{X_j}\zeta_j = \ss_j\zeta_j$$
	Moreover $d\chi_j(L_0)  = Tr_{|_{E^\pm}}(L_0)>0$.
\end{lemma}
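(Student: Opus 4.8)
The plan is to obtain both equalities from one pointwise comparison between $\L_Y\zeta_j$ and $\nabla^j_Y\zeta_j$ on the determinant bundle $\Lambda_0^\pm$, valid for every $Y\in\lieg'$. Since such a $Y$ preserves the splitting, both $\L_Y$ and $\nabla^j_Y$ map $E_0^\pm$ into itself, so $\nabla^j_Y-\L_Y$ restricts to an endomorphism of $E_0^\pm$; evaluating the two derivatives of the top-form $\zeta_j$ on a local frame $Y_1,\dots,Y_p$ of $E_0^\pm$ and using $(\nabla^j_Y\zeta_j)(\dots)-(\L_Y\zeta_j)(\dots)=\sum_i\zeta_j(\dots,(\L_Y-\nabla^j_Y)Y_i,\dots)$ gives the master identity
\[
\L_Y\zeta_j \;=\; \nabla^j_Y\zeta_j \;+\; Tr\big((\nabla^j_Y-\L_Y)|_{E_0^\pm}\big)\,\zeta_j .
\]

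First I would prove $\L_{X_j}\zeta_j=\ss_j\zeta_j$ by taking $Y=X_j$. As $\zeta_j$ is parallel along $T\phi\oplus E_j$ and $X_j$ is tangent to $T\phi$, the first term vanishes: $\nabla^j_{X_j}\zeta_j=0$. For the second term, the defining formula of $\nabla^j$ gives $\nabla^j_{X_j}Y_i-\L_{X_j}Y_i=\nabla^j_{X_j}Y_i-[X_j,Y_i]=C_j^\pm(X_j)\,Y_i$ for $Y_i\in E_0^\pm$, so $(\nabla^j_{X_j}-\L_{X_j})|_{E_0^\pm}=C_j^\pm(X_j)\,\mathrm{Id}$; with $\dim E_0^\pm=p$ and the normalization $C_j^\pm(X_j)=\ss_j^\pm/p$ from \ref{constantscondition}, its trace equals $\ss_j^\pm$. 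The identity then yields $\L_{X_j}\zeta_j=\ss_j^\pm\zeta_j$, that is $\L_{X_j}\zeta_j=\ss_j\zeta_j$, and hyperbolicity (together with $\ss_j^++\ss_j^-=0$) gives $\ss_j\neq0$.

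For the \emph{moreover} part I would apply the same identity with $Y=L_0$, evaluated at the fixed point $v_0$. Because $L_0\in\lieh'$ vanishes at $v_0$, covariant differentiation in the (zero) direction $L_0(v_0)$ kills $\zeta_j$ there, so $(\nabla^j_{L_0}\zeta_j)_{v_0}=0$; and at $v_0$ the endomorphism $(\nabla^j_{L_0}-\L_{L_0})$ reduces to $Z\mapsto-[L_0,Z]_{v_0}$, which is precisely the linearization $\tilde L_0=(di_e)(L_0)$ of the flow of $L_0$ at $v_0$ appearing in Lemma \ref{injectiv}. Hence $d\chi_j(L_0)=Tr\big(\tilde L_0|_{E_0^\pm}\big)$, the trace of $L_0$ acting on the $\pm$-summand $E_0^\pm$ from which $\Lambda_0^\pm$ is built. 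Finally, after replacing $l_0$ by the iterate that makes $\exp L_0$ hyperbolic with the prescribed signs, the eigenvalues of $\tilde L_0$ on $E^-_{v_0}$ are strictly positive, so choosing the sign for which $\zeta_j$ is the volume of $E_0^-$ gives $d\chi_j(L_0)=Tr_{|_{E_0^-}}(L_0)>0$.

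The only real obstacle is sign bookkeeping: one must keep straight that $\zeta_j$ is a covariant top-form (not a multivector), so that the comparison produces $+\,Tr(\nabla^j_Y-\L_Y)$ rather than its negative, and one must justify cleanly that at a zero $v_0$ of the Killing field $L_0$ the Lie derivative of the invariant volume $\zeta_j$ computes the trace of the linearization $\tilde L_0$ on $E_0^\pm$. Once these two points and the normalization $C_j^\pm(X_j)=\ss_j^\pm/p$ are in place, both the equality and the strict positivity follow immediately.
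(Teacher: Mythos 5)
Your proof is correct and follows essentially the same route as the paper: the paper's computation via Cartan's formula and the parallelism of $\zeta_j$ is precisely your trace identity comparing $\L_Y$ and $\nabla^j_Y$ on the determinant bundle (specialized at $Y=X_j$ with the normalization $C_j^\pm(X_j)=\ss_j^\pm/p$), and the \emph{moreover} part is likewise obtained by evaluating at the fixed point $v_0$ and identifying the difference endomorphism with the linearized isotropy action $di_e(L_0)$ of Lemma \ref{injectiv}. If anything, your bookkeeping of the sign in the strict positivity (which, with the paper's choice of $L_0$ having negative eigenvalues on $E^+_{v_0}$, holds for the bundle $\Lambda_0^-$ built from $E_0^-$ and not for $E_0^+$) is more careful than the paper's own treatment, which glosses over this point.
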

\begin{proof}
	First of all, notice that $d\chi_j$ is in fact a Lie algebra morphism. In fact, 
	\begin{align*}
	d\chi_j([Y,Z])\zeta_j &= \L_{[Y,Z]}\zeta_j = [\L_Y,\L_Z]\zeta_j\\
	&=\L_Y\L_Z\zeta_j - \L_Z\L_Y\zeta_j\\
	&=\L_Y(d\chi_j(Z)\zeta_j) - \L_Z(d\chi_j(Y)\zeta_j)\\
	&=d\chi_j(Y)d\chi_j(Z)\zeta_j - d\chi_j(Z) d\chi_j(Y)\zeta_j = [d\chi_j(Y),d\chi_j(Z)]\zeta_j
	\end{align*}
	As wanted. Now, let's prove $d\chi_j(X_j) = \ss_j$.
	
	We shall write 
	$$Y = (Y_0,\dots,Y_p),$$
	$$Y^{(s)} = (Y_0,\dots,Y_{s-1},Y_{s+1},\dots,Y_p)$$ and for indexes $\alpha_1<\dots<\alpha_l$, $l<p$
	$$Y^{(\alpha_1\cdots\alpha_l)} = (Y_0,\dots,Y_{\alpha_1-1},Y_{\alpha_1+1},\dots,Y_{\alpha_l-1},Y_{\alpha_l+1},\dots,Y_p)$$
	Let $\nabla$ be a connection on $\Lambda_0^\pm$ and $\zeta$ be a parallel section along $Y_0$. 
	\begin{align*}
	0 =\nabla_{Y_0}\zeta(Y^{(0)})&=Y_0(\zeta(Y^{(0)})) - \sum_s \zeta(Y_1,\dots,\nabla_{Y_0}Y_s,\dots,Y_p)\\
	d\zeta(Y) &=\sum_{s=0}^p(-1)^sY_s(\zeta(Y^{(s)})) + \sum_{i<s}(-1)^{i+s}\zeta([Y_i,Y_s],Y^{is}) \\
	&=\sum_{s=0}^p(-1)^sY_s(\zeta(Y^{(s)})) + \sum_{0<i<s}(-1)^{i+s}\zeta([Y_i,Y_s],Y^{is}) + \sum_{s=1}^p(-1)^s\zeta([Y_0,Y_s],Y^{0s})\\
	\end{align*}
	Also
\begin{align*}
d(i_{Y_0}\zeta)(Y^{(0)}) &= \sum_{s=1}^p(-1)^{s-1}Y_s(\zeta(Y_0,Y^{(0s)})) + \sum_{0<i<s}(-1)^{i+s}\zeta(Y_0,[Y_i,Y_s],Y^{(0is)})\\
&= Y_0(\zeta(Y))-\sum_{s=0}^p(-1)^{s}Y_s(\zeta(Y^{(s)})) - \sum_{0<i<s}(-1)^{i+s}\zeta([Y_i,Y_s],Y^{(is)})\\
&=Y_0(\zeta(Y)) -d\zeta(Y) + \sum_{s=1}^p(-1)^s\zeta([Y_0,Y_s],Y^{0s}) \\
&=\sum_{s=1}^p\zeta(Y_1,\dots,\nabla_{Y_0}Y_s -[Y_0,Y_s],\dots,Y_p) -d\zeta(Y)
\end{align*}
Now, we notice that for $Y_0 = X_j$,  $\nabla = \nabla^j$ and $\zeta = \zeta_j$ we have $\nabla^j_{X_j}Y = [X_j,Y] + C_j^\pm(X_j) Y$, thus, if we take $Y_1,\dots,Y_p$ a local basis of sections of $E_0^\pm$ we obtain $\nabla^j_{X_j}Y_l - [X_j,Y_l] = C_{j}^\pm(X_j) Y_l$ for every $l=1,\dots,p$, that is:
\begin{align*}
    \L_{X_j}\zeta(Y^{(0)}) &= d(i_{X_j}\zeta)(Y^{(0)}) + d\zeta(X_j,Y^{(0)})\\
    &=\sum_{k=1}^n\zeta(Y_1,\dots,C_j^\pm(X_j) Y_k,\dots,Y_n) = pC_{j}^\pm(X_j)\zeta(Y^{(0)}) 
\end{align*}
but $pC_{j}^\pm(X_j) = \ss_j(=\ss_j^\pm)$ and thus $\L_{X_j}\zeta_j = \ss_j\zeta_j$ as we wanted.\\

Now, for any $Y\in \lieh'$ we have $d\chi(Y) = Tr_{|_{E_0^\pm}}(Y)$. Here we understand $Y$ as an element on $End(V_0)$. As this immersion actually depends on the connection\footnote{Remember, the map $\theta_0:\lieg'\to End(V_0)\times V_0$ is defined as $\theta_0(Y):=(\L_Y - \nabla_Y,Y_{v_0})$}, the trace may depend on the choice of Anosov element. However, as we have seen, when restricted to $\lieh'$ the map $\theta_0$ is actually de differential of the map $j:H'\to GL(V_0)$, $j(h) = dh_{v_0}$ which does not depend on the choice of the connection. Thus, for $Y\in \lieh'$ we have 
\begin{align}\label{tradedxi}
d\chi(Y) = Tr_{|_{E_0^\pm}}(Y)
\end{align}
 and the trace formula doesn't depends on the choice of Anosov element and associated connection, etc.
\end{proof}
\begin{remark}
We recall that the sequence
$$0\longrightarrow \liek_j'\longrightarrow\lieg'\stackrel{\Check P_{j}}{\longrightarrow}\lieg_j\longrightarrow 0$$
is split. Let $\sigma_j:\lieg_j\to\lieg'$ be the corresponding embedding. 
    If we take the projection $L_j = \sigma_j(\Check P_{j}(L_0))$, then it is clear that $L_j$ also belongs to $\lieh'$, moreover, as $\liek'_j$ is an ideal, the action of $L_j$ on $E_0$ coincides with the action of $L_0$, and thus, it has positive eigenvalues and we obtain
    $$d\chi(L_j) = Tr_{|_{E_0^\pm}}(L_j) = Tr_{|_{E_0^\pm}}(L_0)>0$$
\end{remark}

\begin{remark}
Consider the Lie algebra $\sigma_j(\lieg_j)\subset\lieg'$ and the intersections $\ker(d\chi_i)\cap\sigma_j(\lieg_j)$, for any $i,j$. As $\ker(d\chi_i)$ has codimension 1, this intersection must have codimension  at most one. 

Now, from Remark \ref{remsplit},  $X_i$ actually belongs to $\lieg_j$ for every $1\leq i,j\leq l$, and thus the codimension must be $1$.
\end{remark}

In the following, we shall actually identify $\lieg_s$ with it's image $\sigma_s(\lieg_s)\subset\lieg'$.

\begin{coro}\label{cocoro}
For any $s\in \{1,\dots,l\}$, we have:
	$$X_i - \frac{\ss_iL_s}{d\chi_i(L_s)}\in \lieg_s\cap \ker d\chi_i \cap (\ker d\chi_{j})^c\enskip\enskip i\neq j$$
\end{coro}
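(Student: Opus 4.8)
The plan is to reduce the statement to evaluating the two characters $d\chi_i$ and $d\chi_j$ on the candidate element, supplemented by a membership check in $\lieg_s$. The first step is to pin down the values of the characters on the generators $X_1,\dots,X_l$. Lemma \ref{l} gives the diagonal value $d\chi_i(X_i)=\ss_i$ directly. For the off-diagonal values I would use the comparison $\zeta_i=f_{ij}\zeta_j$ together with the logarithmic-derivative relation $X_j(f_{ij})=-\ss_j f_{ij}$ recorded in the Remark preceding Lemma \ref{l}; the one-line computation
\[
\L_{X_j}\zeta_i=X_j(f_{ij})\zeta_j+f_{ij}\L_{X_j}\zeta_j=-\ss_j f_{ij}\zeta_j+\ss_j f_{ij}\zeta_j=0
\]
then yields $d\chi_i(X_j)=0$ for $i\neq j$, so altogether $d\chi_i(X_s)=\delta_{is}\ss_i$.

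Next I would invoke the Remark following Lemma \ref{l}, which shows that $L_s=\sigma_s(\Check P_s(L_0))$ lies in $\lieh'$ and that, through the trace formula (\ref{tradedxi}),
\[
d\chi_i(L_s)=Tr_{|_{E_0^\pm}}(L_s)=Tr_{|_{E_0^\pm}}(L_0)>0
\]
for every $i$ and $s$. The strict positivity, which is exactly where the hyperbolicity of the Anosov element enters via the positive eigenvalues of $L_0$ on $E_0^\pm$, guarantees that the denominator $d\chi_i(L_s)$ is nonzero, so the element $X_i-\frac{\ss_i L_s}{d\chi_i(L_s)}$ is well defined; note also that this trace is independent of $i$.

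With these values in hand the three conditions follow by direct evaluation. Membership in $\lieg_s$ is immediate, since $X_i\in\lieg_s$ by Remark \ref{remsplit} and $L_s=\sigma_s(\Check P_s(L_0))\in\sigma_s(\lieg_s)=\lieg_s$ by construction, whence the whole linear combination lies in $\lieg_s$. For the kernel condition,
\[
d\chi_i\Big(X_i-\frac{\ss_i L_s}{d\chi_i(L_s)}\Big)=\ss_i-\frac{\ss_i}{d\chi_i(L_s)}\,d\chi_i(L_s)=0,
\]
while for $j\neq i$,
\[
d\chi_j\Big(X_i-\frac{\ss_i L_s}{d\chi_i(L_s)}\Big)=-\frac{\ss_i}{d\chi_i(L_s)}\,d\chi_j(L_s)=-\ss_i\neq0,
\]
using $d\chi_j(X_i)=0$, $d\chi_j(L_s)=d\chi_i(L_s)$, and $\ss_i\neq 0$; this places the element outside $\ker d\chi_j$. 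No step is genuinely difficult: the only substantive inputs are the off-diagonal vanishing $d\chi_i(X_j)=0$ (which needs the comparison of the two normalizations $\zeta_i,\zeta_j$) and the uniform positivity of $d\chi_i(L_s)$, after which the corollary reduces to the short computation above.
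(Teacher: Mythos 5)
Your proof is correct and takes essentially the same route as the paper: both rest on $\L_{X_j}\zeta_j=\ss_j\zeta_j$ from Lemma \ref{l}, the logarithmic-derivative relation $X_j(f_{ij})/f_{ij}=-\ss_j$, and the $i$-independence and nonvanishing of $d\chi_i(L_s)$ coming from the trace formula on $\lieh'$. The only difference is organizational: you first isolate the off-diagonal vanishing $d\chi_j(X_i)=0$ (which is exactly the paper's Corollary \ref{coro2}, stated immediately afterwards) and then conclude by linearity of the characters, whereas the paper bundles the same ingredients into the single computation $\L_{X_j-r_jL_s}\zeta_i=-\ss_j\zeta_i\neq 0$.
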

\begin{proof}
    That $X_i - \frac{\ss_iL_s}{d\chi_i(L_s)}\in \lieg_s$ it is clear.\\
    
    Now, we recall that $d\chi_i$ restricted to $\lieh'$ is given by the trace, which doesn't actually depend on $i$, and thus, $d\chi_i(L_s) = d\chi_j(L_s)$ and write $r_i = \frac{\ss_i}{d\chi_i(L_s)}$. We also write $f = f_{ij}$ (that is: $\zeta_i = f\zeta_j$).\\
		
	That $X_i - r_iL_s\in \ker d\chi_i $ is also clear. Now notice that
	\begin{align*}
	\L_{{X_j} - r_jL_s}\zeta_i &= 
	\L_{{X_j}}f\zeta_j - r_jd\chi_i(L_s)\zeta_i\\
	&=
	{X_j}(f)\zeta_j + f\L_{{X_j}}\zeta_j -r_jd\chi_i(L_s)\zeta_i\\
	&=\big(\frac{{X_j}(f)}{f} + \ss_j - \frac{\ss_jd\chi_i(L_s)}{d\chi_j(L_s)}\big)\zeta_i = -\ss_j\zeta_i\neq 0.
	\end{align*}
\end{proof}

\begin{coro}\label{coro2}
	For any $i, j\in \{1,\dots,l\}$, $i\neq j$ we have $d\chi_j(X_i) =0$.
\end{coro}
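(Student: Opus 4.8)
The plan is to read off $d\chi_j(X_i)$ directly from the transition function $f_{ij}$ relating the two parallel sections $\zeta_i$ and $\zeta_j$, using that we already control both the Lie derivative of each $\zeta$ along its own Reeb field and the logarithmic derivative of $f_{ij}$ along $X_i$.

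First I would recall the two ingredients established just above. By Lemma \ref{l} applied with index $i$ we have $\L_{X_i}\zeta_i = \ss_i\zeta_i$, that is $d\chi_i(X_i) = \ss_i$. Writing $\zeta_i = f_{ij}\zeta_j$ as in the transition lemma, the Remark recording the logarithmic derivatives of $f_{ij}$ gives, for $i\neq j$, the identity $X_i(f_{ij}) = \ss_i f_{ij}$ (this is where the explicit choice $C_j^\pm(X_s)=\delta_{js}\ss^\pm/p$ enters).

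The key step is to compute $\L_{X_i}\zeta_i$ in two ways. On one hand it equals $\ss_i\zeta_i = \ss_i f_{ij}\zeta_j$. On the other hand, expanding by the Leibniz rule and using the definition of $d\chi_j$,
\begin{align*}
\L_{X_i}\zeta_i = \L_{X_i}(f_{ij}\zeta_j) = X_i(f_{ij})\zeta_j + f_{ij}\,\L_{X_i}\zeta_j = \ss_i f_{ij}\zeta_j + f_{ij}\,d\chi_j(X_i)\,\zeta_j .
\end{align*}
Equating the two expressions cancels the common term $\ss_i f_{ij}\zeta_j$, leaving $f_{ij}\,d\chi_j(X_i)\,\zeta_j = 0$. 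Since $f_{ij}$ is nowhere vanishing (being the ratio of two nowhere-zero sections) and $\zeta_j$ is nowhere zero, I would conclude $d\chi_j(X_i) = 0$.

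I expect no real obstacle here: the statement is a bookkeeping consequence of the normalization of the constants $C_j^\pm$, which forces the $f_{ij}$ to have exactly the logarithmic derivatives recorded in the Remark. The only two points to keep in mind are that $d\chi_j$ is genuinely well defined (by the uniqueness of $\zeta_j$ up to a constant), so that the relation $\L_{X_i}\zeta_j = d\chi_j(X_i)\zeta_j$ is legitimate, and that the non-vanishing of $f_{ij}$ must be invoked in order to divide it out at the last step.
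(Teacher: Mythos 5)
Your proof is correct and is essentially the paper's own argument: both rest on writing $\zeta_i = f_{ij}\zeta_j$, applying the Leibniz rule, and combining Lemma \ref{l} with the logarithmic-derivative identities of $f_{ij}$ recorded in the Remark. The only (immaterial) difference is that you differentiate along $X_i$ and cancel the common term, whereas the paper differentiates $\zeta_i$ along $X_j$ and uses $\frac{X_j(f_{ij})}{f_{ij}} = -\ss_j$ to get zero in one line --- the same computation up to swapping the roles of $i$ and $j$.
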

\begin{proof}
	 From previous lemmas, we have
	\begin{align*}
	\L_{{X_j}}\zeta_i&=\L_{{X_j}}(f_{ij}\zeta_j) = 
	{X_j}(f_{ij})\zeta_j + f_{ij}\L_{{X_j}}\zeta_j \\
&=\bigg(\frac{{X_j}(f_{ij})}{f_{ij}} + \ss_j\bigg)\zeta_i = 0
	\end{align*}
\end{proof}

\begin{remark}\label{rem}
	Notice that the Corollary \ref{cocoro} implies that, for any $s$, the hyperplanes $\ker d\chi_j\cap \lieg_s$, $j=1,\dots,l$, are in general position, that is $\cap_j \ker d\chi_j$ have co-dimension $l$ on $\lieg_s$. To see this, we notice that  for every $j$, 
	$$\bigg(X_j - \frac{\ss_jL_s}{d\chi_j(L_s)}\bigg)\not\in\lieg_s\cap\bigcap_j \ker d\chi_j$$
	It is clear that $\{X_1 - \frac{\ss_1L_s}{d\chi_1(L_s)},\dots,X_l - \frac{\ss_lL_s}{d\chi_l(L_s)}\}$ are linearly independent.
\end{remark}

\begin{remark}
	Consider the Lie algebra 
	$$\lieg_s^0:=\bigcap_{i=1}^l\big(\ker(d\chi_i)\cap\lieg_s\big)$$
	
	Then, the map $\pi:\lieg_s\to \lieg_s\slash \Ii_s$ restricted to $\lieg_s^0$ is an isomorphism. In fact, from the Remark \ref{rem}, this map is indeed between spaces of the same dimension, thus, we just have to prove that it is an injection. Now, let $X = \sum_j b_jX_j$ be an element of $\Ii_s = \ker(\pi)$. Then
	$$ d\chi_i(X) = \sum_jb_jd\chi_i(X_j) = b_i\ss_i$$

	That is, $X\in \lieg_s^0  \Leftrightarrow b_i=0\forall i$, that is the map is injective.

\end{remark}
\begin{coro}\label{cccoc}
	The Lie algebra $\lieg_s^0$ is semisimple
\end{coro}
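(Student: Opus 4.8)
The plan is to identify $\lieg_s^0$ with the derived subalgebra $[\lieg_s,\lieg_s]$ and then invoke the structure theory of reductive Lie algebras. First I would observe that each character $d\chi_i\colon\lieg'\to\R$ is a Lie algebra homomorphism (Lemma \ref{l}); since $\R$ is abelian, its kernel contains $[\lieg',\lieg']$, so in particular the restriction $d\chi_i|_{\lieg_s}$ annihilates every bracket in $\lieg_s$. Consequently $[\lieg_s,\lieg_s]\subseteq\bigcap_{i=1}^l\big(\ker(d\chi_i)\cap\lieg_s\big)=\lieg_s^0$. This simultaneously shows that $\lieg_s^0$, being the common kernel of the homomorphisms $d\chi_i|_{\lieg_s}$, is an ideal of $\lieg_s$ (in particular it is closed under the bracket).

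Next I would compare dimensions. By Theorem \ref{reductible} the Lie algebra $\lieg_s$ is reductive with centre $\Ii_s$, and $\dim\Ii_s=l$ since the elements $X_1,\dots,X_l$ lie in $\lieg_s$ and are linearly independent. Writing the reductive decomposition $\lieg_s=\Ii_s\oplus[\lieg_s,\lieg_s]$ gives $\dim[\lieg_s,\lieg_s]=\dim\lieg_s-l$. On the other hand, Remark \ref{rem} shows that the hyperplanes $\ker(d\chi_i)$ are in general position on $\lieg_s$, so $\lieg_s^0$ has codimension exactly $l$, i.e. $\dim\lieg_s^0=\dim\lieg_s-l$ as well. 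Since $[\lieg_s,\lieg_s]\subseteq\lieg_s^0$ and the two subspaces have equal dimension, they coincide: $\lieg_s^0=[\lieg_s,\lieg_s]$.

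Finally, the derived subalgebra of a reductive Lie algebra is semisimple, whence $\lieg_s^0$ is semisimple. Equivalently, the projection $\lieg_s\to\lieg_s\slash\Ii_s$ restricts to a Lie algebra isomorphism $\lieg_s^0\xrightarrow{\sim}\lieg_s\slash\Ii_s$ (the vector-space isomorphism recorded in the remark preceding the statement, upgraded to a Lie algebra isomorphism because $\lieg_s^0$ is an ideal), and the quotient of a reductive Lie algebra by its centre is semisimple. I expect no serious obstacle here, as the genuine work was already done in establishing reductivity (Theorem \ref{reductible}) and the general-position statement (Remark \ref{rem}); the only point requiring a little care is confirming that $\lieg_s^0$ is a subalgebra at all, which is immediate once the $d\chi_i$ are recognized as Lie algebra homomorphisms.
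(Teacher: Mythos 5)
Your proof is correct, and it reaches the conclusion by a genuinely different identification than the paper's. The paper (in the remark immediately preceding the corollary) shows that the quotient map $\pi:\lieg_s\to\lieg_s\slash\Ii_s$ restricts to a bijection on $\lieg_s^0$: injectivity comes from evaluating the characters on the centre, $d\chi_i\big(\sum_j b_jX_j\big)=b_i\ss_i$, and the dimension match is the same codimension-$l$ count from Remark \ref{rem}; the corollary then invokes the fact that a reductive algebra modulo its centre is semisimple. You instead identify $\lieg_s^0$ with the derived subalgebra $[\lieg_s,\lieg_s]$: the homomorphism property of the $d\chi_i$ (Lemma \ref{l}) forces $[\lieg_s,\lieg_s]\subseteq\lieg_s^0$, and equality follows from the same dimension count together with Theorem \ref{reductible}. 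The two routes are dual to one another --- the paper proves the transversality $\lieg_s^0\cap\Ii_s=0$, you prove the containment $[\lieg_s,\lieg_s]\subseteq\lieg_s^0$ --- and both rest on exactly the same two prior results. Your version buys a little extra rigor: it exhibits $\lieg_s^0$ as an ideal outright and pins it down as the semisimple factor of the reductive decomposition, whereas the paper's argument, as written, produces only a linear isomorphism $\lieg_s^0\cong\lieg_s\slash\Ii_s$ and leaves implicit the fact (which you make explicit) that $\lieg_s^0$ is a subalgebra of $\lieg_s$, without which semisimplicity would not transfer along that isomorphism.
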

\begin{proof}
We recall that $\lieg_s$ is reductive and its center is $\Ii_s$. From the previous remark $\lieg_s^0 = \lieg_s\slash\Ii_s$ and thus is semisimple.
\end{proof}

We can also build a Lie group $H^0\subset H'$, by taking characters $\chi_j:H'\to (\R,\;\cdot\;)$ defined by
$$h_*\zeta_j = \chi_j(h)\zeta_j$$
and taking $H^0 = \cap_j\ker\chi_j$.

We have the split sequence
$$0\to\liek'\to\lieg'\stackrel{\sigma}{\leftrightarrows}\lieg\to 0$$
thus,  we can identify, $\lieg^0$ as a subalgebra $\sigma(\lieg^0)\subset \lieg'$. It is clear that $Lie(H^0)\cap \sigma(\lieg^0) = \sigma(\lieg^0\cap \lieh) = \sigma(\lieh^0)$. Moreover, it is clear that $H^0$ is a closed subgroup of $H'$.

            We are ready to construct our local model. For any fixed $j\in \{1,\dots,l\}$, we consider the following data
\begin{itemize}
    \item Lie algebras $\lieg'$ and $\lieg = \lieg_j$ with corresponding connected, simply connected subgroups $\hat G'$ and $\hat G=\hat G_j$.
    \item Lie subalgebras $\lieh'\subset\lieg'$ and $\lieh=\lieh_j,\lieg^0 =\lieg_j^0,\lieh^0 = \lieh_j^0\subset \lieg_j$ with corresponding connected subgroups $\hat H'$, $\hat H = \hat H_j$, $\hat G^0 =\hat G_j^0$ and $\hat H^0 = \hat H_j^0$.
\end{itemize}
As previously mentioned, our goal is to model our space as the quotient $\hat G'\slash \hat H'$. The map $\Check P = \Check P_j:\lieg'\to\lieg_j$ induces a map $P:\hat G'\to\hat G$ such that $P(H') = H$. It follows from Lemma \ref{closedimpliesclosed} that it suffices to show that $\hat H$ is closed in $\hat G$. We will make a further reduction:

\begin{remark}
	Notice that if $\hat G^{00}$ is the connected, simply connected, Lie group associated with $\lieg^0$, then $\hat G^{00}\times \R^k$ is a connected, simply connected, Lie group with Lie algebra $\lieg^0\oplus \R^k = \lieg$, and thus, $\hat G = \hat G^{00}\times \R^k$. Thus we can suppose that $\hat G^0$ is also simply connected.
	\end{remark}

\begin{lemma}
	Consider $G = G^0\times \R^k$ a Lie group and $H\subset G$ be a Lie subgroup. Then, $H$ is closed in $G$ if, and only if, $H\cap G^0$ is closed in $G^0$.
\end{lemma}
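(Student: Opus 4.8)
The plan is to prove the nontrivial (backward) implication by a direct sequence argument, the forward implication being immediate: if $H$ is closed in $G$ then, since $G^0 = G^0\times\{0\}$ is closed in $G$, the intersection $H\cap G^0$ is closed in $G$ and hence in $G^0$. For the converse I assume $N := H\cap G^0$ is closed in $G^0$ (equivalently in $G$, as $G^0$ is closed in $G$) and let $\pi:G = G^0\times\R^k\to\R^k$ be the second projection. First I would record that $W := \pi(H)$ is a \emph{linear subspace} of $\R^k$: it is the connected subgroup of the vector group $\R^k$ with Lie algebra $\pi_*(\lieh)$, and connected subgroups of $\R^k$ are linear subspaces, hence closed. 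Now take any sequence $h_n\in H$ with $h_n\to g$ in $G$ and write $g=(g_0,v)$. Since $\pi(h_n)\in W$ and $W$ is closed, $v\in W=\pi(H)$, so I can pick $h^*\in H$ with $\pi(h^*)=v$ and pass to $h_n':=h_n (h^*)^{-1}\to g(h^*)^{-1}=:p$. This new limit satisfies $\pi(p)=0$, i.e. $p\in G^0$, while $\pi(h_n')\to 0$; it then suffices to prove $p\in N$, for then $g=p\,h^*\in H$.

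The heart of the argument is to correct $h_n'$ so that it lands exactly in $N$. I consider the restricted homomorphism $f:=\pi|_H:H\to W$; it is a surjective homomorphism of Lie groups with kernel $N$, its differential $\pi_*|_{\lieh}$ is onto, and, being a homomorphism, $f$ has constant rank and is therefore a submersion. Hence $f$ admits a smooth local section $s$ defined on a neighbourhood of $0\in W$ with $s(0)=e$. Writing $u_n:=\pi(h_n')\to 0$, I set $c_n:=s(u_n)\in H$ for large $n$; then $\pi(c_n)=u_n$ and $c_n\to s(0)=e$. Consequently $d_n:=h_n'\,c_n^{-1}$ satisfies $\pi(d_n)=u_n-u_n=0$, so $d_n\in \ker f = H\cap G^0 = N$, and by continuity of the group operations $d_n\to p\,e^{-1}=p$. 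Since $N$ is closed, $p\in N$, which finishes the proof.

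The main obstacle, and the point demanding genuine care, is this correction step: a priori $H$ carries its own (possibly finer) immersed-subgroup topology, and one must ensure that the section $s$ produces elements $c_n$ that converge to $e$ \emph{in the ambient group} $G$, so that $d_n\to p$ there. This is legitimate because $s$ is continuous for the Lie topology of $H$ and the inclusion $H\hookrightarrow G$ is continuous, so $c_n\to e$ in $H$ transfers to $G$. The other essential use of the hypotheses is the closedness of $W=\pi(H)$, which relies precisely on the second factor being a vector group $\R^k$: a dense, winding subgroup of the abelian factor — the usual source of non-closedness — cannot occur, and all remaining potential non-closedness is thereby confined to $N=H\cap G^0$, exactly the object the hypothesis controls.
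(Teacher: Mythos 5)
Your proof is correct, and it takes a genuinely different route from the paper's. The paper reduces to $k=1$ by induction and then uses a quotient trick: when $\lieh\subset\lieg^0$ the claim is immediate (for connected $H$), and otherwise it picks $v\in\lieh$ with nonzero $\R$-component, sets $R=\exp(\R v)$, observes that $R$ is closed in $G$ and contained in $H$, identifies $G\slash R\cong G^0$ and $H=q^{-1}(G^0\cap H)$ under the continuous quotient map $q:G\to G\slash R$, and concludes that $H$ is the preimage of a closed set. You instead handle all $k$ at once by a direct sequence argument whose two inputs are the closedness of $W=\pi(H)$ --- coming from the identity $\pi(H)=\pi_*(\lieh)$, a linear subspace --- and a smooth local section of the submersion $\pi|_H:H\to W$, used to translate a convergent sequence of $H$ into the kernel $N=H\cap G^0$, where the hypothesis applies. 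The paper's route buys brevity and never touches the immersed topology on $H$, but its induction silently requires that $H\cap(G^0\times\R^{k-1})$ again satisfies the standing assumptions (one checks it is connected when $H$ is, being the kernel of a homomorphism of $H$ onto a connected subgroup of $\R$ --- a point the paper does not verify); your route is longer but isolates exactly where the vector-group structure of the second factor enters (no dense winding in $\pi(H)$), and it is careful at the one genuinely delicate point, the passage of the convergence $c_n\to e$ from the Lie topology of $H$ to the ambient topology of $G$. Two small caveats. First, your side remark that ``connected subgroups of $\R^k$ are linear subspaces'' is false as stated for $k\geq 2$ (the plane admits dense connected subgroups that are not Lie subgroups); what you actually use, and what is true, is that the image of a \emph{connected Lie group} under a Lie homomorphism into $\R^k$ equals $\pi_*(\lieh)$. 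Second, both your proof and the paper's use connectedness of $H$, which the statement of the lemma omits; some such hypothesis is genuinely needed, since the zero-dimensional Lie subgroup $H=\{e\}\times(\Z+\sqrt2\,\Z)\subset G^0\times\R$ has $H\cap G^0=\{e\}$ closed while $H$ itself is dense in $\{e\}\times\R$ and hence not closed.
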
	
\begin{proof}
	Notice that we only need to prove the lemma for the case $k=1$, the general case follows easily by induction.\\

	Consider $\lieg = \lieg^0\oplus\R$ the Lie algebra of $G$, and $\lieh$ the Lie algebra of $H$. If $\lieh\subset\lieg^0$ then $H\subset G^0$ and there is nothing to prove. Now, suppose that $\lieh\not\subset\lieg^0$, then, there exists an element $v\in\lieh$ with non zero $\R$ component. Take $R = \exp(\R v)$. The non zero $\R$ component of $v$ implies that $R$ is closed in $H$ and in $G$. We have the inclusion $H\slash R\subset G\slash R$, and the identifications
	$$H\slash R = G^0\cap H\enskip\enskip and\enskip\enskip G\slash R = G^0$$
	
	As the quotient map $\pi:G\to G\slash R$ is continuous, the inverse image of closed sets are closed, and thus, $\pi^{-1}(G^0\cap H) = H$ is closed.
	
\end{proof}	
\begin{lemma}
The subgroup $\hat H'\subset\hat G'$ is closed.
\end{lemma}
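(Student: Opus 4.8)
The plan is to peel off the two reductions already set up and then settle a semisimple case by algebraicity. First I would invoke Lemma \ref{closedimpliesclosed}: since $P:\hat G'\to\hat G$ is the homomorphism induced by the projection $\lieg'\to\lieg_j$, with $P(\hat H')=\hat H$, it suffices to prove that $\hat H$ is closed in $\hat G$. Next, using the splitting $\hat G=\hat G^0\times\R^k$ recorded in the Remark above together with the preceding Lemma, closedness of $\hat H$ in $\hat G$ is equivalent to closedness of $\hat H\cap\hat G^0=\hat H^0$ in $\hat G^0$. By Corollary \ref{cccoc} the Lie algebra $\lieg^0$ is semisimple, and by the same Remark we may assume $\hat G^0$ simply connected, so that its center is finite.

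It then remains to show that the connected subgroup $\hat H^0\subset\hat G^0$ with Lie algebra $\lieh^0=\lieg^0\cap\lieh$ is closed, and the idea is that $\lieh^0$ is an algebraic Lie algebra. Recall from Lemma \ref{injectiv} that $\mathcal H'$ maps, by a finite covering, onto an algebraic subgroup $i(\mathcal H')\subset GL(V_0)$. The characters $\chi_j$ are rational on this algebraic group, so $H^0=\bigcap_j\ker\chi_j$ is a real algebraic subgroup and $\lieh^0$ is an algebraic subalgebra of $\mathfrak{gl}(V_0)$. I would then transport this algebraicity into $\lieg^0$ through the adjoint representation $Ad:\hat G^0\to Inn(\lieg^0)$: its image is the adjoint group of the semisimple algebra $\lieg^0$, which is Zariski closed hence closed, and its kernel is the finite center. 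Since $ad(\lieh^0)$ is the morphic image of an algebraic group it is again algebraic, so it exponentiates to a closed subgroup of $Inn(\lieg^0)$; pulling back along the finite covering $Ad$ shows $\hat H^0$ is closed in $\hat G^0$, and unwinding the two reductions yields that $\hat H'$ is closed in $\hat G'$.

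The cleanest packaging of the last step is Chevalley's criterion, that a connected subgroup of a semisimple Lie group with finite center is closed exactly when its Lie algebra is algebraic. The hard part will be the algebraicity bookkeeping: verifying that the $\chi_j$ are genuinely rational (not merely smooth) on the algebraic group $\mathcal H'$, and that the notion of algebraic subalgebra coming from the faithful representation on $V_0$ agrees with algebraicity inside the intrinsic algebraic structure of the semisimple $\lieg^0$. Once these compatibilities are settled, no further dynamics is needed and the reductions above close the argument.
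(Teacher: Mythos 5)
Your route is essentially the paper's: the same two reductions (Lemma \ref{closedimpliesclosed}, then the splitting $\hat G = \hat G^0\times\R^k$) to reduce everything to closedness of $\hat H^0$ in $\hat G^0$, followed by algebraicity of the isotropy group pushed through the adjoint representation of the semisimple algebra $\lieg^0$ (Corollary \ref{cccoc}) and pulled back. The paper implements the last step at the group level: it uses the algebraic finite covering $j:H'\to \Aut(\lieg')$, $h'\mapsto h'_*$, notes that $j(H^0)$ is an algebraic, hence closed, subgroup of $\Aut(\lieg')$, and then observes that $Ad^{-1}(j(H^0))\subset\hat G^0$ is closed with identity component $\hat H^0$; you phrase the same mechanism as algebraicity of the subalgebra $\lieh^0$ plus a Chevalley-type criterion inside $Inn(\lieg^0)$.

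However, one of your claims is genuinely false: a connected, simply connected, semisimple Lie group need \emph{not} have finite center. The universal covers of $SL(2,\R)$, $Sp(2n,\R)$ and $SU(p,q)$ all have center containing a copy of $\Z$, and nothing prevents $\lieg^0$ from having such factors (in the motivating examples coming from geodesic flows and Weyl chamber actions this is exactly what happens). Consequently $Ad:\hat G^0\to Inn(\lieg^0)$ may be an infinite covering, and the finite-center hypothesis of the criterion you invoke can fail. The error is repairable, and the repair is what the paper actually does: semisimplicity gives $\liez(\lieg^0)=0$, so $\ker(Ad|_{\hat G^0})$ is discrete; the preimage under the continuous homomorphism $Ad$ of a closed subgroup is closed, and the identity component of a closed subgroup is closed --- applied to $Ad^{-1}$ of your closed algebraic image this yields closedness of $\hat H^0$ with no finiteness needed anywhere. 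Two smaller points: your ``exactly when'' version of Chevalley's criterion is false in the ``only if'' direction (the one-parameter subgroup of $SL(2,\R)\times SL(2,\R)$ generated by $(X,\lambda X)$, with $X$ hyperbolic and $\lambda$ irrational, is closed but its Lie algebra is not algebraic); only the direction ``algebraic $\Rightarrow$ closed'' is true, and it is the only one you need. And the ``algebraicity bookkeeping'' you defer (rationality of the $\chi_j$, and the compatibility of the algebraicity of $\lieh^0$ seen in $GL(V_0)$ via Lemma \ref{injectiv} with algebraicity inside $\Aut(\lieg')$ or $\Aut(\lieg^0)$) is indeed the delicate point; be aware that the paper also leaves it implicit, asserting without proof that $j$ is algebraic and that $j(H')$ and $j(H^0)$ are algebraic subgroups.
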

\begin{proof}
	First, lets recall that $H'$ is an algebraic subgroup of $Aut(V_0)$. We also have that $\lieg'\slash \lieh' =  V_0$ therefore the map  
	\begin{align*}
	j:H' &\rightarrow Aut(\lieg')\\
	h' &\mapsto (Y \mapsto h'_*(Y)),
	\end{align*}
	is a (finite) covering onto it's image \footnote{In fact, consider $Y = (A,Y_0)\in\lieg'\subset End(V_0)\times V_0$. Then, $h'_*Y = (dh'_{v_0}\circ A,dh'_{v_0}Y_0)$. But $\lieg'\slash \lieh' = V_0$ means that the the composite map $\lieg' \to End(V_0)\times V_0 \to V_0$ is surjective, and thus $h'_*Y = Y$ for all $Y\in \lieg'$ implies $dh'_{v_0}Y_0 = Y_0$ for all $Y_0\in V_0$ and thus $dh'_{v_0} = Id$. But we have already seen (Lemma \ref{injectiv}) that the map $H'\ni h'\to dh'_{v_0}\in GL(V_0)$ is a covering and thus, has discrete kernel $\Gamma$, and thus, $h\in \Gamma'$.}, moreover it is algebraic. 
	
	Thus, the images of $H^0$ and  $H'$ are both algebraic subgroups of $Aut(\lieg')$ and therefore closed. Also, as $j$ is a covering, it follows that $Lie(H') = Lie(j(H'))$ and $Lie(H^0) = Lie(j(H^0))$.
	
	%That is, we can identify $H'$ with an algebraic subgroup of $Aut(\lieg')$, and therefore a closed subgroup. As $H$ is closed in $H'$, it follows that it can also be identified with a closed subgroup of $Aut(\lieg')$.

	As $\lieg^0$ is semisimple, the map $Ad:\hat G^0\to Aut(\lieg')$ is a covering onto it's image. This implies that $Lie(\hat G^0) = Lie (Ad(\hat G^0))$. From $\lieh^0\subset Lie(H^0)$ it follows that $Ad(\hat H^0)\subset j(H^0)$. 
	
	Now, the adjoint representation is continuous and thus, $Ad^{-1}(j(H^0))$ is closed. It remains to show that $Ad^{-1}(j(H^0)) = \hat H^0$, but this follows fom the fact that the differential of the map $j$ is actually the adjoint representation of $\lieh'$ on $\lieg'$. 
\end{proof}

        \section{Ehresmann's geometric structure}\label{modelsec}
            \begin{remark}\label{anoact}
As $\hat H'$ is closed we can take the homogeneous space $\hat V = \hat G' \slash \hat H'$ we consider the base point $\hat v_0 \in \hat V $ the induced flows on $\hat V$
given by 
\begin{equation*}
\hat\varphi^j_t (\hat v)=\exp(tX_j)\cdot \hat v
\end{equation*}

Clearly those flows commute and they define an action of $\R^k$ on $\hat V$. We shall call it the Anosov action on $\hat V$.
\end{remark}

\begin{remark}\label{foliations}
We consider the associated vector fields $\hat X_j$. We also consider $\hat E_i^\pm $ the tangent distribution to the foliation $\hat{\mathcal F}_i^\pm: =\{ g \hat Q_i^\pm\slash \hat H' | g \hat Q_i^\pm \in \hat G' \slash \hat Q_i^\pm \}$. where  $\hat Q_i^\pm$ are subgroups of $\hat G'$ associated with the subalgebras:
$$\lieq_i^\pm:=\{Y\in\lieg'\; Y_{v_0} \in (E_i^\pm)_{v_0}\}$$\\

It is clear that $T(\hat V) = T\hat\phi\oplus \hat E_0^+\oplus \hat E_0^-\oplus \hat E_1^+\oplus \hat E_1^-$.\\

Also, for $Y,Z\in \lieg'$ the alternating bilinear maps $$B_j(Y,Z):=(d\alpha_j)_{v_0}(Y_{v_0},Z_{v_0})$$
is well defined and $\hat H'$-invariant, which allow us to extends them to $2-$ forms $\omega_j$ on $\hat V$.
\end{remark}

The map

\begin{equation*}
\theta(exp(Y) \cdot v_0) = exp(Y) \cdot \hat v_0 
\end{equation*}
is a local diffeomorphism between a neighbourhood of $v_0$ and a neighbourhood of $\hat v_0$ such that $\theta_* X_j = \hat X_j $, $\theta_* E_i^\pm = \hat E_i^\pm$ and $\theta^*\omega_i = d\alpha_i$. We also use $\theta$ to define a connection $\hat\nabla$ on the bundle $T\hat\phi^1\oplus\hat E_1^+\oplus \hat E_1^-$, such that $\theta^*\hat{\tilde\nabla} = \tilde\nabla$, where $\tilde \nabla$ is the connection required for rigidity of our geometric structure.

\begin{remark}
        The local connections $\hat {\tilde{\nabla^j}}$ on $\hat {T\phi}^j\oplus\hat E_j^+\oplus \hat E_j^-$ over a neighborhood of $\hat v_0$ is $\hat G'$ invariant and extends to a connection $\hat{\tilde\nabla^j}$ on $\hat{T\phi}^j\oplus\hat E_j$ over $\hat G'\slash \hat H'$.
    
    In fact, we recall that the connections $\tilde\nabla^j$ can be given by a vector valued bilinear map (given by the Christofell symbols) which transforms in a certain specific way. The map $\theta$ copies over this christofell symbols and ensure that it is transformed in the right way.  The fact that $\lieg'\subset\tilde\lieg'$ means precisely that  this bilinear maps are invariant by the left action of $\hat G'$. As $H'$ is the isotropy group of the structure, this means that the such maps actually defines a left invariant connection on $\hat G'\slash \hat H'$. In particular, this means that the connection $\hat{\tilde\nabla}^j$ is given by a single bilinear $ad(\hat H')$-invariant map $$B^j:(\lieg'\slash\lieh')\times(\mathfrak e^j\slash\lieh')\to\mathfrak e^j\slash\lieh'$$ where $\mathfrak e^j\subset \lieg'$ is the Lie subalgebra of vector fields tangent to $T\phi^j\oplus E_j^+\oplus E_j^-$.
\end{remark}

The transitivity of $\Aut^{loc}(\sigma)$ on $\Omega$ gives us local charts from $\Omega$ to $\hat G'\slash \hat H'$  which preserves the structure.

\begin{remark}
	The local charts we defined above do not yet define a $(\hat G',\hat V)$-structure on $\Omega$, because we do not have any control of the transition maps, that is, we obtained an atlas 
	\begin{align}\label{atlas}
	\{f_i:U_i\to V_i\subset\hat V\}
	\end{align}
	 of $\Omega$, and the change of coordinates of this atlas
	$$f_i\circ f_j^{-1}:f_j(U_i\cap U_j)\to f_i(U_i\cap U_j)$$
	preserves the structure. However, we do not know if the transition maps $f_i\circ f_j^{-1}$ are the restriction of an element of $\hat G'$.\\ 
	
	In what follows, we shall construct a (slightly) larger group $G'$, such that the atlas \ref{atlas} will in fact be an atlas of a $(G',\hat V)$-structure on $\Omega$.
\end{remark}

Define the group $G'$ of diffeomorphisms of $\hat V$ that preserves $\hat E_i^\pm$,  $\hat X_j$, $\omega_j$ for every $j$ and the connection $\hat\nabla$. This is a Lie group and its identity connected component is precisely the image of $\hat G'$ in $Diff(\hat V)$. Define $H'$ the isotropy group of $\hat v_0$ in $G'$. The following lemma proves that there is no confusion\footnote{Remember that we have previously defined $H'$ as the isotropy group of  our chosen point $v_0\in M$, that is $H' = \Aut^{loc}_{v_0}(\sigma)$. As the map $\theta$ above is a local diffeomorphism from a neighbourhood of $v_0$ to a neighbourhood of $\hat v_0$ that preserves the structure, we identify $\Aut^{loc}(v_0) = \Aut^{loc}(\hat v_0)$}.

\begin{lemma}
	Up to coverings, we have the following identifications 
	\begin{align*}
	H' \equiv \Aut^{loc}(\hat v_0) \equiv \{ A \in \Aut&(\lieg') | A(\lieq_i^\pm) \subset \lieq_i^\pm; A(\lieh') \subset \lieh'; A(X_j) = X_j\\
	&\omega_j\circ (A\times A) = \omega_j\;\;;\; A\circ B\circ(A\times A) = B \}
	\end{align*}
	where $B$ denotes the bilinear map which defines the connection.
\end{lemma}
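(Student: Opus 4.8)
The plan is to produce two mutually inverse correspondences, valid up to the finite ambiguity already recorded in Lemma \ref{injectiv}, between the isotropy group and the group of Lie algebra automorphisms appearing on the right-hand side (call it $\mathcal A$). In one direction a local automorphism of $\sigma$ fixing $\hat v_0$ acts by pushforward on the Lie algebra $\lieg'$ of Killing fields, which under $\theta$ is identified near $\hat v_0$ with the fundamental vector fields of the $\hat G'$-action; in the other direction an element of $\mathcal A$ integrates to an automorphism of the simply connected group $\hat G'$ and descends to a global automorphism of $\hat V$. The phrase ``up to coverings'' is forced exactly as in Lemma \ref{injectiv}: the passage to the induced action on $V_0 = \lieg'/\lieh'$ is only a finite covering onto its image, with the discrete kernel $\Gamma$ coming from the jet group.

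\textbf{Forward direction.} Given $\varphi\in\Aut^{loc}(\hat v_0)$, set $A:=\varphi_*$ on germs at $\hat v_0$ of Killing fields. Since $\varphi$ fixes $\hat v_0$ and preserves the structure, $A$ is a well-defined Lie algebra automorphism of $\lieg'$ (pushforward preserves brackets), and I check each defining condition of $\mathcal A$. For $Y\in\lieq_i^\pm$ one has $(AY)_{\hat v_0}=d\varphi_{\hat v_0}(Y_{\hat v_0})\in (E_i^\pm)_{\hat v_0}$, because $\varphi$ preserves $\hat E_i^\pm$ and fixes $\hat v_0$, so $A(\lieq_i^\pm)=\lieq_i^\pm$; likewise $A(\lieh')=\lieh'$ since vanishing at $\hat v_0$ is preserved by $d\varphi_{\hat v_0}$; $A(X_j)=X_j$ since $\varphi_*\hat X_j=\hat X_j$; $\omega_j\circ(A\times A)=\omega_j$ since $B_j$ is the value of $\omega_j$ at $\hat v_0$ and $\varphi^*\omega_j=\omega_j$; and $A\circ B\circ(A\times A)=B$ since $\varphi$ preserves the connection $\hat\nabla$ whose defining $\hat H'$-invariant bilinear map is $B$. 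Each subspace inclusion is an equality by finite-dimensionality.

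\textbf{Backward direction.} Given $A\in\mathcal A$, integrate it to the unique automorphism $\tilde A\in\Aut(\hat G')$ of the simply connected group. Because $A(\lieh')=\lieh'$ and $\hat H'$ is the connected subgroup with Lie algebra $\lieh'$, we get $\tilde A(\hat H')=\hat H'$, so $\tilde A$ descends to a diffeomorphism $\bar A$ of $\hat V$ fixing $\hat v_0$. I then verify $\bar A\in G'$: from $A(\lieq_i^\pm)=\lieq_i^\pm$ the subgroups $\hat Q_i^\pm$ are preserved, hence so are the foliations $\hat{\mathcal F}_i^\pm$ and the distributions $\hat E_i^\pm$; from $A(X_j)=X_j$ and the homomorphism identity $\tilde A(\exp(tX_j)g)=\exp(tX_j)\tilde A(g)$ one obtains $\bar A\circ\hat\varphi^j_t=\hat\varphi^j_t\circ\bar A$, i.e. $\bar A_*\hat X_j=\hat X_j$; and from the last two conditions together with the $\hat G'$-invariance of $\omega_j$ and $\hat\nabla$, these tensors are preserved. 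Thus $\bar A\in H'$. Computing on the fundamental vector fields $\xi_Y$ of the $\hat G'$-action gives $\bar A_*\xi_Y=\xi_{AY}$, which shows the two maps are mutually inverse.

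\textbf{Main obstacle.} The delicate point is the identification $H'\equiv\Aut^{loc}(\hat v_0)$ and the bijectivity of the forward map, both of which rest on rigidity. A germ of automorphism fixing $\hat v_0$ is determined by the pair $\big((A_Y)_{\hat v_0},Y_{\hat v_0}\big)$ through the Cauchy problem of Lemma \ref{repres}, so its action on $\lieg'$ pins it down up to the finite kernel $\Gamma$ of Lemma \ref{injectiv}; conversely a local automorphism extends to a global one precisely because the descended map $\bar A$ realizes its germ on all of $\hat V$. I expect the bookkeeping around this finite covering---making ``up to coverings'' precise and checking the local-to-global extension is consistent---to be the only genuine difficulty, the remaining verifications being direct consequences of the invariance properties listed above.
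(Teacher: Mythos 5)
Your proposal is correct and follows essentially the same route as the paper's proof: the forward map is the pushforward action on the Lie algebra of Killing fields (the paper's map $j$, a finite covering onto its image by Lemma \ref{injectiv}, which is exactly the ``up to coverings'' ambiguity), and the backward map integrates $A\in\Aut(\lieg')$ to an automorphism of the simply connected group $\hat G'$, descends it to $\hat V$ via $A(\lieh')\subset\lieh'$, and checks structure preservation. The paper merely organizes the same ingredients as a chain of inclusions $H_1\hookrightarrow H'\hookrightarrow H_1'\ltimes F\to H_1'\hookrightarrow H_1$ whose composition is the identity, rather than as a pair of mutually inverse correspondences.
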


\begin{proof}
	First we observe that $H' \subset \Aut^{loc}(\hat v_0)$. This is clear, as global diffeomorphisms	 can be restricted to local ones.\\
	 
	Lets call the right side of the above equivalence $H_1$.
	Remember that $\Aut^{loc}(\hat v_0)$ is the group of germs at $\hat v_0$ of local diffeomorphisms of $\hat V$ that preserves $\hat X_j$, $\hat E_i^\pm$, $d\alpha_j$ and the connection $\tilde\nabla$, and thus, we can identify
	
	$$\Aut^{loc}(\hat v_0) = \Aut^{loc}_{v_0}(\sigma)$$
	As we have already seen, the map
	\begin{align*}
	j:\Aut^{loc}_{v_0}(\sigma) &\rightarrow Aut(\lieg')\\
	h' &\mapsto (Y \mapsto h'_*(Y)),
	\end{align*}
	is a finite covering onto it's image $H_1'$, in fact, $\Aut^{loc}_{v_0}(\sigma) = H_1'\ltimes F$, where $F$ is a finite group. Thus, the second equivalence is in fact, the injection of $H_1'$ onto $H_1$. 
	
	Now, as $\hat G'$ is simply connected, $$\Aut(\lieg') = \Aut(\hat G')$$ and thus, an element $A$ of $H_1$ is also an automorphism of $\hat G'$. The condition $A(\lieh') \subset \lieh'$ implies that it preserves $\hat H'$, and thus induces an diffeomorphism of $\hat V$. The other conditions on $A$ means that it preserves the $A$-structure, which shows that $H_1 \subset H'$.
	
	Thus we obtained the following:
	$$H_1\hookrightarrow H' \hookrightarrow H_1'\ltimes F \to H_1' \hookrightarrow H_1$$
	Where the composition is actually the identity.
\end{proof}

\begin{remark}\label{gext}
    This lemma means that there exists a finite group $F$ of local diffeomorphisms fixing $\hat v_0$, such that every local diffeomorphism of $\hat V$ that fixes $\hat v_0$ and preserves the structure is, up $F$ the restriction of an element of $H'$. 
    
    Suppose, now, that there exists a representation $\rho':G'\to Aut(F)$ such that the restriction $\rho'|_{H_1'}$ induces the semidirect product $H_1'\ltimes F$. Then we consider $G'' = G'\ltimes F'$ and $H'' = H'\ltimes F$. It is clear that $G''\slash H'' = G'\slash H' = \hat V$ and, thus, every local diffeomorphisms of $\hat V$ that preserves the structure is the restriction of an element of $G''$.
    
    In the following we shall show that this is indeed the case.
\end{remark}

\begin{lemma}
    There representation $\rho:H_1'\to Aut(F)$ extends to a representation of $G'$
\end{lemma}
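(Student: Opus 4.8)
The plan is to exploit that $Aut(F)$ is a \emph{finite} group, so that any continuous homomorphism out of a Lie group into it is locally constant and therefore factors through the group of connected components. First I would record that, since $F$ is finite, $Aut(F)$ is finite; hence $\rho:H_1'\to Aut(F)$ annihilates the identity component of $H_1'$ and descends to a homomorphism $\bar\rho:\pi_0(H_1')\to Aut(F)$, and likewise any extension I build must factor through $\pi_0(G')$. This reduces the whole problem to producing a homomorphism $\pi_0(G')\to Aut(F)$ that is compatible with $\bar\rho$ under the inclusion $H_1'\hookrightarrow G'$.

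Second, I would prove that $\hat V$ is simply connected. Since $\hat G'$ is the universal cover of $G'$ (hence simply connected) and $\hat H'$ is, by construction, the connected subgroup with Lie algebra $\lieh'$, the homotopy exact sequence of the fibration $\hat H'\hookrightarrow\hat G'\to\hat V$ gives $\pi_1(\hat V)\cong\pi_0(\hat H')=0$.

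Third, I would use the fibration $H'\hookrightarrow G'\to G'\slash H'=\hat V$, noting that $H'$ is closed, being the isotropy subgroup of the smooth $G'$-action at $\hat v_0$. Its homotopy exact sequence reads $\pi_1(\hat V)\to\pi_0(H')\to\pi_0(G')\to\pi_0(\hat V)$; since $\hat V$ is connected and simply connected, both end terms vanish, and the inclusion $H'\hookrightarrow G'$ therefore induces an isomorphism $\pi_0(H')\xrightarrow{\ \cong\ }\pi_0(G')$. Combining this with the previous lemma, which identifies $H'$ with $H_1'$ through the finite covering $j$ (so that $\pi_0(H')\cong\pi_0(H_1')$), I can then define
$$\rho':G'\twoheadrightarrow\pi_0(G')\xleftarrow{\ \cong\ }\pi_0(H')\xrightarrow{\ \cong\ }\pi_0(H_1')\xrightarrow{\ \bar\rho\ }Aut(F),$$
and a direct check — tracing an element $h\in H'\cong H_1'$ through these isomorphisms, all of which are induced by the inclusions and by $j$ — shows that $\rho'|_{H_1'}=\rho$, as required.

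The main obstacle is the injectivity of $\pi_0(H')\to\pi_0(G')$. Surjectivity is automatic, since the identity component $G'_0$, being the image of $\hat G'$, already acts transitively on $\hat V$, so that $G'=G'_0\,H'$; but inverting this map in order to push $\bar\rho$ across requires precisely the vanishing of $\pi_1(\hat V)$. Hence the crux is the simple-connectedness of $\hat V$, which in turn rests on $\hat H'$ being connected and $\hat G'$ being the universal cover.
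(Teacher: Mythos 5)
Your proof is correct and follows essentially the same route as the paper: both arguments hinge on the homotopy exact sequence of the fibration $H'\hookrightarrow G'\to\hat V$ to identify $\pi_0(H')\cong\pi_0(G')$, and on the finiteness of $\Aut(F)$ to factor the representation through $\pi_0$. Your explicit verification that $\pi_1(\hat V)=0$ (via $\hat H'$ connected and $\hat G'$ simply connected) is a detail the paper merely asserts, and note that for the arrow $\pi_0(H')\to\pi_0(H_1')$ you only need the homomorphism induced by $H'\hookrightarrow H_1'\ltimes F\to H_1'$ (the paper's ``trivial extension''), not an isomorphism.
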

\begin{proof}
Let $\rho$ denote the representation $H_1'\to Aut(F)$ which induces the semidirect product $Aut^{loc}_{v_0} = H_1'\ltimes F $. As $H_1'\hookrightarrow H' \hookrightarrow H_1'\ltimes F $, it follows that we can extend the representation $\rho$ to $H'$ in a trivial way:
$$H'\ni(g,\gamma)\mapsto \rho(g)$$

Now, we observe that the exact homotopy sequence give us
$$\cdots \to \pi_1(\hat V)\to \pi_0(H')\to\pi_0(G')\to \pi_0(\hat V)$$
As the first and last terms are trivial, it follows that $\pi_0(G')=\pi_0(H')$.
Finally, observe that, for any Lie group $G$ a representation on a finite set reduces to a representation of $\pi_0(G)$ and thus, the representation $\rho:H'\to Aut(F)$ actually extends to a representation of $\rho$.
\end{proof}

\begin{coro}
	Every diffeomorphism from a connected open set of $\hat V$ to another that preserves $\hat X_j$, $\hat E_i^\pm$, $\omega_j$ and $B$  is a restriction of an element of $G'$ 
\end{coro}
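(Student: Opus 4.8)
The plan is to reduce the assertion, by transitivity of the homogeneous action, to the case already handled of germs fixing the base point $\hat v_0$.

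First I would fix a point $x$ in the source and write $y = \varphi(x)$, where $\varphi$ denotes the given local diffeomorphism preserving $\hat X_j$, $\hat E_i^\pm$, $\omega_j$ and $B$. Since $\hat V = \hat G'\slash \hat H'$ is homogeneous and the identity component of $G'$ is the image of $\hat G'$ in $Diff(\hat V)$, the group $G'$ acts transitively on $\hat V$; I may therefore pick $g, g' \in G'$ with $g\cdot \hat v_0 = x$ and $g'\cdot\hat v_0 = y$. Every element of $G'$ preserves $\hat X_j$, $\hat E_i^\pm$, $\omega_j$ and $\hat\nabla$ by its very definition, so the conjugate $\psi := (g')^{-1}\circ\varphi\circ g$ is again a structure-preserving local diffeomorphism, and now it fixes $\hat v_0$. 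Hence $\psi$ defines an element of $\Aut^{loc}(\hat v_0)$.

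Next I would appeal to the identification $\Aut^{loc}(\hat v_0) \equiv H_1'\ltimes F$ obtained above, together with the extension of the representation $\rho$ to $G'$. By construction the groups $G'' = G'\ltimes F$ and $H'' = H'\ltimes F$ satisfy $G''\slash H'' = \hat V$, and, as noted in Remark \ref{gext}, every germ at $\hat v_0$ of a structure-preserving local diffeomorphism is the germ of (the restriction of) an element of $H''\subset G''$. Applying this to $\psi$ yields $h\in H''$ whose germ at $\hat v_0$ agrees with that of $\psi$.

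Finally I would upgrade this agreement of germs to an equality on the whole connected domain. Because the geometric structure contains the linear connection $\hat\nabla$ and is therefore Gromov-rigid (Corollary \ref{GromovRigidity}), two structure-preserving maps of a connected open set that share the same germ at one point coincide on that set; thus $\psi$ equals the restriction of $h$. Conjugating back, $\varphi = g'\circ\psi\circ g^{-1}$ is the restriction of $g'\,h\,g^{-1}$, and since $g, g'\in G'\subset G''$ and $h\in G''$, this transformation lies in $G''$, that is, in $G'$ once it has been enlarged by the finite symmetry group $F$. The only delicate point is this final bookkeeping with $F$: the ambiguity in realizing a germ fixing $\hat v_0$ as a genuine automorphism of $\lieg'$ is exactly what forces the passage from $G'$ to $G''$, and the extension of $\rho$ is precisely what makes that passage consistent. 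No essentially new difficulty arises, the substance having been settled in the preceding lemmas.
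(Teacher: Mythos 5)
Your proposal is correct and follows essentially the same route as the paper: reduce by transitivity (conjugating by elements of $G'$) to a germ fixing $\hat v_0$, then invoke the identification $H' \equiv \Aut^{loc}(\hat v_0)$ together with the $F$-extension of Remark \ref{gext}. You additionally make explicit the germ-to-global step via rigidity of the connection and the bookkeeping with $G'' = G'\ltimes F$, which the paper leaves implicit in its two-line proof.
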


\begin{proof}
	It suffices to prove in the case of a diffeomorphism that fixes $\hat v_0$. The corollary follows from the equality $H'= \Aut^{loc}(\hat v_0)$.
\end{proof}

\begin{remark}\label{remarkk}
	Notice that we can identify $\hat V = \hat G'\slash\hat H' = G'\slash H'$. As $H'$ is algebraic, it follows that $H'$ and $G'$ have a finite number of connected components. This means that up to finite covering of $M$, we can suppose that $G' = \hat G'$.
\end{remark}
\begin{lemma}\label{model}
	The diffeomorphisms $\theta$ from an open set of $\Omega$ to an open set of $\hat V$ which preserves the structure forms a maximal atlas of a $(G',\hat V)$- structure over $\Omega$.
\end{lemma}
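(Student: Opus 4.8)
The plan is to verify the two defining properties of a $(G',\hat V)$-structure directly: that structure-preserving diffeomorphisms from open subsets of $\Omega$ into $\hat V$ cover $\Omega$, and that any two of them differ by an element of $G'$ on their overlap. Maximality is then automatic once we declare the atlas to consist of \emph{all} such diffeomorphisms.

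First I would produce charts covering $\Omega$. Fix $x\in\Omega$. Since $\Omega$ is the open dense orbit of the pseudo-group $\mathcal{G}$ of local automorphisms of $\sigma$, there is a local automorphism $g\in\mathcal{G}$ carrying a neighbourhood of $x$ onto a neighbourhood of $v_0$, say $g(x)=v_0$. Composing with the local diffeomorphism $\theta$ constructed above (which satisfies $\theta_*X_j=\hat X_j$, $\theta_*E_i^\pm=\hat E_i^\pm$, $\theta^*\omega_i=d\alpha_i$ and $\theta^*\hat{\tilde\nabla}=\tilde\nabla$), I obtain a map $f:=\theta\circ g$ from a neighbourhood of $x$ into $\hat V$ near $\hat v_0$. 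Because $g$ preserves the Gromov structure $\sigma$—hence the ordered frame $X_1,\dots,X_k$, the splitting into the $E_i^\pm$, the $2$-forms $d\alpha_j$, and the Christoffel data $B$ of the connections $\tilde\nabla^j$—and $\theta$ intertwines exactly these data with $\hat X_j$, $\hat E_i^\pm$, $\omega_j$ and $B$ on $\hat V$, the composite $f$ is a structure-preserving chart. Thus every point of $\Omega$ lies in the domain of such a chart.

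Next I would check compatibility of overlapping charts. Given two charts $f_1=\theta\circ g_1$ and $f_2=\theta\circ g_2$ with $g_1,g_2\in\mathcal{G}$, the change of coordinates on the overlap is
$$f_1\circ f_2^{-1}=\theta\circ\bigl(g_1\circ g_2^{-1}\bigr)\circ\theta^{-1}.$$
The middle factor $g_1\circ g_2^{-1}$ is again a local automorphism of $\sigma$ on $M$ (defined near $v_0$), so conjugating by $\theta$ yields a diffeomorphism between connected open subsets of $\hat V$ that preserves $\hat X_j$, $\hat E_i^\pm$, $\omega_j$ and $B$. The Corollary immediately preceding this Lemma asserts precisely that such a diffeomorphism is the restriction of an element of $G'$. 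Hence every transition map belongs to $G'$, which is the compatibility condition for a $(G',\hat V)$-structure.

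For maximality I would simply observe that the collection of all structure-preserving diffeomorphisms from open subsets of $\Omega$ into $\hat V$ is closed under restriction, and by the previous paragraph any chart compatible with every member of the collection already lies in it; thus it is the maximal atlas. I expect the only delicate point to be bookkeeping: confirming that a local automorphism of $\sigma$ preserves each datum \emph{individually} (each $X_j$ rather than merely their span, and the full tensor $B$) so that the hypotheses of the Corollary are met verbatim. This, however, is already built into the definitions of $\Sigma$ and of the equivariant map $\sigma$, so no new computation is required. The substantive work—closedness of $\hat H'$ in $\hat G'$ (making $\hat V$ a genuine manifold) and the globalization of local structure automorphisms of $\hat V$ into elements of $G'$—has been carried out beforehand, so this Lemma amounts to assembling those results.
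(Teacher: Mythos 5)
Your proposal is correct and follows essentially the same route as the paper: the covering of $\Omega$ comes from the transitivity of $\Aut^{loc}(\sigma)$ on $\Omega$ composed with the local diffeomorphism $\theta$, and the compatibility of transition maps is exactly the preceding Corollary stating that every structure-preserving diffeomorphism between connected open subsets of $\hat V$ is the restriction of an element of $G'$. The paper's proof is just a two-line version of this argument (it does not even spell out maximality), so your write-up is a faithful, more detailed rendering of the same idea; the only cosmetic difference is that you factor transition maps as $\theta\circ(g_1\circ g_2^{-1})\circ\theta^{-1}$, whereas it suffices to note directly that any composition $f_1\circ f_2^{-1}$ of structure-preserving charts preserves the structure on $\hat V$ and then invoke the Corollary.
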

\begin{proof}
	By construction the domains of definition of those diffeomorphisms covers $\Omega$, and from the previous lemma, it follows that the change of local coordinates are given by the elements of $G'$.
\end{proof}

\begin{remark}
	It is clear that the developing map associated preserves the structure and, in particular, the induced action on the local model $G'\slash H'$ is given by
	\begin{align*}
	\R^k\times G'\slash H' &\to G'\slash H'\\
	(t_1,\dots,t_k, gH')&\mapsto gH'\exp(t_1\hat X_1 + \dots + t_k \hat X_k)
	\end{align*}
	
Finally, should $\Omega$ be equal to $M$, and the developing map $\tilde M \to G'\slash H'$ be a diffeomorphism, then, the canonical representation $\rho: \pi_1(M)\to G'$ give a diffeomorphism
	
$$M =  \pi_1(M)\backslash \tilde M \to \rho(\pi_1(M))\backslash G'\slash H'$$
which smoothly conjugates our original action with an quasi-algebraic one.
\end{remark}

        \section{Extending the structure}\label{extendsec}
           The goal of this section is to extend the $(G',\hat V)$ structure of $\Omega$ to $M$. First some technical preparations.

We can build a connection $\hat \nabla$ on $\hat V$ in a similar to the one on $M$. It is clear that $G'$ is a group of affine transformation with respect to $\hat \nabla$. This connection is also compatible with the $A$-structures on $M$ and $\hat V$, which allow us to prove the following lemma:
\begin{lemma}\label{extge}
	The geodesics of $\hat\nabla$ tangents to the distributions $\hat E^+$ are complete
\end{lemma}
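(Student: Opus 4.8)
The strategy is to reproduce the argument of Lemma \ref{lemageod}, replacing the compactness of $M$, which there served only to produce a uniform lower bound on the existence time of geodesics, by the homogeneity of $\hat V = \hat G'/\hat H'$. Two structural facts make the scheme work. First, $\hat\nabla$ preserves the splitting, and in particular $\hat\nabla$ preserves $\hat E^+$, so a geodesic which is initially tangent to $\hat E^+$ stays tangent to it for all time for which it is defined; consequently it suffices to show that \emph{every} geodesic tangent to $\hat E^+$ extends by an amount of time bounded below by one fixed constant, since one may then iterate at the end of the maximal interval and conclude completeness. Second, the Anosov flow $\hat\varphi^j_t$ of Remark \ref{anoact} is a one-parameter subgroup of $G'$, hence is defined for all $t$ and acts by affine transformations of $\hat\nabla$; thus it carries geodesics to geodesics and preserves $\hat E^+$, exactly as $\phi$ does on $M$.

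The only missing ingredient is the uniform lower bound on existence time. Here I would use the $\hat G'$-invariance of $\hat\nabla$ to trivialize $T\hat V$ as the associated bundle $\hat G'\times_{\hat H'}(\lieg'/\lieh')$: for $\hat v=g\hat v_0$ every tangent vector has the form $dL_g(\xi)$ with $\xi\in\lieg'/\lieh'$, and, since $\hat\nabla$ and the whole structure are $\hat G'$-invariant, the geodesic through $dL_g(\xi)$ is the $g$-translate of the geodesic at $\hat v_0$ with initial velocity $\xi$. Fixing once and for all a norm on the \emph{vector space} $\lieg'/\lieh'$, ordinary local existence of geodesics at the single point $\hat v_0$ gives a constant $c>0$ such that the geodesic at $\hat v_0$ with velocity $\xi$ exists for time $\geq 1$ whenever $\|\xi\|<c$; by the $\hat G'$-equivariance just described, this bound transports, in the chosen trivialization, to every point of $\hat V$.

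Finally I would use the flow to shrink the initial data. Given $Y\in\hat E^+_{\hat v}$ with $\hat v=g\hat v_0$ and $Y=dL_g(\xi)$, pushing forward by $\hat\varphi^j_\tau$ and re-expressing the result in the fixed trivialization over the translated base point replaces $\xi$ by $\exp(-\tau\,\mathrm{ad}(X_j))\xi$ (up to the $\hat H'$-action); because $X_j$ is an Anosov element, the $\mathrm{ad}(X_j)$-eigenvalues on the stable directions $\hat E^+$ have the sign for which $\|\exp(-\tau\,\mathrm{ad}(X_j))\xi\|\to 0$ as $\tau\to+\infty$. Choosing $\tau$ large enough that this norm is $<c$, the translated geodesic exists for time $\geq 1$, and applying the affine diffeomorphism $\hat\varphi^j_{-\tau}$ shows that the geodesic through $Y$ also exists for time $\geq 1$; as $Y$ ranges over all of $\hat E^+$, completeness follows. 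I expect the delicate point to be exactly this uniform bound: $\hat V$ is not compact, and since the linear isotropy of $\hat H'$ contains the hyperbolic element $\exp L_0$, there is no $\hat G'$-invariant Riemannian metric on $\hat V$ with which to imitate the compact estimate directly. The resolution is to measure smallness in the fixed vector space $\lieg'/\lieh'$ after left translation to $\hat v_0$, rather than through a background metric, and to let the algebraic contraction $\exp(-\tau\,\mathrm{ad}(X_j))$ do the shrinking.
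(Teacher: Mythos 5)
Your reduction to a uniform lower bound on existence time, and the transport of such a bound by left translations, are unobjectionable; the proposal collapses at the step where you shrink the initial velocity. You want $\exp(-\tau\,\mathrm{ad}(X_j))$ to contract the trivialized velocity $\xi$, but $\mathrm{ad}(X_j)=0$ on $\lieg'$: by the very definition of $\lieg'$ in Subsection \ref{subsecaffine}, every $Y\in\lieg'$ satisfies $\L_Y X_s=0$ for all $s$, so each $X_s$ is \emph{central} in $\lieg'$ (the paper states this explicitly in the proof of Lemma \ref{nilradicallemma}, and it is why $\Ii_j$ is the centre of $\lieg_j$ in Theorem \ref{reductible}). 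Consequently $\exp(\tau X_j)$ is central in the connected group $\hat G'$, the flow of Remark \ref{anoact} is simultaneously left and right multiplication, and its differential, read in your trivialization $Y=dL_g(\xi)$, is exactly the identity on $\lieg'\slash\lieh'$: it shrinks nothing. This is not a matter of sign conventions. In this BFL-type construction $\hat G'$ is built from Killing fields that \emph{commute} with the action, so the Anosov contraction is invisible to $\mathrm{ad}(X_j)$; it is a metric statement on the compact manifold $M$, measured in a background metric, not an algebraic statement on $\hat V$. Your mental model is the Weyl chamber flow, where the flow is right multiplication by a non-central element of $\liea$; here that element has been made central by construction.

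The gap is fatal to the whole scheme, because homogeneity plus invariance genuinely does not imply completeness: the translation-invariant connection on $\R$ defined by $\nabla_{\partial_x}\partial_x=\partial_x$ satisfies your ``small velocity at one point, hence small velocity everywhere'' bound, yet is incomplete --- along its geodesics the trivialized velocity blows up in finite time, which is precisely what your iteration can no longer exclude once the shrinking mechanism is gone. Some input from the compact dynamics on $M$ is unavoidable, and that is what the paper uses: it never argues intrinsically on $\hat V$. Instead it takes a vector $\hat Y\in\hat E^+$, pulls it back through a chart of the $(G',\hat V)$-structure on $\Omega$ to a vector $Y\in E^+_v$, and develops the $M$-geodesic (complete by Lemma \ref{lemageod}, whose proof uses compactness and the genuine Anosov contraction) back into $\hat V$. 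The real work there is to keep that geodesic inside $\Omega$, where charts exist: the paper introduces the dense set $\Delta$ of points whose strong stable and unstable leaves lie entirely in $\Omega$ (density follows from density of compact orbits), and uses $\nabla E^+\subset E^+$ to see that an $E^+$-geodesic never leaves a single strong stable leaf. If you insist on an intrinsic argument on $\hat V$, the correct shrinking device is not $\mathrm{ad}(X_j)$ but the linear isotropy: the paper produces a hyperbolic element $L_0\in\lieh'$ whose eigenvalues on $E^+_{v_0}$ are strictly negative, and rewriting $Y=dL_g(\xi)=dL_{g\exp(-tL_0)}\bigl(\chi(\exp(tL_0))\xi\bigr)$, where $\chi$ is the isotropy representation, does contract the representative as $t\to+\infty$; that would yield a proof genuinely different from the paper's, but it is not the one you wrote.
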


\begin{proof}
    First, let us define the set
    $$\Delta = \{p\in M|F^+(p)\subset\Omega \;\;and\;\;F^-(p)\subset\Omega\}$$
    \begin{description}
    \item[Claim:] $\Delta$ is dense.
    
    In fact, first we notice that compact orbits are dense in $M$ (they are dense in the non wandering set, which we have proved to be all $M$). As $\Omega$ is open and dense, it follows that compact orbits are dense in the entire $\Omega$.
	
	We shall prove that $\Delta$ contains every compact orbit in $\Omega$.  It is known that the Anosov foliations depends only on the open connected cone of Anosov elements we chose. Let $p\in \Omega$ with a compact orbit. We can take an Anosov element $X'$ with flow $\varphi_t'$ in the same cone as $X$ such that $\varphi_{t_0}'(p) = p$. 
	
	Now, let $q\in F^+(p)$, then
	\begin{align*}
	0 =\lim_{t\to +\infty}d(\varphi_t'(q),\varphi_t'(p)) &= \lim_{n\to \mp\infty}d(\varphi_{nt_0}'(q),\varphi_{nt_0}'(p))\\
	&= \lim_{n\to \mp\infty}d(\varphi_{nt_0}'(q),p)
	\end{align*}
	thus $\varphi_{nt_0}'(q) \to p$. As $\Omega$ is open, and invariant by the action, it follows that $q\in \Omega$. We have proved that $F^+(p)\subset\Omega$. A similar argument proves the case $F^-(p)$.

    \end{description}

	Consider $\hat Y \in \hat E^+$ with base-point $\hat p\in \hat V$. Lets prove that a geodesic with initial condition $\hat Y$, integrates to up to time greater then $1$. There exists a connected, simple connected open set $O\subset\Omega$, and a developing map $\theta:O\to\hat V$ and $Y\in E^+_{v}$ such that, $\theta_*Y=\hat Y$. 
	From the density of $\Delta$ and the transitivity of the pseudo-group $Aut^{loc}(\sigma)$, we can move the base point $v$ of $Y$ a little bit and suppose that $v\in \Delta$. 
	Now, we have already proved that the geodesics in $\Omega$ tangents to $E^+$ are complete, thus, they integrate up to a time greater the one. In particular, the geodesic $t\to \gamma(t)$, starting from $v$ with direction $Y$ integrate up to time greater the one. 
	As $\nabla E^+\subset E^+$, we have $\dot\gamma\in E^+$,
	thus, $\gamma$ never leaves the leaf $F^+(v)$. As $v\in \Delta$, $\gamma(t)\in \Omega$, and we can suppose that $\gamma([0,1])\subset O$\footnote{The developing map is defined for any simple connected open set, We just take $O'$ to be a tubular neighbourhood of $\gamma$, and $\theta'$ the developing map of $O'$ that coincides with $\theta$ on the point v }. The curve $t\mapsto \theta \circ\gamma$ is our desired geodesic.
\end{proof}

\begin{remark}
	At first we may be tempted to just use the completeness of the geodesics in $M$ (Lemma \ref{lemageod}). But we build our $(G',\overline V)$-structure on $\Omega$, which means that the developing maps are defined on the universal cover $\tilde \Omega$, and not on $\tilde M$. That is, maybe our geodesic passes through a point in $M\backslash \Omega$, and in this case, we won't be able to develop our geodesic on $\overline V$.
	
	Our next section will solve this problem by extending the $(G',\overline V)-$structure to all $M$.
\end{remark}

Now,let us define appropriated local coordinates for $M$ using the completeness of the connection along the strong leaves.

Let $\varphi^j_t$, $j=1,\dots,k$ be the flows of the vector fields $X_j$. Define, for every $v\in M$ the map
\begin{align*}
\Psi_v:T_vM &= E_v^+ \oplus\bigoplus_j\R X_j\oplus E_v^-  \to M&\\
Y&=Y^+ + t_1X_1 +\dots +  t_kX_k + Y^-\mapsto \varphi^1_{t_1}\circ\dots\circ\varphi^k_{t_k}(\exp^{\nabla}(\tau_{Y^+}(Y^-)))
\end{align*}
where $\exp^{\nabla}$ is the exponential with respect to the connection $\nabla$ and $\tau_{Y^+}(Y^-)$ is the parallel transport for time $t =1$ along the geodesic $t\mapsto \exp^{\nabla}(tY^+)$ of the vector $Y^-$. 

We define, in an analogous way, for every $\tilde v\in\tilde M$ and $\hat v\in \hat V$ the maps  $\tilde\Psi_:T_{\tilde v}\tilde M\to \tilde M$ and $\hat\Psi_:T_{\hat v}\hat V\to \hat V$.

From Lemas \ref{extge} and \ref{lemageod}, we see that those maps can be defined even for arbitrary large $Y^+$,$\tilde Y^+$ and $\hat Y^+$.

\begin{definition}
	An open set $O\subset M$ will be called $A$-star-shaped with respect to $v\in O$ if there exists an open set $U\subset T_vM$ such that
	\begin{itemize}
		\item{}$\Psi_v$ is an diffeomorphism from $U$ to $O$. 
		\item{}If $Y = Y^+ + t_1X_1+\dots + t_kX_k + Y^-$ is in $U$, then, for every $s\in [0,1]$, so are
		\begin{itemize}
			\item{}$Y^+ + 
					t_1X_1 +\dots +  			t_kX_k +	   s Y^-$
			\item{}$s Y^+ + t_1X_1 +\dots + 			t_kX_k $
			\item{}$t_1X_1 +\dots +s t_jX_j+\dots + t_kX_k$ for every $j\in \{1,\dots,k\}$
		\end{itemize}
	\end{itemize}
	In particular, $O$ is contractible and therefore simple connected.
\end{definition}

We have defined the $A$-star-shaped open sets in such way that they are well behaved with respect to the affine transformations, that is, if $O$ is an $A$-star-shaped open set with respect to $v$ and $\theta$ is a developing map  from $O$ to $\hat V$, then
\begin{align}\label{Aetoiles}
\theta\circ\Psi_v = \hat\Psi_{\theta(v)}\circ T_v\theta
\end{align}

\begin{lemma}\label{starshape}
	Let $v \in \Delta$ and $O$ be an $A$-star-shaped open set with respect to $v$. Then, there exists an open dense subset $O^\prime\subset O\cap\Omega$ which is also $A$-star-shaped with respect to $v$.
\end{lemma}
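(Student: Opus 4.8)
The plan is to carve $O$ down to the sublevel set on which the whole $\Psi_v$-retraction stays inside $\Omega$. Writing $\Psi_v\colon U\to O$ for the given star-shaped chart, I would define
$$U'=\Big\{Y=Y^++\textstyle\sum_j t_jX_j+Y^-\in U \ :\ \exp^{\nabla}_{w_1}\!\big(s\,\tau_{Y^+}(Y^-)\big)\in\Omega\ \ \forall s\in[0,1]\Big\},\qquad w_1:=\exp^{\nabla}(Y^+),$$
and set $O'=\Psi_v(U')$; since $\Psi_v|_U$ is a diffeomorphism, $\Psi_v|_{U'}$ is automatically a diffeomorphism onto $O'$. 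The reason only this single unstable-arc condition appears is that, because $v\in\Delta$, the two other families of scalings in the definition of $A$-star-shapedness are automatically harmless: taking $Y^-=0$ and scaling $Y^+$ traces $\varphi^1_{t_1}\circ\dots\circ\varphi^k_{t_k}(\exp^{\nabla}(sY^+))$, which is a curve in $F^+(v)\subset\Omega$ moved by the flows, while scaling the $t_j$ merely pushes a point of $\Omega$ by the flows; in both cases the flow-invariance of $\Omega$ keeps us inside $\Omega$.

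First I would check that $U'$ is $A$-star-shaped. For the scaling of $Y^-$ by $s_0\in[0,1]$, linearity of parallel transport gives $\tau_{Y^+}(s_0Y^-)=s_0\,\tau_{Y^+}(Y^-)$, so the arc attached to $Y^++\sum_j t_jX_j+s_0Y^-$ is the reparametrised subarc $s\mapsto\exp^{\nabla}_{w_1}(ss_0\,\tau_{Y^+}(Y^-))$ of the arc attached to $Y$, hence still lies in $\Omega$; for the other two scalings the $E^-$-component vanishes and the defining condition holds vacuously. Together with the star-shapedness of $U$ this shows $U'$ is closed under all three operations. Openness of $U'$ is then a compactness statement: the retraction of any $Y\in U'$ is the continuous image of a compact parameter set landing in the open set $\Omega$, so a whole neighbourhood of $Y$ still retracts into $\Omega$ (equivalently, the projection to $U$ of the closed preimage of $M\setminus\Omega$ under the compactly parametrised retraction map is closed).

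The heart of the matter, and the step I expect to be the main obstacle, is density of $O'$ in $O\cap\Omega$. I would reduce it to the claim that $\Delta\cap F^+(v)$ is dense in the stable leaf $F^+(v)$: if $w_1\in\Delta\cap F^+(v)$ then $F^-(w_1)\subset\Omega$, so every arc based at $w_1$ satisfies the condition defining $U'$ vacuously, and choosing $Y^+$ with $\exp^{\nabla}(Y^+)=w_1$ close to a prescribed $Y_0^+$ (and keeping the other components) produces points of $U'$ approximating $\Psi_v(Y_0)$, which gives any prescribed point of $O\cap\Omega$. To establish this density I would invoke the hyperbolic local product structure (available from the smoothness of the invariant splitting) together with the density of $\Delta$ already proved in Lemma~\ref{extge}: given $w_1'\in F^+(v)$ and $\delta\in\Delta$ near $w_1'$, the local product decomposition $M\cong W^{ss}_{loc}\times W^{cu}_{loc}$ yields a point $w_1=F^+_{loc}(w_1')\cap W^{cu}_{loc}(\delta)$ close to both. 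This $w_1$ lies in $F^+(v)$, so $F^+(w_1)=F^+(v)\subset\Omega$, and it lies in the weak-unstable leaf of $\delta$, so $w_1=\varphi^1_{t_1}\circ\dots\circ\varphi^k_{t_k}(\delta')$ for some $\delta'\in F^-(\delta)$ and hence $F^-(w_1)=\varphi^1_{t_1}\circ\dots\circ\varphi^k_{t_k}(F^-(\delta))\subset\Omega$ by the flow-invariance of $\Omega$ and $\delta\in\Delta$. Thus $w_1\in\Delta\cap F^+(v)$ and $w_1\to w_1'$, proving the density. The delicate points to write out carefully are precisely the two verifications that both leaves of this product-intersection point sit in $\Omega$, and the availability of the product structure with enough regularity; everything else is routine.
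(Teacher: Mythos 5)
Your construction is literally the paper's: the paper defines $U'$ by the condition $\Psi_v(Y^++t_1X_1+\dots+t_kX_k+sY^-)\in\Omega$ for all $s\in[0,1]$, which is equivalent to your condition by flow-invariance of $\Omega$; your star-shapedness check (subarc property for the $Y^-$-scaling, $F^+(v)\subset\Omega$ and flow-invariance for the other two, both from $v\in\Delta$) is the paper's, and your tube-lemma argument for openness is more explicit than the paper's ``it is clearly open''. The genuine divergence is the density step, which you rightly call the heart. The paper never needs the local product structure: given open $W\subset\Omega\cap O$, it picks $p\in W\cap\Delta$ (density of $\Delta$, from the proof of Lemma \ref{extge}) and shows that $p$ itself lies in $O'$ --- writing $p=\Psi_v(Y_0)$, action-invariance of $\Delta$ puts $q:=\Psi_v(Y_0^++Y_0^-)=\varphi_{-t_0}(p)$ in $\Delta$, and the scaling arc $s\mapsto\Psi_v(Y_0^++sY_0^-)$ is a geodesic tangent to $E^-$, hence contained in $F^-(q)\subset\Omega$; the flow then carries everything inside $\Omega$. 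Aiming only at points of $\Delta$, which are already dense, is what makes the argument soft. You instead approximate arbitrary points of $O\cap\Omega$, which forces you to prove $\Delta\cap F^+(v)$ is dense in the leaf $F^+(v)$; your product-structure argument for this is correct (both inclusions $F^+(w_1)=F^+(v)\subset\Omega$ and $F^-(w_1)=\varphi_t(F^-(\delta))\subset\Omega$ do hold), but it costs two ingredients the paper never invokes: the local product structure for $\R^k$ Anosov actions (standard via normal hyperbolicity, but not established in the paper --- and it comes from hyperbolicity, not from smoothness of the splitting as you suggest), plus the fact that $\exp^{\nabla}|_{E_v^+}$ is a homeomorphism onto a leaf-open set so that leaf-nearby points of $\Delta$ pull back to vectors near $Y_0^+$. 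One wording slip: the arc condition at $w_1\in\Delta$ does not hold ``vacuously''; it holds because the arc, being a geodesic tangent to $E^-$ (here $\nabla E^{\pm}\subset E^{\pm}$ is used), stays in $F^-(w_1)\subset\Omega$ --- the same leaf-confinement fact that underlies the paper's version.
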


\begin{proof}
	We consider the open set $U\subset T_vM$ associated with the $A$-star-shaped set $O$ and define
	\begin{align*}
	U':=\{Y = Y^++t_1X_1+&\dots+t_kX_k+Y^-\in U\;|\\
	&\Psi_v\left(  Y^++t_1X_1+\dots+t_kX_k+s Y^- \right)\in \Omega \;\forall s\in [0,1]\}
	\end{align*}
	
	We take $O'=\Psi_v(U')$. It is clearly an open set. Let us see that it is also dense.\\

	For any open set $W\subset \Omega\cap O$ take $p\in W\cap\Delta$. We write $p = \Psi_v(Y_0^+ +t_{01}X_1 + \dots + t_{0k}X_k + Y_0^-)$.We must show that $\Psi_v(Y_0^+ +t_{01}X_1 + \dots + t_{0k}X_k + sY_0^-)\in \Omega$ for every $s\in[0,1]$.\\
	
	Remember that, by definition, $\Psi_v$ is defined by travelling a certain distance along the unstable foliation, then travelling a certain distance along the stable foliation, and acting a certain vector on the result.
	
	As $\Delta$ is invariant by the action, we have that $\Psi_v(Y_0^+ + Y_0^-)\in\Delta$.\\
	
	By definition, the stable foliation of $\Psi_v(Y_0^+ + Y_0^-)$ is contained in $\Omega$, in particular, $\Psi_v(Y_0^+ + sY_0^-)\in \Omega$. But $\Omega$ is invariant by the action, and thus
	$$\Psi_v(Y_0^+ +t_{01}X_1 + \dots + t_{0k}X_k+ sY_0^-)\in \Omega$$

	It remains to show that $O'$ is $A$-star-shaped. 
	
	As $\Omega$ is invariant by the action and $v\in\Delta\subset\Omega$, we have
	$$t_1X_1+\dots+t_kX_k\in U' \Leftrightarrow \Psi_v( +t_1X_1+\dots+s t_jX_j+\dots+t_kX_k) \in \Omega$$
	that is
	$$t_1X_1+\dots+t_kX_k\in U' \Leftrightarrow  +t_1X_1+\dots+s t_jX_j+\dots+t_kX_k \in U'$$
	
	We must now show, that for any $s\in[0,1]$ we have
	$$Y^++t_1X_1+\dots+t_kX_k\in U' \Leftrightarrow s Y^++t_1X_1+\dots+t_kX_k \in U'$$
	that is
	$$Y^++t_1X_1+\dots+t_kX_k\in U' \Leftrightarrow \Psi_v\left(s Y^++t_1X_1+\dots+t_kX_k\right) \in \Omega$$

But $v\in\Delta$ and thus, it's unstable foliation is in $\Omega$. In particular, $\Psi_v(sY^+)\in\Omega$ for every $s$. As $\Omega$ is invariant by the action, $\Psi_v\left(s Y^++t_1X_1+\dots+t_kX_k\right) \in \Omega$

\end{proof}	
The following lemma is a simple variation of the usual result about the existence of normal neighborhoods of uniform sizes.
\begin{lemma}
	Fix a background metric $g$ on $M$ and let us denote, for $v\in M$,$r\in\R_{>0}$ and $Y = Y^++t_1X_1+\dots+t_kX_k+Y^- \in T_{v}M$:
	$$
	B^r(v) = \{Y\in T_vM\;;\;\|Y\|^2\stackrel{def}{=} g(Y^+,Y^+)+|t_1|^2+\dots+|t_k|^2+g(Y^-,Y^-)<r^2\}$$
	and 
	$$B_r(v) = \{p\in M;\; dist_g(c,p)<r\}$$
	Then, there exists $\delta,\varepsilon>0$ such that for every $v\in M$, $\Psi_v:B^\delta(v)\to M$
	is a diffeomorphism onto it's image and
	$B_\epsilon(v)\subset \Psi_v(B^\delta(v))$
\end{lemma}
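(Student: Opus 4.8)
The plan is to treat this as the standard ``uniform normal neighbourhood'' statement, with the Riemannian exponential replaced by the smooth family of maps $\Psi_v$. The two ingredients that make the usual argument go through are that $(v,Y)\mapsto\Psi_v(Y)$ depends smoothly on its base point and that the differential of $\Psi_v$ at the origin is invertible. First I would record joint smoothness: this uses the smoothness of the invariant splitting $TM = E^+\oplus T\phi\oplus E^-$, of the connection $\nabla$, and of the flows $\varphi^j_t$, all of which enter the definition of $\Psi_v$. Next I would compute the differential of $\Psi_v$ at $0\in T_vM$ under the canonical identification $T_0(T_vM)=T_vM=E_v^+\oplus\bigoplus_j\R X_j\oplus E_v^-$. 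Differentiating along the $Y^+$, the $t_j$, and the $Y^-$ directions separately yields $Y^+$, $X_j(v)$, and $Y^-$ respectively: along a pure $Y^+$-variation one gets $\exp^\nabla_v(sY^+)$, along a pure $t_j$-variation the flow $\varphi^j$ generated by $X_j$, and along a pure $Y^-$-variation the parallel transport $\tau$ reduces to the identity over the constant geodesic, leaving $\exp^\nabla_v(sY^-)$. Hence $(d\Psi_v)_0=\operatorname{Id}$.

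With this in hand I would introduce the map $\Phi:TM\to M\times M$, $\Phi(v,Y)=(v,\Psi_v(Y))$. It is smooth, and along the zero section its differential is block lower triangular with identity diagonal blocks, hence invertible; by the inverse function theorem $\Phi$ is a local diffeomorphism near every point $(v,0)$. The crucial bookkeeping observation is that, because $\Phi$ retains the base point in its first coordinate, $\Phi$ is injective on a disc bundle as soon as each individual $\Psi_v$ is injective on the corresponding disc, since $\Phi(v,Y)=\Phi(v',Y')$ forces $v=v'$ and then $Y=Y'$.

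The uniformity in $\delta$ is then extracted from the compactness of $M$. I would argue by contradiction: if no uniform $\delta$ existed for which $\Psi_v$ is a diffeomorphism on $B^\delta(v)$, one would obtain $v_n\to v$ and vectors shrinking to the zero vector of $T_vM$ (measured in a local trivialisation of $TM$ about $v$) witnessing either non-injectivity or degeneracy of the differential, contradicting that $\Phi$ is a local diffeomorphism near $(v,0)$. Fix such a $\delta$. Then $\Phi$ restricted to the open disc bundle $\{\|Y\|<\delta\}$ is an injective local diffeomorphism, hence a diffeomorphism onto an open set $W\subset M\times M$ containing the diagonal $\{(v,v):v\in M\}$. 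Finally, since the diagonal is compact, a Lebesgue-number argument yields $\varepsilon>0$ with $\{(v,p):dist_g(v,p)<\varepsilon\}\subset W$; for such a pair we have $(v,p)=\Phi(v,Y)$ with $\|Y\|<\delta$, which is precisely the inclusion $B_\varepsilon(v)\subset\Psi_v(B^\delta(v))$.

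The only genuinely non-routine point, and the one I would be most careful about, is the computation $(d\Psi_v)_0=\operatorname{Id}$ together with the joint smoothness of $\Psi$: once $\Psi_v$ is known to be a smooth family of maps with invertible differential along a compact zero section, everything else is the verbatim compactness argument used for the Riemannian injectivity radius. The adaptation is harmless precisely because $\Psi_v$ was built so that its three constituent pieces (the strong unstable geodesic, the strong stable geodesic with parallel transport, and the commuting flows of the $X_j$) are mutually independent to first order at $v$.
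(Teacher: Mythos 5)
Your proposal is correct and is essentially the paper's own approach: the paper in fact prints no proof for this lemma, dismissing it as ``a simple variation of the usual result about the existence of normal neighborhoods of uniform sizes,'' and your argument --- joint smoothness of $(v,Y)\mapsto\Psi_v(Y)$, the computation $(d\Psi_v)_0=\mathrm{Id}$ on each of the three summands, passage to $\Phi(v,Y)=(v,\Psi_v(Y))$ as a local diffeomorphism along the zero section, and the compactness/Lebesgue-number step on the diagonal --- is precisely that standard argument transplanted to $\Psi_v$. You have simply supplied the details the paper chose to omit, so there is nothing to correct.
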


\begin{coro}\label{hauhua}
	For any dense subset $U\subset M$, we can cover $M$ by $A$-star shaped open sets with base points in $U$.
\end{coro}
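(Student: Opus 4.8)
The plan is to use the preceding uniform-neighbourhood lemma directly: it supplies constants $\delta,\varepsilon>0$, uniform in the base point, so that $\Psi_v$ is a diffeomorphism on the round ball $B^\delta(v)\subset T_vM$ and its image contains the metric ball $B_\varepsilon(v)$. For each $v\in U$ I would set $O_v:=\Psi_v(B^\delta(v))$ and claim that the family $\{O_v\}_{v\in U}$ is the desired cover.

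First I would check that each $O_v$ is $A$-star-shaped with respect to $v$, taking the open set in the definition to be $B^\delta(v)$ itself. The diffeomorphism requirement is exactly the content of the lemma. For the scaling conditions, note that the norm $\|Y\|^2 = g(Y^+,Y^+)+|t_1|^2+\dots+|t_k|^2+g(Y^-,Y^-)$ is a sum of nonnegative terms, one block for each of $Y^+$, the coefficients $t_j$, and $Y^-$. Hence replacing $Y^-$ by $sY^-$, or $Y^+$ by $sY^+$, or a single $t_j$ by $st_j$, with $s\in[0,1]$, can only decrease the corresponding summand and therefore does not increase $\|Y\|$. Thus every vector produced by these operations remains in $B^\delta(v)$, which is precisely the $A$-star-shaped condition.

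It then remains to verify that the $O_v$ cover $M$. Since $U$ is dense, every $p\in M$ lies within distance $\varepsilon$ of some $v\in U$, that is $p\in B_\varepsilon(v)$; by the lemma $B_\varepsilon(v)\subset \Psi_v(B^\delta(v))=O_v$, so $p\in O_v$. Hence $M=\bigcup_{v\in U}O_v$, as required.

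I do not anticipate a serious obstacle here: the corollary is essentially a packaging of the uniform lemma together with the elementary observation that the chosen norm is monotone under each of the three rescalings appearing in the definition of $A$-star-shaped. The only point deserving attention is making sure the tangent-space ball is the one adapted to the splitting $T_vM=E_v^+\oplus\bigoplus_j\R X_j\oplus E_v^-$, so that the three coordinate blocks decouple and the star-shaped conditions follow block by block.
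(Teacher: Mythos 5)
Your proof is correct and is exactly the argument the paper intends: the corollary is stated as an immediate consequence of the uniform-size lemma, and your combination of block-wise norm monotonicity (so $B^\delta(v)$ itself witnesses the $A$-star-shaped condition) with density of $U$ and the inclusion $B_\varepsilon(v)\subset\Psi_v(B^\delta(v))$ is that consequence spelled out. One pedantic note: the second and third operations in the definition also set $Y^-$ (respectively $Y^+$ and $Y^-$) to zero, not merely rescale a single block, but since zeroing a block only decreases the norm your monotonicity argument covers this verbatim.
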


\begin{lemma}
	$\Omega=M$
\end{lemma}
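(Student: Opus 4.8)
The plan is to prove $\Omega=M$ by extending the developing map across the bad set $M\setminus\Omega$ along complete geodesics. By Corollary \ref{hauhua}, applied to the dense set $\Delta$, we may cover $M$ by $A$-star-shaped open sets whose base points lie in $\Delta\subset\Omega$. It therefore suffices to show that any such $A$-star-shaped set $O$, with base point $v\in\Delta$, is entirely contained in $\Omega$.

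First I would produce a developing chart defined on all of $O$. By Lemma \ref{starshape} there is a dense open $A$-star-shaped subset $O'\subset O\cap\Omega$, on which a developing map $\theta\colon O'\to\hat V$ exists, and on the corresponding domain $U'\subset T_vM$ the compatibility relation (\ref{Aetoiles}), $\theta\circ\Psi_v=\hat\Psi_{\theta(v)}\circ T_v\theta$, holds. The crucial point is that $\hat\Psi_{\theta(v)}$ is defined on the entire $A$-star-shaped domain $T_v\theta(U)$: this follows from Lemma \ref{extge} together with its mirror statement for $\hat E^-$ (obtained by reversing the Anosov element, which interchanges the stable and unstable directions), exactly as $\Psi_v$ is defined on all of $U$ by Lemma \ref{lemageod}. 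I can then set
\[
\bar\theta:=\hat\Psi_{\theta(v)}\circ T_v\theta\circ\Psi_v^{-1}\colon O\longrightarrow\hat V,
\]
which is smooth, restricts to $\theta$ on $O'$, and is a local diffeomorphism, since $\Psi_v$ and $\hat\Psi_{\theta(v)}$ are diffeomorphisms on the relevant $A$-star-shaped domains and $T_v\theta$ is a linear isomorphism.

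Next I would verify that $\bar\theta$ preserves the whole geometric structure. Since $\bar\theta=\theta$ on the dense subset $O'$, and $\theta$ carries $X_j,E_i^\pm,\alpha_j$ and the connection to $\hat X_j,\hat E_i^\pm,\omega_j$ and $\hat\nabla$, and since all of these are continuous tensorial data, the identities $\bar\theta_*X_j=\hat X_j$, $\bar\theta_*E_i^\pm=\hat E_i^\pm$, $\bar\theta^*\omega_j=d\alpha_j$ (and compatibility with $\hat\nabla$) extend from $O'$ to all of $O$ by continuity. Thus $\bar\theta$ is a structure-preserving local diffeomorphism on the whole of $O$. To conclude $O\subset\Omega$, take any $q\in O$; since $G'$ acts transitively on $\hat V=G'\slash H'$ by structure-preserving diffeomorphisms, choose $g\in G'$ with $g\cdot\theta(v)=\bar\theta(q)$. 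On suitable small neighborhoods, $\bar\theta^{-1}\circ g\circ\theta$ is then a local diffeomorphism of $M$ preserving $\sigma$, i.e.\ an element of the pseudo-group $\mathcal{G}$, carrying $v$ to $q$. As $v\in\Omega$ and $\Omega$ is a $\mathcal{G}$-orbit, this forces $q\in\Omega$; hence $O\subset\Omega$, and since such sets cover $M$ we obtain $\Omega=M$.

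The main obstacle is the extension step: one must ensure that $\bar\theta$ is genuinely a local diffeomorphism across $M\setminus\Omega$, not merely a smooth map that might degenerate there. This rests entirely on the completeness of the $\hat\nabla$-geodesics tangent to $\hat E^\pm$ in $\hat V$, so that $\hat\Psi_{\theta(v)}$ is defined and nondegenerate on the full $A$-star-shaped domain, combined with the density and $A$-star-shapedness of $O'$, which is precisely what lets the identity (\ref{Aetoiles}) propagate the chart from $O'$ to all of $O$.
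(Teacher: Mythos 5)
Your proposal is correct and follows essentially the same route as the paper: covering $M$ by $A$-star-shaped sets with base points in $\Delta$ (Corollary \ref{hauhua}), passing to the dense $A$-star-shaped subsets of Lemma \ref{starshape}, extending the developing map via $\hat\Psi_{\theta(v)}\circ T_v\theta\circ\Psi_v^{-1}$ using the completeness from Lemma \ref{extge}, and propagating structure preservation by density. The only difference is that you make explicit the final step (using transitivity of $G'$ on $\hat V$ to conclude each point lies in the $\mathcal{G}$-orbit $\Omega$), which the paper leaves implicit in its claim that extending the $(G',\hat V)$-structure to $M$ suffices.
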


\begin{proof}
	We just need to prove that the $(G',\hat V)$-structure on $\Omega$ can be extended to $M$.
	
	Using the Corolary \ref{hauhua}, we take a cover $\{O_{v_i}\}$ of $M$ by $A$-star shaped open sets with basepoints $v_i\in\Delta$, and using Lemma \ref{starshape}we consider open, subsets $O_{v_i}'\subset \Omega\cap O_{v_i}(\varepsilon)$ dense in $O_{v_i}$ and also $A$-star-shaped with respect to $v_i$.

	In particular, $O_{v_i}'$ is contractible, and therefore, simple connected. there exists, thus, a developing map $\theta:O_{v_i}'\to \hat V$.  We define $\hat{\theta}: O_{v_i}\to\hat V$ by
	\begin{align*}
	\hat\theta = \hat{\Psi_{\theta(v_i)}}\circ(T_{v_i}\theta)\circ\Psi_{v_i}^{-1}
	\end{align*}
	
	It follows from \ref{Aetoiles} that $\hat{\theta}$ is an extension of $\theta$. From the density of $O_{v_i}'$ in $O_{v_i}$, it follows that $\hat{\theta}_*(X_{j})=\hat X_{j}$, $\hat{\theta}_*(E^\pm) = \hat E^\pm$ and $\hat{\theta}^*(\omega) = d\lambda$. We conclude that $\hat{\theta}$ is a developing map\footnote{A developing map is a local diffeomorphism ($\hat{\theta}$ is the composite of local diffeomorphisms) that preserves the invariant bundles and the $2$-form.}. That is, we managed to extend the $(G',\hat V)-$structure to $M$.
\end{proof}
        \section{Completeness}\label{completesec}
           Finally we shall prove that the extended structure build in the previous section is complete, that is
\begin{proposition}\label{complet}
	Let $\tilde M$ be the universal cover of $M$ and  let $\theta:\tilde M \to \hat V$ be a developing map of the $(G',\hat V)$-structure. Then $\theta$ is a covering map..
\end{proposition}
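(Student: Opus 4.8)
The plan is to reduce the completeness of the structure to a single statement about the homogeneous model — namely that the global ``exponential'' chart $\hat\Psi_{\hat v}:T_{\hat v}\hat V\to\hat V$ is a diffeomorphism — and then to transport this conclusion through the developing map. By Lemmas \ref{extge} and \ref{lemageod} the geodesics of $\hat\nabla$ (resp.\ $\nabla$) tangent to the strong distributions are complete, and the $\R^k$-action is complete since $M$ is compact; hence the maps $\tilde\Psi_{\tilde v}:T_{\tilde v}\tilde M\to\tilde M$ and $\hat\Psi_{\hat v}:T_{\hat v}\hat V\to\hat V$ are defined on the whole tangent space. Since $\theta$ preserves $\hat\nabla$, the fields $\hat X_j$ and the distributions $\hat E^\pm$, the intertwining identity (\ref{Aetoiles}) extends from $A$-star-shaped neighbourhoods to the global maps:
\begin{equation*}
\theta\circ\tilde\Psi_{\tilde v}=\hat\Psi_{\theta(\tilde v)}\circ T_{\tilde v}\theta,
\end{equation*}
where $T_{\tilde v}\theta$ is a linear isomorphism.

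First I would record the formal consequences of this identity, \emph{assuming} that $\hat\Psi_{\hat v}$ is a diffeomorphism. Then the right-hand side $\hat\Psi_{\theta(\tilde v)}\circ T_{\tilde v}\theta$ is a diffeomorphism from $T_{\tilde v}\tilde M$ onto $\hat V$. Reading the equality backwards shows at once that $\tilde\Psi_{\tilde v}$ is injective and that $\theta$ is injective and surjective in restriction to $W:=\tilde\Psi_{\tilde v}(T_{\tilde v}\tilde M)$; since $\theta$ is a local diffeomorphism, $\tilde\Psi_{\tilde v}$ is an open embedding, so $W$ is open. To see $W=\tilde M$ I would prove $W$ is also closed: if $\tilde\Psi_{\tilde v}(u_n)\to\tilde p$, then $\hat\Psi_{\theta(\tilde v)}\circ T_{\tilde v}\theta\,(u_n)=\theta(\tilde\Psi_{\tilde v}(u_n))\to\theta(\tilde p)$, and applying the continuous inverse of the diffeomorphism $\hat\Psi_{\theta(\tilde v)}\circ T_{\tilde v}\theta$ gives $u_n\to u_\infty$, whence $\tilde p=\tilde\Psi_{\tilde v}(u_\infty)\in W$. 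As $\tilde M$ is connected, $W=\tilde M$, and the displayed identity exhibits $\theta=\bigl(\hat\Psi_{\theta(\tilde v)}\circ T_{\tilde v}\theta\bigr)\circ\tilde\Psi_{\tilde v}^{-1}$ as a diffeomorphism; in particular $\theta$ is a covering map, proving Proposition \ref{complet}.

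It remains — and this is the heart of the matter — to prove that $\hat\Psi_{\hat v}$ is a diffeomorphism of $\hat V$, where I would use the homogeneity of $\hat V=\hat G'\slash\hat H'$. The $\hat\nabla$-geodesics through $\hat v$ tangent to $\hat E^+$ (resp.\ $\hat E^-$) should be the orbits $t\mapsto\exp(tY^\pm)\cdot\hat v$ of one-parameter subgroups generated by the unstable subalgebra $\mathcal U'$ (resp.\ the stable subalgebra $\mathcal S'$), and the $\R^k$-directions are the subgroups $\exp(tX_j)$ with $X_j\in\liea'$; consequently $\hat\Psi_{\hat v}$ factors as a product-of-exponentials map built from the grading $\lieg'=\mathcal U'\oplus(\lieh'\oplus\liea')\oplus\mathcal S'$ into orbit, stable and unstable parts, followed by projection to $\hat G'\slash\hat H'$. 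Because $\hat G'$ is simply connected and $\hat H'$ is closed and connected (so that $\hat V$ is simply connected), I expect this product map to be a global diffeomorphism onto $\hat V$, by the standard horospherical/triangular decomposition argument: the strong exponentials embed as closed cells and the orbit flow fills the remaining directions without monodromy. \textbf{The main obstacle} is precisely upgrading local bijectivity of $\hat\Psi_{\hat v}$ to a global one: one must control how the nilpotent factor $\liek'$ interacts with the stable and unstable exponentials and verify that no two broken geodesic paths of the coordinate system reach the same point of $\hat V$, which is exactly where the simple connectedness of $\hat G'$ and the injectivity of $\exp$ on the strong unipotent directions must be invoked with care.
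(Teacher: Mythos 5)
Your formal reduction in the first half is sound: \emph{if} $\hat\Psi_{\hat v}:T_{\hat v}\hat V\to\hat V$ were a global diffeomorphism, then the intertwining identity $\theta\circ\tilde\Psi_{\tilde v}=\hat\Psi_{\theta(\tilde v)}\circ T_{\tilde v}\theta$ (which does hold globally, by uniqueness of geodesics, parallel transport and flow lines under a structure-preserving local diffeomorphism) would make $\tilde\Psi_{\tilde v}$ an open embedding with open and closed image, and $\theta$ would be a diffeomorphism. The genuine gap is the claim you defer to the end, and it cannot be repaired, because that claim is false. The image of the broken-geodesic chart based at $\hat v_0=e\hat H'$ is contained in $\exp(\Ii')\,\hat Q^+\hat Q^-\cdot\hat v_0$ (flow directions, then the unstable leaf through $\hat v_0$, then the stable leaves through its points), and this is a set of ``big Bruhat cell'' type: open, typically dense, but \emph{proper}. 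Already in the model for the geodesic flow of a closed hyperbolic surface, where $\hat V$ is a cover of $PSL(2,\R)$ with $N^\pm$ the horocyclic subgroups and $A$ the diagonal, the product $AN^+N^-$ consists only of matrices with nonvanishing lower-right entry; in the unit tangent bundle picture, the chart based at $v$ never reaches the vectors whose forward endpoint on $\partial\H^2$ equals the backward endpoint $v^-$ of $v$. Since Proposition \ref{complet} is true in these examples (and $\theta$ is even a diffeomorphism there), your hypothesis is strictly stronger than the conclusion: no amount of care with the nilpotent factor or with simple connectedness will produce surjectivity of $\hat\Psi_{\hat v}$, because it genuinely fails.

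This is precisely why the paper never uses the charts $\Psi$ globally, but only on the (proper) open sets furnished by the $A$-star-shaped neighbourhoods, and proves the covering property in its classical local form. For $w\in\hat V$ it takes an $A$-star-shaped $O=\hat\Psi_w(U)$ with $U\subset T_w\hat V$ open, and for each $v_i\in\theta^{-1}(w)$ sets $O_i=\tilde\Psi_{v_i}\big((T_{v_i}\theta)^{-1}(U)\big)$, so that (\ref{Aetoiles}) makes $\theta|_{O_i}:O_i\to O$ a diffeomorphism; the $O_i$ are pairwise disjoint by an open-closed argument on the subset of $O$ where $\theta_i^{-1}=\theta_j^{-1}$, using connectedness of $O$; and the remaining inclusion $\theta^{-1}(O)\subset\bigcup_iO_i$ --- the step your global claim was meant to shortcut --- is obtained from the auxiliary parallel-transport maps $\tilde\Phi$, $\hat\Phi$ and the relation $T\theta\circ\tilde\Phi=\hat\Phi\circ T\theta$, which, for any $v'$ with $\theta(v')\in O$, produce an index $i$ and a vector $\tilde Y$ based at $v_i$ with $\tilde\Psi_{v_i}(\tilde Y)=v'$. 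Nothing there requires the charts to be onto $\hat V$; it only requires them to be defined on all of $U$, which is exactly what the completeness statements (Lemmas \ref{extge} and \ref{lemageod}) provide. If you want to salvage your write-up, replace the global diffeomorphism claim by this local evenly-covered-neighbourhood argument.
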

\begin{proof}
	To prove our proposition, we must find, for each $w\in \hat V$ a neighbourhood $O$ such that $\theta^{-1}(O)$ is the disjoint union of open sets $\{O_i\}_{i\in I}$ such that the restriction of $\theta$ to each $O_i$ is a diffeomorphism onto $O$.
	
	Let $w\in\hat V$ and ${v_i}_{i\in I} = \theta^{-1}(w)$. Consider $O$ an $A$-star-shaped open set with respect to $w$ and $U\subset T_w\hat V$ the associated open set.
	
	We define, for $i\in I$
	\begin{align*}
	O_i = \tilde{\Psi}_{v_i}\big((T_{v_i}\theta)^{-1}(U)\big)
	\end{align*} 
	
	It follows from \ref{Aetoiles} that $\theta$ induces an diffeomorphism $\theta_i$ from $O_i$ to $O$.
	
	Lets show that $O_i\cap O_j = \emptyset$ if $i\neq j$.  For this, we consider the set
	\begin{align*}
	O' = \{w'\in O\;|\:\theta_i^{-1}(w') = \theta_j^{-1}(w')\}
	\end{align*}
	
	The set $O'$ is obviously closed\footnote{$O'$ is the intersection of two closed sets, the inverse images by continuous functions of a point}. 
	
	On the other hand, $O'$ is also open, for $\theta_i$ and $\theta_j$ are the restriction of a $\theta$ to a certain open sets. If there is a intersection (which is the case if $\theta_i^{-1}(w') = \theta_j^{-1}(w')$ for some $w'$), then $\theta_i = \theta_j$ on this intersection $V'$, and thus $\theta(V')\subset O'$.

	 As $w\not\in O'$ if $i\neq j$ and $O$ is connected (actually contractible), it follows that $O'=\emptyset$. 
	
	Finally, we shall show that $\theta^{-1}(O)=\cup_{i\in I}O_i$
	
	First we define the map
	\begin{align*}
	\tilde\Phi:T\tilde M&\to T\tilde M\\
	\tilde Y &= \tilde Y^+ + t_1\tilde X_1+\dots +t_k\tilde X_k+ \tilde Y^-&\mapsto (\tilde\varphi^1_{t_1})_*\dots(\tilde \varphi^k_{t_k})_*\big(\tilde \tau_{(\tilde \tau_{\tilde Y^+}\tilde Y^-)}(\tilde \tau_{\tilde Y^+} \tilde Y)\big)
	\end{align*}
	 and in a similar way the map $\hat \Phi: T\hat V\to T\hat V$. It is clear that
	 
	 $$T\theta\circ\tilde \Phi = \hat{\Phi}\circ T\theta$$

	Moreover, $\tilde\Phi$ is a diffeomorphism\footnote{in fact, we shall write explicit formulas for its inverse. Consider the smooth maps $\tilde f^\pm, \tilde g^\pm: T\tilde M \to T\tilde M$ defined by
	\begin{align*}
	\tilde f^\pm(\tilde Y) &=  \tilde \tau_{\tilde Y^\pm}\tilde Y\\
	\tilde g^\pm(\tilde Y) &=  \tilde \tau_{-\tilde Y^\pm}\tilde Y
	\end{align*}
	It is clear that $\tilde f^\pm\circ \tilde g^\pm = \tilde g^\pm\circ \tilde f^\pm = Id$. which makes $\tilde f^\pm$ diffeomorphisms. We can write 
	$$\tilde\Phi(\tilde Y) = (\tilde\varphi^1_{t_1})_*\dots(\tilde \varphi^k_{t_k})_*(\tilde f^-\circ\tilde f^+(\tilde Y))$$.
	Let $\tilde Z = \tilde Z^+ + t_1X_1+\dots +  t_kX_k + \tilde Z^-$ and let 
	$$(\tilde\varphi^1_{-t_1})_*\dots(\tilde \varphi^k_{-t_k})_*(\tilde Z):=\tilde W $$ 
	The inverse $\tilde\Phi^{-1}$ is given by
		$$\tilde\Phi^{-1}(\tilde Z) = g^+\circ g^- 	(\tilde W)$$
		
		The key element here is the fact that if we write $$\tilde \Phi(\tilde Y^+ +t_1\tilde X_1+\dots+t_k\tilde X_k + \tilde Y^- ) = \tilde Z^+ + s_1\tilde X^1+\dots +s_k\tilde X_k + \tilde Z^-$$, then $s_j = t_j$ for every $j$.}

	Now, consider $v'\in \hat V$ such that $\theta(v')\in O$. We can write $\theta(v') = \tilde \Psi_w(\hat Y)$ for $\hat Y\in U$. Consider
	 $$\tilde Y := (\tilde \Phi)^{-1}\big((T_{v'}\theta)^{-1}(\hat \Phi(\hat Y))\big)$$
	and let $v$ be the basepoint of $\tilde Y$. We have
	\begin{align*}
	T\theta(\tilde Y)&= \hat{\Phi}^{-1}\circ T\theta\circ\tilde \Phi(\tilde Y)\\
	&= \hat{\Phi}^{-1}\circ T\theta (T_{v'}\theta)^{-1}(\hat \Phi(\hat Y)) = \hat{\Phi}^{-1}(\hat \Phi(\hat Y)) = \hat Y
	\end{align*}
	Thus the base point $v$ must be $v_i$ for some $i\in I$. It remains to prove that 
	$$\tilde \Psi_{v_i}(\tilde Y) =v'$$
	This will conclude the proof, for we have shown that $v'\in O_i$.
\end{proof}

\begin{lemma}
	If $\tilde Z$ has basepoint $a$, and $\Phi^{-1}(\tilde Z)$ has basepoint $b$, then $\tilde \Psi_b\circ\tilde \Phi(\tilde Z) = a$.
\end{lemma}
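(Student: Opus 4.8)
The plan is to extract from the two definitions the single compatibility identity
\[
\pi\big(\tilde\Phi(\tilde Y)\big)=\tilde\Psi_{\pi(\tilde Y)}(\tilde Y),
\]
valid for every tangent vector $\tilde Y$, where $\pi\colon T\tilde M\to\tilde M$ denotes the bundle projection; in words, the basepoint of $\tilde\Phi(\tilde Y)$ is obtained from $\tilde Y$ by the very recipe defining $\tilde\Psi$. Granting this, the lemma follows in one line: writing $\tilde Y=\tilde\Phi^{-1}(\tilde Z)$, which by hypothesis has basepoint $b$, the identity gives $\pi(\tilde Z)=\pi(\tilde\Phi(\tilde Y))=\tilde\Psi_b(\tilde Y)=\tilde\Psi_b\big(\tilde\Phi^{-1}(\tilde Z)\big)$, and $\pi(\tilde Z)=a$ yields the claim. (Consistently with the way the statement is used in the proof of Proposition \ref{complet}, the operator applied to $\tilde Z$ is $\tilde\Phi^{-1}$.)

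It remains to establish the displayed identity, which is pure definition-chasing. Decompose $\tilde Y=\tilde Y^++t_1\tilde X_1+\dots+t_k\tilde X_k+\tilde Y^-$ relative to $T\tilde M=\tilde E^+\oplus\bigoplus_j\R\tilde X_j\oplus\tilde E^-$. The structural input I would use is that the connection $\nabla$ preserves this splitting and satisfies $\nabla\tilde X_j=0$; hence parallel transport maps $\tilde E^\pm$ into $\tilde E^\pm$ and leaves the flow coefficients $t_j$ unchanged (the invariance of the $t_j$ is exactly the point recorded in the footnote to the definition of $\tilde\Phi$). Then I track basepoints through the three operations making up $\tilde\Phi$. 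First, $\tilde f^+(\tilde Y)=\tilde\tau_{\tilde Y^+}\tilde Y$ is based at $\exp^{\nabla}(\tilde Y^+)$, and its $\tilde E^-$-component is $\tilde\tau_{\tilde Y^+}\tilde Y^-$. Next, $\tilde f^-$ parallel transports along the geodesic tangent to that component, so $\tilde f^-\circ\tilde f^+(\tilde Y)$ is based at $\exp^{\nabla}\big(\tilde\tau_{\tilde Y^+}\tilde Y^-\big)$. Finally the pushforwards $(\tilde\varphi^1_{t_1})_*\cdots(\tilde\varphi^k_{t_k})_*$ send this basepoint to $\tilde\varphi^1_{t_1}\circ\dots\circ\tilde\varphi^k_{t_k}\big(\exp^{\nabla}(\tilde\tau_{\tilde Y^+}\tilde Y^-)\big)$, which is by definition $\tilde\Psi_{\pi(\tilde Y)}(\tilde Y)$.

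I do not expect any genuine obstacle: the argument is entirely the bookkeeping of basepoints through the two parallel transports $\tilde f^\pm$ and the commuting flows $\tilde\varphi^j_{t_j}$. The one point deserving care is the compatibility of the splitting with parallel transport, so that the $\tilde E^-$-part of $\tilde f^+(\tilde Y)$ is indeed $\tilde\tau_{\tilde Y^+}\tilde Y^-$ and the flow parameters appearing in $\tilde\Phi$ coincide with those in $\tilde\Psi$; with those facts in hand the two chains of operations visibly terminate at the same point. The identical computation performed on $\hat V$ gives the companion identity $\pi\circ\hat\Phi=\hat\Psi$, which is what makes the relation $T\theta\circ\tilde\Phi=\hat\Phi\circ T\theta$ used in Proposition \ref{complet} interact correctly with basepoints.
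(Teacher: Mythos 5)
Your proposal is correct and is essentially the paper's own argument: the paper likewise proves the lemma by observing, through definition-chasing along the two parallel transports and the flows, that the basepoint of $\tilde\Phi(\tilde Y)$ is exactly $\tilde\Psi_{\pi(\tilde Y)}(\tilde Y)$, and then takes $\tilde Y=\tilde\Phi^{-1}(\tilde Z)$. Your reading of the statement with $\tilde\Phi^{-1}$ in place of $\tilde\Phi$, so that it matches its use in Proposition \ref{complet}, is also the intended one.
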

\begin{proof}
	Notice that if we take an element $T_b\tilde M\ni \tilde Y = \tilde Y^+ + t_1\tilde X_1 + \dots + t_k\tilde X_k + \tilde Y^-$, then we can understand
	the map $\Psi_b$ in the following way.
	
	We take the geodesic with starting vector $\tilde Y^+$ and transport along it, to time one, the vector $\tilde Y^-$. Now we take a geodesic with this starting vector and take it's time one and apply $\varphi^1_{t_1}\circ\dots\circ\varphi^k_{t_k}$.
	
	We can understand the map $\Phi$ in the following way.
	We take the geodesic with starting vector $\tilde Y^+$ and transport along it, to time one, both the vector $\tilde Y^-$. We now take a geodesic with this starting vector and take it's time one. We transport the original vector $\tilde Y$ along both of this segments, and apply $D(\varphi^1_{t_1}\circ\dots\circ\varphi^k_{t_k})$ o the result. It is clear that $\Phi(\tilde Y)$ will
	have basepoint $\Psi(Y)$. We take $\tilde Y = \Phi^{-1}(\tilde Z)$ and the result follows.
	
\end{proof}

\bibliographystyle{amsplain}

%------------------------------------------------------------------------------
% End of journal.tex
%------------------------------------------------------------------------------

%\begin{thebibliography}{9}
%
%\bibitem[CRS13]{CRS13}
%M.\+ Clay, K.\+ Rafi, and S.\+ Schleimer,
%Uniform hyperbolicity of the curve graph via surgery sequences.
%\textit{Algebr. Geom. Topol.}~\textbf{14} (2014), no.\+ 6, %3325–3344.
%\Zbl{3302964} \MR{1309.57015}
%
%\bibitem{Davies}
%E.\+ B.\+ Davies,
%\textit{Heat kernels and spectral theory.}
%Cambridge Tracts in Mathematics~92.
%Cambridge University Press, Cambridge, 1989.
%\Zbl{0699.35006} \MR{0990239}
%
%\bibitem{Rev}
%H.\+ Reeve,
%Shrinking targets for countable Markov maps.\\
%Preprint~2011. \arXiv{1107.4736} [math.DS]
%
%\end{thebibliography}
\end{document}